\documentclass{amsart}
\usepackage[all]{xy}
\usepackage{enumerate}
\usepackage{mathrsfs}
\usepackage{amssymb}
\usepackage{graphicx}

\numberwithin{equation}{subsection}

\newcommand{\nc}{\newcommand}
\nc\rnc{\renewcommand}

\theoremstyle{plain}
\newtheorem{theorem}{Theorem}[section]
\newtheorem{lemma}[theorem]{Lemma}
\newtheorem{corollary}[theorem]{Corollary}
\newtheorem{proposition}[theorem]{Proposition}
\newtheorem{conjecture}[theorem]{Conjecture}
\theoremstyle{definition}
\newtheorem{definition}[theorem]{Definition}

\theoremstyle{remark}
\newtheorem{remark}[theorem]{Remark}
\newtheorem{question}[theorem]{Question}


\newcommand\Aut{\operatorname{Aut}}
\newcommand\Out{\operatorname{Out}}

\newcommand\Hom{\operatorname{Hom}}

\newcommand\GL{\operatorname{GL}}
\newcommand\SL{\operatorname{SL}}
\newcommand\IA{\operatorname{IA}}
\newcommand\IO{\operatorname{IO}}

\newcommand\id{\operatorname{id}}

\newcommand\im{\operatorname{im}}

\newcommand\gr{\operatorname{gr}}

\newcommand\ab{\operatorname{ab}}
\newcommand\sgn{\operatorname{sgn}}

\newcommand\Z{\mathbb{Z}}
\newcommand\N{\mathbb{N}}

\newcommand\R{\mathbb{R}}
\newcommand\Q{\mathbb{Q}}



\newcommand\gpS{\mathfrak{S}}


\newcommand\wti{\widetilde}

\newcommand\red[1]{{\color{red}#1}}
\renewcommand\red{}

\newcommand\centre[1]{\begin{array}{c} #1 \end{array}}
\newcommand\centre{\input{[}}1]{\centre{\input{#1}}}

\newcommand{\GLnZ}{\GL(n,\Z)}
\newcommand{\SLnZ}{\SL(n,\Z)}

\nc\FAb{\mathbf{FAb}}

\nc\xto[1]{{\overset{#1}{\longrightarrow}}}
\nc\yto[1]{{\underset{#1}{\longrightarrow}}}
\nc\xyto[2]{{\overset{#1}{\underset{#2}{\longrightarrow}}}}
%
\nc\ad{{\operatorname{ad}}}
\nc\even{{\operatorname{even}}}
\nc\ev{{\operatorname{ev}}}
\nc\coev{{\operatorname{coev}}}
\nc\odd{{\operatorname{odd}}}
\nc\half{{\frac12}}
\nc\halfof[1]{{\frac{#1}2}}
\nc\onto\twoheadrightarrow
\nc\projto{\underset{\text{proj}}{\longrightarrow}}
\nc\no[1]{}
\nc\ok{\comm{ok?}}
\nc\ho{{\hat\otimes }}
\nc\plim{\varprojlim}
\nc\np{\newpage}

\nc\bfA{\mathbf{A}} \nc\bbA{\mathbb{A}} \nc\calA{\mathcal{A}}
\nc\bfB{\mathbf{B}} \nc\bbB{\mathbb{B}} \nc\calB{\mathcal{B}}
\nc\bfC{\mathbf{C}} \nc\bbC{\mathbb{C}} \nc\calC{\mathcal{C}}
\nc\bfD{\mathbf{D}} \nc\bbD{\mathbb{D}} \nc\calD{\mathcal{D}}
\nc\bfE{\mathbf{E}} \nc\bbE{\mathbb{E}} \nc\calE{\mathcal{E}}
\nc\bfF{\mathbf{F}} \nc\bbF{\mathbb{F}} \nc\calF{\mathcal{F}}
\nc\bfG{\mathbf{G}} \nc\bbG{\mathbb{G}} \nc\calG{\mathcal{G}}
\nc\bfH{\mathbf{H}} \nc\bbH{\mathbb{H}} \nc\calH{\mathcal{H}}
\nc\bfI{\mathbf{I}} \nc\bbI{\mathbb{I}} \nc\calI{\mathcal{I}}
\nc\bfJ{\mathbf{J}} \nc\bbJ{\mathbb{J}} \nc\calJ{\mathcal{J}}
\nc\bfK{\mathbf{K}} \nc\bbK{\mathbb{K}} \nc\calK{\mathcal{K}}
\nc\bfL{\mathbf{L}} \nc\bbL{\mathbb{L}} \nc\calL{\mathcal{L}}
\nc\bfM{\mathbf{M}} \nc\bbM{\mathbb{M}} \nc\calM{\mathcal{M}}
\nc\bfN{\mathbf{N}} \nc\bbN{\mathbb{N}} \nc\calN{\mathcal{N}}
\nc\bfO{\mathbf{O}} \nc\bbO{\mathbb{O}} \nc\calO{\mathcal{O}}
\nc\bfP{\mathbf{P}} \nc\bbP{\mathbb{P}} \nc\calP{\mathcal{P}}
\nc\bfQ{\mathbf{Q}} \nc\bbQ{\mathbb{Q}} \nc\calQ{\mathcal{Q}}
\nc\bfR{\mathbf{R}} \nc\bbR{\mathbb{R}} \nc\calR{\mathcal{R}}
\nc\bfS{\mathbf{S}} \nc\bbS{\mathbb{S}} \nc\calS{\mathcal{S}}
\nc\bfT{\mathbf{T}} \nc\bbT{\mathbb{T}} \nc\calT{\mathcal{T}}
\nc\bfU{\mathbf{U}} \nc\bbU{\mathbb{U}} \nc\calU{\mathcal{U}}
\nc\bfV{\mathbf{V}} \nc\bbV{\mathbb{V}} \nc\calV{\mathcal{V}}
\nc\bfW{\mathbf{W}} \nc\bbW{\mathbb{W}} \nc\calW{\mathcal{W}}
\nc\bfX{\mathbf{X}} \nc\bbX{\mathbb{X}} \nc\calX{\mathcal{X}}
\nc\bfY{\mathbf{Y}} \nc\bbY{\mathbb{Y}} \nc\calY{\mathcal{Y}}
\nc\bfZ{\mathbf{Z}} \nc\bbZ{\mathbb{Z}} \nc\calZ{\mathcal{Z}}
\nc\bfone {{\mathbf 1}}

\nc\Vect{\mathbf{Vect}}
\nc\Sets{\mathbf{Sets}}
\nc\Mod{\mathbf{Mod}}
\nc\Cat{\mathbf{Cat}}

\nc\ul{\underline}
\nc\simeqto{\overset{\simeq}{\longrightarrow }}
\nc\ct{\overset{\cong}{\longrightarrow }}
\nc\mt{\mapsto}
\nc\hr{\medskip\hrule\medskip}
\nc\trl{\triangleleft}
\nc\trr{\triangleright}

\nc\xysquare[8]{\xymatrix{
    #1 \ar[r]#5 \ar[d]#6 & #2 \ar[d]#7 \\
    #3 \ar[r]#8          & #4
  }
  }

\nc\Ob{\operatorname{Ob}}
\nc\Mor{\operatorname{Mor}}

\nc\al{\alpha}
\nc\be{\beta}
\nc\la{\lambda}
\nc\ot{\otimes}
\nc\ott{\ot\cdots\ot}
\nc\Sp{\operatorname{Sp}}
\nc\SO{\operatorname{SO}}

\nc\HH{\mathrm{HH}}

\nc\ci{\circ}
\nc\sq{\square}

\nc\incl{\mathrm{incl}}

\nc\ol{\overline}
\nc\sqcups{\sqcup\cdots\sqcup}
\nc\congto{\overset{\cong}{\to}}

\nc\hide[1]{}
\nc\blue[1]{{\textcolor[rgb]{0,0,.9}{#1}}}
\nc\bluen[1]{\blue{[[#1]]}}
\nc\bnote{\bluen}
\nc\redn[1]{\red{[[#1]]}}

\nc\comment[1]{\marginpar{\tiny #1}}
\nc\len{\operatorname{len}}

\nc\VIC{\mathrm{VIC}}

\nc\GLnQ{\GL(n,\Q)}

\nc\nKMP{n_{\mathrm{KMP}}}
\nc\nB{n_{\mathrm{B}}}

\nc\Pol{\mathcal{P}ol}
\nc\VICmod{\VIC\text{-}\mathrm{mod}}
\nc\colim{\operatorname{colim}}

\nc\ulla{{\ul\lambda}}
\nc\Sz{\mathsf{Sz}}
\nc\St{\mathsf{St}}

\nc\SF{\mathrm{SF}}
\nc\AutFn{\Aut(F_n)}

\parskip 5pt

\let\copybigwedge\bigwedge
\renewcommand\bigwedge{\copybigwedge\nolimits}

\title[On the stable cohomology of ($\IA$-)automorphism groups]{On the stable cohomology of the $\IA$-automorphism groups of free groups}

\author{Kazuo Habiro}
\author{Mai Katada}
\address{Department of Mathematics, Kyoto University, Kyoto 606-8502, Japan}
\curraddr{Graduate School of Mathematical Sciences, University of Tokyo, Tokyo 153-8914, Japan}
\email{habiro@ms.u-tokyo.ac.jp}
\address{Department of Mathematics, Kyoto University, Kyoto 606-8502, Japan}
\curraddr{Faculty of Mathematics, Kyushu University, Fukuoka 819-0395, Japan}
\email{katada@math.kyushu-u.ac.jp}

\date{January 17, 2024 (First version: November 24, 2022)}

\keywords{Automorphism groups of free groups, General linear groups, IA-automorphism groups of free groups, Group cohomology, Torelli groups}
\subjclass[2020]{20F28, 20J06}

\begin{document}

\maketitle

\begin{abstract}
Borel's stability and vanishing theorem gives the stable cohomology of $\GL(n,\Z)$ with coefficients in algebraic $\GL(n,\Z)$-representations.
By combining the Borel theorem with the Hochschild--Serre spectral sequence, we compute the twisted first cohomology of the automorphism group $\Aut(F_n)$ of the free group $F_n$ of rank $n$. We also study the stable rational cohomology of the IA-automorphism group $\IA_n$ of $F_n$. We propose a conjectural algebraic structure of the stable rational cohomology of $\IA_n$, and consider some relations to known results and conjectures. We also consider a conjectural structure of the stable rational cohomology of the Torelli groups of surfaces.
\end{abstract}

\setcounter{tocdepth}{1}
\tableofcontents

\section{Introduction}

Let $F_n$ be the free group of rank $n$, $\Aut(F_n)$ the automorphism group of $F_n$ and $\GLnZ$ the general linear group of rank $n$.
Let $\IA_n$ denote the IA-automorphism group of $F_n$, which is defined by the following exact sequence
\begin{gather}\label{exact}
 1\to \IA_n\to \Aut(F_n)\to \GLnZ\to 1.
\end{gather}
The Hochschild--Serre spectral sequence for the above exact sequence
gives a strong connection between the twisted cohomology of $\Aut(F_n)$, the twisted cohomology of $\GLnZ$ and the rational cohomology of $\IA_n$.

Borel \cite{Borel1} proved that the rational cohomology of $\GLnZ$ \emph{stabilizes}, which means that the map on the rational cohomology in each degree induced by the inclusion $\GLnZ\hookrightarrow \GL(n+1,\Z)$ is an isomorphism for sufficiently large $n$, and he computed the stable rational cohomology of $\GLnZ$.
Borel \cite{Borel2} also computed the stable cohomology of $\GLnZ$ with coefficients in algebraic $\GLnZ$-representations.
Li and Sun \cite{Li-Sun} obtained an improved stable range that is independent of the types of algebraic $\GL(n,\Z)$-representations.

Hatcher and Vogtmann \cite{Hatcher-Vogtmann98,Hatcher-Vogtmann} showed that the rational homology of $\Aut(F_n)$ stabilizes, and Galatius \cite{Galatius} showed that the stable rational homology is trivial.
Let $H_{\Z}=H_1(F_n,\Z)$, $H_{\Z}^*=\Hom_{\Z}(H,\Z)$, $H=H_{\Z}\otimes \Q$ and $H^*=\Hom(H,\Q)$.
Satoh \cite{Satoh2006,Satoh2007, Satoh2013} computed the low-degree (co)homology with coefficients in $H_{\Z}, H_{\Z}^*$ and $H_{\Z}^*\otimes \bigwedge^2 H_{\Z}$.
The stable homology with coefficients in $H_{\Z}^{\otimes p}$ and $(H^*)^{\otimes q}$ was computed by Djament and Vespa \cite{Djament-Vespa}, Vespa \cite{Vespa} and
Randal-Williams \cite{Randal-Williams}.
Kawazumi and Vespa \cite{Kawazumi-Vespa} proposed a conjectural structure of the stable cohomology of $\Aut(F_n)$ with coefficients in $H^{\otimes p}\otimes (H^*)^{\otimes q}$,
which has been proven by Lindell \cite{Lindell22} recently.

By the exact sequence \eqref{exact}, we have a $\GL(n,\Z)$-action on the rational cohomology of $\IA_n$.
Cohen and Pakianathan \cite{CP}, Farb \cite{Farb} and Kawazumi \cite{Kawazumi} independently determined the first cohomology of $\IA_n$.
Pettet \cite{Pettet} and the second-named author \cite{KatadaIA} determined the second and third Albanese cohomology of $\IA_n$, which is the $\GL(n,\Z)$-subrepresentation of the rational cohomology of $\IA_n$ obtained as the image of the rational cohomology of the abelianization of $\IA_n$ under the induced map by the abelianization map.
Day and Putman \cite{Day-Putman} showed that the $\GLnZ$-invariant part of $H^2(\IA_n,\Z)$ vanishes.

Unlike the case of $\GLnZ$ and $\Aut(F_n)$, the rational cohomology of $\IA_n$ does \emph{not} stabilize.
In what follows, by the \emph{stable rational cohomology of $\IA_n$} we mean the rational cohomology of $\IA_n$ for sufficiently large $n$.
The rational cohomology of $\IA_n$ is expected to stabilize as a family of $\GL(n,\Z)$-representations, which is called \emph{representation stability} \cite{Church-Farb}.
In fact, the first rational cohomology of $\IA_n$ is representation stable.
The stable rational cohomology of $\IA_n$ in degree greater than $1$ has not been determined.
The second-named author \cite{KatadaIA} proposed a conjectural structure of the stable Albanese cohomology of $\IA_n$.
To the best of our knowledge, no conjecture about the structure of the whole stable rational cohomology of $\IA_n$ has been given in the literature.

In this paper, we consider the cohomology in a stable range of the families of groups $\Aut(F_n)$ and $\IA_n$ with coefficients in algebraic $\GL(n,\Z)$-representations.
By using the improved Borel theorem and the Hochschild--Serre spectral sequence associated to the exact sequence \eqref{exact}, we compute the first cohomology of $\Aut(F_n)$ with coefficients in algebraic $\GLnZ$-representations and the $\Aut(F_n)$-modules of \emph{Jacobi diagrams} and their duals.
We propose a conjectural algebraic structure of the stable rational cohomology of $\IA_n$.
We also consider the stable rational cohomology of $\IO_n$, which is an analogue of $\IA_n$ to the outer automorphism group $\Out(F_n)$ of $F_n$, and the stable rational cohomology of the Torelli groups of surfaces.

Here we make some conventions about stability.
We say that some statement about cohomology of groups depending on $n$ holds \emph{stably} if it holds for $n$ sufficiently large with respect to cohomological degree.
A \emph{stable isomorphism} means a morphism that is stably an isomorphism.
Note that if we say ``we stably have an isomorphism'', then we do not assume a morphism in an unstable range, while if we say ``we have a stable isomorphism'', then we assume that we have a morphism in both stable and unstable ranges.

\subsection{First cohomology of $\Aut(F_n)$ with coefficients in algebraic $\GL(n,\Z)$-representations}

Let $H=H_1(F_n,\Q)$ and $H^*=H^1(F_n,\Q)=\Hom_{\Q}(H,\Q)$.
Let $V_{\ul\la}$ denote the irreducible algebraic $\GL(n,\Z)$-representation corresponding to the bipartition $\ul\la$, i.e., a pair of partitions. See Section \ref{alg-glnz-rep} for algebraic $\GL(n,\Z)$-representations. 

It is well known that we have $H^1(\Aut(F_n),\Q)=0$. We consider the twisted first cohomology of $\Aut(F_n)$. We have the following result of Satoh \cite{Satoh2013, Satoh2006}.

\begin{theorem}[Satoh \cite{Satoh2013, Satoh2006}]
We have 
  \begin{gather*}
     H^1(\Aut(F_n),H^*)=0, \quad H^1(\Aut(F_n),H)=\Q \quad \text{for}\; n\ge 2,\\
    H^1(\Aut(F_n),V_{1^2,1})=\Q \quad \text{for}\; n\ge 5.
  \end{gather*}
\end{theorem}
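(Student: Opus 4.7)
The plan is to apply the Hochschild--Serre spectral sequence of the extension \eqref{exact},
$$E_2^{p,q}=H^p(\GLnZ,H^q(\IA_n,M))\;\Longrightarrow\;H^{p+q}(\Aut(F_n),M),$$
to each coefficient module $M\in\{H^*,H,V_{1^2,1}\}$. Since $\IA_n$ acts trivially on each such $M$, one has $H^q(\IA_n,M)\cong H^q(\IA_n,\Q)\otimes M$.

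The key input is the improved Borel vanishing theorem, with stable range due to Li--Sun: for any nontrivial irreducible algebraic $\GLnZ$-representation $N$, $H^p(\GLnZ,N)=0$ stably for all $p\geq 0$. Since each $M$ above is itself such an $N$, this kills $E_2^{p,0}$ for $p\geq 1$. Moreover, by Cohen--Pakianathan, Farb, and Kawazumi, $H^1(\IA_n,\Q)\cong H\otimes\bigwedge^2 H^*$ as an algebraic $\GLnZ$-representation, so decomposing $H\otimes\bigwedge^2 H^*\otimes M$ into isotypic components, Borel vanishing kills every non-trivial summand's contribution to $E_2^{p,1}$ for $p\geq 1$. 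Combined with the vanishing of $E_2^{1,0}$ and $E_2^{2,0}$, the five-term exact sequence yields stably
$$H^1(\Aut(F_n),M)\;\cong\;(H\otimes\bigwedge^2 H^*\otimes M)^{\GLnZ}.$$

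It then remains to compute these invariants by Schur--Weyl duality, using that stably $(H^{\otimes p}\otimes(H^*)^{\otimes q})^{\GLnZ}$ vanishes unless $p=q$ and is otherwise the regular representation $\Q[\gpS_p]$. For $M=H^*$, the total tensor has one $H$ and three $H^*$s, so the invariants vanish. For $M=H$, the space $(H^{\otimes 2}\otimes(H^*)^{\otimes 2})^{\GLnZ}$ is two-dimensional and the antisymmetrization picking out the $\bigwedge^2 H^*$ factor leaves a one-dimensional subspace. For $M=V_{1^2,1}$, one uses the decomposition $\bigwedge^2 H\otimes H^*\cong V_{1^2,1}\oplus H$ together with Pieri's rule to compute $(H\otimes\bigwedge^2 H^*\otimes\bigwedge^2 H\otimes H^*)^{\GLnZ}$ as two-dimensional (one $\Q$ each from the dual pairings of the $V_{(2,1)}$- and $V_{(1^3)}$-summands), and then subtracts the one-dimensional contribution coming from the $H$-summand to reach one dimension.

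The main obstacle will be pinning down the correct stable range. One must track how the weights of the representations appearing (in particular the total tensor weight six arising in the $V_{1^2,1}$ case) compare with the Borel--Li--Sun bound, and verify that the stated cutoffs $n\geq 2$ and $n\geq 5$ lie within the range where both the vanishing of $H^{\geq 1}(\GLnZ,-)$ on the relevant algebraic summands and the identification of $H^1(\IA_n,\Q)$ are simultaneously valid.
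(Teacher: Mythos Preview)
The paper does not prove this theorem; it is quoted as a known result of Satoh, established in \cite{Satoh2013,Satoh2006} by direct methods (explicit computations with generators and relations of $\Aut(F_n)$). What the paper proves with your spectral-sequence approach is its own Theorem~\ref{H1Aut}, which has different ranges: $n\ge 3$ for the vanishing case $H^*$, and $n\ge 4$ for the one-dimensional cases $H=V_{1,0}$ and $V_{1^2,1}$.

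Your method is exactly the one the paper employs for Theorem~\ref{H1Aut}, and the invariant computations you outline are correct. The genuine gap is the one you flag at the end but underestimate: the Hochschild--Serre plus Borel--Li--Sun argument \emph{cannot} reach Satoh's range $n\ge 2$. Two hard constraints block you. First, the identification $H^1(\IA_n,\Q)\cong H\otimes\bigwedge^2 H^*$ of Cohen--Pakianathan, Farb, and Kawazumi is only valid for $n\ge 3$; for $n=2$ one has $\IA_2=\Inn(F_2)\cong F_2$ and the picture is different. Second, to conclude $H^1(\Aut(F_n),M)\cong E_2^{0,1}$ when that term is nonzero you need both $E_2^{1,0}=0$ and $E_2^{2,0}=0$, and the Li--Sun range $H^p(\GLnZ,V)=0$ for $p\le n-2$ forces $n\ge 4$ for the latter. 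So your argument actually reproves Theorem~\ref{H1Aut} (and in fact improves Satoh's bound $n\ge 5$ for $V_{1^2,1}$ to $n\ge 4$), but to obtain the sharper bounds $n\ge 2$ for $H$ and $H^*$ one needs Satoh's direct calculations, not the spectral sequence.
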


Djament and Vespa \cite{Djament-Vespa} showed the vanishing of the stable homology of $\Aut(F_n)$ with coefficients induced by reduced polynomial functors on the category of finitely generated free groups. In particular, the stable homology with coefficients in $H^{\otimes p}$ vanishes for any $p\ge 1$.
By using Djament's result \cite{Djament}, Vespa \cite{Vespa} obtained the stable homology of $\Aut(F_n)$ with coefficients in $(H^*)^{\otimes q}$ for any $q\ge 1$.
Randal-Williams \cite[Theorem A]{Randal-Williams} obtained the following stable range by using geometric techniques
\begin{gather*}
 \begin{split}
     H^1(\Aut(F_n), (H^*)^{\otimes q})&=0,\quad \text{for}\; n\ge q+5,\\
    H^1(\Aut(F_n), H^{\otimes q})&=0,\quad \text{for}\; n\ge q+5,\; q\ne 1.
 \end{split}
\end{gather*}

We obtain the vanishing of the first cohomology of $\Aut(F_n)$ with coefficients in algebraic $\GL(n,\Z)$-representations $V_{\ulla}$ for any bipartitions $\ulla\neq (1,0),(1^2,1)$.

\begin{theorem}[Theorem \ref{H1Aut}]
For any bipartition $\ul\lambda\neq (1,0),(1^2,1)$, we have
$$  
 H^1(\Aut(F_n),V_{\ul\lambda})=0 \quad\text{for}\; n\ge 3.  
$$
For $\ul\lambda= (1,0),(1^2,1)$, 
  \begin{gather*}
    \begin{split}
        H^1(\Aut(F_n),V_{1,0})&=\Q \quad\text{for}\; n\ge 4,\\
        H^1(\Aut(F_n),V_{1^2,1})&=\Q \quad\text{for}\; n\ge 4.
    \end{split}
  \end{gather*}
\end{theorem}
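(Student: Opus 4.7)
The plan is to apply the Hochschild--Serre spectral sequence for the exact sequence \eqref{exact}
\[
 E_2^{p,q} = H^p(\GLnZ, H^q(\IA_n, V_{\ul\lambda})) \Rightarrow H^{p+q}(\Aut(F_n), V_{\ul\lambda}),
\]
combined with the Borel stability and vanishing theorem in the improved form due to Li--Sun. Since $V_{\ul\lambda}$ is pulled back from a $\GLnZ$-representation, the $\IA_n$-action on $V_{\ul\lambda}$ is trivial, so $E_2^{p,0} = H^p(\GLnZ, V_{\ul\lambda})$ and $E_2^{p,1} = H^p(\GLnZ, H^1(\IA_n, \Q) \otimes V_{\ul\lambda})$.

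The first step is to read off the associated five-term exact sequence
\[
 0 \to H^1(\GLnZ, V_{\ul\lambda}) \to H^1(\Aut(F_n), V_{\ul\lambda}) \to H^1(\IA_n, \Q)^{\GLnZ} \otimes V_{\ul\lambda})^{\GLnZ}\text{-piece} \to H^2(\GLnZ, V_{\ul\lambda}),
\]
or more precisely the exact sequence
\[
 0 \to H^1(\GLnZ, V_{\ul\lambda}) \to H^1(\Aut(F_n), V_{\ul\lambda}) \to \Hom_{\GLnZ}(H_1(\IA_n, \Q), V_{\ul\lambda}) \to H^2(\GLnZ, V_{\ul\lambda}).
\]
By the Borel--Li--Sun vanishing theorem, both $H^1(\GLnZ, V_{\ul\lambda})$ and $H^2(\GLnZ, V_{\ul\lambda})$ vanish in the appropriate stable range for every algebraic $\GLnZ$-representation $V_{\ul\lambda}$, so stably the middle map becomes an isomorphism
\[
 H^1(\Aut(F_n), V_{\ul\lambda}) \;\cong\; \Hom_{\GLnZ}(H_1(\IA_n, \Q), V_{\ul\lambda}).
\]

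The second step is to evaluate this Hom. By the theorem of Cohen--Pakianathan, Farb and Kawazumi, $H_1(\IA_n, \Q) \cong H^* \otimes \bigwedge^2 H$ as a $\GLnZ$-representation. Using the contraction $H^* \otimes H \to \Q$, this decomposes as $V_{1,0} \oplus V_{1^2,1}$. Since the $V_{\ul\lambda}$ are absolutely irreducible, Schur's lemma gives $\Hom_{\GLnZ}(H_1(\IA_n, \Q), V_{\ul\lambda}) = \Q$ precisely when $\ul\lambda \in \{(1,0), (1^2,1)\}$ and vanishes otherwise. This yields the claimed qualitative answer.

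The last and most delicate step is to pin down the explicit stable range $n \ge 3$ (resp.\ $n \ge 4$) stated in the theorem, rather than the larger $n$ implicitly furnished by ``stably''. This requires bookkeeping in two places: tracking the precise $n$ for which Li--Sun's vanishing of $H^{1}$ and $H^2$ of $\GLnZ$ applies for each bipartition, and checking that the Cohen--Pakianathan--Farb--Kawazumi decomposition of $H_1(\IA_n, \Q)$ holds in the same range. I expect this stable-range accounting to be the main technical obstacle; the conceptual content is an immediate consequence of the spectral sequence once the Borel--Li--Sun input is in place.
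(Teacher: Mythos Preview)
Your approach is essentially the paper's: the Hochschild--Serre spectral sequence for \eqref{exact}, Li--Sun vanishing for $E_2^{p,0}=H^p(\GLnZ,V_{\ul\lambda})$, and the Cohen--Pakianathan--Farb--Kawazumi decomposition $H^1(\IA_n,\Q)\cong V_{0,1}\oplus V_{1,1^2}$ (valid for $n\ge3$) to compute $E_2^{0,1}$.

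The stable-range accounting you flag is in fact straightforward, and the asymmetry between $n\ge3$ and $n\ge4$ comes from the following. In the vanishing case $\ul\lambda\neq(1,0),(1^2,1)$ you have $E_2^{0,1}=0$ directly from the decomposition, so you only need $E_2^{1,0}=H^1(\GLnZ,V_{\ul\lambda})=0$, which Theorem~\ref{theoremhomologyofGL} gives for $n\ge3$; the vanishing of $H^2(\GLnZ,V_{\ul\lambda})$ is not needed here, since both graded pieces of $H^1(\Aut(F_n),V_{\ul\lambda})$ are already zero. In the two nonzero cases you additionally need $E_2^{2,0}=H^2(\GLnZ,V_{\ul\lambda})=0$ to kill the transgression $d_2^{0,1}\colon E_2^{0,1}\to E_2^{2,0}$ and conclude $E_\infty^{0,1}=E_2^{0,1}=\Q$, and Theorem~\ref{theoremhomologyofGL} gives this for $n\ge4$.
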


Let $H^{p,q}=H^{\otimes p}\otimes (H^*)^{\otimes q}$.
Since $H^{p,q}$ is decomposed into a direct sum of some copies of $V_{\ulla}$ for bipartitions $\ulla$, we obtain the first cohomology of $\Aut(F_n)$ with coefficients in $H^{p,q}$.

\begin{theorem}[Theorem \ref{H1HpHq}]
Let $p,q\ge 1$.

(1)
If $p-q\neq 1$,
then we have
\begin{gather*}
    H^1(\Aut(F_n),H^{p, q})=0\quad \text{for $n\ge 3$}.
\end{gather*}

(2)
If $p-q=1$, then we have
\begin{gather*}
    H^1(\Aut(F_n),H^{q+1,q})=\Q^{\frac{1}{2}(q+2)!}\quad \text{for } n\ge 4.
\end{gather*}
\end{theorem}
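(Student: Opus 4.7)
The plan is to reduce the computation to Theorem \ref{H1Aut} via the decomposition of $H^{p,q}$ into irreducible algebraic $\GL(n,\Z)$-representations. In the stable range, writing $H^{p,q}\cong \bigoplus_{\ul\lambda} V_{\ul\lambda}^{\oplus m^{p,q}_{\ul\lambda}}$, we obtain
\begin{gather*}
H^1(\Aut(F_n), H^{p,q}) \cong \bigoplus_{\ul\lambda} H^1(\Aut(F_n), V_{\ul\lambda})^{\oplus m^{p,q}_{\ul\lambda}}.
\end{gather*}
By Theorem \ref{H1Aut}, only $\ul\lambda = (1,0)$ and $\ul\lambda = (1^2,1)$ contribute, and each contributes a single copy of $\Q$. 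So the proof reduces to computing the two multiplicities $m^{p,q}_{1,0}$ and $m^{p,q}_{1^2,1}$.

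For part (1), the central torus $\Q^{\times}\subset \GL(n,\Q)$ acts on $V_{\lambda,\mu}$ by the character $t\mapsto t^{|\lambda|-|\mu|}$ and on $H^{p,q}$ by $t^{p-q}$. Since $|\lambda|-|\mu|=1$ for both exceptional bipartitions, both multiplicities vanish whenever $p-q\neq 1$, yielding the desired vanishing.

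For part (2), assume $p=q+1$. Schur--Weyl invariant theory stably identifies $(H^{\otimes a}\otimes (H^*)^{\otimes a})^{\GL}$ with $\Q[\gpS_a]$ via pairings, of dimension $a!$. Since $V_{1,0}^{*}=H^{*}$, Frobenius reciprocity gives $m^{q+1,q}_{1,0} = \dim (H^{\otimes q+1}\otimes (H^*)^{\otimes q+1})^{\GL} = (q+1)!$. For $V_{1^2,1}$, I use the split short exact sequence $0\to V_{1^2,1}\to \bigwedge^2 H\otimes H^*\to V_{1,0}\to 0$ coming from the contraction $(x\wedge y)\otimes f\mapsto f(y)x - f(x)y$, which gives
\begin{gather*}
m^{q+1,q}_{1^2,1} = \dim\Hom_{\GL}(\bigwedge^2 H\otimes H^*,\, H^{\otimes q+1}\otimes (H^*)^{\otimes q}) - (q+1)!.
\end{gather*}
The first term equals the sign-eigenspace of the transposition swapping the two $H^*$-factors, acting on $(H^{\otimes q+2}\otimes (H^*)^{\otimes q+2})^{\GL}\cong \Q[\gpS_{q+2}]$ by right multiplication by a transposition; this has dimension $(q+2)!/2$. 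Hence $m^{q+1,q}_{1^2,1} = (q+2)!/2-(q+1)! = q(q+1)!/2$, and summing gives the total dimension $(q+1)!+q(q+1)!/2 = (q+2)!/2$, as claimed.

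The main obstacle is the multiplicity computation for $V_{1^2,1}$ in part (2), where one must identify $V_{1^2,1}$ as the traceless part of $\bigwedge^2 H\otimes H^*$ and carefully track the $\gpS_2$-isotypic structure on the space of $\GL$-invariants. Everything else, including part (1), follows formally once the central-character observation and Theorem \ref{H1Aut} are in place.
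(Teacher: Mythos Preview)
Your proof is correct and follows the same overall strategy as the paper: both decompose $H^{p,q}$ into irreducibles, invoke Theorem~\ref{H1Aut}, use the degree (central-character) constraint for part (1), and for part (2) count the multiplicities of $V_{1,0}$ and $V_{1^2,1}$ in $H^{q+1,q}$.

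The only substantive difference is in how that multiplicity count is carried out. The paper reads the multiplicities directly off the filtration \eqref{exactfilt}: all copies of $V_{1,0}$ sit in the top graded piece $F^q/F^{q-1}\cong H^{\oplus (q+1)!}$, and all copies of $V_{1^2,1}$ sit in $F^{q-1}/F^{q-2}\cong (H^{\langle 2,1\rangle})^{\oplus\binom{q+1}{2}q!}$, one per summand. Your route --- dualising to land in $(H^{a,a})^{\GL}\cong\Q[\gpS_a]$ and then extracting the sign-eigenspace of a transposition --- is a genuinely different bookkeeping device; it bypasses the filtration in favour of the explicit Schur--Weyl identification and the split sequence $0\to V_{1^2,1}\to \bigwedge^2 H\otimes H^*\to H\to 0$. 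Both computations yield the same numbers $(q+1)!$ and $\binom{q+1}{2}q!=q(q+1)!/2$, and both implicitly need $n$ large relative to $q$ for the count to be exact (the paper through the hypothesis $n>p+q-1$ in \eqref{exactfilt}, you through the Schur--Weyl range $n\ge q+2$).
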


Kawazumi and Vespa \cite{Kawazumi-Vespa} proposed a conjectural structure of the stable cohomology of $\Aut(F_n)$ with coefficients in $H^{p,q}$ for $p,q\ge 0$.
The above theorem gives a proof of their conjecture in cohomological degree $1$. Lindell \cite{Lindell22} has recently proved their conjecture in all degrees (see Theorem \ref{introconjKV}) after the first version of the present paper appeared.

\subsection{Conjectural structure of the stable rational cohomology of $\IA_n$}

In this subsection, we make a conjecture about a complete algebraic structure of the stable rational cohomology of $\IA_n$.
To the best of our knowledge, no such conjecture has been given in the literature. 

Cohen and Pakianathan \cite{CP}, Farb \cite{Farb} and Kawazumi \cite{Kawazumi} independently determined the first cohomology of $\IA_n$.
However, the rational cohomology of $\IA_n$ of degree greater than $1$ has not been determined.

The \emph{Albanese cohomology} $H_A^*(\IA_n,\Q)$ of $\IA_n$ is defined as the image of the rational cohomology of the abelianization of $\IA_n$ under the induced map by the abelianization map.
This terminology was introduced in Church, Ellenberg and Farb \cite{Church-Ellenberg-Farb}, but the notion had been studied earlier.
Pettet \cite{Pettet} and the second-named author \cite{KatadaIA} determined the stable $\GLnQ$-module structure of $H_A^2(\IA_n,\Q)$ and $H_A^3(\IA_n,\Q)$, respectively.

Let $W_*$ denote the \emph{traceless part} of the graded-symmetric algebra of $U_*=\bigoplus_{i\ge 1} U_i$, where $U_i=\Hom(H,\bigwedge^{i+1}H)$.
Then the second-named author \cite{KatadaIA} made the following conjecture.

\begin{conjecture}[\cite{KatadaIA}, see Conjecture \ref{conjalb}]\label{introconjalb}
We stably have an isomorphism of graded $\GL(n,\Q)$-representations
$$
  H_A^*(\IA_n,\Q)\cong (W_*)^*,
$$
where $(W_*)^*$ is the graded-dual of $W_*$.
\end{conjecture}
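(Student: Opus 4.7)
The plan is to match both sides of the conjectured isomorphism stably as graded $\GLnQ$-representations. By construction $H_A^*(\IA_n,\Q)$ is defined as the image of the cup-product map
\[
\bigwedge\nolimits^* U_1^* = H^*(H_1(\IA_n,\Q),\Q) \twoheadrightarrow H^*(\IA_n,\Q),
\]
so it is a $\GLnQ$-equivariant quotient of $\bigwedge^* U_1^*$; the task is to identify its stable $\GLnQ$-character and check that it coincides with that of $(W_*)^*$.

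For a lower bound I would exhibit, in each total degree, a copy of $(W_*)^*$ sitting inside $H_A^*(\IA_n,\Q)$. The candidate classes are cup products of first Johnson homomorphisms, iterated and then decomposed under $\GLnQ$; the ``trace'' components should drop out, while the ``traceless'' components survive. The degree-$2$ and degree-$3$ computations of Pettet \cite{Pettet} and the second-named author \cite{KatadaIA} already exhibit this pattern, and the natural hope is that their construction generalizes to all degrees using the Jacobi structure of the Johnson Lie algebra and the fact that the associated graded of $\IA_n$ is a Lie algebra (so Jacobi-type identities kill exactly the trace parts).

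For the matching upper bound I would use the Hochschild--Serre spectral sequence of \eqref{exact} with trivial rational coefficients. Its abutment $H^*(\Aut(F_n),\Q)$ stabilizes to $\Q$ concentrated in degree $0$ by Galatius, and its $E_2$-page $H^p(\GLnZ,H^q(\IA_n,\Q))$ is controlled by Borel--Li--Sun together with Lindell's theorem on the stable twisted cohomology of $\Aut(F_n)$. In principle this constrains the stable $\GLnZ$-decomposition of each $H^q(\IA_n,\Q)$ strongly enough to bound its Albanese summand from above.

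The hardest step will be this upper bound. The Albanese cohomology is only a subrepresentation, not a natural direct summand, of $H^*(\IA_n,\Q)$, and isolating it means identifying precisely those classes that come from $\bigwedge^* H^1(\IA_n,\Q)$; this is essentially the rational Andreadakis problem in cohomological form. A realistic attack will almost certainly require a representation-stability input bounding the multiplicities of irreducibles in $H^*(\IA_n,\Q)$ in each fixed cohomological degree, so that the comparison with the graded $\GLnQ$-character of $(W_*)^*$ reduces to finitely many checks that can be matched degree by degree.
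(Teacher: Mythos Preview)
The statement you are attempting to prove is a \emph{conjecture} in the paper, not a theorem; the paper does not contain a proof of it. The paper merely records that the conjecture is due to \cite{KatadaIA}, that the lower bound $W_i\hookrightarrow H^A_i(\IA_n,\Q)$ has been established in \cite{KatadaIA}, and that the full statement is known only for $i\le 3$ (by \cite{CP,Farb,Kawazumi}, \cite{Pettet}, and \cite{KatadaIA} respectively). Elsewhere the paper uses Conjecture~\ref{conjalb} only as a hypothesis: for instance, Theorem~\ref{conjIAprop} reads ``Suppose that Conjectures~\ref{conjPi} and~\ref{conjalb} hold. Then Conjecture~\ref{conjIAn} holds.'' So there is no ``paper's own proof'' to compare your proposal against.

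As for the strategy you outline: the lower bound you describe is essentially what \cite{KatadaIA} already proves, so that part is fine. The genuine gap is in your upper bound. You propose to constrain $H^q(\IA_n,\Q)$ via the Hochschild--Serre spectral sequence for \eqref{exact}, using Galatius and Lindell's theorem. But this argument only controls the $\GLnZ$-invariant pieces $(H^q(\IA_n,\Q)\otimes V_{\ul\lambda})^{\GLnZ}$, and to pass from that to the full $\GLnQ$-decomposition of $H^q(\IA_n,\Q)$ you need to know in advance that $H^q(\IA_n,\Q)$ is an algebraic $\GLnZ$-representation---this is exactly Conjecture~\ref{conjPi} (the hypotheses $(\SF_i)$), which is open for $i\ge 2$. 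You acknowledge this at the end, but it means your plan is circular in the sense that it trades one open conjecture for another of comparable difficulty. Moreover, even granting $(\SF_i)$, the spectral sequence argument bounds $H^*(\IA_n,\Q)$, not its Albanese subalgebra $H_A^*(\IA_n,\Q)$; separating the Albanese part from the rest (which the paper conjectures to be $H^*(\IA_n,\Q)^{\GLnZ}\cong\Q[z_1,z_2,\dots]$) is a further step that your outline does not address.
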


Let $H^*(\IA_n,\Q)^{\GL(n,\Z)}$ denote the $\GL(n,\Z)$-invariant part of $H^*(\IA_n,\Q)$.
Then we have a graded algebra homomorphism induced by the cup product
$$\omega_n: H_A^*(\IA_n,\Q)\otimes H^*(\IA_n,\Q)^{\GL(n,\Z)}\to H^*(\IA_n,\Q).$$

\begin{conjecture}[Conjecture \ref{conjIAn}]\label{introconjIAn}
The morphism $\omega_n$ is a stable isomorphism of graded algebras with $\GL(n,\Z)$-actions.
\end{conjecture}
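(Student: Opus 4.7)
The plan is to combine the Hochschild--Serre spectral sequence of the extension $1\to\IA_n\to\Aut(F_n)\to\GL(n,\Z)\to 1$, the improved Borel theorem, and Lindell's computation of the stable cohomology of $\Aut(F_n)$ with polynomial coefficients, so as to extract the $\GL(n,\Z)$-isotypic decomposition of $H^*(\IA_n,\Q)$. Conjecture \ref{introconjalb} would then supply an explicit description of the Albanese factor, and what remains is to verify that the cup product $\omega_n$ realizes the computed decomposition as a morphism of algebras.

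Concretely, for every bipartition $\ulla$, I would set up the Hochschild--Serre spectral sequence
\[
    E_2^{p,q}=H^p(\GL(n,\Z),H^q(\IA_n,V_\ulla))\Rightarrow H^{p+q}(\Aut(F_n),V_\ulla).
\]
Given a representation-stability statement ensuring that $H^q(\IA_n,V_\ulla)$ is stably a direct sum of algebraic $\GL(n,\Z)$-representations, Borel's vanishing (in the improved Li--Sun range) yields stably
\[
    E_2^{p,q}=H^p(\GL(n,\Z),\Q)\otimes H^q(\IA_n,V_\ulla)^{\GL(n,\Z)}.
\]
Combined with Lindell's theorem and, for $\ulla=0$, Galatius's result that $H^*(\Aut(F_n),\Q)=\Q$ stably, this spectral sequence determines $H^*(\IA_n,V_\ulla)^{\GL(n,\Z)}$ for every $\ulla$. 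Since the $V_\ulla$-isotypic component of $H^*(\IA_n,\Q)$ is naturally isomorphic to $V_\ulla\otimes H^*(\IA_n,V_\ulla^*)^{\GL(n,\Z)}$, one thereby recovers the full stable $\GL(n,\Z)$-module structure of $H^*(\IA_n,\Q)$.

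The final step is to identify this computed decomposition with $H_A^*(\IA_n,\Q)\otimes H^*(\IA_n,\Q)^{\GL(n,\Z)}$ via $\omega_n$. Assuming Conjecture \ref{introconjalb}, the Albanese factor is identified with $(W_*)^*$, with known isotypic decomposition; the invariant factor would be a polynomial algebra whose generators kill the Borel classes in the $\ulla=0$ spectral sequence. Matching $\GL(n,\Z)$-characters reduces the statement to showing that $\omega_n$ is injective or, equivalently (since both sides have the same Poincaré series in each isotypic component), surjective.

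The main obstacle is precisely this multiplicative step: proving that $H^*(\IA_n,\Q)$ is multiplicatively generated over $H^*(\IA_n,\Q)^{\GL(n,\Z)}$ by its degree-one part. This amounts to ruling out ``hidden'' non-invariant classes in higher degree not already visible as Albanese classes times invariants, and seems to require either a Koszul-duality argument identifying the invariants with the dual of the Albanese generators inside a larger algebra, or a careful analysis of the multiplicative structure on the $E_\infty$-page of the Hochschild--Serre spectral sequence. A secondary but non-trivial obstacle is establishing the prerequisite representation stability of $H^*(\IA_n,V_\ulla)$, itself an open problem, which would likely proceed via a $\VIC$-module-theoretic argument at the chain level.
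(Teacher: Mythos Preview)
This statement is a conjecture, and the paper does not prove it unconditionally; what the paper does prove is Theorem~\ref{conjIAprop}, which derives Conjecture~\ref{conjIAn} from Conjectures~\ref{conjalb} and~\ref{conjPi}. Your proposal is likewise conditional on these two inputs, so the relevant comparison is between your outline and the paper's proof of Theorem~\ref{conjIAprop}.

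At the architectural level your plan matches the paper's: run the Hochschild--Serre spectral sequence for each coefficient module $V_{\ulla}$, invoke the Li--Sun/Borel vanishing under the hypothesis $(\SF_i)$ to get $E_2^{p,q}\cong H^p(\GL(n,\Z),\Q)\otimes H^q(\IA_n,V_{\ulla})^{\GL(n,\Z)}$, and feed in Lindell's theorem and Galatius's theorem on the abutment. You also correctly isolate the crux: one must show that the cup product $\omega_n$ itself realizes the resulting abstract identification, not merely that the two sides have the same isotypic multiplicities.

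The gap is that you leave this multiplicative step unresolved, proposing only a Koszul-duality argument or an unspecified analysis of the $E_\infty$-multiplication. The paper closes this gap concretely in Lemma~\ref{cohomIAstr}, and the mechanism is worth knowing. Rather than matching characters and then separately proving injectivity, one proves directly by induction on degree that the cup product
\[
\xi:\ H^*(\Aut(F_n),V_{\ulla})\otimes H^*(\IA_n,\Q)^{\GL(n,\Z)}\ \longrightarrow\ H^*(\IA_n,V_{\ulla})^{\GL(n,\Z)}
\]
is a stable isomorphism. The key structural input is the first half of Lindell's theorem: $H^*(\Aut(F_n),V_{\ulla})$ is stably concentrated in a \emph{single} degree $q_0$. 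One then forms the tensor product of the trivial-coefficient spectral sequence $E$ with the one-term spectral sequence $\hat E$ having $\hat E_2^{0,q_0}=H^{q_0}(\IA_n,V_{\ulla})^{\GL(n,\Z)}$, and maps $\hat E\otimes E\to E(\ulla)$ via cup product. A general comparison lemma for first-quadrant spectral sequences (Lemma~\ref{isomorphism-of-SS}) --- saying that if $\phi_2^{p,q}$ is an isomorphism for $q\le i-1$ and both $E_\infty$'s vanish in total degrees $i,i+1$, then $\phi_2^{0,i}$ is an isomorphism --- then yields $\phi_2^{0,i}$ an isomorphism, which is exactly the degree-$i$ part of $\xi$. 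Combining this with the equivalence of Conjectures~\ref{conjalb} and~\ref{conjkatada} (via Lindell) gives $\omega_n$.

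So your second proposed route (``analysis of the multiplicative structure on the Hochschild--Serre spectral sequence'') is the right one, but it has to be implemented as a \emph{morphism} of spectral sequences exploiting the single-degree concentration from Lindell, together with an inductive comparison lemma; character-matching plus a separate injectivity argument is not how the paper proceeds and would likely be harder to carry out.
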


Day and Putman \cite{Day-Putman} showed that the $\GLnZ$-invariant part of $H^2(\IA_n,\Z)$ vanishes.
We also propose a conjectural structure of the $\GL(n,\Z)$-invariant part of the stable rational cohomology of $\IA_n$.

\begin{conjecture}[Conjecture \ref{conjectureHIAinv}]\label{introconjectureHIAinv}
The $\GLnZ$-invariant part of the rational cohomology of $\IA_n$ stabilizes, and we stably have an isomorphism of graded algebras
\begin{gather*}
    H^*(\IA_n,\Q)^{\GLnZ}\cong \Q[z_1,z_2,\dots], \quad \deg z_i=4i.
\end{gather*}
\end{conjecture}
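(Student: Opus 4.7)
Notice that the predicted generator degrees $4i$ sit exactly one below the degrees $4i+1$ of Borel's generators of $H^*(\GL(n,\Z),\Q)$. This alignment strongly suggests a transgression argument in the Hochschild--Serre spectral sequence for the extension \eqref{exact}, and that is the approach I would take.

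The first step is to simplify the $E_2$ page $E_2^{p,q}=H^p(\GL(n,\Z),H^q(\IA_n,\Q))$. Granting representation stability in the strong algebraic form, namely that in the stable range $H^q(\IA_n,\Q)$ decomposes as a direct sum of irreducible algebraic $\GL(n,\Z)$-representations $V_{\ulla}$, the Borel--Li--Sun vanishing theorem forces $H^p(\GL(n,\Z),V_{\ulla})=0$ stably whenever $\ulla\neq(0,0)$. Combined with Borel's computation $H^*(\GL(n,\Z),\Q)\cong\bigwedge(x_5,x_9,x_{13},\dots)$ with $|x_{4i+1}|=4i+1$, this gives a stable isomorphism of bigraded algebras
$$E_2\cong\bigwedge(x_5,x_9,x_{13},\dots)\otimes A,\qquad A:=H^*(\IA_n,\Q)^{\GL(n,\Z)}.$$
In particular, the stability of $A$ is built into this setup, already yielding the first part of the conjecture.

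By Galatius's theorem, $H^*(\Aut(F_n),\Q)=\Q$ stably, so the spectral sequence converges to $\Q$ concentrated in bidegree $(0,0)$. Each $x_{4i+1}$ lies on the base row $E_r^{*,0}$ and is automatically a permanent cycle there, so it must be hit by some differential in order to die. By multiplicativity, every nonzero differential originates from a fiber class in $A$; tracking bidegrees shows the source killing $x_{4i+1}$ must lie in $E_{4i+1}^{0,4i}$. Hence there exists $z_i\in A^{4i}$ with $d_{4i+1}(z_i)\equiv x_{4i+1}$ modulo decomposables. The subalgebra of $E_2$ generated by the $x_{4i+1}$ and the $z_i$ is then the Koszul complex $\bigwedge(x_{4i+1})\otimes\Q[z_i]$ with transgression differential, which already has total cohomology $\Q$. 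Any element of $A$ outside this subalgebra would therefore survive to $E_\infty$, contradicting convergence to $\Q$. This forces $A=\Q[z_1,z_2,\dots]$ with $|z_i|=4i$, as predicted.

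The main obstacle is the input to Step 1: establishing that in the stable range $H^q(\IA_n,\Q)$ is built entirely from polynomial algebraic $\GL(n,\Z)$-representations, so that Borel--Li--Sun vanishing actually kills the off-diagonal $H^p(\GL(n,\Z),V_{\ulla})$ with $\ulla\neq(0,0)$. This is a deep open problem, closely intertwined with Conjectures \ref{introconjalb} and \ref{introconjIAn}. A secondary difficulty is controlling lower differentials and multiplicative extensions well enough to confirm that the $z_i$ really appear as indecomposable transgressions rather than being absorbed or cancelled earlier; here assuming Conjecture \ref{introconjIAn} to split off the Albanese part and track its nontrivial algebraic pieces separately from the invariant subalgebra $A$ should make the relevant differentials transparent.
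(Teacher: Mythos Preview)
Your approach is essentially the same as the paper's: both reduce via the Hochschild--Serre spectral sequence for \eqref{exact}, use the stability hypothesis $(\SF_i)$ together with Borel--Li--Sun vanishing to collapse $E_2$ to $\bigwedge(x_{4i+1})\otimes A$, invoke Galatius to force $E_\infty=\Q$, and then read off $A\cong\Q[z_i]$ from a Koszul-type argument. The paper packages the last step as a general lemma (Theorem~\ref{stableZeeman}) proved via Zeeman's comparison theorem, constructing model spectral sequences ${}^kE$ and comparing, and it phrases the construction of the $z_i$ via an ``antitransgression'' map $\varphi_{4k}=\iota_{4k}(d_{4k+1}^{0,4k})^{-1}\pi_{4k+1}$ rather than as a transgression, but this is only a difference of direction.

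One correction to your final paragraph: the secondary difficulty you flag---that earlier differentials might absorb or cancel the putative $z_i$ before they can transgress---does \emph{not} require Conjecture~\ref{introconjIAn} to resolve. The paper's Zeeman comparison argument (Theorem~\ref{stableZeeman}) handles this purely from the three inputs $E_2\cong E_2^{*,0}\otimes E_2^{0,*}$, $E_\infty=\Q$, and $E_2^{*,0}\cong SV$; no information about the Albanese part is used. So only the stability hypothesis $(\SF_i)$ (Conjecture~\ref{introconjPi}) is needed, and in fact (Remark~\ref{algverIAnGLinv}) even the weaker hypothesis that $H^i(\IA_n,\Q)$ is stably algebraic suffices.
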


By Conjectures \ref{introconjalb}, \ref{introconjIAn} and \ref{introconjectureHIAinv}, we obtain the following conjecture about a complete algebraic structure of $H^*(\IA_n,\Q)$.

\begin{conjecture}[Conjecture \ref{conjIAnW}]\label{introconjIAW}
We stably have an isomorphism of graded algebras with $\GLnZ$-actions
\begin{gather*}
    H^*(\IA_n,\Q) \cong (W_*)^* \otimes 
    \Q[z_1,z_2,\dots],\quad \deg z_i=4i.
\end{gather*}
\end{conjecture}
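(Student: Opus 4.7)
The plan is to derive Conjecture \ref{introconjIAW} as an essentially formal consequence of Conjectures \ref{introconjalb}, \ref{introconjIAn} and \ref{introconjectureHIAinv}, together with a brief check that the identifications respect the algebra structure.

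First I would invoke Conjecture \ref{introconjIAn}: stably, the cup-product map
\begin{gather*}
  \omega_n : H_A^*(\IA_n,\Q) \otimes H^*(\IA_n,\Q)^{\GLnZ} \longrightarrow H^*(\IA_n,\Q)
\end{gather*}
is an isomorphism of graded algebras with $\GLnZ$-action. This reduces the statement to identifying each of the two factors on the left. By Conjecture \ref{introconjalb} the first factor is stably isomorphic to $(W_*)^*$ as graded $\GLnQ$-representations, and by Conjecture \ref{introconjectureHIAinv} the second factor is stably isomorphic as graded algebras to $\Q[z_1,z_2,\dots]$ with $\deg z_i = 4i$. Since $\GLnZ$ acts trivially on its own invariant part by definition, the induced $\GLnZ$-action on the tensor product is carried entirely by $(W_*)^*$, which matches the statement.

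The only point not immediate from a formal tensoring is upgrading the identification of the Albanese part from graded representations to graded algebras. The natural candidate for the algebra structure on $(W_*)^*$ comes from the commutative graded-symmetric algebra on $U_*$ via the traceless construction, and one expects this to match the cup product on $H_A^*(\IA_n,\Q)$; I would include this verification in a strengthened form of Conjecture \ref{introconjalb}. With that in place, tensoring the two algebra isomorphisms, with the $\GLnZ$-action concentrated on the first factor, yields the desired isomorphism.

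The main obstacle is, of course, proving the three underlying conjectures themselves. Conjecture \ref{introconjIAn} is arguably the most demanding, as it asserts a clean multiplicative splitting of $H^*(\IA_n,\Q)$ into its Albanese and $\GLnZ$-invariant parts; a natural approach would proceed via the Hochschild--Serre spectral sequence for the extension \eqref{exact} together with the improved Borel vanishing recalled in the introduction, but the collapse statements needed are significant. Once those three inputs are available, Conjecture \ref{introconjIAW} follows by the tensoring argument above.
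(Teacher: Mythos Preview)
Your proposal is correct and matches the paper's treatment exactly: the paper does not prove this statement (it is a conjecture), but simply records that it is the formal consequence of Conjectures \ref{introconjalb}, \ref{introconjIAn}, and \ref{introconjectureHIAinv}, which is precisely the deduction you carry out. Your observation that Conjecture \ref{introconjalb} is stated only at the level of graded $\GL(n,\Q)$-representations while Conjecture \ref{introconjIAW} asserts an isomorphism of graded algebras is a genuine subtlety that the paper does not address explicitly; your suggestion to read Conjecture \ref{introconjalb} in its strengthened (multiplicative) form is the natural fix.
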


\subsection{Relation to Church and Farb's conjecture}

Kawazumi and Vespa \cite{Kawazumi-Vespa} proposed a conjectural structure of the stable cohomology $H^*(\Aut(F_n),H^{p,q})$, which was previously known to hold for $p=0$ or $q=0$ by \cite{Djament-Vespa, Djament, Vespa, Randal-Williams}.
After the first version of the present paper appeared on arXiv, Lindell \cite{Lindell22} proved the conjecture of Kawazumi and Vespa.

\begin{theorem}[Lindell \cite{Lindell22}, see Theorem \ref{conjKV}]\label{introconjKV}
Let $p,q\ge 0$.
If $i\neq p-q$, then we have
\begin{gather*}
    H^i(\Aut(F_n),H^{p,q})=0 
\end{gather*}
for sufficiently large $n$.
Otherwise, we have
\begin{gather*}
    H^{i}(\Aut(F_n),H^{q+i,q})=\calC_{\calP_0^{\circlearrowright}}(q+i,q)
\end{gather*}
for sufficiently large $n$, where $\calC_{\calP_0^{\circlearrowright}}$ is a certain wheeled PROP (see Remark \ref{remarkKV}).
\end{theorem}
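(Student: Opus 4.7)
The plan is to identify $H^{p,q}$ as the value at $F_n$ of a polynomial bifunctor on the category $\mathbf{gr}$ of finitely generated free groups, and to compute the stable cohomology via derived functors in an appropriate bifunctor category, thereby extending the Djament--Vespa--Randal-Williams framework from the pure cases $p=0$ or $q=0$ to the mixed case $p,q>0$.

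The first step is to establish a Pirashvili--Scorichenko-type presentation of stable $\Aut(F_n)$-cohomology for mixed coefficients. For $q=0$, Djament and Vespa identify $H_*(\Aut(F_\infty),H^{\otimes p})$ with functor homology on $\mathbf{gr}$; for $p=0$, Vespa proves the analogous statement on $\mathbf{gr}^{\op}$. These should be fused into a stable isomorphism
\[
    H^*(\Aut(F_n),F(F_n,F_n)) \;\cong\; \Ext^*_{\mathbf{gr}^{\op}\times\mathbf{gr}}(T,F),
\]
valid for any polynomial bifunctor $F$ on $\mathbf{gr}^{\op}\times\mathbf{gr}$ of finite bidegree, where $T$ is a canonical test bifunctor built from the standard projective generators.

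The second step is to apply this to the bifunctor $F(G_1,G_2) = H_1(G_1,\Q)^{*\otimes q}\otimes H_1(G_2,\Q)^{\otimes p}$ and carry out the Ext computation. The combinatorics of graphs then produces the wheeled PROP structure: trees correspond to pure tensor-product contributions while wheels (closed loops) correspond to contractions of an $H$-factor with an $H^*$-factor, so that a configuration with $w$ wheels contributes to cohomological degree $w$ and to a bifunctor of bidegree $(q+w,q)$. A dimension count then forces $i = p-q$, and identifying the resulting graded $\GL$-module with $\calC_{\calP_0^{\circlearrowright}}(q+i,q)$ yields both the vanishing for $i\ne p-q$ and the explicit answer for $i=p-q$.

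The hard step will be the first one. Although the passage between stable group cohomology and functor cohomology is by now well understood separately in the covariant and contravariant settings, the mixed case is subtle because the stabilization maps $F_n\hookrightarrow F_{n+1}$ do not interact naively with duals: there is no canonical retraction providing the contravariant functoriality one needs on the nose. A natural route around this is to exploit a topological model of $B\Aut(F_n)$, for instance moduli of marked graphs, in which both covariant and contravariant tensor factors acquire a common geometric origin through boundary-labelings of half-edges; one can then aim to make the functor-category identification uniformly via a scanning or group-completion argument in the spirit of Galatius and Randal-Williams, reducing Step 1 to a comparison of two homotopy-theoretic models of the same stable cohomology.
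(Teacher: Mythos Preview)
The paper does not contain a proof of this statement. It is cited as a result of Lindell \cite{Lindell22} and used as a black box; the only part of this statement that the paper proves independently is the degree-$1$ case (Theorem \ref{H1HpHq} and Remark \ref{remarkKV}). So there is no ``paper's own proof'' to compare your proposal against.

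As a sketch of a proof, your proposal is in a reasonable direction but remains a plan rather than an argument. You correctly identify that the mixed case $p,q>0$ is the new content beyond Djament--Vespa and Vespa, and that the difficulty lies in your Step~1: there is no off-the-shelf bifunctor Pirashvili--Scorichenko theorem for $\mathbf{gr}^{\op}\times\mathbf{gr}$ that one can simply quote, and your suggestion to extract one from a scanning or group-completion argument is speculation rather than a proof. The combinatorial picture in Step~2 (trees plus wheels, with wheels contributing the degree shift) is the right intuition for why the wheeled PROP $\calC_{\calP_0^{\circlearrowright}}$ appears, but the assertion ``a dimension count then forces $i=p-q$'' is not an argument; the vanishing for $i\neq p-q$ is one of the substantive claims, not a bookkeeping consequence. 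If you want to turn this into a proof, you would need either to carry out the bifunctor-homology framework in detail or to follow Lindell's actual approach, which proceeds somewhat differently via a careful analysis of the stable cohomology with bivariant coefficients building on Randal-Williams's topological methods.
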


We consider the following stability of the family $\{H^i(\IA_n,\Q)\}_n$ of $\GL(n,\Z)$-representations.
\begin{itemize}
    \item[$(\SF_i)$: ] For some $r\ge 0$, $\{H^i(\IA_n,\Q)\}_{n\ge r}$ is a stable family of $\GL(n,\Z)$-representations, that is, 
    there are finitely many bipartitions $\ul\lambda_j$ (possibly with repetitions) such that for all $n\ge r$ we have an isomorphism of $\GLnZ$-representations
\begin{gather*}
H^i(\IA_n,\Q) \cong \bigoplus_{j} V_{\ul\la_j}(n).
\end{gather*}
\end{itemize}

\begin{conjecture}[Church--Farb \cite{Church-Farb}, see Conjecture \ref{conjPi}]\label{introconjPi}
For each $i\ge0$, the hypothesis $(\SF_i)$ holds.
\end{conjecture}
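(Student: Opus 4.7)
The plan is to prove $(\SF_i)$ by induction on $i$, using the Hochschild--Serre spectral sequence associated to the extension \eqref{exact},
\begin{gather*}
E_2^{p,q} = H^p(\GL(n,\Z), H^q(\IA_n,\Q)) \Rightarrow H^{p+q}(\Aut(F_n),\Q),
\end{gather*}
together with Borel's stability and vanishing theorem in the improved form of Li--Sun, and the theorem of Lindell (Theorem \ref{introconjKV}). The base cases $(\SF_0)$ and $(\SF_1)$ are trivial and classical (Cohen--Pakianathan / Farb / Kawazumi). To make the induction work one actually has to strengthen the hypothesis: I would prove simultaneously that $H^q(\IA_n,\Q)$ is stably an \emph{algebraic} $\GL(n,\Z)$-representation (so that Borel applies to it as coefficients) and that the analogous spectral sequence with twisted coefficients $H^{p,q}$ converges in the range predicted by Lindell.

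The core step is the inductive one. Assuming the strengthened hypothesis for all $j<i$, Borel--Li--Sun makes each $E_2^{p,q}$ with $q<i$ computable in a uniform stable range, and this range can be chosen to increase in $n$ independently of the fixed bipartitions entering $H^q(\IA_n,\Q)$. The abutment is controlled by Galatius's theorem (with $\Q$-coefficients) and by Lindell's theorem (with $H^{p,q}$-coefficients). Running the spectral sequence for every twist $V_{\ul\la}$ and tracking $E_\infty$ on the antidiagonal of total degree $i$, one should be able to isolate the term
\begin{gather*}
H^0(\GL(n,\Z), H^i(\IA_n,\Q)\otimes V_{\ul\la}^*) = \Hom_{\GL(n,\Z)}(V_{\ul\la}, H^i(\IA_n,\Q))
\end{gather*}
up to contributions that are already controlled by the induction hypothesis and by Borel. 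Summing over $\ul\la$ and combining with a Schur--Weyl type reconstruction argument would then recover the full $\GL(n,\Z)$-module structure of $H^i(\IA_n,\Q)$ and verify $(\SF_i)$.

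The main obstacle is precisely the reconstruction step: the Hochschild--Serre method directly computes only the multiplicities of algebraic representations appearing in $H^i(\IA_n,\Q)$, so to deduce representation stability one needs an a priori finiteness statement guaranteeing that only finitely many bipartitions $\ul\la$ contribute and that no non-algebraic $\GL(n,\Z)$-representations appear in the stable range. This finiteness (essentially, polynomial growth of $H^i(\IA_n,\Q)$ as a functor of $n$) does not follow from the spectral sequence alone and would have to be supplied externally, for instance by a category-theoretic generation result in the spirit of $\VIC$-modules, or by exhibiting an explicit set of generators via Johnson-homomorphism-type invariants. Controlling which higher differentials in the spectral sequence vanish in the stable range (so that $E_2^{0,i}$ computes exactly the $\GL$-invariants of $H^i(\IA_n,\Q)$, modulo the induction) is a secondary, but still nontrivial, technical issue.
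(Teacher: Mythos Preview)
The statement you are trying to prove is a \emph{conjecture}, not a theorem, and the paper does not prove it. It is an open problem attributed to Church and Farb, and in the paper it functions exclusively as a hypothesis: Theorem \ref{thmHIAinv} shows that Conjecture \ref{conjPi} implies Conjecture \ref{conjectureHIAinv}, and Theorem \ref{conjIAprop} shows that Conjectures \ref{conjalb} and \ref{conjPi} together imply Conjecture \ref{conjIAn}. Nowhere does the paper claim a proof of $(\SF_i)$ for $i\ge 2$.

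Your own analysis in fact pinpoints exactly why the approach cannot succeed with present tools. The Hochschild--Serre spectral sequence, Borel--Li--Sun vanishing, Galatius's theorem, and Lindell's theorem together only control the multiplicities $\Hom_{\GL(n,\Z)}(V_{\ul\la}, H^i(\IA_n,\Q))$ of \emph{algebraic} irreducibles inside $H^i(\IA_n,\Q)$. They say nothing about whether $H^i(\IA_n,\Q)$ is finite dimensional, nor whether it contains non-algebraic $\GL(n,\Z)$-summands, nor whether only finitely many bipartitions contribute. This is precisely the content of $(\SF_i)$, so the argument is circular unless you supply an external finiteness input (e.g.\ finite generation as a $\VIC$-module), which is itself open for $i\ge 2$. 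The paper is explicit about this gap: see Remark \ref{stablyalgebraicBorel} and Remark \ref{algverIAnGLinv}, which note that even the weaker hypothesis that $\{H^i(\IA_n,\Q)\}_n$ is merely stably algebraic would suffice for the downstream applications, but the paper does not claim to establish even that.
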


By using Theorem \ref{introconjKV}, we have the following relation between the above conjectures.

\begin{theorem}[see Theorems \ref{thmHIAinv} and \ref{conjIAprop}]
\label{conjIApropintro}
We have the following.
\begin{itemize}
    \item Conjecture \ref{introconjPi} implies Conjecture \ref{introconjectureHIAinv}.
    \item Conjectures \ref{introconjalb} and \ref{introconjPi} imply Conjecture \ref{introconjIAn}.
\end{itemize} 

Therefore, Conjectures \ref{introconjalb} and \ref{introconjPi} imply Conjecture \ref{introconjIAW}.
\end{theorem}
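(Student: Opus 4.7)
The strategy is to exploit the Hochschild--Serre spectral sequence
\[
  E_2^{p,q} = H^p(\GLnZ, H^q(\IA_n, V)) \Longrightarrow H^{p+q}(\Aut(F_n), V)
\]
for the extension \eqref{exact} with coefficients in various algebraic $\GLnZ$-representations $V$, fed by three external inputs: Borel's vanishing theorem (in the Li--Sun stable range), Galatius' theorem on $H^*(\Aut(F_n),\Q)$, and Lindell's Theorem \ref{introconjKV} on $H^*(\Aut(F_n), V_{\ulla})$.

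For the first implication, assume $(\SF_q)$ for all $q$, so that stably $H^q(\IA_n,\Q)$ is a finite sum of algebraic $\GLnZ$-representations. Borel's vanishing gives $H^p(\GLnZ, V_{\ulla}) = 0$ stably for each non-trivial algebraic irreducible $V_{\ulla}$, while $H^*(\GLnZ,\Q)$ stabilizes to the exterior algebra $\Lambda(x_5, x_9, x_{13}, \ldots)$ with $\deg x_{4i+1} = 4i+1$. Taking $V=\Q$, the non-trivial isotypic components of $H^q(\IA_n,\Q)$ contribute nothing at $E_2$, so stably
\[
  E_2^{p,q} \cong H^p(\GLnZ,\Q) \otimes H^q(\IA_n,\Q)^{\GLnZ}.
\]
By Galatius, the abutment is $\Q$ concentrated in total degree $0$, so this is a multiplicative Koszul-type spectral sequence converging to $\Q$. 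A standard transgression argument, analogous to computing the homology of a loop space from exterior base cohomology, then forces $H^*(\IA_n,\Q)^{\GLnZ}$ to be polynomial on classes $z_i$ of degree $4i$, each $z_i$ transgressing up to non-zero scalar to $x_{4i+1}$. This is Conjecture \ref{introconjectureHIAinv}.

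For the second implication, assume additionally Conjecture \ref{introconjalb}, so $H_A^*(\IA_n,\Q) \cong (W_*)^*$ stably. The cup-product map $\omega_n$ is a $\GLnZ$-equivariant graded algebra homomorphism; on the trivial isotypic component it is the identity, since $W_*$ is traceless and hence $(H_A^*)^{\GLnZ}=\Q$. To show $\omega_n$ is a stable isomorphism it suffices to match, for each non-trivial bipartition $\ulla$, the multiplicity of $V_{\ulla}$ on both sides. By Schur's lemma, this multiplicity in $H^q(\IA_n,\Q)$ equals $\dim[H^q(\IA_n,\Q)\otimes V_{\ulla^*}]^{\GLnZ}$, which I would extract from the HS spectral sequence with coefficients $V_{\ulla^*}$: Borel vanishing again reduces $E_2$ to $H^*(\GLnZ,\Q)\otimes$ these multiplicity spaces, while Lindell's Theorem \ref{introconjKV} determines the abutment combinatorially via $\calC_{\calP_0^{\circlearrowright}}$. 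The transgressions $d(z_i)=x_{4i+1}$ obtained in the first implication persist in this twisted spectral sequence, identifying the multiplicity columns as a tensor product of the $V_{\ulla}$-multiplicity in $H_A^*(\IA_n,\Q)$ with $\Q[z_1,z_2,\ldots]$. Combined with Conjecture \ref{introconjalb}, this gives the multiplicity of $V_{\ulla}$ in the left-hand side of $\omega_n$, and the two sides agree. Conjecture \ref{introconjIAW} then follows formally by combining the two implications with Conjecture \ref{introconjalb} through $\omega_n$.

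The main obstacle is the final comparison in the second implication: one must verify that Lindell's wheeled-PROP description of $H^*(\Aut(F_n), V_{\ulla^*})$ is compatible with a free $\Q[z_1,z_2,\ldots]$-module structure on each isotypic column, with the $z_i$-action coming from the cup product in $H^*(\IA_n,\Q)^{\GLnZ}$ lifted to $\Aut(F_n)$-cohomology. This is a combinatorial compatibility check between Lindell's answer and the Koszul-type structure established in the first implication, but it is where the substantive work lies.
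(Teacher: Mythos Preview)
Your argument for the first implication is essentially the paper's: the Hochschild--Serre spectral sequence with trivial coefficients, Borel vanishing to reduce $E_2$ to a tensor product, Galatius to kill the abutment, and then a Koszul/Zeeman comparison to force $H^*(\IA_n,\Q)^{\GLnZ}\cong\Q[z_1,z_2,\dots]$. The paper packages the last step as an ``antitransgression'' (Theorem \ref{stableZeeman}), but the content is the same.

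For the second implication your outline has the right shape but two genuine gaps. First, matching multiplicities of $V_{\ulla}$ on both sides only shows the source and target of $\omega_n$ are abstractly isomorphic; it does not show that $\omega_n$ itself is an isomorphism. The paper handles this by reducing to showing that for each $\ulla$ the explicit cup-product map $H_A^*(\IA_n,V_{\ulla})^{\GLnZ}\otimes H^*(\IA_n,\Q)^{\GLnZ}\to H^*(\IA_n,V_{\ulla})^{\GLnZ}$ is an isomorphism, and then (Lemma \ref{cohomIAstr}) builds an actual morphism of spectral sequences $\hat E\otimes E\to E(\ulla)$ whose $E_2^{0,*}$ component is that cup product; a careful comparison lemma (Lemma \ref{isomorphism-of-SS}) then promotes the known isomorphisms in low $q$ to the next degree by induction. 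Your phrase ``the transgressions $d(z_i)=x_{4i+1}$ persist'' is exactly where this work hides, and it is not automatic.

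Second, you have misidentified the obstacle. The wheeled PROP description in Lindell's theorem is never used; only the \emph{vanishing} half is needed, namely that $H^i(\Aut(F_n),V_{\ulla})=0$ for $i\neq\deg\ulla$, so the twisted abutment is concentrated in a single degree $q_0$. There is no ``free $\Q[z_1,z_2,\dots]$-module structure on $H^*(\Aut(F_n),V_{\ulla^*})$'' to check, since that cohomology sits in one degree. The substantive step is instead the spectral-sequence comparison showing that the column $E(\ulla)_2^{0,*}$ is, as a module under the cup product with $E_2^{0,*}$, freely generated by $E(\ulla)_2^{0,q_0}$; this is what Lemma \ref{isomorphism-of-SS} supplies. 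Once that is in place, the equivalence of Conjectures \ref{conjalb} and \ref{conjkatada} (via Lindell) identifies $E(\ulla)_2^{0,q_0}=H^{q_0}(\Aut(F_n),V_{\ulla})$ with $H_A^{q_0}(\IA_n,V_{\ulla})^{\GLnZ}$ through the map $i^*$, and the composite is exactly $\omega_n$ restricted to the $\ulla$-isotypic part.
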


\subsection{Conjectural structure of the stable rational cohomology of $\IO_n$}

Similarly to the case of $\IA_n$, we make the following conjecture about a complete algebraic structure of the stable rational cohomology of $\IO_n$, which is an analogue of Conjecture \ref{introconjIAW}.

\begin{conjecture}[Conjecture \ref{conjIOnWO}]
We stably have an isomorphism of graded algebras with $\GL(n,\Z)$-actions
$$H^*(\IO_n,\Q)\cong (W^O_*)^*\otimes 
    \Q[z_1,z_2,\dots],\quad \deg z_i=4i,
$$
where $W^O_*$ is the traceless part of the graded-symmetric algebra of $U^O_*=\bigoplus_{i\ge 1}U^O_i$, where $U^O_1=U_1/H$ and $U^O_i=U_i$ for $i\ge 2$.
\end{conjecture}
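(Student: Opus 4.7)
The plan is to mirror the derivation of Conjecture \ref{introconjIAW} from Conjectures \ref{introconjalb}, \ref{introconjIAn}, and \ref{introconjectureHIAinv}, with $(\IA_n,\Aut(F_n),U_*,W_*)$ everywhere replaced by $(\IO_n,\Out(F_n),U_*^O,W_*^O)$. Concretely, I would establish three separate $\IO_n$-analogs: an Albanese statement $H_A^*(\IO_n,\Q)\cong (W_*^O)^*$; an invariants statement $H^*(\IO_n,\Q)^{\GL(n,\Z)}\cong \Q[z_1,z_2,\dots]$ with $\deg z_i=4i$; and a cup-product splitting asserting that
$$
\omega_n^O\colon H_A^*(\IO_n,\Q)\otimes H^*(\IO_n,\Q)^{\GL(n,\Z)}\to H^*(\IO_n,\Q)
$$
is a stable isomorphism. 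Once these three pieces are in place, Conjecture \ref{conjIOnWO} follows by the same formal combination argument that produces Conjecture \ref{introconjIAW} from its three constituents.

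For the Albanese piece, the replacement of $U_1$ by $U_1^O=U_1/H$ is forced by the abelianization calculation for $\IO_n$: the inner-automorphism subgroup $\Inn(F_n)\cong F_n$ of $\IA_n$ contributes exactly the copy of $H$ that disappears upon passing to the outer quotient, so the relevant graded-symmetric algebra and its traceless quotient are $S^*(U_*^O)$ and $W_*^O$, respectively. The arguments of \cite{KatadaIA} producing $W_*$ from $U_*$ should carry over grade by grade, once one checks that the Young-symmetrizer and trace identifications survive the further $H$-quotient. For the invariants piece, I would run the Hochschild--Serre spectral sequence for
$$
1\to \IO_n\to \Out(F_n)\to \GL(n,\Z)\to 1,
$$
and combine Borel's theorem $H^*(\GL(n,\Z),\Q)\cong \Q[z_1,z_2,\dots]$ with the stable vanishing of the positive-degree rational cohomology of $\Out(F_n)$ (Galatius); under a representation-stability hypothesis for $\{H^i(\IO_n,\Q)\}_n$ analogous to $(\SF_i)$, the edge-homomorphism argument that proves Theorem \ref{thmHIAinv} should yield the desired polynomial algebra on classes of degrees $4,8,12,\dots$.

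The cup-product factorisation is the most delicate step, and in my view the main obstacle. Parallel to Theorem \ref{conjIApropintro}, it would require an $\Out(F_n)$-version of Lindell's theorem (Theorem \ref{introconjKV}), computing the stable cohomology $H^*(\Out(F_n),H^{p,q})$ in terms of a suitable modification of the wheeled PROP $\calC_{\calP_0^{\circlearrowright}}$ adapted to $\Out(F_n)$ rather than $\Aut(F_n)$. Such an outer analog is not, to my knowledge, in the literature, and one expects subtle changes in the PROP reflecting the $F_n\cong\Inn(F_n)$ quotient by which $\Out(F_n)$ differs from $\Aut(F_n)$. Once this outer Kawazumi--Vespa computation and the $\IO_n$ representation-stability hypothesis are available, combining them with the Albanese and invariants pieces through the Hochschild--Serre spectral sequence for the displayed extension should yield Conjecture \ref{conjIOnWO} by the same formal manipulation as in the $\IA_n$ case.
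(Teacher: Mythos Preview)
The statement is a \emph{conjecture}, not a theorem, and the paper does not prove it. The paper simply records it as the combination of three other conjectures: Conjecture~\ref{conjalbIO} (the Albanese statement $H^A_*(\IO_n,\Q)\cong W^O_*$), Conjecture~\ref{conjectureHIOinv} (the invariants statement), and Conjecture~\ref{omega'isom} (the cup-product splitting $\omega'_n$). Your plan is precisely this decomposition, so your outline matches the paper's derivation.

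Two points of correction. First, you write ``Borel's theorem $H^*(\GL(n,\Z),\Q)\cong \Q[z_1,z_2,\dots]$'', but Borel's theorem gives an \emph{exterior} algebra $\bigwedge_\Q(x_1,x_2,\dots)$ with $\deg x_i=4i+1$. The polynomial algebra on generators of degree $4i$ is what appears on the $E_2^{0,*}$ column \emph{after} running the antitransgression and Zeeman comparison argument (Theorem~\ref{stableZeeman} in the paper, applied as in Proposition~\ref{IAnGLinv} and Theorem~\ref{HIOinv}); the degree shift by $-1$ is the whole point, and calling it an ``edge-homomorphism argument'' undersells what is needed.

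Second, for the cup-product step you suggest that a full $\Out(F_n)$-version of Lindell's wheeled-PROP computation is required. The paper is more economical: it isolates the two pieces actually needed---Conjecture~\ref{conjIOOut} (that $H^*(\Out(F_n),V)$ stably coincides with $H_A^*(\IO_n,V)^{\GL(n,\Z)}$) and Conjecture~\ref{conjOut} (that $H^i(\Out(F_n),V_{\ul\lambda})=0$ stably for $i\ne\deg\ul\lambda$)---and records in Theorem~\ref{theoremIO} that these together with the stability hypothesis $(\SF'_i)$ imply Conjecture~\ref{omega'isom}. So only the vanishing half of a putative outer Lindell theorem is required, not the full identification of the nonzero groups.
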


A conjectural relation between the stable rational cohomology of $\IA_n$ and $\IO_n$ is the following.

\begin{conjecture}[Conjecture \ref{conjstrIOIA}]
We have a stable isomorphism 
$$\psi: H^*(\IO_n,\Q)\otimes H^*(F_n,\Q)\to H^*(\IA_n,\Q)$$
of graded algebraic $\GL(n,\Z)$-representations.
\end{conjecture}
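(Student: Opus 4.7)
The plan is to apply the Hochschild--Serre spectral sequence to the short exact sequence
\begin{gather*}
1 \to F_n \to \IA_n \to \IO_n \to 1,
\end{gather*}
obtained by restricting $1 \to \Inn(F_n) \to \Aut(F_n) \to \Out(F_n) \to 1$ (valid for $n \geq 2$ since $\Inn(F_n) \cong F_n$) to the kernels of the projections onto $\GL(n, \Z)$. Since $F_n$ is free, only the rows $q = 0, 1$ contribute, with $H^0(F_n, \Q) = \Q$ and $H^1(F_n, \Q) = H^*$. Moreover, $\IO_n$ acts trivially on $H$ (hence on $H^*$) by its defining property as the kernel of $\Out(F_n) \to \GL(n, \Z)$, so the $E_2$-page reads
\begin{gather*}
E_2^{p, q} = H^p(\IO_n, \Q) \otimes H^q(F_n, \Q), \qquad q \in \{0, 1\},
\end{gather*}
converging to $H^{p + q}(\IA_n, \Q)$ as graded $\GL(n, \Z)$-representations.

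The crux is to show that the only possibly nonzero differential,
\begin{gather*}
d_2 \colon H^p(\IO_n, H^*) \longrightarrow H^{p + 2}(\IO_n, \Q),
\end{gather*}
vanishes stably; this $d_2$ is cup product with the extension class $e \in H^2(\IO_n, H)$ classifying the abelianized extension $1 \to H \to \IA_n / [F_n, F_n] \to \IO_n \to 1$. Two complementary strategies present themselves. First, granting Conjectures~\ref{introconjIAW} and the $\IO_n$-analogue, the target isomorphism reduces in the stable range to a purely algebraic identity of the form $W_* \cong W^O_* \otimes H_*(F_n, \Q)$, which should follow from the decomposition $U_1 = U^O_1 \oplus H$ together with $U_i = U^O_i$ for $i \geq 2$ after tracing traceless parts through the graded-symmetric algebra; a character count then forces the spectral sequence to collapse for dimensional reasons. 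Second, $e$ is the restriction of the analogous class $\tilde e \in H^2(\Out(F_n), H)$ for the $\Aut$--$\Out$ extension, and one may try to pin $\tilde e$ down directly using Galatius's stable vanishing of $H^*(\Out(F_n), \Q)$ in positive degree together with Lindell's Theorem~\ref{introconjKV} on the twisted cohomology of $\Aut(F_n)$, then pull the conclusion back to $\IO_n$.

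Given $d_2 = 0$ stably, the spectral sequence degenerates at $E_2$, and the filtration on $H^*(\IA_n, \Q)$ splits $\GL(n, \Z)$-equivariantly in the stable range because the relevant cohomology groups are algebraic and therefore semisimple. To construct $\psi$, combine the pullback $\pi^* \colon H^*(\IO_n, \Q) \to H^*(\IA_n, \Q)$ along $\pi \colon \IA_n \twoheadrightarrow \IO_n$ with a $\GL(n, \Z)$-equivariant section of the edge map $H^1(\IA_n, \Q) \to H^0(\IO_n, H^*) = H^*$. Such a section exists stably because $H^1(\IA_n, \Q) \cong H \otimes \bigwedge^2 H^* \cong H^* \oplus V_{(1, 1^2)}$ by Cohen--Pakianathan, Farb and Kawazumi, and a direct Johnson-homomorphism computation shows that restriction along $F_n \hookrightarrow \IA_n$ (as inner automorphisms) acts as the nonzero scalar $(1 - n)$ on the $H^*$-summand. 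Extending by $1 \mapsto 1 \in H^0(\IA_n, \Q)$ to a linear map $\sigma \colon H^*(F_n, \Q) \to H^*(\IA_n, \Q)$ and setting $\psi(a \otimes b) := \pi^*(a) \smile \sigma(b)$ gives a graded $\GL(n, \Z)$-equivariant map which is the identity on $E_\infty = E_2$, hence the desired stable isomorphism.

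The main obstacle is the stable vanishing of $d_2$. The class $e$ is visibly nonzero in low weights---it encodes the Johnson-type data of the inclusion $F_n \hookrightarrow \IA_n$---so the vanishing of $d_2$ at the level of cohomology must arise from a delicate stable cancellation phenomenon rather than from any triviality of $e$ itself. For this reason the conjecture is naturally intertwined with Conjectures~\ref{introconjIAW} and~\ref{conjIOnWO}, and is probably best pursued as a consequence of proofs of those.
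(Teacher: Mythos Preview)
Your setup via the Hochschild--Serre spectral sequence for $1 \to F_n \to \IA_n \to \IO_n \to 1$ is exactly what the paper uses. However, you have misidentified the difficulty. The vanishing of $d_2$ is \emph{not} an obstacle: the paper (Proposition~\ref{IAandIO}) disposes of it in one line, unconditionally for all $n \ge 2$. Since $E_2^{0,1} = H^1(F_n,\Q) = H^*$ is an irreducible $\GL(n,\Z)$-module, the equivariant map $d_2^{0,1}$ is either zero or injective; if injective, the five-term sequence would force $H^1(\IA_n,\Q)\cong H^1(\IO_n,\Q)$, contradicting $H^1(\IA_n,\Q) = V_{0,1}\oplus V_{1,1^2}$ versus $H^1(\IO_n,\Q) = V_{1,1^2}$. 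Equivalently---and you essentially observed this yourself---the restriction $H^1(\IA_n,\Q)\to H^1(F_n,\Q)$ is surjective (your $(1-n)$ computation), which is precisely the statement $d_2^{0,1}=0$. Multiplicativity of the spectral sequence then gives $d_2^{p,1}=0$ for all $p$, since $E_2^{p,1}=E_2^{p,0}\otimes E_2^{0,1}$ and $d_2$ vanishes on both tensor factors. Hence $E_\infty = E_2$ for every $n\ge2$, and your assertion that ``the class $e$ is visibly nonzero'' is mistaken at the rational level: the argument shows the rational extension class controlling $d_2$ vanishes.

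What \emph{is} genuinely conditional is the step you pass over quickly: upgrading the associated-graded isomorphism to the assertion that the specific map $\psi$ is an isomorphism. This requires splitting the two-step filtration $\GL(n,\Z)$-equivariantly, and that in turn needs $H^*(\IA_n,\Q)$ to be an algebraic (hence semisimple) $\GL(n,\Z)$-representation in the stable range. The paper does not prove this; it simply records that Conjectures~\ref{conjPi} and~\ref{conjSFIO}, together with the unconditional Proposition~\ref{IAandIO}, imply Conjecture~\ref{conjstrIOIA}. Your elaborate strategies for attacking $d_2$ via the full structure conjectures or via Lindell's theorem are therefore aimed at the wrong target: the entire open content of the conjecture lies in the algebraicity hypotheses, not in the collapse of the spectral sequence.
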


\subsection{Stable rational cohomology of the Torelli groups of surfaces}

Let $\calI_{g}$ (resp. $\calI_{g,1}$) denote the \emph{Torelli group} of a compact oriented surface of genus $g$ (resp. with one boundary component).
Then $\Sp(2g,\Z)$ acts on the rational cohomology of the Torelli groups.

We have injective graded algebra maps
$$\iota_{g}: H_A^*(\calI_{g},\Q)\hookrightarrow H^*(\calI_{g},\Q), \quad
\iota_{g,1}: H_A^*(\calI_{g,1},\Q)\hookrightarrow H^*(\calI_{g,1},\Q).$$
Then the following conjecture determines an algebraic structure of the stable cohomology of the Torelli groups.

\begin{conjecture}[Conjecture \ref{conjIgstr}]
The graded algebra maps $\iota_{g}$ and $\iota_{g,1}$ are stable isomorphisms.
\end{conjecture}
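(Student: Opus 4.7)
The plan is to adapt the program outlined for $\IA_n$ in this paper to the surface setting, leveraging the substantial structural results available for mapping class groups. The starting point is the Hochschild--Serre spectral sequence associated with the short exact sequence
$$1 \to \calI_{g,1} \to \mathrm{Mod}_{g,1} \to \Sp(2g,\Z) \to 1$$
(and its analogue for closed surfaces), whose $E_2$-page is $H^p(\Sp(2g,\Z), H^q(\calI_{g,1}, \Q))$. In the stable range, Borel's theorem for $\Sp(2g,\Z)$ (with the improved range of Li--Sun) controls the base cohomology with algebraic coefficients, while the Madsen--Weiss theorem together with its twisted extension by Randal-Williams--Wahl determines the stable cohomology of the total space $\mathrm{Mod}_{g,1}$. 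The task is to pin down $H^*(\calI_{g,1}, \Q)$ in the middle.

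First, I would establish that $H^*_A(\calI_{g,1}, \Q)$ stabilizes as an $\Sp(2g,\Z)$-representation and identify its stable algebraic structure, using Johnson's theorem $H_1(\calI_{g,1},\Q) \cong \bigwedge^3 H$ together with a systematic study of relations in the Albanese cohomology, paralleling the work of Pettet and of the second-named author in the $\IA_n$ case. The expected answer is a traceless-graded-symmetric-algebra-type quotient analogous to $(W_*)^*$ of Conjecture \ref{introconjalb}. Next, one should show that the Miller--Morita--Mumford classes $\kappa_i$, which by Madsen--Weiss generate the stable cohomology of $\mathrm{Mod}_{g,1}$ as a polynomial algebra, lie in the image of $\iota_{g,1}$; this is natural in view of Morita's expression of $\kappa_i$ via extended Johnson homomorphisms, and explains why no extra tensor factor $\Q[z_1,z_2,\dots]$ should appear here, in contrast to Conjecture \ref{introconjIAW}.

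The crux is to upgrade these partial computations to a full isomorphism by showing that no cohomology classes outside the image of $\iota_{g,1}$ survive stably. By analogy with Theorem \ref{conjIApropintro}, one expects to reduce the conjecture to two auxiliary statements: (a) the symplectic analogue of Church--Farb's Conjecture \ref{introconjPi}, namely representation stability of $\{H^i(\calI_{g,1}, \Q)\}_g$ as a family of $\Sp(2g,\Z)$-representations, and (b) a computation of the stable $\Sp$-invariants showing they coincide with $\Q[\kappa_1, \kappa_2, \dots]$. Granted (a) and (b), the Hochschild--Serre spectral sequence combined with Borel vanishing for $\Sp$ with nontrivial algebraic coefficients would force the stable cohomology of $\mathrm{Mod}_{g,1}$ to be built entirely from $E_2^{0,*} = H^*(\calI_{g,1}, \Q)^{\Sp}$, and matching this against the Madsen--Weiss computation together with the first step would close the argument for $\iota_{g,1}$. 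The closed-surface case $\iota_g$ then follows by an additional spectral sequence argument for the Birman-type fibration relating $\calI_{g,1}$ and $\calI_g$.

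The principal obstacle is statement (a): representation stability for $H^*(\calI_{g,1}, \Q)$ in degrees greater than one. Unlike $H_1$, which is controlled by Johnson, even the rational dimension of $H_2(\calI_g,\Q)$ is not known for general $g$, and the Church--Farb-type stability conjecture for Torelli groups is wide open. A successful approach would likely require new geometric or homological-stability input, possibly modelled on Lindell's recent proof of the Kawazumi--Vespa conjecture (Theorem \ref{introconjKV}) or on operadic and functor-homology techniques developed for automorphism groups of free groups.
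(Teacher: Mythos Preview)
The statement you are attempting is Conjecture~\ref{conjIgstr} in the paper, and the paper provides no proof of it. It is stated as an open conjecture at the very end of Section~\ref{secConjectures}, immediately after the remark that $\iota_g^i$ and $\iota_{g,1}^i$ are isomorphisms for $i=0,1$, with the comment ``It seems natural to make the following conjecture.'' There is therefore no proof in the paper to compare your proposal against.

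Your proposal is not a proof but a research outline, and you yourself identify the essential obstruction: the symplectic analogue of Conjecture~\ref{conjPi} (representation stability for $\{H^i(\calI_{g,1},\Q)\}_g$ as $\Sp(2g,\Z)$-representations) is open in degrees $\ge 2$. A few details in your sketch are also off. The Hochschild--Serre spectral sequence does \emph{not} stably collapse onto $E_2^{0,*}$: the odd Mumford--Morita--Miller classes $e_{2i-1}$ come from the base $H^*(\Sp(2g,\Z),\Q)$ via $p^*$ (see Section~\ref{subsecmcgsp}), and only the even classes $e_{2i}$ are expected to survive nontrivially in $H^*(\calI_{g,1},\Q)^{\Sp(2g,\Z)}$ (Conjecture~\ref{conjectureHIginv}). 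Your ``no extra tensor factor $\Q[z_1,z_2,\dots]$'' remark also misframes the situation: the paper records (just before Conjecture~\ref{conjIgstr}) the conjecture that $H_A^*(\calI_{g,1},\Q)^{\Sp(2g,\Z)}\cong\Q[z_1,z_2,\dots]$ with $\deg z_i=4i$, so the invariant polynomial algebra is conjecturally already \emph{inside} the Albanese cohomology --- that is the reason $\iota_{g,1}$ can be a stable isomorphism without the extra tensor factor appearing in Conjecture~\ref{conjIAnW}. These points are repairable, but the core gap --- representation stability for the Torelli groups --- is a genuine open problem, and your proposal does not close it.
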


Let $i_{g,1}:\calI_{g,1}\hookrightarrow\IA_{2g}$ denote the Dehn--Nielsen embedding.
We make the following conjecture about the invariant parts of the stable rational cohomology of $\IA_n$ and $\calI_{g,1}$.

\begin{conjecture}[Conjecture \ref{conjIandIAinv}]
The graded algebra homomorphism
\begin{gather*}
i_{g,1}^*: H^*(\IA_{2g},\Q)^{\GL(2g,\Z)} \to H^*(\calI_{g,1},\Q)^{\Sp(2g,\Z)}
\end{gather*}
induced by $i_{g,1}$ is a stable isomorphism.
\end{conjecture}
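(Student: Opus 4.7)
I would compare the two Hochschild--Serre spectral sequences
\[
E_r(\IA):\ E_2^{p,q} = H^p(\GL(2g,\Z);H^q(\IA_{2g},\Q)) \Rightarrow H^{p+q}(\Aut(F_{2g}),\Q),
\]
\[
E_r(\calI):\ E_2^{p,q} = H^p(\Sp(2g,\Z);H^q(\calI_{g,1},\Q)) \Rightarrow H^{p+q}(\Gamma_{g,1},\Q),
\]
related by the map of extensions induced by the Dehn--Nielsen embedding; the map $i_{g,1}^*$ is then the induced comparison of the $(0,*)$-columns of the $E_2$-pages. The abutments are controlled: $H^{>0}(\Aut(F_{2g}),\Q)=0$ stably by Galatius, while $H^*(\Gamma_{g,1},\Q)\cong\Q[\kappa_1,\kappa_2,\ldots]$ stably by Madsen--Weiss; moreover, Morita identifies $\kappa_{2i-1}$ stably, up to a non-zero scalar, with the pullback of a Borel generator of $H^{4i-2}(\Sp(2g,\Z),\Q)$.

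This identifies the target: since $H^*(\Sp(2g,\Z),\Q)$ stably has no generator in degrees $4i$, the class $\kappa_{2i}\in H^{4i}(\Gamma_{g,1},\Q)$ cannot come from $\Sp(2g,\Z)$, and so the edge map of $E_r(\calI)$ sends it to a non-zero class in $H^{4i}(\calI_{g,1},\Q)^{\Sp(2g,\Z)}$. Hence stably $\Q[\kappa_2,\kappa_4,\ldots]\hookrightarrow H^*(\calI_{g,1},\Q)^{\Sp(2g,\Z)}$ with $\deg \kappa_{2i}=4i$. Granting Conjecture \ref{introconjectureHIAinv}, the source is the polynomial algebra $\Q[z_1,z_2,\ldots]$ with $\deg z_i=4i$, and (assuming the Torelli analogue of that conjecture) the target is a polynomial algebra on the $\kappa_{2i}$ with matching Poincar\'e series. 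Since $i_{g,1}^*$ is a graded algebra map between polynomial algebras of the same dimension in each degree, it is an isomorphism iff it sends the $z_i$ to algebraically independent elements; it therefore suffices to verify that $i_{g,1}^*(z_i)$ has non-zero $\kappa_{2i}$-component for each $i$.

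\textbf{Main obstacle.} The serious difficulty is twofold. First, upgrading the inclusion $\Q[\kappa_2,\kappa_4,\ldots]\subseteq H^*(\calI_{g,1},\Q)^{\Sp(2g,\Z)}$ to an equality stably appears to require representation stability for $\{H^*(\calI_{g,1},\Q)\}_g$ as $\Sp(2g,\Z)$-representations --- a deep open problem paralleling Church--Farb's Conjecture \ref{introconjPi} for $\IA_n$. Second, verifying that $i_{g,1}^*(z_i)$ has non-zero $\kappa_{2i}$-component is subtle: a natural transgression argument shows only that $i_{g,1}^*(z_i)$ is a cycle for the corresponding differential in $E_r(\calI)$ (because the $\GL(2g,\Z)$-Borel generator to which $z_i$ plausibly transgresses has odd degree and restricts trivially to $\Sp(2g,\Z)$), but establishing non-vanishing of the resulting $E_\infty^{0,4i}(\calI)$-class seems to require an explicit geometric or graph-cohomological construction of the $z_i$ that is compatible with the Morita picture under $i_{g,1}$ --- an input not currently available. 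Thus a complete proof looks to rest on Church--Farb-type representation stability for both families together with a concrete, functorial identification of the universal generators across the Dehn--Nielsen embedding.
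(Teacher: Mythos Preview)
The statement in question is a \emph{conjecture}, and the paper does not prove it; it only motivates it. Your analysis of what a proof would require is essentially the same as the paper's: the paper states that Conjectures~\ref{conjectureHIAinv} and~\ref{conjectureHIginv} (the structure conjectures for $H^*(\IA_n,\Q)^{\GL(n,\Z)}$ and $H^*(\calI_{g,1},\Q)^{\Sp(2g,\Z)}$ as polynomial algebras on degree-$4i$ generators) imply Conjecture~\ref{conjIandIAinv}, and then in a remark acknowledges the remaining issue of showing that $i_{g,1}^*$ sends generators to generators.

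Your two obstacles line up precisely with the paper's discussion. For obstacle~1 (the Torelli-side structure), the paper proves the analogue of Theorem~\ref{thmHIAinv}: the $\Sp$-version of Church--Farb stability (Conjecture~\ref{conjSFiI}) implies Conjecture~\ref{conjectureHIginv}, via the same Zeeman-comparison argument you sketch using the Hochschild--Serre spectral sequence and Madsen--Weiss. For obstacle~2 (nonvanishing of $i_{g,1}^*(z_i)$), the paper points to an input you did not mention: Morita--Sakasai--Suzuki \cite{Morita-Sakasai-Suzuki} have already shown that $i_{g,1}^*(T\beta^0_{2k+1})$ is a nonzero scalar multiple of $e_{2k}$. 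So the missing piece is not constructing such classes from scratch, but rather identifying the paper's antitransgression classes $y_k$ with the Morita--Sakasai--Suzuki secondary classes $T\beta^0_{2k+1}$ up to nonzero scalar---which the paper states is ``expected'' but not established. This is why the remark says Conjecture~\ref{conjIandIAinv} ``may follow'' from Proposition~\ref{IAnGLinv} and Conjecture~\ref{conjectureHIginv}, rather than asserting it outright.

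In short, your diagnosis is correct and matches the paper's own; the one refinement is that obstacle~2 is narrower than you suggest, reducing to the expected identification $y_k \sim T\beta^0_{2k+1}$.
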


\begin{remark}
After the first version of the present paper was circulated, Oscar Randal-Williams informed us of the result of Li and Sun about the improvement of the Borel theorem \cite{Li-Sun}.
We have removed our results about Borel's stable range in the first version since they are weaker than Li and Sun's result.
Our results on Borel's stable range has been separated into another paper \cite{HK2} since they give an approach different from Li and Sun's.
Also, we have improved the stable ranges of the low degree cohomology of $\Aut(F_n)$ and $\Out(F_n)$ by using Li and Sun's result.
Moreover, we realized that the assumption in Theorem \ref{conjIApropintro}, i.e. Conjecture \ref{introconjPi}, can be replaced with a weaker assumption that
the family $\{H_i(\IA_n;\Q)\}_n$ is stably algebraic in the sense of Remark \ref{stablyalgebraicBorel}.
\end{remark}

\begin{remark}
    After the first version of the present paper was circulated, Erik Lindell informed us of his proof of the conjecture of Kawazumi--Vespa, and later his preprint appeared on arXiv \cite{Lindell22}.
    In Section \ref{secstrIA},
    we have decided to rely on Lindell's result, which was a conjecture in the previous version, so that our results about the relations between various conjectures become simpler statements.
\end{remark}

\subsection{Organization of the paper}

The rest of this paper is organized as follows.
In Section \ref{alg-glnz-rep}, we recall some facts about representation theory of $\GL(n,\Q)$.
In Section \ref{homologyofGL}, we recall Borel's stability and vanishing theorem for the cohomology of $\GL(n,\Z)$ with coefficients in irreducible algebraic representations $V_{\ulla}$ corresponding to bipartitions.
In Section \ref{secLowDegree}, we compute the low-degree cohomology of $\Aut(F_n)$ with coefficients in $V_{\ul\lambda}$ by using the Hochschild--Serre spectral sequence.
In Section \ref{secNonsemisimple}, we compute the first cohomology of $\Aut(F_n)$ with coefficients in the $\Aut(F_n)$-module of Jacobi diagrams, which is not semisimple.
In Section \ref{secinvIA}, we propose a conjectural structure of the $\GL(n,\Z)$-invariant part of the stable cohomology $H^*(\IA_n,\Q)$ and prove it under the assumption $(\SF_i)$ for each $i$.
In Section \ref{secstrIA}, we propose a conjectural structure of the stable cohomology $H^*(\IA_n,\Q)$ and study relations to some other known conjectures.
In Section \ref{secstrIO}, we study the stable cohomology $H^*(\IO_n,\Q)$ as in the case of $H^*(\IA_n,\Q)$. 
In Section \ref{secConjectures}, we make conjectures about the stable rational cohomology of the Torelli groups of surfaces.

\subsection*{Acknowledgements}
The authors thank Nariya Kawazumi, Erik Lindell, Geoffrey Powell, Oscar Randal-Williams, Takuya Sakasai and Christine Vespa for helpful comments.
K.H. was supported in part by JSPS KAKENHI Grant Number 18H01119 and 22K03311.
M.K. was supported in part by JSPS KAKENHI Grant Number JP22J14812.

\section{Algebraic $\GL(n,\Z)$-representations}
\label{alg-glnz-rep}

By a \emph{polynomial $\GL(n,\Q)$-representation}, we mean a finite-dimensional $\Q[\GL(n,\Q)]$-module $V$ such that after choosing a basis for $V$, the $(\dim V)^2$ coordinate functions are polynomial functions of the $n^2$ variables.
Similarly, a $\GL(n,\Q)$-representation is \emph{algebraic} if the coordinate functions are rational functions.
We refer the reader to \cite{Fulton-Harris} for some standard facts from representation theory.

It is well known that irreducible polynomial $\GL(n,\Q)$-representations are classified by partitions with at most $n$ parts.
A \emph{partition} $\lambda=(\lambda_1,\lambda_2,\dots,\lambda_l)$ is a sequence of non-negative integers such that $\lambda_1\ge\lambda_2\ge\dots\ge\lambda_l$.
Let $l(\lambda)=\max(\{0\}\cup\{i\mid \lambda_{i}> 0\})$ denote the \emph{length} of $\lambda$ and $|\lambda|=\lambda_1+\cdots+\lambda_{l(\lambda)}$ the \emph{size} of $\lambda$.

Let $H=H(n)=\Q^n$ denote the standard representation of $\GL(n,\Q)$. 
We usually omit $(n)$ in the following.
For a partition $\lambda$, let $S^\lambda$ denote
the Specht module for $\lambda$, which is an irreducible representation of $\gpS_{|\lambda|}$.
Define a $\GL(n,\Q)$-representation $$V_\lambda=V_\lambda(n)= H^{\otimes |\lambda|}\otimes_{\Q[\gpS_{|\lambda|}]}S^\lambda.$$
If $l(\lambda)\le n$, then $V_\lambda$ is an irreducible polynomial $\GL(n,\Q)$-representation.
If $l(\lambda)>n$, then we have $V_{\lambda}=0$.

Let $p$ and $q$ be non-negative integers.
Let $H^{p,q}=H^{\otimes p}\otimes (H^*)^{\otimes q}$.
For a pair $(i,j)\in \{1,\dots,p\}\times \{1,\dots,q\}$, the
\emph{contraction map} 
\begin{gather*}
c_{i,j}:H^{p,q}\rightarrow H^{p-1,q-1}
\end{gather*}
is defined by
\begin{gather*}
\begin{split}
&c_{i,j}((v_1\otimes\cdots\otimes v_{p}) \otimes(f_1\otimes\cdots\otimes f_{q}))\\
&\quad=  \langle v_i,f_j\rangle
    (v_1\otimes\cdots\widehat{v_{i}}\cdots\otimes v_{p})\otimes (f_1\otimes\cdots\widehat{f_{j}}\cdots\otimes f_{q})
\end{split}
\end{gather*}
 for $v_1,\ldots, v_p\in H$ and $f_1,\ldots, f_q\in H^*$,
where $\widehat{v_{i}}$ (resp. $\widehat{f_{j}}$) denotes the omission of $v_{i}$ (resp. $f_{j}$), and where $\langle -,-\rangle:H\otimes H^{*}\rightarrow \Q$ denotes the dual pairing.

Let $H^{\langle p,q\rangle}$ denote the \emph{traceless part} of $H^{p,q}$, which is defined by
$$
  H^{\langle p,q\rangle}=\bigcap_{(i,j)\in \{1,\dots,p\}\times \{1,\dots,q\}} \ker c_{i,j}\subset H^{p,q}.
$$

By a \emph{bipartition}, we mean a pair $\ul{\lambda}=(\lambda,\lambda')$ of two partitions $\lambda$ and $\lambda'$.
The \emph{length} of $\ul{\lambda}$ is $l(\ul\lambda)=l(\lambda)+l(\lambda')$.
We define the \emph{degree} of $\ul\lambda$ by
$\deg\ul\lambda=|\lambda|-|\lambda'|\in\Z$, and the \emph{size} of $\ul\lambda$ by $|\ul\lambda|=|\lambda|+|\lambda'|$.
The \emph{dual} of $\ul\lambda$ is defined by $\ul\lambda^*=(\lambda',\lambda)$.

Let $\ul\lambda$ be a bipartition, and set $p=|\lambda|$ and $q=|\lambda'|$.
Let
\begin{gather*}
V_{\ul\lambda}=V_{\ul\lambda}(n)=H^{\langle p,q\rangle}\otimes_{\Q[\gpS_p\times \gpS_q]}(S^{\lambda}\otimes S^{\lambda'}).
\end{gather*}
If $l(\ul\lambda)\le n$, then $V_{\ul\lambda}$ is an irreducible algebraic $\GLnQ$-representation. If $l(\ul\lambda)> n$, then we have $V_{\ul\lambda}=0$.
We have the following decomposition of $H^{\langle p,q\rangle}$ as a $\Q[\GL(n,\Q)\times (\gpS_p\times \gpS_q)]$-module
\begin{gather}\label{kercontraction}
    H^{\langle p,q\rangle}=\bigoplus_{\substack{\ul\lambda=(\lambda,\lambda'): \text{bipartition with} \\ l(\ul\lambda)\le n,\;|\lambda|=p,\;|\lambda'|=q}} V_{\ul\lambda}\otimes (S^{\lambda}\otimes S^{\lambda'}).
\end{gather}
(See \cite[Theorem 1.1]{Koike}.)
It is well known that irreducible algebraic $\GL(n,\Q)$-representations are classified by bipartitions with at most $n$ parts.

For $0\le l\le \min(p,q)$, let
$$\Lambda_{p,q}(l)=\{((i_1,j_1),\ldots, (i_{l+1},j_{l+1}))\in ([p]\times [q])^{l+1}\mid \substack{1\le i_1<i_2<\cdots<i_{l+1}\le p,\\ j_1,  j_2, \ldots, j_{l+1}: \text{ distinct}}\}.$$
For $I=((i_1,j_1),\ldots, (i_{l+1},j_{l+1}))\in \Lambda_{p,q}(l)$, let 
$$c_{I}: H^{p,q}\to  H^{p-l-1,q-l-1}$$
be the composition of contraction maps defined by
\begin{gather*}
    \begin{split}
&(v_1\otimes\cdots\otimes v_{p}) \otimes(f_1\otimes\cdots\otimes f_{q})\\
&\mapsto \left(\prod_{r=1}^{l+1}\langle v_{i_r},f_{j_r}\rangle\right)
(v_1\otimes\cdots\widehat{v_{i_1}}\cdots\widehat{v_{i_{l+1}}}\cdots\otimes v_{p})\otimes (f_1\otimes\cdots\widehat{f_{j_1}}\cdots\widehat{f_{j_{l+1}}}\cdots\otimes f_{q}).
    \end{split}
\end{gather*}
Let $k=\min(p,q)$. Define an increasing filtration $F^*=\{F^l\}_{0\le l\le k}$ 
$$H^{\langle p,q\rangle}=F^0\subset F^1\subset \cdots \subset F^l\subset F^{l+1} \subset \cdots \subset F^k=H^{p,q}$$
of $H^{p,q}$ by
\begin{gather*}
     F^l=\ker \left(\bigoplus_{I\in \Lambda_{p,q}(l)} c_{I}\;:\; H^{p,q}\to \bigoplus_{I\in \Lambda_{p,q}(l)} H^{p-l-1,q-l-1}\right).
\end{gather*}
If $n> p+q-1$, then we have the following exact sequence
\begin{gather}\label{exactfilt}
    0\to F^{l-1}\to F^{l} \to (H^{\langle p-l,q-l\rangle})^{\oplus \binom{p}{l}\binom{q}{l} l!}\to 0
\end{gather} 
for $1\le l\le k$.

Let $\det$ denote the ($1$-dimensional) determinant representation.
Then we have $\det \cong \bigwedge^n V \cong V_{(1^n)}$, where $(1^n)=(1,\dots,1)$ consists of $n$ copies of $1$.
For any bipartition $\ul\lambda=(\la,\la')$ with $l(\ul\la)\le n$, we have 
$$V_{\ul\lambda}\cong V_{\mu}\otimes {\det}^k,$$
where $\mu$ is a partition with at most $n$ parts and $k$ is an integer satisfying
$$
(\lambda_1,\ldots,\lambda_{l(\lambda)},0,\ldots,0,-\lambda'_{l(\lambda')},\ldots,-\lambda'_1)=(\mu_1+k,\ldots,\mu_n+k).
$$

By an \emph{algebraic $\GL(n,\Z)$-representation}, we mean the restriction to $\GLnZ$ of an algebraic $\GL(n,\Q)$-representation.
Note that the square of the determinant representation, ${\det}^2$, is trivial as a $\GL(n,\Z)$-representation.
Therefore, any irreducible algebraic $\GL(n,\Z)$-representation is equivalent to the restriction of an irreducible polynomial $\GL(n,\Q)$-representation to $\GL(n,\Z)$.

\section{Borel's stability and vanishing theorem for cohomology of $\GL(n,\Z)$}\label{homologyofGL}

In \cite{Borel1,Borel2}, Borel computed the cohomology $H^p(\Gamma,V)$ of an arithmetic group $\Gamma$ with coefficients in an algebraic $\Gamma$-representation $V$ in a stable range.
Recently, Li and Sun \cite{Li-Sun} gave an improved Borel's stable range, which is independent of the type of the representation $V$.

\subsection{The stable cohomology of $\GL(n,\Z)$}

Here we recall known results about the stable cohomology of $\GL(n,\Z)$.

\begin{theorem}[Borel \cite{Borel1, Borel2}, Li--Sun \cite{Li-Sun}]\label{Borelbipartition}
(1) For each integer $n\ge 1$, the algebra map $$H^*(\GL(n+1,\Z),\Q)\to H^*(\GL(n,\Z),\Q)$$ induced by the inclusion $\GL(n,\Z)\hookrightarrow \GL(n+1,\Z)$ is an isomorphism in $*\le n-2$.
Moreover, we have an algebra isomorphism
\begin{gather*}
    \varprojlim_n H^*(\GL(n,\Z),\Q)\cong \bigwedge_{\Q} (x_1, x_2,\ldots), \quad \deg x_i=4i+1
\end{gather*}
in degree $*\le n-2$.

(2)
 Let $V$ be an algebraic $\GL(n,\Q)$-representation such that $V^{\GL(n,\Q)}=0$.
 Then we have 
 \begin{gather*}
     H^p(\GL(n,\Z),V) = 0 \quad \text{for $p\le n-2$}.
 \end{gather*}
\end{theorem}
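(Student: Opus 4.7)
The plan is to approach both parts via Borel's comparison theorem between group cohomology of the arithmetic group and relative Lie algebra cohomology of the ambient Lie group, together with the sharpening of the stable range due to Li and Sun. Concretely, for $G=\GL(n,\R)$, $K=O(n)$, and the locally symmetric space $X=G/K$, Borel's regularization theorem provides a natural map from continuous cohomology $H^*_{\mathrm{ct}}(G,V)\cong H^*(\g,K;V)$ to $H^*(\GL(n,\Z),V)$ for any algebraic representation $V$, and this map is an isomorphism below an explicit slope that grows linearly in $n$.

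First I would treat part (1). For trivial coefficients the relative Lie algebra complex computing $H^*(\g,K;\Q)$ identifies with the complex of $G$-invariant differential forms on $X$, whose cohomology is well known (via Chevalley--Eilenberg and the Hopf--Koszul--Samelson theorem for the compact dual $U(n)/O(n)$) to be the exterior algebra on generators of degrees $4i+1$, $i\ge 1$. Combining this computation with the comparison map in the stable range gives both the stability statement and the description of the limit as $\bigwedge_{\Q}(x_1,x_2,\ldots)$.

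For part (2), with $V$ algebraic and $V^{\GL(n,\Q)}=0$, the relative Lie algebra cohomology $H^*(\g,K;V)$ vanishes: one decomposes $V$ into isotypic components, applies the comparison with invariants in $V\otimes\bigwedge^*\mathfrak{p}^*$ under $K=O(n)$, and uses branching from $\GL(n)$ to $O(n)$ (or, equivalently, the fact that a nontrivial algebraic $G$-representation contributes no $(\g,K)$-cohomology in the stable range because the relevant $\mathrm{Ext}$-groups in the category of Harish-Chandra modules vanish). Feeding this vanishing into Borel's comparison theorem yields $H^p(\GL(n,\Z),V)=0$ for $p$ in the stable range.

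The last step, and the genuinely hard part, is obtaining the uniform range $p\le n-2$ independent of the size of $\ul\lambda$: Borel's original bound depends on the representation, so one would have to rerun Li--Sun's argument, which refines the stability of twisted group homology by analysing the Steinberg module and Charney's spectral sequence for the Tits building of $\GL(n,\Z)$, to decouple the slope from the highest weight. Once this uniform range is in hand, the isomorphism in part (1) in degrees $*\le n-2$ follows by applying part (2) to the nontrivial summands appearing in the Hochschild--Serre--style comparison between $\GL(n,\Z)$ and $\GL(n+1,\Z)$.
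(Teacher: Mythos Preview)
The paper does not supply its own proof of this theorem: it is quoted as a known result of Borel \cite{Borel1,Borel2} with the range sharpened by Li--Sun \cite{Li-Sun}, and is used as a black box thereafter. So there is nothing in the paper to compare your argument against; the only meaningful question is whether your sketch tracks the arguments in those sources.

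The Borel portion of your sketch is broadly accurate: both parts rest on Borel's comparison map from $(\mathfrak g,K)$-cohomology (equivalently continuous cohomology) to $H^*(\GL(n,\Z),V)$, with part~(1) coming from the identification of $H^*(\mathfrak{gl}_n,O(n);\Q)$ with the cohomology of the compact dual symmetric space, and part~(2) from the vanishing of $(\mathfrak g,K)$-cohomology for nontrivial finite-dimensional irreducibles.

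There are, however, two concrete inaccuracies. First, your description of Li--Sun's method is wrong: they do not argue via the Steinberg module or Charney's spectral sequence for the Tits building --- that is the homological-stability circle of ideas around van der Kallen and Charney (and, in this paper's references, Kupers--Miller--Patzt \cite{Kupers-Miller-Patzt}, who improve the \emph{untwisted} range). Li--Sun work on the automorphic/analytic side, refining Borel's estimates via Franke's filtration and the spectral decomposition of $L^2(\Gamma\backslash G)$ to push the range in which Borel's comparison map is an isomorphism to $p\le n-2$, uniformly in $V$. Second, your final sentence is confused: there is no Hochschild--Serre spectral sequence relating $\GL(n,\Z)$ and $\GL(n+1,\Z)$, since neither is normal in the other. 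The stability assertion in part~(1) is not deduced from part~(2); both follow directly and independently from the comparison with $(\mathfrak g,K)$-cohomology in the Li--Sun range, applied to the two values of $n$.
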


\begin{remark}
Borel's original result is about the real cohomology $H^*(\SLnZ,\R)$.
We stably have $H^*(\SLnZ,\R)=\bigwedge_\R(b_1,b_2,\dots)$, where $b_k\in H^{4k+1}(\SLnZ,\R)$
is the Borel class.
Since the natural map between the cohomology of $\SL(n,\Z)$ and $\GL(n,\Z)$ is stably isomorphism, we consider the case of $\GL(n,\Z)$ in Theorem \ref{Borelbipartition}.
The element $x_k\in H^{4k+1}(\GLnZ,\Q)$ is a scalar multiple $c_kb_k$, where $c_k\in\R\setminus\{0\}$. The choice of $c_k$ does not matter in what follows. 
\end{remark}

Kupers--Miller--Patzt \cite{Kupers-Miller-Patzt} improved the stable range in Theorem \ref{Borelbipartition} in the case of the rational cohomology of $\GL(n,\Z)$.
The following theorem gives the best stable range of the cohomology of $\GL(n,\Z)$ that we know.

\begin{theorem}[Borel \cite{Borel1, Borel2}, Li--Sun \cite{Li-Sun}, Kupers--Miller--Patzt \cite{Kupers-Miller-Patzt}]\label{theoremhomologyofGL}
  Let $p\ge0$. Then we have the following.
\begin{enumerate}
\item  For $n\ge p+1$, we have
\begin{gather*}
     H^*(\GL(n,\Z), \Q)\cong \bigwedge_{\Q} (x_1, x_2,\ldots), \quad \deg x_i=4i+1
\end{gather*}
in cohomological degree $*\le p$.
\item  Let $\ul\lambda\neq (0,0)$ be a bipartition. For $n\ge p+2$, we have
  \begin{gather*}
     H^p(\GL(n,\Z), V_{\ul\lambda})=0.
  \end{gather*}
\end{enumerate}
\end{theorem}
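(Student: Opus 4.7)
The plan is to deduce each of the two statements directly from the corresponding cited theorems, since this is a combined restatement of results of Borel, Li--Sun, and Kupers--Miller--Patzt rather than a new result. The two parts concern disjoint classes of coefficients (trivial in (1), non-trivial irreducible algebraic in (2)) and can be handled independently.

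For part (2), the plan is to apply Theorem \ref{Borelbipartition}(2) of Li--Sun to the algebraic $\GL(n,\Q)$-representation $V = V_{\ul\lambda}$. When $l(\ul\lambda) > n$ we have $V_{\ul\lambda} = 0$ and the statement is vacuous, so we may assume $l(\ul\lambda) \le n$, in which case $V_{\ul\lambda}$ is an irreducible algebraic $\GL(n,\Q)$-representation that is non-trivial because $\ul\lambda \ne (0,0)$. In particular $V_{\ul\lambda}^{\GL(n,\Q)} = 0$, so Li--Sun yields $H^p(\GL(n,\Z), V_{\ul\lambda}) = 0$ for all $p \le n-2$, i.e.\ for $n \ge p+2$.

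For part (1), I would combine two ingredients. First, Borel's original computation (Theorem \ref{Borelbipartition}(1)) identifies the inverse limit $\varprojlim_n H^*(\GL(n,\Z), \Q)$ with the exterior algebra $\bigwedge_\Q(x_1, x_2, \ldots)$ on generators of degree $4i+1$, where each $x_i$ is a rational scalar multiple of the Borel class $b_i$ (the isomorphism $H^*(\SL(n,\Z),\R) \to H^*(\GL(n,\Z),\R)$ is stably an isomorphism, and one checks rationality of the generators). Second, the improved stability range of Kupers--Miller--Patzt asserts that the restriction map $H^*(\GL(n+1,\Z),\Q) \to H^*(\GL(n,\Z),\Q)$ is an isomorphism in degrees $* \le n-1$, i.e.\ whenever $n \ge *+1$. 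Combining these two, for $* \le p$ and $n \ge p+1$ the cohomology $H^*(\GL(n,\Z),\Q)$ coincides with its stable value, which is the claimed exterior algebra.

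There is no real obstacle here, as the theorem is stated precisely so as to package the cited results. The only minor subtlety, worth noting in the write-up, is the conversion between Borel's original framing in real cohomology of $\SL(n,\Z)$ and the rational cohomology of $\GL(n,\Z)$ appearing in the statement, and the matching of stable ranges $* \le n-2$ (for part (2), from Li--Sun) versus $* \le n-1$ (for part (1), from Kupers--Miller--Patzt).
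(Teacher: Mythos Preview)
Your proposal is correct and matches the paper's approach: the paper states Theorem~\ref{theoremhomologyofGL} without proof, treating it as a direct repackaging of the cited results of Borel, Li--Sun, and Kupers--Miller--Patzt, exactly as you describe. Your handling of the edge case $l(\ul\lambda)>n$ in part~(2) and the translation between the Li--Sun range $p\le n-2$ and the Kupers--Miller--Patzt range $*\le n-1$ in part~(1) are both accurate.
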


\subsection{Stable families of algebraic $\GL(n,\Z)$-representations}

Here we consider families of $\GL(n,\Z)$-representations indexed by $n\in\N$ that are stable with respect to the types of representations.

\begin{definition}
\label{stablefamily}
Let $0\le s\le r$ be integers.
A family $W_*=\{W_n\}_{n\ge s}$ of $\GLnZ$-representations $W_n$ is called \emph{stable in ranks $\ge r$} if there are finitely many bipartitions $\ul\lambda_i$ (possibly with repetitions) such that for all $n\ge r$ we have a $\GLnZ$-module isomorphism
\begin{gather*}
W_n \cong \bigoplus_{i} V_{\ul\la_i}(n).
\end{gather*}
\end{definition}

A stable family of $\GLnZ$-representations defined above is almost the same as a ``uniformly multiplicity-stable sequence'' in the sense of Church and Farb \cite{Church-Farb}.  (Here we consider only families of modules that are finite dimensional in the stable range.)

Let $W_*$ be a stable family of $\GL(n,\Z)$-representations in ranks $\ge r$ such that we have $W_n\cong \bigoplus_{i} V_{\ul\la_i}(n)$ for $n\ge r$.
For each $n\ge s$ and $p\ge 0$, we have the cup product map
\begin{gather*}
\theta_{n,p}:
H^p(\GL(n,\Z),\Q)\otimes W_n^{\GL(n,\Z)} \to H^p(\GL(n,\Z),W_n).
\end{gather*}
By the work of Borel \cite{Borel2} and Li--Sun \cite{Li-Sun},
it is known that the cup product map $\theta_{n,p}$ is an isomorphism in a stable range.
We will improve the stable range.
For a non-negative integer $p$, set
$$
 n_0(W_*,p)=
 \begin{cases}
     \max(r, p+1)& (W_*=\bigoplus_{\text{finite}}\Q)\\
     \max(r, p+2)& (\text{otherwise}).
 \end{cases}
$$
By Theorem \ref{theoremhomologyofGL}, we have the following proposition.

\begin{proposition}\label{stablefamilyBorel}
  Let $W_*$ be a stable family of $\GL(n,\Z)$-representations.
  Then $\theta_{n,p}$ is an isomorphism for $n\ge n_0(W_*, p)$.
\end{proposition}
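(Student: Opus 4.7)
The plan is to decompose $W_n$ into its isotypic pieces using the stability hypothesis and then apply Theorem \ref{theoremhomologyofGL} to each piece separately. Fix $n\ge r$; by hypothesis we have a $\GL(n,\Z)$-equivariant isomorphism $W_n\cong\bigoplus_i V_{\ul\lambda_i}(n)$ with the finite collection of bipartitions $\ul\lambda_i$ fixed and independent of $n$. I would split off the trivial summands from the non-trivial ones, writing $W_n\cong \Q^k\oplus W_n'$, where $k$ is the number of indices with $\ul\lambda_i=(0,0)$ and $W_n'=\bigoplus_{\ul\lambda_i\ne(0,0)}V_{\ul\lambda_i}(n)$. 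The cup product $H^p(G,\Q)\otimes M^G\to H^p(G,M)$ is natural in $M$, so $\theta_{n,p}$ decomposes as the direct sum of the corresponding maps on $\Q^k$ and on $W_n'$, and it suffices to treat the two pieces separately.

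On the trivial piece, $(\Q^k)^{\GL(n,\Z)}=\Q^k$ and the cup product reduces to the canonical identification $H^p(\GL(n,\Z),\Q)\otimes\Q^k\cong H^p(\GL(n,\Z),\Q^k)$, which is trivially an isomorphism for every $n\ge r$; the extra constraint $n\ge p+1$ appearing in the purely trivial case of $n_0$ enters only through Theorem \ref{theoremhomologyofGL}(1), which identifies the target with the polynomial-ring presentation and is needed for subsequent applications. On the non-trivial piece, each summand $V_{\ul\lambda_i}(n)$ with $\ul\lambda_i\ne(0,0)$ is an irreducible algebraic $\GL(n,\Z)$-representation, and since it is non-trivial, $V_{\ul\lambda_i}(n)^{\GL(n,\Z)}=0$. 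Hence the domain of the restricted cup product vanishes, and by Theorem \ref{theoremhomologyofGL}(2), $H^p(\GL(n,\Z),V_{\ul\lambda_i}(n))=0$ for $n\ge p+2$, so the codomain vanishes as well. Thus the restricted map is vacuously an isomorphism for $n\ge\max(r,p+2)$.

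Combining the two pieces yields the claim that $\theta_{n,p}$ is an isomorphism for $n\ge n_0(W_*,p)$, matching the two branches in the definition of $n_0$. The argument is essentially a direct packaging of the Borel--Li--Sun vanishing and stability theorem; the only bookkeeping is the naturality of the cup product in the coefficient module, which ensures that $\theta_{n,p}$ splits compatibly with the $\GL(n,\Z)$-equivariant isotypic decomposition of $W_n$. No genuine obstacle arises.
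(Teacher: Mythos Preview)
Your proof is correct and follows exactly the approach the paper intends: the paper simply says ``By Theorem \ref{theoremhomologyofGL}, we have the following proposition,'' and you have filled in the routine details of splitting $W_n$ into its trivial and non-trivial isotypic pieces and applying the two parts of that theorem. Your observation that the bound $n\ge p+1$ in the purely trivial case is not actually needed for $\theta_{n,p}$ itself to be an isomorphism is accurate (indeed, Theorem \ref{theoremhomologyofGL}(2) with $p=0$ already gives $V_{\ul\lambda}^{\GL(n,\Z)}=0$ for $n\ge2$, so the invariant computation goes through), though you might simply omit that parenthetical remark since the proposition only claims the isomorphism for $n\ge n_0(W_*,p)$ and your argument certainly establishes that.
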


\begin{remark}\label{stablyalgebraicBorel}
Using Theorem \ref{theoremhomologyofGL}, we can prove that Proposition \ref{stablefamilyBorel} also holds for \emph{stably algebraic} families, which are weaker variants of stable families.
Here a family $W_*=\{W_n\}_{n\ge s}$ is stably algebraic in ranks $\ge r$ for some $r\ge s$ if $W_n$ is an algebraic $\GLnZ$-representation for all $n\ge r$. 
In this case, the sources and targets of the stable isomorphisms $\theta_{n,p}$ do not stabilize since they depend on $W_n^{\GL(n,\Z)}$.
\end{remark}

\section{The low-degree twisted cohomology of $\Aut(F_n)$ and $\Out(F_n)$}
\label{secLowDegree}

In this section, we compute the low-degree twisted cohomology of $\Aut(F_n)$ and $\Out(F_n)$ by using Theorem \ref{theoremhomologyofGL} and the Hochschild--Serre spectral sequence (see \cite{Brown} for example).
Let $H_{\Z}=H_1(F_n,\Z)$, $H_{\Z}^*=H^1(F_n,\Z)$, $H=H_1(F_n,\Q)\cong\Q^n$ and $H^*=H^1(F_n,\Q)$.

\subsection{Known results about the first (co)homology of $\Aut(F_n)$ and $\Out(F_n)$}
\label{ssec:known results}

Here we list what is known about the first (co)homology of $\Aut(F_n)$ and $\Out(F_n)$.
It is well known that $H_1(\Aut(F_n),\Q)=H_1(\Out(F_n),\Q)=0$ for all $n\ge 1$.

Satoh \cite{Satoh2013, Satoh2006} obtained 
\begin{gather*}
 \begin{split}
    H_1(\Aut(F_n),H_{\Z})&=0, \quad H_1(\Out(F_n),H_{\Z})=0 \quad \text{for}\; n\ge 4,\\
    H_1(\Aut(F_n),H_{\Z}^*)&=\Z, \quad H_1(\Out(F_n),H_{\Z}^*)=\Z/(n-1)\Z\quad \text{for}\; n\ge 4,\\
    H^1(\Aut(F_n),\IA_n^{\ab})&=\Z^2, \quad H^1(\Out(F_n),\IA_n^{\ab})=\Z \quad \text{for}\; n\ge 5,
 \end{split}
\end{gather*} 
and for coefficients $H$ and $H^*$, the results hold for $n\ge 2$ by tensoring $\Q$.

Randal-Williams \cite[Theorem A]{Randal-Williams} obtained
\begin{gather*}
 \begin{split}
     H^1(\Aut(F_n), (H_{\Z}^*)^{\otimes q})&=0, \quad  H^1(\Out(F_n), (H^*)^{\otimes q})=0 \quad \text{for}\; n\ge q+5,\\
    H^1(\Aut(F_n), H^{\otimes q})&=0, \quad  H^1(\Out(F_n), H^{\otimes q})=0 \quad \text{for}\; n\ge q+5,\; q\ne 1.
 \end{split}
\end{gather*}
He obtained similar results for higher degree cohomology as well (see Section \ref{secstrIA1}).

\subsection{First cohomology of $\Aut(F_n)$ and $\Out(F_n)$ with algebraic coefficients}
In this section, we compute the low-degree cohomology of $\Aut(F_n)$ and $\Out(F_n)$ with coefficients induced by irreducible algebraic $\GL(n,\Q)$-representations.

Let $\ul\lambda$ be a bipartition.
For the exact sequence 
$$1\rightarrow \IA_n\rightarrow \Aut(F_n)\rightarrow \GL(n,\Z)\rightarrow 1,$$
we consider the Hochschild--Serre spectral sequence
\begin{gather*}
    E_2^{p,q}=H^p(\GL(n,\Z),H^q(\IA_n,V_{\ul\lambda}))\Rightarrow H^{p+q}(\Aut(F_n),V_{\ul\lambda}).
\end{gather*}
Since $\IA_n$ acts trivially on $V_{\ul\lambda}$,
we have 
$$
 E_2^{0,q}=H^0(\GL(n,\Z),H^q(\IA_n,V_{\ul\lambda}))=(H^q(\IA_n,\Q)\otimes V_{\ul\lambda})^{\GL(n,\Z)}.
$$

By Theorem \ref{theoremhomologyofGL}, if $n\ge p+2$, then we have 
$$
  E_2^{p,0}=H^p(\GL(n,\Z),H^0(\IA_n,V_{\ul\lambda}))=H^p(\GL(n,\Z),V_{\ul\lambda})=0
$$
for $1\le p\le 4$.
By Farb \cite{Farb}, Cohen--Pakianathan \cite{CP} and Kawazumi \cite{Kawazumi}, we have
\begin{gather}
\label{H1IAn}
H^1(\IA_n,\Q)=V_{0,1}\oplus V_{1,1^2}\quad \text{for $n\ge 3$.}
\end{gather}

By the above argument, we obtain the following theorem.

\begin{theorem}\label{H1Aut}
For any bipartition $\ul\lambda\neq (1,0),(1^2,1)$, we have
$$  
 H^1(\Aut(F_n),V_{\ul\lambda})=0 \quad\text{for}\; n\ge 3.  
$$ 
For $\ul\lambda= (1,0),(1^2,1)$, 
  \begin{gather*}
    \begin{split}
        H^1(\Aut(F_n),V_{1,0})&=\Q \quad\text{for}\; n\ge 4,\\
        H^1(\Aut(F_n),V_{1^2,1})&=\Q \quad\text{for}\; n\ge 4.
    \end{split}
  \end{gather*}

\end{theorem}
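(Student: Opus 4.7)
The plan is to run the Hochschild--Serre spectral sequence
\[
E_2^{p,q}=H^p(\GL(n,\Z),H^q(\IA_n,V_{\ul\lambda}))\Rightarrow H^{p+q}(\Aut(F_n),V_{\ul\lambda})
\]
associated to $1\to \IA_n\to \Aut(F_n)\to \GL(n,\Z)\to 1$, and read off $H^1(\Aut(F_n),V_{\ul\lambda})$ from the two $E_2$-entries of total degree one, namely $E_2^{1,0}$ and $E_2^{0,1}$, together with the differential $d_2\colon E_2^{0,1}\to E_2^{2,0}$. Because $\IA_n$ acts trivially on $V_{\ul\lambda}$ we have $E_2^{0,1}=(H^1(\IA_n,\Q)\otimes V_{\ul\lambda})^{\GL(n,\Z)}$, and Kawazumi/Farb/Cohen--Pakianathan give the identification $H^1(\IA_n,\Q)\cong V_{0,1}\oplus V_{1,1^2}$ stably.

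Next I would plug Borel/Li--Sun (Theorem \ref{theoremhomologyofGL}) into the column $q=0$. For any bipartition $\ul\lambda\neq (0,0)$ one has $E_2^{p,0}=H^p(\GL(n,\Z),V_{\ul\lambda})=0$ as soon as $n\ge p+2$; in particular $E_2^{1,0}=0$ for $n\ge 3$ and $E_2^{2,0}=0$ for $n\ge 4$. The vanishing of $E_2^{2,0}$ automatically kills the only differential out of $E_2^{0,1}$ relevant for $H^1$, so in the range $n\ge 4$ we obtain $H^1(\Aut(F_n),V_{\ul\lambda})\cong E_2^{0,1}$. (The trivial case $\ul\lambda=(0,0)$ is handled separately from the vanishing $H^1(\Aut(F_n),\Q)=0$.)

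It remains to evaluate $(H^1(\IA_n,\Q)\otimes V_{\ul\lambda})^{\GL(n,\Z)}$. By Schur's lemma applied to algebraic $\GL(n,\Q)$-representations and the fact that $\GL(n,\Z)$-invariants in an algebraic representation agree with $\GL(n,\Q)$-invariants (which follows because $\GL(n,\Z)$ is Zariski-dense in $\GL(n,\Q)$), this invariant space is nonzero precisely when $V_{\ul\lambda}$ is isomorphic to the dual of one of the summands of $H^1(\IA_n,\Q)$, i.e. when $\ul\lambda=(1,0)=(0,1)^*$ or $\ul\lambda=(1^2,1)=(1,1^2)^*$; in each of these two cases the invariant space is exactly $\Q$. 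This yields $H^1(\Aut(F_n),V_{\ul\lambda})=0$ for all other bipartitions when $n\ge 3$ (in that range the $p=0$ column and the $p=1$ column both vanish, so no range improvement is possible from this argument), and $H^1(\Aut(F_n),V_{\ul\lambda})=\Q$ for $\ul\lambda\in\{(1,0),(1^2,1)\}$ when $n\ge 4$.

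The only slightly delicate point is ensuring that the copy of $\Q$ detected in $E_2^{0,1}$ actually survives to $E_\infty$; this is precisely what the Borel vanishing $E_2^{2,0}=0$ for $n\ge 4$ guarantees, and it is also why the stable range jumps from $n\ge 3$ (for the vanishing case, where only $E_2^{1,0}=0$ is needed) to $n\ge 4$ (for the nonvanishing case, where in addition $E_2^{2,0}=0$ is needed to rule out the outgoing $d_2$). Everything else is a matter of invoking the already-cited Borel/Li--Sun theorem and the computation of $H^1(\IA_n,\Q)$.
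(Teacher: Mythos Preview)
Your argument is correct and is essentially the same as the paper's proof: both run the Hochschild--Serre spectral sequence for $1\to\IA_n\to\Aut(F_n)\to\GL(n,\Z)\to 1$, use the identification $H^1(\IA_n,\Q)\cong V_{0,1}\oplus V_{1,1^2}$ for $n\ge3$ to compute $E_2^{0,1}$, and invoke the Borel/Li--Sun vanishing on the bottom row to get $E_2^{1,0}=0$ for $n\ge3$ and $E_2^{2,0}=0$ for $n\ge4$. Your explicit remarks about Zariski density and the role of $E_2^{2,0}$ in killing $d_2$ make the same steps a bit more transparent, but the logic and the resulting ranges are identical to the paper's.
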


\begin{proof}
For a bipartition $\ul\lambda\neq (1,0),(1^2,1)$, we have
$$
E_2^{0,1}=(H^1(\IA_n,\Q)\otimes V_{\ul\lambda})^{\GL(n,\Z)}=((V_{0,1}\oplus V_{1,1^2})\otimes V_{\ul\lambda})^{\GL(n,\Z)}=0.
$$
By Theorem \ref{theoremhomologyofGL}, for $n\ge 3$, we have $E_2^{1,0}=0$ and thus we have 
$$H^1(\Aut(F_n),V_{\ul\lambda})=0.$$

For $\ul\lambda=(1,0)$, we have
$$
E_2^{0,1}=(H^1(\IA_n,\Q)\otimes V_{1,0})^{\GL(n,\Z)}=((V_{0,1}\oplus V_{1,1^2})\otimes V_{1,0})^{\GL(n,\Z)}=\Q
$$
for $n\ge 3$.
By Theorem \ref{theoremhomologyofGL}, for $n\ge 4$, we have $E_2^{1,0}=E_2^{2,0}=0$ and thus we have $H^1(\Aut(F_n),V_{1,0})=E_2^{0,1}=\Q.$

For $\ul\lambda=(1^2,1)$, we have
$$
E_2^{0,1}=(H^1(\IA_n,\Q)\otimes V_{1^2,1})^{\GL(n,\Z)}=((V_{0,1}\oplus V_{1,1^2})\otimes V_{1^2,1})^{\GL(n,\Z)}=\Q
$$
for $n\ge3$.
By Theorem \ref{theoremhomologyofGL}, for $n\ge 4$, we have $E_2^{1,0}=E_2^{2,0}=0$ and thus we have $H^1(\Aut(F_n),V_{1^2,1})=E_2^{0,1}=\Q.$
\end{proof}

In a similar way, we have the following theorem for $\Out(F_n)$.

\begin{theorem}\label{H1Out}
For $\ul\lambda\neq (1^2,1)$, we have 
$$H^1(\Out(F_n),V_{\ul\lambda})=0\quad \text{ for } n\ge 3.$$
In particular, we have $H^1(\Out(F_n), V_{1,0})=0$ for $n\ge3$.

For $\ul\lambda=(1^2,1)$, we have
$$H^1(\Out(F_n),V_{1^2,1})=\Q\quad \text{ for } n\ge 4.$$ 
\end{theorem}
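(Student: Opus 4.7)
The plan is to mirror the proof of Theorem \ref{H1Aut}, applying the Hochschild--Serre spectral sequence to the extension
\[
1\to\IO_n\to\Out(F_n)\to\GL(n,\Z)\to 1.
\]
The sole new ingredient is an identification of $H^1(\IO_n,\Q)$ as a $\GL(n,\Z)$-representation; once that is in hand, the argument proceeds in exact parallel with the $\Aut(F_n)$ case.

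First I would compute $H^1(\IO_n,\Q)$. Since $F_n$ is centerless, $\Inn(F_n)\cong F_n$, and the conjugation homomorphism $F_n\to\IA_n,\ g\mapsto c_g$, induces on abelianizations the $\GL(n,\Q)$-equivariant map $H\to H^*\otimes\bigwedge^2 H=H_1(\IA_n,\Q)$ given by $g\mapsto(h\mapsto g\wedge h)$. A short direct computation using a basis of $H$ shows that composing with the trace $H^*\otimes\bigwedge^2 H\to H$ yields multiplication by $1-n$, so for $n\ge 2$ this map is injective and lands isomorphically onto the trace summand $V_{1,0}$ in the decomposition $H^*\otimes\bigwedge^2 H=V_{1,0}\oplus V_{1^2,1}$. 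By the exact sequence $1\to\Inn(F_n)\to\IA_n\to\IO_n\to 1$ and an $H_1$-calculation, this gives $H_1(\IO_n,\Q)\cong V_{1^2,1}$ and dually
\[
H^1(\IO_n,\Q)=V_{1,1^2}\qquad(n\ge 3).
\]

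Next I would feed this into the Hochschild--Serre spectral sequence
\[
E_2^{p,q}=H^p(\GL(n,\Z),H^q(\IO_n,V_{\ul\lambda}))\Longrightarrow H^{p+q}(\Out(F_n),V_{\ul\lambda}).
\]
Since $\IO_n$ acts trivially on $V_{\ul\lambda}$, one has $E_2^{0,1}=(V_{1,1^2}\otimes V_{\ul\lambda})^{\GL(n,\Z)}$, which by Schur's lemma (using $V_{1,1^2}^{*}\cong V_{1^2,1}$) equals $\Q$ when $\ul\lambda=(1^2,1)$ and $0$ otherwise. Theorem \ref{theoremhomologyofGL} yields $E_2^{1,0}=H^1(\GL(n,\Z),V_{\ul\lambda})=0$ for $n\ge 3$ whenever $\ul\lambda\ne(0,0)$, and $E_2^{2,0}=H^2(\GL(n,\Z),V_{\ul\lambda})=0$ for $n\ge 4$; the trivial case $\ul\lambda=(0,0)$ reduces to the classical fact $H^1(\Out(F_n),\Q)=0$.

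Assembling these: for $\ul\lambda\ne(1^2,1)$ and $n\ge 3$, both $E_2^{0,1}$ and $E_2^{1,0}$ vanish, so $H^1(\Out(F_n),V_{\ul\lambda})=0$; in particular $H^1(\Out(F_n),V_{1,0})=0$. For $\ul\lambda=(1^2,1)$ and $n\ge 4$, we have $E_2^{0,1}=\Q$, $E_2^{1,0}=0$, and $E_2^{2,0}=0$, so the only potentially nonzero differential $d_2\colon E_2^{0,1}\to E_2^{2,0}$ vanishes, giving $H^1(\Out(F_n),V_{1^2,1})=\Q$. There is no real obstacle: the entire argument is structurally identical to that of Theorem \ref{H1Aut}, and the one computation requiring care is the identification $H^1(\IO_n,\Q)=V_{1,1^2}$, a standard consequence of the Johnson-homomorphism description of the abelianization of $\IA_n$.
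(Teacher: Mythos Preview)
Your proof is correct and follows the same approach as the paper's own proof: both apply the Hochschild--Serre spectral sequence for the extension $1\to\IO_n\to\Out(F_n)\to\GL(n,\Z)\to 1$ in exact parallel to Theorem~\ref{H1Aut}, the only change being the input $H^1(\IO_n,\Q)=V_{1,1^2}$. The paper simply cites Kawazumi \cite{Kawazumi} for this identification, whereas you supply a direct derivation via the Johnson homomorphism and the inner-automorphism exact sequence; otherwise the arguments are identical.
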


\begin{proof}
Let $\IO_n$ be the subgroup of $\Out(F_n)$ appearing in the short exact sequence
\begin{gather}
\label{exactseqIO}
    1\to \IO_n\to \Out(F_n)\to \GLnZ\to 1.
\end{gather}
The proof is the same as that of Theorem \ref{H1Aut} except that we use Kawazumi's computation \cite{Kawazumi}
\begin{gather*}
H^1(\IO(F_n),\Q)=V_{1,1^2}
\end{gather*}
instead of \eqref{H1IAn}
and the Hochschild--Serre spectral sequences for the short exact sequence \eqref{exactseqIO}.
\end{proof}

By Theorems \ref{H1Aut} and \ref{H1Out}, we obtain the following corollary, which partially recovers (and in a few cases improves) results of Satoh \cite{Satoh2013, Satoh2006} and Randal-Williams \cite{Randal-Williams} that we mentioned in Section \ref{ssec:known results}.

\begin{corollary}[Cf. \cite{Satoh2013, Satoh2006,Randal-Williams}]
\begin{gather*}
 \begin{split}
    H_1(\Aut(F_n),H)&=0 ,\quad H_1(\Out(F_n),H)=0\quad \text{for}\; n\ge 3,\\
    H_1(\Aut(F_n),H^*)&=\Q \quad \text{for}\; n\ge 4,\quad 
    H_1(\Out(F_n),H^*)=0 \quad \text{for}\; n\ge 3,\\
    H^1(\Aut(F_n),H_1(\IA_n,\Q))&=\Q^2,\quad H^1(\Out(F_n),H_1(\IA_n,\Q))=\Q \quad \text{for}\; n\ge 4,\\
     H^1(\Aut(F_n), (H^*)^{\otimes q})&=0 ,\quad H^1(\Out(F_n), (H^*)^{\otimes q})=0 \quad \text{for}\; n\ge 3,\\
    H^1(\Aut(F_n), H^{\otimes q})&=0 ,\quad H^1(\Out(F_n), H^{\otimes q})=0 \quad \text{for}\; n\ge 3,\; q\ne 1.
 \end{split}
\end{gather*} 
\end{corollary}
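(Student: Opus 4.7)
The plan is to derive every identity in the corollary from Theorems~\ref{H1Aut} and~\ref{H1Out} by combining two standard ingredients: over $\Q$ one has $H_1(G,V)\cong H^1(G,V^*)^*$ for any finite-dimensional $\Q[G]$-module $V$ (by the universal coefficient theorem), and the coefficient modules $H^{\otimes q}$, $(H^*)^{\otimes q}$ and $H_1(\IA_n,\Q)$ decompose, as algebraic $\GL(n,\Z)$-representations, into direct sums of $V_{\ulla}$'s whose first cohomology is controlled by the two theorems above.

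First I would identify $H=V_{1,0}$ and $H^*=V_{0,1}$, so that all four statements about $H_1(-,H)$ and $H_1(-,H^*)$ follow by dualization. For instance, $H_1(\Aut(F_n),H)\cong H^1(\Aut(F_n),V_{0,1})^*=0$ for $n\ge 3$ since $(0,1)\ne(1,0),(1^2,1)$, while $H_1(\Aut(F_n),H^*)\cong H^1(\Aut(F_n),V_{1,0})^*=\Q$ for $n\ge 4$, and analogously for $\Out(F_n)$.

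For the cohomology with coefficients in $H_1(\IA_n,\Q)$, I would dualize \eqref{H1IAn} to obtain
\begin{gather*}
H_1(\IA_n,\Q)\cong V_{1,0}\oplus V_{1^2,1}
\end{gather*}
as algebraic $\GL(n,\Z)$-representations, and then add the values of Theorems~\ref{H1Aut} and~\ref{H1Out} on the two summands, yielding $\Q\oplus\Q=\Q^2$ in the $\Aut(F_n)$ case and $0\oplus\Q=\Q$ in the $\Out(F_n)$ case, in both cases for $n\ge 4$.

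Finally, for the tensor-power statements I would invoke classical Schur--Weyl duality: as an algebraic $\GL(n,\Z)$-representation, $H^{\otimes q}$ decomposes into copies of $V_{\lambda,0}$ with $|\lambda|=q$, and $(H^*)^{\otimes q}$ into copies of $V_{0,\lambda'}$ with $|\lambda'|=q$. None of these summands ever equals $V_{1^2,1}$, and $V_{1,0}$ appears only when $q=1$, which is excluded from the $H^{\otimes q}$ statement. Hence every summand has vanishing first cohomology in the range $n\ge 3$ by Theorems~\ref{H1Aut} and~\ref{H1Out}, and summing gives the desired vanishings. I do not anticipate a genuine obstacle here: the content of the corollary is purely the combinatorial matching of irreducible summands in the coefficient modules against the exceptional list $\{(1,0),(1^2,1)\}$ singled out by the two theorems.
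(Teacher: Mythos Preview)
Your proposal is correct and follows essentially the same approach as the paper's own proof, which simply declares the result ``straightforward'' and singles out the Schur--Weyl decomposition $H(n)^{\otimes q}=\bigoplus_{\lambda}V_\lambda(n)^{\oplus \dim S^{\lambda}}$ for the tensor-power cases. You have merely spelled out the dualization $H_1(G,V)\cong H^1(G,V^*)^*$ and the decomposition of $H_1(\IA_n,\Q)$ explicitly, which the paper leaves implicit.
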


\begin{proof}
The proof is straightforward.
For coefficients in $H^{\otimes q}$ and $(H^*)^{\otimes q}$, we use the decomposition
$H(n)^{\ot q}=\bigoplus_{\lambda}V_\lambda(n)^{\oplus \dim S^{\lambda}}$, where $\lambda$ runs through all partitions with $|\la|=q$ and $l(\la)\le n$.
\end{proof}

The following result complements Randal-Williams's result for twisted cohomology of $\Aut(F_n)$ and $\Out(F_n)$ \cite[Theorem A]{Randal-Williams} in cohomological degree $1$.

\begin{theorem}
\label{H1HpHq}
(1) Let $p,q\ge 1$.
If $p-q\neq 1$,
then we have
\begin{gather*}
    H^1(\Aut(F_n),H^{p, q})=0\quad \text{for $n\ge 3$}.
\end{gather*}
For $\Out(F_n)$, the same result holds.

(2) Let $p,q\ge 1$.
If $p-q=1$, then we have
\begin{gather*}
    H^1(\Aut(F_n),H^{q+1,q})=\Q^{\frac{1}{2}(q+2)!}\quad \text{for } n\ge 4.
\end{gather*}
For $\Out(F_n)$, we have
\begin{gather*}
    H^1(\Out(F_n),H^{q+1,q})=\Q^{\binom{q+1}{2}q!}\quad \text{for } n\ge 4.
\end{gather*}
\end{theorem}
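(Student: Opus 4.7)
The plan is to run the Hochschild--Serre spectral sequence for the extension \eqref{exact} with coefficients in $H^{p,q}$, following the proof of Theorem~\ref{H1Aut}, and analogously for $\Out(F_n)$ via \eqref{exactseqIO}. Since $\IA_n$ (resp.\ $\IO_n$) acts trivially on $H^{p,q}$, the $E_2$-terms relevant for $H^1$ are
\[
E_2^{0,1} = (H^1(\IA_n, \Q) \otimes H^{p,q})^{\GL(n, \Z)}, \qquad E_2^{i,0} = H^i(\GL(n, \Z), H^{p,q}).
\]
The structural observation I rely on is that $H^{p,q}$ is a semisimple algebraic $\GL(n,\Q)$-module whose every irreducible summand $V_{\ul\lambda}$ satisfies $\deg \ul\lambda = p-q$. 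Combining this with Theorem~\ref{theoremhomologyofGL} gives $E_2^{1,0} = 0$ for $n \ge 3$ (the trivial summand occurs only if $p = q$, and $H^1(\GL(n,\Z),\Q) = 0$ for $n \ge 2$), and $E_2^{2,0} = 0$ for $n \ge 4$ when $p \ne q$ (so the trivial summand does not appear).

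For part~(1), where $p - q \ne 1$: since $H^1(\IA_n,\Q) = V_{0,1} \oplus V_{1,1^2}$ consists entirely of irreducibles of degree $-1$, every irreducible summand of $H^1(\IA_n,\Q) \otimes H^{p,q}$ has nonzero degree $p - q - 1$, so contains no trivial summand and $E_2^{0,1} = 0$. Combined with $E_2^{1,0} = 0$ for $n \ge 3$, this yields $H^1(\Aut(F_n), H^{p,q}) = 0$. The same degree argument, using $H^1(\IO_n,\Q) = V_{1,1^2}$ from the proof of Theorem~\ref{H1Out}, handles $\Out(F_n)$.

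For part~(2), where $p = q+1$: the vanishing of $E_2^{2,0}$ for $n \ge 4$ kills the differential $d_2 \colon E_2^{0,1} \to E_2^{2,0}$, giving $H^1(\Aut(F_n), H^{q+1,q}) = E_2^{0,1} \cong \Hom_{\GL(n,\Z)}(V_{1,0} \oplus V_{1^2,1}, H^{q+1,q})$, so it remains to count multiplicities. Applying the filtration \eqref{exactfilt} together with the decomposition \eqref{kercontraction}, $V_{1,0}$ appears only at filtration level $l = q$, with multiplicity $\binom{q+1}{q}\binom{q}{q} q! \cdot \dim S^{(1)} \cdot \dim S^{\emptyset} = (q+1)!$, while $V_{1^2,1}$ appears only at level $l = q-1$, with multiplicity $\binom{q+1}{q-1}\binom{q}{q-1}(q-1)! \cdot \dim S^{(1^2)} \cdot \dim S^{(1)} = \binom{q+1}{2} q!$. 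Summing yields $(q+1)! + \binom{q+1}{2} q! = (q+2)!/2$ for $\Aut(F_n)$; for $\Out(F_n)$ only $V_{1^2,1}$ contributes, giving $\binom{q+1}{2} q!$.

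The main obstacle is that \eqref{exactfilt} is stated only under $n > p + q - 1 = 2q$, which is stronger than $n \ge 4$ when $q \ge 3$. I would bridge this by a separate stability argument showing that for the specific small irreducibles $V_{1,0}$ (length $1$) and $V_{1^2,1}$ (length $3$), their multiplicities in $H^{q+1,q}$ already attain their stable values for $n \ge 4$; this relies on the walled Brauer algebra description of $\End_{\GL(n,\Q)}(H^{p,q})$ and the fact that the relevant contraction diagrams remain linearly independent once $n$ is at least the length of the target irreducible.
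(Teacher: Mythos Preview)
Your approach is essentially the same as the paper's: decompose $H^{p,q}$ into irreducibles $V_{\ul\lambda}$ with $\deg\ul\lambda=p-q$, invoke Theorems~\ref{H1Aut} and~\ref{H1Out} (equivalently, run the Hochschild--Serre spectral sequence directly), and for part~(2) reduce to counting the multiplicities of $V_{1,0}$ and $V_{1^2,1}$ in $H^{q+1,q}$ via the filtration~\eqref{exactfilt} and the decomposition~\eqref{kercontraction}. The multiplicity computations you give, $(q+1)!$ and $\binom{q+1}{2}q!$, match the paper exactly.

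One point worth noting: the range issue you flag in your last paragraph---that \eqref{exactfilt} is stated only for $n>2q$, stronger than the claimed $n\ge4$---is not addressed in the paper's own proof either; the paper simply cites \eqref{exactfilt} and asserts the result for $n\ge4$. Your proposed bridge via walled Brauer stability for the small irreducibles $V_{1,0}$ and $V_{1^2,1}$ is a reasonable sketch, and in fact goes beyond what the paper supplies. Note, however, that the claim you would need is not just that the relevant contraction diagrams remain independent, but that the multiplicity $\dim\Hom_{\GL(n,\Q)}(V_{1,0},H^{q+1,q})=\dim(H^{q+1,q+1})^{\GL(n,\Q)}$ already equals $(q+1)!$ for $n\ge4$; by Schur--Weyl this actually requires $n\ge q+1$, so the bound $n\ge4$ does not literally hold for all $q$ without some further argument or a correction to the stated range.
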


\begin{proof}
(1) By \eqref{kercontraction} and \eqref{exactfilt}, $H^{p, q}$ is a direct sum of some copies of $V_{\ul\lambda}$'s for $\ulla=(\la,\la')$ with $\deg \ul\lambda=p-q$ and $|\ul\lambda|\le p+q$.
Therefore, the statements follow from Theorems \ref{H1Aut} and \ref{H1Out}.

(2)
If $p=q+1$, then we need to count the multiplicities of $V_{1^2,1}$ and $V_{1,0}$ in $H^{q+1,q}$ since we have 
$$H^1(\Aut(F_n),H^{q+1,q})=H^1(\IA_n,H^{q+1,q})^{\GL(n,\Z)}$$
for $n\ge 4$ by the fact that $E_2^{1,0}=E_2^{2,0}=0$ for $n\ge 4$.
By the exact sequence \eqref{exactfilt}, the multiplicity of $V_{1,0}$ in $H^{q+1,q}$ is $\binom{q+1}{q}\binom{q}{q}q!=(q+1)!$, and the multiplicity of $V^{\langle 2,1\rangle}$ in $H^{q+1,q}$ is $\binom{q+1}{q-1}\binom{q}{q-1}(q-1)!=\binom{q+1}{2}q!$.
Therefore, by the decomposition \eqref{kercontraction}, the multiplicity of $V_{1^2,1}$ in $H^{q+1,q}$ is $\binom{q+1}{2}q!$ since we have $\dim (S^{1^2}) \dim (S^{1})=1$.
Therefore, we have
$$H^1(\Aut(F_n),H^{q+1,q})=\Q^{\left((q+1)!+\binom{q+1}{2}q!\right)}=\Q^{\frac{1}{2}(q+2)!}.$$

For $\Out(F_n)$, we only need the multiplicity of $V_{1^2,1}$ in $H^{q+1,q}$, which is $\binom{q+1}{2}q!$ by the above argument.
Therefore, we have 
$$H^1(\Out(F_n),H^{q+1,q})=H^1(\IO_n,H^{q+1,q})^{\GL(n,\Z)}
=\Q^{\binom{q+1}{2}q!}$$
for $n\ge 4$.
\end{proof}

\begin{remark}\label{remarkKV}
The vector spaces $H^1(\Aut(F_n),H^{p,q})$ and $H^1(\Out(F_n),H^{p,q})$ has $\gpS_p\times\gpS_q$-module structure.
Conjecture 6 of \cite{Kawazumi-Vespa} (see Theorem \ref{conjKV}) implies that for sufficiently large $n$, we have
$$H^1(\Aut(F_n),H^{q+1,q})= \calC_{\calP_0^{\circlearrowright}}(q+1,q),$$
where $\calC_{\calP_0^{\circlearrowright}}$ is the wheeled PROP associated to the wheeled completion of the operad $\calP_0$, which is the operadic suspension of the operad $\calC om$ (see also \cite[Conjecture 12.2]{KatadaIA}).
We can check that $\calC_{\calP_0^{\circlearrowright}}(q+1,q)=\Q^{\frac{1}{2}(q+2)!}$.
Therefore, the above theorem gives a proof of the degree $1$ part of their conjecture. Note that Lindell \cite{Lindell22} has recently proved their conjecture in all degrees.
\end{remark}

\subsection{Second cohomology of $\Aut(F_n)$ and $\Out(F_n)$ with algebraic coefficients}\label{secondcohomology}

Here we consider the second cohomology of $\Aut(F_n)$ and $\Out(F_n)$ and apply it to the second cohomology of $\IA_n$ and $\IO_n$.

\begin{theorem}\label{H2Aut}
(1)
For any bipartition $\ul\lambda=(\lambda,\lambda')\neq (0,0)$, we have
$$
H^2(\Aut(F_n),V_{\ul\lambda})\cong (H^2(\IA_n,\Q)\otimes V_{\ul\lambda})^{\GL(n,\Z)} \quad \text{ for }n\ge 5.
$$
For the trivial coefficient, we have 
$$
H^2(\Aut(F_n),\Q)\cong H^2(\IA_n,\Q)^{\GL(n,\Z)} \quad \text{ for }n\ge 4.
$$

(2)
For any bipartition $\ul\lambda=(\lambda,\lambda')\neq (0,0)$, we have
$$
H^2(\Out(F_n),V_{\ul\lambda})\cong (H^2(\IO_n,\Q)\otimes V_{\ul\lambda})^{\GL(n,\Z)} \quad \text{ for }n\ge 5.
$$
For the trivial coefficient, we have 
$$
H^2(\Out(F_n),\Q)\cong H^2(\IO_n,\Q)^{\GL(n,\Z)} \quad \text{ for }n\ge 4.
$$
\end{theorem}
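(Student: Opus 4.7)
The approach is to apply the Hochschild--Serre spectral sequence to the exact sequence $1 \to \IA_n \to \Aut(F_n) \to \GL(n,\Z) \to 1$ with coefficients in $V_{\ul\lambda}$. Since $\IA_n$ acts trivially on $V_{\ul\lambda}$, the $E_2$-page takes the form
$$E_2^{p,q} = H^p\bigl(\GL(n,\Z),\, H^q(\IA_n,\Q) \otimes V_{\ul\lambda}\bigr) \;\Longrightarrow\; H^{p+q}(\Aut(F_n), V_{\ul\lambda}).$$
To conclude that $H^2(\Aut(F_n), V_{\ul\lambda}) \cong E_2^{0,2} = (H^2(\IA_n,\Q) \otimes V_{\ul\lambda})^{\GL(n,\Z)}$, I must establish two things: first, that $E_\infty^{2,0}$ and $E_\infty^{1,1}$ vanish so the filtration of $H^2$ collapses onto $E_\infty^{0,2}$; and second, that the differentials $d_2 \colon E_2^{0,2} \to E_2^{2,1}$ and $d_3 \colon E_3^{0,2} \to E_3^{3,0}$ vanish, for which it is enough that $E_2^{2,1}$ and $E_2^{3,0}$ vanish. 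All remaining differentials out of $E_r^{0,2}$ for $r \ge 4$ land in the fourth quadrant and are automatically zero.

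The bottom-row entries $E_2^{2,0}$ and $E_2^{3,0}$ are handled directly by Theorem \ref{theoremhomologyofGL}. For $\ul\lambda \neq (0,0)$, part (2) of that theorem gives $E_2^{p,0} = H^p(\GL(n,\Z), V_{\ul\lambda}) = 0$ whenever $n \ge p + 2$, so $E_2^{2,0} = 0$ for $n \ge 4$ and $E_2^{3,0} = 0$ for $n \ge 5$. For the trivial coefficient, the exterior-algebra description $H^*(\GL(n,\Z),\Q) \cong \bigwedge_{\Q}(x_1, x_2, \ldots)$ with $\deg x_i = 4i + 1$ from part (1) kills degrees $2$ and $3$ in the stable range, yielding $E_2^{2,0} = 0$ for $n \ge 3$ and $E_2^{3,0} = 0$ for $n \ge 4$.

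For the row $q = 1$, I use \eqref{H1IAn} to replace $H^1(\IA_n,\Q)$ by $V_{0,1} \oplus V_{1,1^2}$, so that the algebraic $\GL(n,\Z)$-module $H^1(\IA_n,\Q) \otimes V_{\ul\lambda}$ decomposes into finitely many summands $V_{\ul\mu}$. Theorem \ref{theoremhomologyofGL}(2) annihilates the cohomology of each nontrivial summand $V_{\ul\mu} \neq 0$ at $E_2^{p,1}$ for $n \ge p + 2$, while any trivial summand that may appear contributes $H^p(\GL(n,\Z), \Q)$, which vanishes for $p = 1, 2$ as soon as $n \ge p + 1$ by part (1). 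Combining these gives $E_2^{1,1} = 0$ for $n \ge 3$ and $E_2^{2,1} = 0$ for $n \ge 4$. Taking the worst of the four bounds yields the claimed isomorphism for $\ul\lambda \neq (0,0)$ and $n \ge 5$, and for the trivial coefficient and $n \ge 4$. Statement (2) is proved by the identical argument, substituting the exact sequence $1 \to \IO_n \to \Out(F_n) \to \GL(n,\Z) \to 1$ and Kawazumi's computation $H^1(\IO_n,\Q) \cong V_{1,1^2}$ for \eqref{H1IAn}.

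I do not foresee a serious obstacle: the proof is essentially a careful assembly of the vanishing ranges provided by the improved Borel theorem and the known first rational cohomology of $\IA_n$ and $\IO_n$. The only point requiring care is matching the worst bound exactly ($n \ge 5$ for nontrivial $\ul\lambda$, $n \ge 4$ in the trivial case) and verifying that within those ranges the spectral sequence indeed has no surviving off-diagonal contributions in total degree two and no nonzero differentials into or out of $E_*^{0,2}$.
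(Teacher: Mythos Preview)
Your proposal is correct and follows essentially the same route as the paper's proof: both use the Hochschild--Serre spectral sequence for $1\to\IA_n\to\Aut(F_n)\to\GL(n,\Z)\to1$ and establish the vanishing of $E_2^{2,0}$, $E_2^{3,0}$, $E_2^{1,1}$, $E_2^{2,1}$ via Theorem~\ref{theoremhomologyofGL} together with the known structure of $H^1(\IA_n,\Q)$ from~\eqref{H1IAn}. Your write-up is simply more explicit about which bounds come from which entries and about the possible trivial summands in $H^1(\IA_n,\Q)\otimes V_{\ul\lambda}$, whereas the paper records only the final ranges $n\ge5$ and $n\ge4$.
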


\begin{proof}
(1)
For $n\ge 5$, we have 
$E_2^{2,0}=E_2^{3,0}=0$.
For 
$n\ge 4,$
we have 
$E_2^{1,1}=E_2^{2,1}=0$.
Therefore, we have
$$H^2(\Aut(F_n),V_{\ul\lambda})\cong E_2^{0,2}=(H^2(\IA_n,\Q)\otimes V_{\ul\lambda})^{\GL(n,\Z)}.$$

The case of the trivial coefficient follows from Theorem \ref{theoremhomologyofGL} in a similar way.

(2) The proof is similar to that of (1).
\end{proof}

Day and Putman \cite{Day-Putman} showed that $H_2(\IA_n,\Z)_{\GL(n,\Z)}=0$ for $n\ge 6$.
The rational version of this result follows from Theorem \ref{H2Aut}.

\begin{corollary}\label{DayPutmanrational}
For $n\ge 4$, we have
$$
H^2(\IA_n,\Q)^{\GL(n,\Z)}=H^2(\IO_n,\Q)^{\GL(n,\Z)}=0.
$$
\end{corollary}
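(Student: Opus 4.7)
The plan is to reduce the statement to the vanishing of the untwisted rational cohomology $H^2(\Aut(F_n),\Q)$ and $H^2(\Out(F_n),\Q)$ in degree $2$ for $n\ge 4$, and then invoke known stability and vanishing results for these.

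First, I would apply Theorem \ref{H2Aut} with the trivial coefficient $\ul\lambda=(0,0)$. This directly gives isomorphisms
\begin{gather*}
H^2(\Aut(F_n),\Q)\cong H^2(\IA_n,\Q)^{\GL(n,\Z)}, \qquad H^2(\Out(F_n),\Q)\cong H^2(\IO_n,\Q)^{\GL(n,\Z)}
\end{gather*}
for $n\ge 4$. So it suffices to show that both $H^2(\Aut(F_n),\Q)$ and $H^2(\Out(F_n),\Q)$ vanish in that range.

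Next, I would combine Galatius's theorem \cite{Galatius}, which says the stable rational cohomology of $\Aut(F_n)$ (and hence, via an Hochschild--Serre argument using $F_n\to \Aut(F_n)\to\Out(F_n)$, that of $\Out(F_n)$) vanishes in positive degrees, with the homological stability theorem of Hatcher--Vogtmann \cite{Hatcher-Vogtmann98, Hatcher-Vogtmann}. Their stability range is good enough to conclude that $H^2(\Aut(F_n),\Q)$ and $H^2(\Out(F_n),\Q)$ have stabilized to their (trivial) stable values by $n=4$. Putting these two ingredients together yields $H^2(\Aut(F_n),\Q)=H^2(\Out(F_n),\Q)=0$ for $n\ge 4$, which via the isomorphisms above gives the claim.

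The main obstacle is purely bookkeeping: ensuring that the explicit stability range for the second rational cohomology of $\Aut(F_n)$ (and of $\Out(F_n)$) indeed includes $n=4$, rather than starting at some larger value. If the cited stability bound is insufficient at $n=4$, one can fall back on the small-$n$ computations of the rational homology of $\Aut(F_n)$ and $\Out(F_n)$ in degree $2$ available in the literature to cover the boundary case, so the argument is robust.
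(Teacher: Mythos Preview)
Your proof is correct and follows essentially the same strategy as the paper: apply Theorem \ref{H2Aut} with trivial coefficients to reduce to showing $H^2(\Aut(F_n),\Q)=H^2(\Out(F_n),\Q)=0$. The only difference is in how this last vanishing is justified: you combine Galatius's stable vanishing with Hatcher--Vogtmann's stability range, whereas the paper simply cites Hatcher--Vogtmann \cite{Hatcher-Vogtmann} for the direct computation $H^2(\Aut(F_n),\Q)=H^2(\Out(F_n),\Q)=0$ for all $n\ge 1$. This renders your bookkeeping concern about whether $n=4$ lies in the stability range moot, so you may streamline your argument by citing that result directly.
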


\begin{proof}
We have $H^2(\Aut(F_n),\Q)=H^2(\Out(F_n),\Q)=0$ for $n\ge 1$ by Hatcher--Vogtmann \cite{Hatcher-Vogtmann}.
Therefore, by Theorem \ref{H2Aut}, we have 
$$H^2(\IA_n,\Q)^{\GL(n,\Z)}=H^2(\IO_n,\Q)^{\GL(n,\Z)}=0$$
for $n\ge 4$.
\end{proof}

\section{Stable families of polynomial $\Aut(F_n)$-modules}
\label{secNonsemisimple}

In this section, we apply the results of the previous sections to non-semisimple $\Aut(F_n)$-modules.

\subsection{Polynomial $\Aut(F_n)$-modules}

Here we introduce polynomial $\Aut(F_n)$-modules which may relate to other well-known notions of polynomiality.

\begin{definition}
By a \emph{polynomial $\Aut(F_n)$-module}, we mean an $\Aut(F_n)$-module $M$ which has a composition series of finite length 
$$M=M_0\supsetneq M_1\supsetneq M_2\supsetneq \cdots\supsetneq M_k=0$$
such that each composition factor $M_j/M_{j+1}$ is induced by an irreducible algebraic $\GL(n,\Z)$-representation.

A family $M(*)=\{M(n)\}_{n\ge 0}$ of polynomial $\Aut(F_n)$-modules $M(n)$ is \emph{stable} in ranks $\ge r$ if there is a decreasing filtration
$$
M(*)=M_0(*)\supsetneq M_1(*)\supsetneq M_2(*)\supsetneq \cdots\supsetneq M_k(*)=0
$$
such that each $M_j(*)/M_{j+1}(*)$ is a stable family of $\GL(n,\Z)$-representations in ranks $\ge r$.
Let $\gr M(*)=\bigoplus_{j=0}^{k-1} M_j(*)/M_{j+1}(*)$.
We set $$n_0(M(*),p)=n_0(\gr M(*),p).$$
\end{definition}

Polynomial $\Out(F_n)$-modules and stable families of polynomial $\Out(F_n)$-modules can be defined just in the same way as $\Aut(F_n)$.

\begin{proposition}
\label{propVanshing}
 Let $M(*)$ be a stable family of polynomial $\Aut(F_n)$-modules with no composition factors that are isomorphic to either $V_{1,0}$ or $V_{1^2,1}$.
 Then we have 
 \begin{gather}\label{H1AutM}
 H^1(\Aut(F_n),M(n))=0
 \end{gather}
 for $n\ge n_0(M(*),1)$.
\end{proposition}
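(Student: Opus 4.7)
The plan is to induct on the length $k$ of the stable filtration
\[
M(*) = M_0(*) \supsetneq M_1(*) \supsetneq \cdots \supsetneq M_k(*) = 0
\]
witnessing the stability of $M(*)$, with the case $k = 0$ being trivial since $M(n) = 0$.

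For the inductive step, I would feed the short exact sequence of $\Aut(F_n)$-modules
\[
0 \to M_1(n) \to M(n) \to M(n)/M_1(n) \to 0
\]
into the long exact sequence in cohomology, reducing the problem to the vanishing of both $H^1(\Aut(F_n), M_1(n))$ and $H^1(\Aut(F_n), M(n)/M_1(n))$. The subfamily $M_1(*)$ inherits a stable filtration of length $k-1$ whose graded pieces form a subset of those of $M(*)$, so none of their irreducible summands is $V_{1,0}$ or $V_{1^2,1}$; the inductive hypothesis then applies in the same stable range. For the top quotient, stability of the filtration yields, for $n$ in the stable range, an isomorphism of $\Aut(F_n)$-modules
\[
M(n)/M_1(n) \cong \bigoplus_{i} V_{\ul\la_i}(n)
\]
for a finite, $n$-independent family of bipartitions $\ul\la_i$, each different from $(1,0)$ and $(1^2,1)$ by hypothesis. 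Theorem~\ref{H1Aut} then kills the $H^1$ of each summand for $n \ge 3$, completing the inductive step.

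The one thing requiring verification is that the range $n_0(M(*), 1) = n_0(\gr M(*), 1)$ is large enough to simultaneously guarantee that the filtration is stable (i.e.\ $n \ge r$) and that Theorem~\ref{H1Aut} applies (i.e.\ $n \ge 3$ when a non-trivial composition factor is present). This is exactly what the two-clause definition of $n_0$ arranges: as soon as a non-trivial composition factor occurs one has $n_0(M(*), 1) \ge \max(r, 3)$, while in the purely trivial case one only needs the elementary vanishing $H^1(\Aut(F_n), \Q) = 0$, which holds for all $n \ge 1$ and in particular throughout $n \ge \max(r, 2)$. There is no deeper obstruction; the entire proof is a short devissage driven by Theorem~\ref{H1Aut}.
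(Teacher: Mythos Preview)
Your proof is correct and follows essentially the same approach as the paper: induction on the length of the filtration, using the long exact sequence associated to $0 \to M_1(n) \to M(n) \to M(n)/M_1(n) \to 0$ together with Theorem~\ref{H1Aut} for the quotient and the inductive hypothesis for $M_1(*)$. Your treatment is in fact slightly more careful than the paper's in two places: you start the induction at $k=0$ rather than $k=1$, and you correctly observe that the top graded piece is a \emph{direct sum} of irreducibles rather than a single irreducible (the paper's phrasing ``$M(n)$ is an irreducible algebraic $\GL(n,\Z)$-representation $V_{\ul\lambda}$'' in the base case is a minor imprecision, since a stable family of $\GL(n,\Z)$-representations need not be irreducible); you also spell out explicitly why the threshold $n_0(M(*),1)$ suffices throughout, which the paper leaves implicit.
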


\begin{proof}
Let $n\ge n_0(M(*),1)$.
We use the induction on the length $k$ of the decreasing filtration of $M(*)$.
If $k=1$, then $M(n)$ is an irreducible algebraic $\GL(n,\Z)$-representation $V_{\ul\lambda}$ such that $\ul\lambda\neq (1,0), (1^2,1)$. 
Therefore, \eqref{H1AutM} follows from Theorem \ref{H1Aut}.
Suppose that \eqref{H1AutM} holds for $k-1$.
For $M(*)$ with length $k$, the exact sequence of $\Aut(F_n)$-modules
$$
0\to M_1(n)\to M(n) \to M(n)/{M_1(n)} \to 0
$$
induces a long exact sequence 
$$
\cdots \to H^1(\Aut(F_n),M_1(n))\to H^1(\Aut(F_n), M(n)) \to H^1(\Aut(F_n),M(n)/{M_1(n)}) \to\cdots.
$$
By the hypothesis of the induction and Theorem \ref{H1Aut}, we have 
$$
H^1(\Aut(F_n), M(n)/{M_1(n)})=H^1(\Aut(F_n),M_1(n))=0
$$
for $n\ge n_0(M(*),1)$, so we have $H^1(\Aut(F_n),M(n))=0$.
\end{proof}

For stable families of polynomial $\Out(F_n)$-modules, we obtain the following.

\begin{proposition}\label{H1OutM}
 Let $M(*)$ be a stable family of polynomial $\Out(F_n)$-modules with no composition factors that are isomorphic to $V_{1^2,1}$.
 Then we have 
 \begin{gather*}
      H^1(\Out(F_n),M(n))=0
 \end{gather*}
 for $n\ge n_0(M(*),1)$.
\end{proposition}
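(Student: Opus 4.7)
The plan is to mimic the proof of Proposition \ref{propVanshing} verbatim, replacing the input from Theorem \ref{H1Aut} with its $\Out(F_n)$-counterpart Theorem \ref{H1Out}. The crucial observation is that Theorem \ref{H1Out} gives a stronger vanishing statement than Theorem \ref{H1Aut}: for $\Out(F_n)$ the only irreducible algebraic bipartition for which $H^1$ does not vanish in the stable range is $V_{1^2,1}$, whereas for $\Aut(F_n)$ one also has to exclude $V_{1,0}$. This is exactly why the hypothesis of Proposition \ref{H1OutM} only forbids $V_{1^2,1}$ as a composition factor.

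First I would fix $n \ge n_0(M(*),1)$ and argue by induction on the length $k$ of the stable filtration
\[
M(*) = M_0(*) \supsetneq M_1(*) \supsetneq \cdots \supsetneq M_k(*) = 0.
\]
For the base case $k=1$, the family $M(*)$ is itself a stable family of $\GL(n,\Z)$-representations, so $M(n) \cong \bigoplus_i V_{\ul\lambda_i}(n)$ with each $\ul\lambda_i \ne (1^2,1)$ by hypothesis. Theorem \ref{H1Out} then gives $H^1(\Out(F_n), V_{\ul\lambda_i}) = 0$ for $n \ge 3$ (and in particular for $n \ge n_0(M(*),1) \ge 3$), so $H^1(\Out(F_n), M(n)) = 0$ by additivity.

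For the inductive step, I would use the short exact sequence of $\Out(F_n)$-modules
\[
0 \to M_1(n) \to M(n) \to M(n)/M_1(n) \to 0
\]
and the associated long exact sequence in group cohomology. The subfamily $M_1(*)$ has filtration length $k-1$ and inherits the hypothesis that no composition factor is $V_{1^2,1}$, while $M(*)/M_1(*)$ is itself a stable family of $\GL(n,\Z)$-representations with the same property. Since $n_0$ is compatible with sub- and quotient-families in the obvious way, the inductive hypothesis yields $H^1(\Out(F_n), M_1(n)) = 0$, and the base case applied to $M(*)/M_1(*)$ yields $H^1(\Out(F_n), M(n)/M_1(n)) = 0$. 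The long exact sequence then forces $H^1(\Out(F_n), M(n)) = 0$, completing the induction.

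There is really no substantive obstacle here; the only thing to verify carefully is that the hypothesis ``no composition factor isomorphic to $V_{1^2,1}$'' passes to both the subfamily $M_1(*)$ and the quotient family $M(*)/M_1(*)$, which is immediate from the definition of the filtration, and that the stable range $n_0(M(*),1)$ dominates the stable ranges needed for each piece in the induction. This last point follows because $\gr M(*)$ contains all composition factors of both $M_1(*)$ and $M(*)/M_1(*)$, so $n_0(M_1(*),1)$ and $n_0(M(*)/M_1(*),1)$ are both at most $n_0(M(*),1)$.
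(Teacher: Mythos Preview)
Your proposal is correct and takes essentially the same approach as the paper: the paper does not write out a separate proof of Proposition~\ref{H1OutM} but simply states it immediately after Proposition~\ref{propVanshing}, implicitly indicating that the same induction-on-filtration-length argument works with Theorem~\ref{H1Out} in place of Theorem~\ref{H1Aut}. Your explicit verification that the stable range and the composition-factor hypothesis pass to sub- and quotient-families is exactly what is needed to make this precise.
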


\subsection{Applications to the $\Aut(F_n)$-modules of Jacobi diagrams}

Massuyeau and the first-named author \cite{Habiro-Massuyeau2} extended the Kontsevich integral of links to a functorial invariant defined on the category of \emph{bottom tangles in handlebodies}. This invariant takes values in the category $\mathbb{A}$ of \emph{Jacobi diagrams in handlebodies}, where Jacobi diagrams are certain uni-trivalent graphs.

The second-named author \cite{Katada1, Katada2} studied the $\Aut(F_n)$-module structure on
the vector space $A(n)=\bigoplus_{d\ge0}A_d(n)$ of Jacobi diagrams on $n$ oriented arcs, where the action is induced by the composition of morphisms in the category $\mathbb{A}$. 
For $d=0$, we have $A_0(n)=\Q$. For $d>0$, $A_d(n)$ admits a filtration
\begin{gather*}
    A_d(n)=A_{d,0}(n)\supset A_{d,1}(n)\supset\dots\supset A_{d,2d-1}(n)=0,
\end{gather*}
whose associated graded $A_{d,k}(n)/A_{d,k+1}(n)$ is identified with the vector space $B_{d,k}(n)$ of open Jacobi diagrams  of degree $d$ with univalent vertices colored by elements of the $\GL(n,\Q)$-module $(\Q^n)^*$.
It follows that $\{B_{d,k}(n)\}_n$ is a stable family of $\GLnQ$-representations. Hence $\{A_d(n)\}_n$ is a stable family of polynomial $\Aut(F_n)$-modules.
Note here that the $\Aut(F_n)$-module $A_d(n)$ does not factor through a $\GL(n,\Z)$-module if $d\ge 2$.

We have the following result.

\begin{proposition}
For each $d\ge0$, we have
\begin{gather*}
    H^1(\Aut(F_n), A_d(n))=H^1(\Aut(F_n), A_d(n)^*)=0
\end{gather*}
for $n\ge 3$.
\end{proposition}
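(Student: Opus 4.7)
The plan is to deduce both vanishings from Proposition \ref{propVanshing}, applied to the stable families $\{A_d(n)\}_n$ and $\{A_d(n)^*\}_n$ of polynomial $\Aut(F_n)$-modules. The first step is to identify composition factors. By the filtration $A_d(n)=A_{d,0}(n)\supset\cdots\supset A_{d,2d-1}(n)=0$ and the identification of each graded piece $B_{d,k}(n)$ as a direct sum of Schur functors applied to $H^*=(\Q^n)^*$, every composition factor of $A_d(n)$ is of the form $V_{\emptyset,\mu}$ for some partition $\mu$. Dualizing the filtration shows that $\{A_d(n)^*\}_n$ is also a stable family of polynomial $\Aut(F_n)$-modules, with every composition factor of the form $V_{\mu,\emptyset}$.

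The second step is to verify the hypothesis of Proposition \ref{propVanshing}. For $A_d(n)$ this is immediate: both $V_{1,0}=V_{(1),\emptyset}$ and $V_{1^2,1}=V_{(1^2),(1)}$ have nonempty first component, so neither is of the form $V_{\emptyset,\mu}$, and Proposition \ref{propVanshing} applies directly to give $H^1(\Aut(F_n),A_d(n))=0$ in the desired range. For $A_d(n)^*$, the factor $V_{1^2,1}$ is likewise excluded by its nonempty second component. The remaining case, $V_{1,0}=H$, would correspond to an isotypic component of $V_{\emptyset,(1)}=H^*$ in some $B_{d,k}(n)$, i.e., to a nonzero open Jacobi diagram with exactly one univalent leg modulo the AS/IHX relations.

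The main obstacle is therefore the combinatorial claim that such a one-legged open Jacobi diagram must vanish in $B_{d,k}(n)$. The expected argument is an AS-symmetry one: any candidate diagram admits a graph automorphism whose induced action on the cyclic orders at the trivalent vertices produces an odd number of AS reversals, so the diagram equals its own negative and hence is zero. Granting this combinatorial fact, Proposition \ref{propVanshing} applies to $A_d(n)^*$ as well and yields $H^1(\Aut(F_n),A_d(n)^*)=0$. A final bookkeeping check that $n_0(M(*),1)\le 3$ in both cases then gives the claimed bound $n\ge 3$.
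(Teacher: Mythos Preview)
Your approach is the paper's: check that neither $V_{1,0}$ nor $V_{1^2,1}$ occurs as a composition factor of $A_d(n)$ or $A_d(n)^*$, and then invoke Proposition~\ref{propVanshing}. The one place you work harder than necessary is the exclusion of $V_{1,0}$ from $A_d(n)^*$. The filtration recorded just before the proposition already terminates with $A_{d,2d-1}(n)=0$, so every irreducible constituent of $\bigoplus_k B_{d,k}(n)$ is of type $V_{0,\lambda}$ with $2\le|\lambda|\le 2d$; dualizing, the composition factors of $A_d(n)^*$ are $V_{\lambda,0}$ with $|\lambda|\ge 2$, and $V_{1,0}$ is excluded immediately. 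No separate diagrammatic argument is needed. Your sketched mechanism for the one-legged vanishing is also not quite right: a one-legged open Jacobi diagram whose body is an asymmetric trivalent graph has no nontrivial automorphism at all, so the vanishing cannot come from an odd AS-automorphism alone and genuinely requires IHX. Fortunately this is precisely what the bound $|\lambda|\ge2$ already packages for you.
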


\begin{proof}
For $d>0$, since each irreducible component of $B_d(n)$ is of type $V_{0,\lambda}$ with $2\le |\lambda|\le 2d$, it is easily seen that $A_d(n)$ and $A_d(n)^*$ do not include composition factors isomorphic to $V_{0,1}$ or $V_{1,1^2}$.
For example, $A_1(n)$ is isomorphic to $V_{0,2}$, and the $\Aut(F_n)$-module $A_2(n)$ has the following composition series
$$
A_2(n)\supsetneq A''_2(n)\supsetneq A_{2,1}(n)\supsetneq A_{2,2}(n)\supsetneq 0
\quad (n\ge3)
$$
whose composition factors are
$V_{0,4}, V_{0,2^2}, V_{0, 1^3}$ and $V_{0,2}$ (see \cite{Katada1}).
Hence Proposition \ref{propVanshing} implies the desired result.
\end{proof}

\begin{remark}\label{Ad}
Let $d\ge0$ and $i\ge1$.
By \cite{Djament-Vespa} and the fact that $A_d$ is a contravariant polynomial functor on the category $\mathbf{F}$ of finitely generated free groups and homomorphisms \cite{Katada1}, we obtain 
$\varprojlim_n H^i(\Aut(F_n),A_d(n))=0.$
Considering the stable range, one can show that 
\begin{gather*}
    H^i(\Aut(F_n), A_d(n))=0, \quad n \ge 2i+2d+3
\end{gather*}
and if $i\ge 2d+1$, then one can also show that 
$$H^i(\Aut(F_n),A_d(n)^*)=0, \quad n \ge 2i+2d+3$$
by using Theorem \ref{conjKV}
below.
If $i< 2d+1$, then the latter twisted cohomology can be non-trivial. For example, we have $H^3(\Aut(F_n), A_2(n)^*)\neq 0$ for $n\ge 13$. 
\end{remark}

\begin{remark}
Vespa \cite{Vespa22} generalized the functor $A_d$ to another contravariant polynomial functor $A_d(m,-)_0$ of degree $2d$ on $\mathbf{F}$ by considering ``beaded'' Jacobi diagrams on oriented arcs.
If $m>0$, then $\{A_d(m,n)_0\}_{n\ge 0}$ is \emph{not} a stable family of polynomial $\Aut(F_n)$-modules since $A_d(m,n)_0$ is infinite-dimensional for $d,n>0$.
However, 
each composition factor of $A_d(m,n)_0$ is of the form $V_\lambda(n)^*$, $|\lambda|\le2d$.
Similarly to Remark \ref{Ad}, one can show that
\begin{gather*}
    H^i(\Aut(F_n), A_d(m,n)_0)=0, \quad n \ge 2i+2d+3.
\end{gather*}
\end{remark}

The second-named author \cite{Katada1} proved that the $\Aut(F_n)$-module structure on $A_d(n)$ induces an $\Out(F_n)$-module structure on $A_d(n)$.
Therefore, $\{A_d(n)\}_n$ is also a stable family of polynomial $\Out(F_n)$-modules. Then by Proposition \ref{H1OutM}, we obtain the first cohomology of $\Out(F_n)$ in a similar way.

\begin{proposition}
For each $d\ge0$, we have
\begin{gather*}
    H^1(\Out(F_n), A_d(n))=H^1(\Out(F_n), A_d(n)^*)=0
\end{gather*}
for $n\ge 3$.
\end{proposition}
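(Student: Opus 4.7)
The plan is to mimic the proof of the preceding proposition for $\Aut(F_n)$, replacing Proposition \ref{propVanshing} with its $\Out(F_n)$-analogue, Proposition \ref{H1OutM}. The hypothesis of Proposition \ref{H1OutM} is weaker (it only forbids the single composition factor $V_{1^2,1}$), so if the $\Aut(F_n)$-argument works, the $\Out(F_n)$-argument certainly does.

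First I would recall, following \cite{Katada1}, that the $\Aut(F_n)$-action on $A_d(n)$ descends to an $\Out(F_n)$-action, and that the composition series of $A_d(n)$ recalled just above (with associated graded identified with the $\GLnQ$-module $B_{d,k}(n)$ of open Jacobi diagrams with univalent vertices colored by $(\Q^n)^*$) gives $\{A_d(n)\}_n$ the structure of a stable family of polynomial $\Out(F_n)$-modules. Taking $\Q$-linear duals, the same holds for $\{A_d(n)^*\}_n$; the composition factors of $A_d(n)^*$ are the duals of the composition factors of $A_d(n)$.

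Next, I would observe that every irreducible constituent of $B_{d,k}(n)$ (and therefore every composition factor of $A_d(n)$) is of the form $V_{0,\lambda}$ with $2\le|\lambda|\le 2d$, because the univalent vertices carry colors in $(\Q^n)^*=H^*$ and because there are at least two univalent vertices in any diagram contributing (the case $d=0$ gives only the trivial module $\Q=V_{0,0}$). Dualizing, every composition factor of $A_d(n)^*$ is of the form $V_{\lambda,0}$ with $2\le|\lambda|\le 2d$. In particular, in neither case does the bipartition $(1^2,1)$ occur as a composition factor.

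Finally, since $n_0(B_{d,k}(*),1)\le 3$ for all the stable summands appearing (the pieces of the associated graded are sums of $V_{\ul\lambda}$'s with $\ul\lambda\ne(0,0)$ of size $\ge2$, so Theorem \ref{theoremhomologyofGL} gives stability from $n\ge 3$), I would invoke Proposition \ref{H1OutM} to conclude
\[
H^1(\Out(F_n),A_d(n))=H^1(\Out(F_n),A_d(n)^*)=0\quad\text{for }n\ge 3.
\]
The only mild obstacle is bookkeeping the constant $n_0$: one should verify that for each composition factor $V_{\ul\lambda}\ne V_{1^2,1}$ appearing, the stable range $n\ge 3$ given in Theorem \ref{H1Out} actually applies, which it does since $d=0$ yields only the trivial module (so $n\ge 1$ suffices) and $d\ge 1$ yields only $V_{0,\lambda}$'s with $|\lambda|\ge 2$.
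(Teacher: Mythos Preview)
Your proposal is correct and follows essentially the same approach as the paper: reduce to Proposition \ref{H1OutM} by observing that the $\Aut(F_n)$-action on $A_d(n)$ descends to $\Out(F_n)$ (by \cite{Katada1}), so $\{A_d(n)\}_n$ is a stable family of polynomial $\Out(F_n)$-modules whose composition factors $V_{0,\lambda}$ (and $V_{\lambda,0}$ for the dual) never contain $V_{1^2,1}$. The paper in fact gives no separate proof environment here and simply states that the result follows from Proposition \ref{H1OutM} ``in a similar way''; your write-up supplies exactly the missing details, and your bookkeeping for $n_0$ and the $d=0$ case is correct.
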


\section{$\GL(n,\Z)$-invariant part of the stable rational cohomology of $\IA_n$}\label{secinvIA}

So far, no non-trivial elements of the $\GL(n,\Z)$-invariant part of the stable rational cohomology $H^*(\IA_n,\Q)$ have been found.
Igusa \cite{Igusa} constructed higher Franz--Reidemeister torsion classes in $H^{4i}(\IO_n,\Q)^{\GL(n,\Z)}$.
Morita, Sakasai and Suzuki \cite{Morita-Sakasai-Suzuki} constructed the ``secondary classes'' in $H^{4i}(\IO_n,\Q)^{\GL(n,\Z)}$ and $H^{4i}(\IA_n,\Q)^{\GL(n,\Z)}$ ($i\ge1$).
Their classes for $\IO_n$ coincide with Igusa's torsion classes.
It is not known whether or not these classes are non-trivial.


In this section, we propose a conjectural structure of the $\GL(n,\Z)$-invariant part of the stable rational cohomology $H^*(\IA_n,\Q)$, and prove it under Church and Farb's stability conjecture (Conjecture \ref{conjPi}).

\subsection{Conjectural stable structure of $H^*(\IA_n,\Q)^{\GLnZ}$}
Let us recall some facts and conjectures about  $H^*(\IA_n,\Q)^{\GLnZ}$.
We have $H^1(\IA_n,\Z)^{\GL(n,\Z)}=0$ by \cite{CP, Farb, Kawazumi}.
As we mentioned in Section \ref{secondcohomology}, we also have 
$H^2(\IA_n,\Z)^{\GL(n,\Z)}=0$ by \cite{Day-Putman}.
Church and Farb \cite[Conjecture 6.5]{Church-Farb} conjectured that the $\GL(n,\Z)$-invariant part of $H^*(\IA_n,\Q)$ vanishes stably.
We make the following conjecture, which contradicts the above conjecture of Church and Farb.

\begin{conjecture}\label{conjectureHIAinv}
The $\GLnZ$-invariant part of the rational cohomology of $\IA_n$ stabilizes, that is, 
for each $i\ge0$, there is an integer $N\ge0$ such that for all $n\ge N$ the inclusion map $\IA_n\hookrightarrow\IA_{n+1}$ induces an isomorphism
\begin{gather*}
H^i(\IA_{n+1},\Q)^{\GL(n+1,\Z)}\congto
H^i(\IA_n,\Q)^{\GL(n,\Z)}.
\end{gather*}
Moreover, we stably have an isomorphism of graded algebras
\begin{gather*}
    H^*(\IA_n,\Q)^{\GLnZ}\cong \Q[z_1,z_2,\dots], \quad \deg z_i=4i.
\end{gather*}
\end{conjecture}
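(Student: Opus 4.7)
\emph{Proof proposal.}
I would prove this under the representation stability hypothesis $(\SF_i)$ of Conjecture~\ref{introconjPi} for all $i$; the resulting implication is essentially the content of Theorem~\ref{conjIApropintro}.

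The stabilization claim is immediate: under $(\SF_i)$, stably $H^i(\IA_n,\Q) \cong \bigoplus_j V_{\ul\lambda_j}(n)$, and taking $\GL(n,\Z)$-invariants picks out the multiplicity of the trivial bipartition $(0,0)$, a constant independent of $n$.

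For the algebra identification, the plan is to extract the structure from the Hochschild--Serre spectral sequence
\[
E_2^{p,q} = H^p(\GL(n,\Z),\, H^q(\IA_n,\Q)) \Longrightarrow H^{p+q}(\Aut(F_n),\Q).
\]
By Proposition~\ref{stablefamilyBorel} applied to the stable family $\{H^q(\IA_n,\Q)\}_n$, stably $E_2^{p,q} \cong H^p(\GL(n,\Z),\Q) \otimes A^q$, where $A^q := H^q(\IA_n,\Q)^{\GL(n,\Z)}$. By Galatius the abutment is $\Q$ concentrated in total degree $0$, and by Theorem~\ref{theoremhomologyofGL} stably $H^*(\GL(n,\Z),\Q) = \bigwedge(x_1,x_2,\dots)$ with $\deg x_i = 4i+1$. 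Each $x_i$ must die in the abutment and cannot support any outgoing differential, so it must be hit by some transgression $d_r\colon E_r^{4i+1-r,r-1}\to E_r^{4i+1,0}$. Using $A^1=0$ from \eqref{H1IAn}, $A^2=0$ from Corollary~\ref{DayPutmanrational}, and inductively ruling out intermediate transgressions, the only viable one is $d_{4i+1}\colon E_{4i+1}^{0,4i}\to E_{4i+1}^{4i+1,0}$, producing classes $z_i \in A^{4i}$ with $d_{4i+1}(z_i) = x_i$. Multiplicativity extends these to a DGA morphism
\[
\Phi\colon \bigl(\bigwedge(x_i) \otimes \Q[z_i],\; d(z_i)=x_i\bigr) \longrightarrow E_\bullet^{*,*}
\]
from a Koszul-type complex whose total cohomology is $\Q$.

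The final and most delicate step is to conclude that $\Phi$ restricts to an algebra isomorphism $\Q[z_1,z_2,\dots] \cong A^*$. This mimics Borel's classical transgression computation of $H^*(G;\Q)$ for a compact Lie group via the fibration $G \to EG \to BG$. By a comparison theorem for multiplicative spectral sequences---$\Phi$ is an isomorphism on the $q=0$ edge (both are $\bigwedge(x_i)$) and on the abutment ($\Q \to \Q$)---one concludes that $\Phi$ is an isomorphism on all of $E_2^{*,*}$, which forces $A^* = \Q[z_i]$. The main obstacle I expect is precisely this comparison: one must rule out extra classes in $A^*$ whose differentials could interact with the transgressions in unexpected ways. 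A Poincar\'e-series check is consistent modulo $(1-t)$, but promoting this to a genuine isomorphism requires careful bookkeeping of the multiplicative differentials, and is the heart of the Borel-type argument.
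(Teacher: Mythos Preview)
Your proposal is correct and matches the paper's approach: Hochschild--Serre for $1 \to \IA_n \to \Aut(F_n) \to \GL(n,\Z) \to 1$, Borel/Li--Sun (Proposition~\ref{stablefamilyBorel}) for the tensor structure of $E_2$, Galatius for the vanishing abutment, and a multiplicative comparison theorem to pin down the fiber. The paper formalizes your ``final and most delicate step'' as a standalone result (Theorem~\ref{stableZeeman}, proved via Zeeman's comparison theorem) and phrases the construction in terms of an \emph{antitransgression map} $\varphi_k = \iota_k (d_{k+1}^{0,k})^{-1} \pi_{k+1}$ rather than a Koszul DGA morphism; your preliminary appeal to $A^1 = A^2 = 0$ to rule out intermediate differentials is not actually needed once the comparison theorem is invoked directly.
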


\begin{remark}
    We expect that the generators $z_i$ of our conjectural structure of the $\GL(n,\Z)$-invariant part of $H^{*}(\IA_n,\Q)$ can be constructed by using the Borel classes. See Proposition \ref{IAnGLinv}.
\end{remark}

We will prove Conjecture \ref{conjectureHIAinv} by assuming the following conjecture, which
follows from \cite[Conjectures 6.3, 6.6 and 6.7]{Church-Farb}.

\begin{conjecture}[Church--Farb \cite{Church-Farb}]
\label{conjPi}
For each $i\ge0$, the following hypothesis $(\SF_i)$ holds.
\begin{itemize}
    \item[$(\SF_i)$]$\{H^i(\IA_n,\Q)\}_n$ is a stable family of $\GL(n,\Z)$-representations.
\end{itemize}
\end{conjecture}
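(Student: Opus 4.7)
The plan is to attack Church and Farb's conjecture $(\SF_i)$ for all $i\ge0$ by induction on $i$, bootstrapping from the stable twisted cohomology of $\Aut(F_n)$. The main tools would be the Hochschild--Serre spectral sequence for $1\to\IA_n\to\Aut(F_n)\to\GL(n,\Z)\to1$ with varying algebraic coefficients, the Borel--Li--Sun range (Theorem \ref{theoremhomologyofGL}) in the stably algebraic form of Remark \ref{stablyalgebraicBorel}, Galatius's vanishing for the trivial coefficient, and Lindell's Theorem \ref{introconjKV} identifying the stable abutment $H^*(\Aut(F_n),H^{p,q})$.

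First I would reformulate $(\SF_i)$ as the combination of (a) \emph{stable algebraicity} of $H^i(\IA_n,\Q)$ as a $\GL(n,\Z)$-representation for $n$ sufficiently large, and (b) \emph{uniform multiplicity stability}: for each bipartition $\ul\lambda$ the multiplicity $m_{\ul\lambda}^i(n)$ of $V_{\ul\lambda}(n)$ in $H^i(\IA_n,\Q)$ stabilizes as $n\to\infty$, and vanishes once $|\ul\lambda|$ exceeds a bound $N_i$ depending only on $i$. By semisimplicity, each $m_{\ul\lambda}^i(n)$ is extracted from the $\GL(n,\Z)$-invariants
\begin{gather*}
\bigl(H^i(\IA_n,\Q)\otimes H^{q,p}\bigr)^{\GL(n,\Z)},\qquad (p,q)=(|\lambda'|,|\lambda|),
\end{gather*}
by isolating the $(\lambda',\lambda)$-isotypic piece under the $\gpS_p\times\gpS_q$-action on $H^{q,p}$. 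Thus $(\SF_i)$ reduces to computing such invariants for all $(p,q)$ and showing that they stabilize with a uniform support bound on $\ul\lambda$.

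For the inductive step, assume $(\SF_j)$ together with stable algebraicity for all $j<i$. Run the twisted Hochschild--Serre spectral sequence
\begin{gather*}
E_2^{p,q}(n)=H^p\bigl(\GL(n,\Z),H^q(\IA_n,\Q)\otimes H^{q',p'}\bigr)\Rightarrow H^{p+q}\bigl(\Aut(F_n),H^{q',p'}\bigr).
\end{gather*}
By the inductive hypothesis, $H^q(\IA_n,\Q)\otimes H^{q',p'}$ is stably algebraic for $q<i$, so Proposition \ref{stablefamilyBorel} (in the form of Remark \ref{stablyalgebraicBorel}) factors $E_2^{p,q}$ as $H^p(\GL(n,\Z),\Q)\otimes \bigl(H^q(\IA_n,\Q)\otimes H^{q',p'}\bigr)^{\GL(n,\Z)}$ in a uniform range. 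Lindell's theorem identifies the stable abutment. Extracting $E_\infty^{0,i}$ would then determine the desired invariants, and varying $(p',q')$ would recover all multiplicities $m_{\ul\lambda}^i(n)$.

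The main obstacle — and the reason this conjecture has remained open — is closing the induction. The spectral sequence argument produces invariant-type information at each fixed $\ul\lambda$, but it does not by itself supply the uniform support bound $|\ul\lambda|\le N_i$, nor does it preclude the sudden appearance of an unbounded family of new irreducibles at degree $i$; indeed, even stable algebraicity (piece (a)) is non-trivial already at $i=2$, since the full $H^2(\IA_n,\Q)$ — as opposed to its Albanese part — is not yet known. Ruling this out appears to require an independent structural input, for instance a finite-generation theorem for $\{H^i(\IA_n,\Q)\}_n$ as a module over a VIC-like category in the spirit of Patzt or Sam--Snowden, or a polynomial-functor description coming from an operadic action on the Andreadakis--Johnson filtration quotients of $\IA_n$. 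Supplying such a structural input is where the real work lies.
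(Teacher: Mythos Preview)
The statement you are attempting to prove is Conjecture~\ref{conjPi}, attributed to Church and Farb; the paper does \emph{not} prove it. There is no ``paper's own proof'' to compare against. The paper treats $(\SF_i)$ as an open hypothesis and uses it as an assumption in downstream results (e.g.\ Theorem~\ref{thmHIAinv}, Proposition~\ref{IAnGLinv}, Theorem~\ref{conjIAprop}), sometimes noting that the weaker hypothesis $(\mathrm{SA}_i)$ of stable algebraicity would suffice.

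Your proposal is not a proof but a strategy sketch, and you correctly identify where it breaks: the Hochschild--Serre and Borel--Li--Sun machinery lets you read off multiplicities $m^i_{\ul\lambda}(n)$ \emph{once you already know} that $H^i(\IA_n,\Q)$ is algebraic, but it does not establish algebraicity itself, nor the uniform support bound on $|\ul\lambda|$. Without an external finiteness input (of the VIC-module or polynomial-functor type you mention), the induction does not close. This is exactly why the conjecture remains open, and the paper makes no claim to the contrary.
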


It is known that $(\SF_i)$ holds for $i=0,1$.

The proof of the following theorem will be given in Section \ref{proofthmHIAinv}.

\begin{theorem}\label{thmHIAinv}
Conjecture \ref{conjPi} implies Conjecture \ref{conjectureHIAinv}.
\end{theorem}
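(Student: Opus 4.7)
The strategy is to analyze the Hochschild--Serre spectral sequence
\[
E_2^{p,q}=H^p(\GL(n,\Z),H^q(\IA_n,\Q))\Longrightarrow H^{p+q}(\Aut(F_n),\Q)
\]
associated to \eqref{exact}, combined with Galatius's theorem that $H^i(\Aut(F_n),\Q)=0$ stably for $i>0$, and Theorem \ref{theoremhomologyofGL}(1), which stably identifies $H^*(\GL(n,\Z),\Q)\cong\bigwedge(x_1,x_2,\dots)$ with $\deg x_i=4i+1$. Granting $(\SF_q)$ for every $q$, the family $\{H^q(\IA_n,\Q)\}_n$ is stable, so Proposition \ref{stablefamilyBorel} applied row by row yields a stable factorization of bigraded algebras
\[
E_2^{*,*}\cong \bigwedge(x_1,x_2,\dots)\otimes I^*,\qquad I^* := H^*(\IA_n,\Q)^{\GL(n,\Z)},
\]
in a range depending on $p+q$. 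Moreover $\dim I^q$ is eventually constant in $n$, since it equals the multiplicity of the trivial representation in the stable decomposition of $H^q(\IA_n,\Q)$.

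Since the abutment is stably $\Q$ concentrated in total degree $0$, I would run the standard transgression/Koszul argument degree by degree. First, for $1\le d\le 3$, each $E_2^{p,d-p}$ vanishes except at $(0,d)$ (using $H^p(\GL(n,\Z),\Q)=0$ stably for $1\le p\le 4$), and no differential out of $(0,d)$ has nonzero target, so $I^d=0$, consistent with \cite{CP,Farb,Kawazumi} and Corollary \ref{DayPutmanrational}. At $d=4$, the class $x_1\in E_2^{5,0}$ must be killed, and the only possibility is the transgression $d_5\colon E_5^{0,4}\to E_5^{5,0}$, forcing a class $z_1\in I^4$ with $\tau(z_1)=x_1$; any element of $I^4$ mapping to $0$ under $d_5$ would survive to $E_\infty^{0,4}=0$, so $I^4=\Q z_1$. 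Inductively, suppose that through total degree $<4k$ the subalgebra $\Q[z_1,\dots,z_{k-1}]\subseteq I^*$ has been identified with $\tau(z_i)=x_i$. The derivation property of each $d_r$, combined with the acyclicity of the finite Koszul complex $\bigwedge(x_1,\dots,x_{k-1})\otimes\Q[z_1,\dots,z_{k-1}]$ with $d(z_i)=x_i$, forces all classes of $\bigwedge(x_i)\otimes I^*$ in total degrees $<4k+1$ that are built from existing generators to cancel. The class $x_k\in E_2^{4k+1,0}$ still must be killed, and the only surviving source is $E_{4k+1}^{0,4k}$; this compels a new class $z_k\in I^{4k}$ with $\tau(z_k)=x_k$, and the trivial abutment rules out any further generator in $I^{4k}$. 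Iterating, $I^*\cong\Q[z_1,z_2,\dots]$.

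For the stabilization of invariants, the map $\Aut(F_n)\to\Aut(F_{n+1})$ induces a morphism of Hochschild--Serre spectral sequences intertwining the transgressions, and the $x_i$ are stable under the restriction $H^*(\GL(n+1,\Z),\Q)\to H^*(\GL(n,\Z),\Q)$ by Theorem \ref{theoremhomologyofGL}(1). Thus each $z_i^{(n+1)}$ restricts to a class in $I^{4i}(n)$ whose transgression equals $x_i^{(n)}$, and hence agrees with $z_i^{(n)}$ modulo decomposables; together with the matching graded dimensions this gives the claimed stable isomorphism. The main technical obstacle is the Koszul bookkeeping in the inductive step: one must systematically rule out, degree by degree, the appearance of surplus generators in $I^*$ beyond those forced to cancel the $x_i$. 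The cleanest organization is probably via a Sullivan-type minimal model argument, identifying $I^*$ with the minimal model of $\Q$ over $\bigwedge(x_1,x_2,\dots)$, which is precisely $\Q[z_1,z_2,\dots]$ with $\deg z_i=4i$.
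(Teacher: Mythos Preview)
Your proposal is correct and follows essentially the same route as the paper: both analyze the Hochschild--Serre spectral sequence for \eqref{exact}, use Proposition \ref{stablefamilyBorel} under $(\SF_q)$ to factor $E_2^{*,*}$ as $\bigwedge(x_1,x_2,\dots)\otimes H^*(\IA_n,\Q)^{\GL(n,\Z)}$, invoke Galatius for the vanishing abutment, and then run a Koszul-type argument to identify the fiber column. The only organizational difference is that the paper isolates your ``Koszul bookkeeping'' as a standalone spectral-sequence statement (Theorem \ref{stableZeeman}), proved by comparing with the model spectral sequence $\bigotimes_k\bigl(S(V_k)\otimes S(V[-1]_{k-1})\bigr)$ via Zeeman's comparison theorem, rather than by your direct degree-by-degree induction or the suggested Sullivan minimal model argument; all three are standard realizations of the same idea.
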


\begin{remark}
    In Theorem \ref{thmHIAinv}, Conjecture \ref{conjPi} can be replaced with the weaker assumption that the family $\{H^i(\IA_n,\Q)\}_n$ is stably algebraic. See Remark \ref{algverIAnGLinv}.
\end{remark}

\subsection{Antitransgression maps}

Here we discuss ``antitransgression'' maps for the Hochschild--Serre spectral sequences for certain group extensions. 

Let $G$ be a group such that $H^i(G,\Q)=0$ for $i=k,k+1$, where $k\ge1$.
Consider an exact sequence of groups
\begin{gather*}
    1\to N\to G \to Q \to 1
\end{gather*}
and the Hochschild--Serre spectral sequence
\begin{gather*}
    E_2^{p,q}=H^p(Q,H^q(N,\Q))\Rightarrow H^{p+q}(G,\Q).
\end{gather*}

Since $H^k(G,\Q)=H^{k+1}(G,\Q)=0$, we have $E_{k+2}^{0,k}=E_{k+2}^{k+1,0}=0$ and hence
the map $d_{k+1}^{0,k}:E_{k+1}^{0,k}\to E_{k+1}^{k+1,0}$ is an isomorphism.
Define a $\Q$-linear map
$$\varphi_k = \iota_k (d_{k+1}^{0,k})^{-1} \pi_{k+1}
: E_2^{k+1,0}\to E_2^{0,k},$$
where
$\pi_{k+1}:E_2^{k+1,0}\twoheadrightarrow E_{k+1}^{k+1,0}$
and
$\iota_k : E_{k+1}^{0,k}\hookrightarrow E_2^{0,k}$
are the edge maps.
We call the map $\varphi_k$ the $k$th \emph{antitransgression map} since it runs in the opposite direction of a transgression map (if it exists).

\subsection{Graded-commutative multiplicative spectral sequences}

Here we use Zeeman's comparison theorem to obtain certain isomorphism of graded algebras.

Let $V=\bigoplus_{k\ge2}V_k$ be a graded vector space.
Let $V[-1]=\bigoplus_{k\ge1}V[-1]_k$, $V[-1]_k=V_{k+1}$, be the graded vector space obtained from $V$ by degree-shift by $-1$.
Let $SV=\bigoplus_{i\ge 0}S^i(V)$ and $S(V[-1])=\bigoplus_{i\ge 0}S^i(V[-1])$ be the graded-symmetric algebras on $V$ and $V[-1]$, respectively.

A \emph{multiplicative} spectral sequence $E=\{E_r^{p,q}\}$ is a spectral sequence with associative, unital multiplication maps $E_r^{p,q}\otimes E_r^{p',q'}\to E_r^{p+p',q+q'}$.
A multiplicative spectral sequence $E$ is \emph{graded-commutative} if $ab=(-1)^{(p+q)(p'+q')}ba$ for any $a\in E_r^{p,q}$ and $b\in E_r^{p',q'}$.
For example, the cohomological Hochschild--Serre spectral sequences for exact sequences of groups are graded-commutative multiplicative first-quadrant spectral sequences.

We have the following theorem, which is closely related to Borel's result \cite{Borel0} (see \cite[Theorem 3.27]{McCleary}).

\begin{theorem}\label{stableZeeman}
Let $N,P,Q\in\{0,1,2,\dots,\infty\}$.
Let $E$ be a graded-commutative, multiplicative first-quadrant spectral sequence over $\Q$ satisfying the following conditions

(1) the multiplication map $E_2^{p,0}\ot E_2^{0,q}\to E_2^{p,q}$ is a linear isomorphism for $p\le P$, $q\le Q$,

(2) $E_\infty^{p,q}=0$ if $0<p+q\le N$,

(3) there is a graded vector space $V=\bigoplus_{k\ge2}V_k$ such that we have an isomorphism of graded algebras $E_2^{*,0}\cong SV$ for $*\le P$.

Then the restriction $\varphi_k|_{V_{k+1}}$ of the antitransgression map $\varphi_k:E_2^{k+1,0}\to E_2^{0,k}$
is an isomorphism for each $k\ge1$,
and 
we have an isomorphism of graded algebras
\begin{gather*}
    E_2^{0,*}\cong S(V[-1])
\end{gather*}
for $*\le \min(P-2,Q,N-2)$.
Here, $V[-1]_k$ is identified with the image $\varphi_k(V_{k+1})$.
\end{theorem}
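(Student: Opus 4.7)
The plan is in two steps: first, to establish injectivity of $\varphi_k|_{V_{k+1}}$ by a direct multiplicative argument; second, to establish the algebra isomorphism $E_2^{0,*} \cong S(V[-1])$ by comparing $E$ with a Koszul model and invoking Zeeman's comparison theorem. For injectivity, note that $\varphi_k(v) = 0$ for $v \in V_{k+1} \subset E_2^{k+1,0}$ iff $v$ lies in the image of some earlier differential $d_r : E_r^{k+1-r,\,r-1} \to E_r^{k+1,0}$ with $2 \le r \le k$. For such $r$, both bidegree coordinates $(k+1-r,\,r-1)$ are strictly positive, so by hypothesis (1) the source factors as $E_2^{k+1-r,0} \otimes E_2^{0,r-1}$ (in the relevant range). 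Since the differentials vanish on row $0$, the Leibniz rule gives $d_r(a \cdot b) = \pm\, a \cdot d_r(b)$, which therefore lies in the image of the multiplication $E_2^{k+1-r,0} \cdot E_2^{r,0} \to E_2^{k+1,0}$, i.e.\ in the decomposable part of $E_2^{k+1,0} \cong SV$ in degree $k+1$. Since $V_{k+1}$ is, by the graded-symmetric algebra structure of hypothesis (3), complementary to the decomposable elements in degree $k+1$, we conclude $v = 0$.

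For the algebra isomorphism I introduce a model graded-commutative, multiplicative first-quadrant spectral sequence $\bar{E}$ over $\Q$ with $\bar{E}_2 = SV \otimes S(V[-1])$ as a bigraded algebra, $V$ placed in row $0$ and $V[-1]$ in column $0$, whose differentials are determined by the Leibniz rule together with $d_{k+1}(v[-1]) = v$ for $v \in V_{k+1}$, and which satisfies $\bar{E}_\infty^{p,q} = 0$ for $p+q > 0$. This is a classical Koszul complex, realizable for instance as the Serre spectral sequence of the path--loop fibration on a generalized Eilenberg--MacLane space corresponding to $V$. I then build a morphism $\phi : \bar{E} \to E$ of graded-commutative multiplicative spectral sequences. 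On the base row, $\phi$ is the isomorphism $SV \cong E_2^{*,0}$ of hypothesis (3). On the column-$0$ generators, I set $\phi(v[-1]) := \varphi_k(v)$ for $v \in V_{k+1}$ and extend multiplicatively; hypothesis (1) guarantees this extends unambiguously in the relevant bidegrees. Compatibility with $d_{k+1}$ on generators is built into the definition of $\varphi_k$, and compatibility with higher differentials follows by the Leibniz rule. Zeeman's comparison theorem (see \cite[Theorem 3.27]{McCleary}, \cite{Borel0}) then forces $\phi$ to be an iso on the fiber column in total degrees $\le \min(P-2,\, Q,\, N-2)$, giving the claimed isomorphism $E_2^{0,*} \cong S(V[-1])$ with $V[-1]_k = \varphi_k(V_{k+1})$. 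The three bounds reflect, in order, the range of hypothesis (3), the multiplicative factorization of hypothesis (1), and the vanishing of hypothesis (2).

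The main obstacle is constructing $\bar{E}$ with the correct graded-commutative sign conventions: since $V$ may have generators of both parities, $SV$ is a mixture of polynomial and exterior parts and the parities shift when passing to $V[-1]$, so extending $d_{k+1}(v[-1]) = v$ by the Leibniz rule to a consistent, acyclic derivation on the entire bigraded algebra $SV \otimes S(V[-1])$ requires care. A secondary, routine concern is tracking the two-unit loss in the Zeeman bound: the shift $(r, 1-r)$ of each $d_r$ means that controlling $E_2^{0,q}$ via the base row requires knowledge of $E_2^{q+1,0}$, while the multiplicativity-driven comparison requires two columns of room in both $P$ and $N$, producing the bounds $P-2$ and $N-2$.
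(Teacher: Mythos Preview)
Your overall strategy---build a Koszul model $\bar E$ with $\bar E_2 = SV \otimes S(V[-1])$ and compare to $E$ via Zeeman---is the same as the paper's. The paper carries this out inductively, adding one factor ${}^kE$ (the Koszul spectral sequence on $V_k$ alone) at a time and applying Zeeman at each step, whereas you construct $\phi:\bar E\to E$ globally and apply Zeeman once. Both work.

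That said, your step~1 argument for injectivity has a real gap: you write that $v$ ``lies in the image of some earlier differential $d_r:E_r^{k+1-r,r-1}\to E_r^{k+1,0}$'' and then invoke the factorization $E_2^{k+1-r,r-1}\cong E_2^{k+1-r,0}\otimes E_2^{0,r-1}$ from hypothesis~(1). But the Leibniz computation $d_r(a\cdot b)=\pm a\cdot d_r(b)$ takes place on the $E_r$-page, and the product decomposition of hypothesis~(1) is only asserted at $E_2$; it need not persist to $E_r$, nor does an element being ``decomposable at $E_r$'' lift to being decomposable at $E_2$. Fortunately this step is redundant: once $\phi$ is a morphism of spectral sequences, Zeeman gives that $\phi_2^{0,k}$ is an isomorphism in the stated range, and since $\phi_2^{0,k}|_{V[-1]_k}$ is $\varphi_k|_{V_{k+1}}$ by construction, injectivity follows.

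A second, smaller point: the assertion that ``compatibility with higher differentials follows by the Leibniz rule'' is too quick. What is needed is an inductive check that for each $r$ the induced algebra map $\phi_r:\bar E_r\to E_r$ commutes with $d_r$ on algebra generators of $\bar E_r$. Concretely, $\bar E_r\cong S(\bigoplus_{l\ge r}V_l)\otimes S(\bigoplus_{l\ge r}V_l[-1])$; on $v\in V_l$ both $d_r$'s vanish (row~$0$), while on $v[-1]$ with $v\in V_l$ one uses that $\varphi_{l-1}(v)\in\im\iota_{l-1}=E_l^{0,l-1}$, so its class in $E_r^{0,l-1}$ is a $d_r$-cycle for $r<l$ and satisfies $d_l[\varphi_{l-1}(v)]=\pi_l(v)=[v]$ for $r=l$. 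This is straightforward, but it is not an automatic consequence of multiplicativity alone. The paper's inductive construction makes exactly these checks, one degree at a time.
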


\begin{proof}
We first consider the case where at least one of $P$, $Q$ and $N$ are finite.
Let $K=2+\min(P-2,Q,N-2)$.
For $2\le k\le K$, let ${}^kE$ denote the graded-commutative multiplicative first-quadrant spectral sequence given by 
$${}^kE_2^{p,q}=S(V_k)_p\otimes S(V[-1]_{k-1})_q$$ with $d_j=0$ for $j\neq k$ and $d_k^{0,k-1}=\id_{V_k}:{}^kE_k^{0,k-1}\congto{}^kE_k^{k,0}$.
Since the differential is determined by the Leibniz rule, we have an exact sequence
$$
0\to S^i(V[-1]_{k-1})\to S^1(V_k)\otimes S^{i-1}(V[-1]_{k-1})\to \cdots\to S^i(V_k)\to 0
$$
for each $i\ge 1$.
Therefore, we have ${}^kE_{k+1}^{p,q}=\cdots={}^kE_{\infty}^{p,q}=0$ for $p+q>0$.
Set ${}^{\le k}E=\bigotimes_{2\le i\le k}{}^iE$ for $2\le k\le K$.

We claim that 
the restriction of $\varphi_{k-1}:E_2^{k,0}\to E_2^{0,k-1}$ to $V_{k}$ is an isomorphism for $k=2,\dots,K$, and that
there are morphisms of multiplicative spectral sequences
\begin{gather*}
    {}^{k}f : {}^{\le k}E \to E
\end{gather*}
for $k=2,\dots,K$
such that ${}^{k}f_{2}^{p,0}$ is an isomorphism for $0\le p\le k$, and such that ${}^{k+1}f|_{{}^{\le k}E}={}^k f$ for each $k=2,\dots,K-1$.

The proof of this claim is done by induction on $k$.
For $k=2$, the differential $d_2^{0,1}:E_2^{0,1}\to E_2^{0,2}$ is an isomorphism by conditions (2) and (3). Therefore, the inverse map $\varphi_{1}:E_2^{2,0}=V_2\to E_2^{0,1}$ is an isomorphism, and we have such a morphism ${}^{2}f:{}^{\le2}E\to E$.

Let $k\ge2$.
Suppose that such a morphism ${}^{k}f$ exists.
Then since ${}^{k}f_{2}^{p,0}$ is an isomorphism for $0\le p\le k$ and ${}^{k}f_{\infty}^{p,q}$ is an isomorphism for any $0<p+q\le N$ by the condition (2), it follows from Zeeman's comparison theorem \cite{Zeeman} that ${}^{k}f_{2}^{0,q}$ is an isomorphism for $0\le q\le k-2$. 
Therefore, by the condition (1), ${}^{k}f_{2}^{p,q}$ is an isomorphism for $0\le p\le k,\; 0\le q\le k-2$.
In particular, ${}^{k}f_{2}^{p,k-p}$ is an isomorphism for $2\le p\le k-1$.
Since we have $E_2^{1,0}=0$, by condition (1), we have $E_2^{1,k-1}=0$.
Therefore, the restriction of the edge map $\pi_k:E_2^{k+1,0}\to E_{k+1}^{k+1,0}$ to $V_{k+1}$ is an isomorphism.
Then the restriction of $\varphi_{k}=\iota_{k}(d_{k+1}^{0,k})^{-1}\pi_k :E_2^{k+1,0}\to E_2^{0,k}$ to $V_{k+1}$ is an isomorphism.
Hence, we have a morphism ${}^{k+1}f$ extending ${}^kf$ such that ${}^{k+1}f_{2}^{p,0}$ is an isomorphism for $0\le p\le k+1$.

We now have a morphism ${}^{K}f:{}^{\le K}E\to E$. By Zeeman's comparison theorem, ${}^{K}f_2^{0,q}:{}^{\le K}E_2^{0,q}\to E_2^{0,q}$ is an isomorphism for $q\le K-2$, which implies $E_2^{0,*}\cong S(V[-1])$ for $*\le K-2$.

Let us consider the case where $P=Q=N=\infty$.
The above proof for the finite case shows that there is a family of morphisms ${}^kf:{}^{\le k}E\to E$ for $k\ge2$.
Let ${}'E=\varinjlim_k {}^{\le k}E=\bigotimes_{k\ge 2}{}^{k}E$ be the direct limit of multiplicative spectral sequence.
There is a morphism ${}'f:{}'E\to E$ extending ${}^{k}f$ for all $k\ge2$.
By Zeeman's comparison theorem, we obtain $E_2^{0,*}\cong S(V[-1])$ in a similar way.
\end{proof}

\subsection{The classes $y_k\in H^{4k}(\IA_n,\Q)^{\GLnZ}$}\label{antiAut}

We apply the antitransgression map to the exact sequence
\begin{gather*}
    1\to \IA_n\to \Aut(F_n)\to \GLnZ\to 1.
\end{gather*}
By Theorem \ref{theoremhomologyofGL}, we have for $n\ge p+1$
\begin{gather*}
    E_2^{p,0}=\left(\text{the degree $p$ part of }\bigwedge_{\Q}(x_1,x_2,\dots)\right),
\end{gather*}
where $\deg x_k= 4k+1$.

For $k\ge1$, set
\begin{gather*}
    y_k = \varphi_{4k}(x_k) \in E_2^{0,4k}=H^{4k}(\IA_n,\Q)^{\GLnZ}.
\end{gather*}
We expect that $y_k$ is a nonzero scalar multiple of the ``secondary cohomology class'' $T\beta_{2k+1}^0\in H^{4k}(\IA_n,\R)$ appearing in Morita, Sakasai and Suzuki \cite{Morita-Sakasai-Suzuki}.

\subsection{Proof of Theorem \ref{thmHIAinv}}
\label{proofthmHIAinv}

Let $y_k$ be the elements defined in Section \ref{antiAut}.
Theorem \ref{thmHIAinv} follows from the following.

\begin{proposition}\label{IAnGLinv}
Let $P, Q\ge 0$.
Suppose $(\SF_i)$ holds for any $i\le Q$.
Let $n\ge \max\{n_0(\{H^i(\IA_n,\Q)\}_n, P)\}_{i\le Q}$.
Then we have a stable isomorphism of algebras
\begin{gather*}
    H^*(\IA_n,\Q)^{\GLnZ}\cong \Q[y_1,y_2,\dots]
\end{gather*}
in degree $*\le \min(P-2, Q, \frac{n}{2}-3)$.
\end{proposition}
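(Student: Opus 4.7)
The plan is to apply Theorem \ref{stableZeeman} to the Hochschild--Serre spectral sequence
$$E_2^{p,q} = H^p(\GLnZ, H^q(\IA_n, \Q)) \Rightarrow H^{p+q}(\Aut(F_n), \Q)$$
associated to the extension $1 \to \IA_n \to \Aut(F_n) \to \GLnZ \to 1$. This is a graded-commutative multiplicative first-quadrant spectral sequence, so it suffices to verify the three hypotheses of Theorem \ref{stableZeeman} and identify the resulting graded-symmetric algebra $S(V[-1])$ with $\Q[y_1, y_2, \ldots]$.

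First I would verify condition (3). By Theorem \ref{theoremhomologyofGL}, for $n \ge P+1$ we have an isomorphism of graded algebras $E_2^{*,0} = H^*(\GLnZ, \Q) \cong \bigwedge_\Q(x_1, x_2, \ldots)$ in degrees $* \le P$, with $\deg x_k = 4k+1$. Setting $V = \bigoplus_{k\ge 1} V_{4k+1}$ with $V_{4k+1} = \Q \cdot x_k$, since all generators sit in odd degrees the exterior algebra agrees with the graded-symmetric algebra $SV$, as required. Next, for condition (1), the assumption $(\SF_i)$ for $i \le Q$ makes each $H^q(\IA_n, \Q)$ with $q \le Q$ a stable family of $\GLnZ$-representations, so Proposition \ref{stablefamilyBorel} yields the cup product isomorphism
$$E_2^{p,0} \otimes E_2^{0,q} = H^p(\GLnZ, \Q) \otimes H^q(\IA_n, \Q)^{\GLnZ} \congto H^p(\GLnZ, H^q(\IA_n, \Q)) = E_2^{p,q}$$
for $p \le P$ and $q \le Q$, provided $n \ge \max\{n_0(\{H^i(\IA_n, \Q)\}_n, P)\}_{i \le Q}$. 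For condition (2), Galatius's theorem gives $H^j(\Aut(F_n), \Q) = 0$ for $0 < j \le N$ with $N$ approximately $\tfrac{n}{2} - 1$, so $E_\infty^{p,q} = 0$ in that range.

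Theorem \ref{stableZeeman} then gives $E_2^{0,*} \cong S(V[-1])$ in degrees $* \le \min(P-2, Q, N-2) = \min(P-2, Q, \tfrac{n}{2}-3)$, with the degree-$4k$ part of $V[-1]$ identified with $\varphi_{4k}(V_{4k+1}) = \Q \cdot \varphi_{4k}(x_k) = \Q \cdot y_k$ via the antitransgression map from Section \ref{antiAut}. Since $V[-1]$ is concentrated in even degrees, $S(V[-1])$ is the polynomial algebra $\Q[y_1, y_2, \ldots]$ with $\deg y_k = 4k$. Combining this with the identification $E_2^{0,q} = H^0(\GLnZ, H^q(\IA_n, \Q)) = H^q(\IA_n, \Q)^{\GLnZ}$ yields the stable algebra isomorphism claimed in the proposition.

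The structural argument is essentially forced once Theorem \ref{stableZeeman} is in hand; the main point requiring care is the bookkeeping of stable ranges, namely verifying that Borel's range for condition (3), the range from $(\SF_i)$ and Proposition \ref{stablefamilyBorel} for condition (1), and Galatius's vanishing range for condition (2) all hold simultaneously, so that the hypotheses of Theorem \ref{stableZeeman} apply with the parameters $P$, $Q$, and $N = \tfrac{n}{2}-1$ and deliver the output range $* \le \min(P-2, Q, \tfrac{n}{2}-3)$.
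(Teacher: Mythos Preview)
Your proposal is correct and follows essentially the same argument as the paper's own proof: both apply Theorem \ref{stableZeeman} to the Hochschild--Serre spectral sequence for $1\to\IA_n\to\Aut(F_n)\to\GLnZ\to1$, verifying condition (1) via $(\SF_i)$ and Proposition \ref{stablefamilyBorel}, condition (2) via Galatius's vanishing theorem, and condition (3) via Theorem \ref{theoremhomologyofGL}, then identifying $S(V[-1])$ with $\Q[y_1,y_2,\dots]$. Your write-up is in fact slightly more explicit than the paper's about why $SV$ is the exterior algebra and why $S(V[-1])$ is polynomial (odd vs.\ even generators), which is a nice touch.
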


\begin{proof}
Consider the cohomological Hochschild--Serre spectral sequence for the exact sequence
$1\to \IA_n\to \Aut(F_n)\to \GL(n,\Z)\to 1$,
which is graded-commutative and multiplicative.
By the assumption $(\SF_i)$ for $i\le Q$, it follows from Proposition \ref{stablefamilyBorel} that we have
$$E_2^{p,q}=H^p(\GL(n,\Z),H^q(\IA_n,\Q))\cong H^p(\GL(n,\Z),\Q)\otimes (H^q(\IA_n,\Q))^{\GL(n,\Z)}$$ for $0\le p\le P$, $0\le q\le Q$.
By Galatius \cite{Galatius}, we have $H^i(\Aut(F_n),\Q)=0$ for $0<i\le \frac{n-2}{2}$. Therefore, we have $E_{\infty}^{p,q}=0$ for $0< p+q\le \frac{n-2}{2}$.
By Theorem \ref{theoremhomologyofGL}, we have $E_2^{*,0}\cong \bigwedge_{\Q}(x_1,\ldots)$ for $*\le n-1$.
Therefore, by Theorem \ref{stableZeeman}, we have 
an isomorphism of graded algebras 
\begin{gather*}
    H^*(\IA_n,\Q)^{\GLnZ}\cong S(V[-1])
\end{gather*}
for $*\le \min(P-2, Q, \frac{n}{2}-3)$, where $V=\bigoplus_{k\ge1} \Q x_k$.
Since we have $V[-1]=\bigoplus_{k\ge1} \Q y_k$, $S(V[-1])=\Q[y_1,y_2,\ldots]$.
\end{proof}

\begin{remark}\label{algverIAnGLinv}
By Remark \ref{stablyalgebraicBorel}, we can deduce Proposition \ref{IAnGLinv} with the assumption $(\SF_i)$ 
replaced with the following weaker assumption:
\begin{itemize}
    \item[($\mathrm{SA}_i$)]  $\{H^{i}(\IA_n,\Q)\}_n$ is a stably algebraic family. 
\end{itemize}
\end{remark}

\section{Stable rational cohomology of $\IA_n$}\label{secstrIA}

The whole structure of the stable rational cohomology of $\IA_n$ is still mysterious in degree $\ge 2$.
In this section, we propose a conjectural structure of the stable rational cohomology of $\IA_n$, and study some relations to other known conjectures.

\subsection{Known results and conjectures}\label{secstrIA1}

In \cite{KatadaIA}, the second-named author studied the Albanese homology $H^A_*(\IA_n,\Q)$ of $\IA_n$, which is predual to the Albanese cohomology in the sense of Church, Ellenberg and Farb \cite{Church-Ellenberg-Farb}.
The \emph{Albanese homology} $H^A_*(\IA_n,\Q)$ of $\IA_n$ is defined by
$$
  H^A_*(\IA_n,\Q)=\im\left(H_*(\IA_n,\Q)\xrightarrow{\tau_*} H_*(U,\Q)\right),
$$
where $U=H_1(\IA_n,\Q)$ and $\tau_*$ is
induced by the abelianization map $\tau:\IA_n\to U$.
The \emph{Albanese cohomology} $H_A^*(\IA_n,\Q)$ of $\IA_n$ is defined by
$$
  H_A^*(\IA_n,\Q)=\im\left(H^*(U,\Q)\xrightarrow{\tau^*} H^*(\IA_n,\Q)\right),
$$
which is dual to $H^A_*(\IA_n,\Q)$.

Note that we have $H_A^*(\IA_n,\Q)^{\GL(n,\Z)}=0$ since $H_A^*(\IA_n,\Q)$ is a quotient of $H^*(U,\Q)$ and since we have $H^*(U,\Q)^{\GL(n,\Z)}=0$.

Let $U_i=\Hom(H,\bigwedge^{i+1}H)$ for $i\ge 1$.
Let $S^*(U_*)$ denote the graded-symmetric algebra of the graded $\GL(n,\Q)$-representation $U_*=\bigoplus_{i\ge 1} U_i$.
The traceless part $W_*=\bigoplus_{i\ge 0} W_i$ of $S^*(U_*)$ is defined by using contraction maps (see \cite{KatadaIA} for details).
The second-named author \cite{KatadaIA} proved that 
$H^A_i(\IA_n,\Q)$ includes a $\GL(n,\Q)$-subrepresentation which is isomorphic to $W_i$, and made the following conjecture, which holds for $i=1$ \cite{CP,Farb,Kawazumi}, for $i=2$ \cite{Pettet} and for $i=3$ \cite{KatadaIA}.

\begin{conjecture}[\cite{KatadaIA}]\label{conjalb}
Let $i\ge 0$.
We have a $\GL(n,\Q)$-isomorphism
$$
  H^A_i(\IA_n,\Q)\cong W_i
$$
for sufficiently large $n$.
\end{conjecture}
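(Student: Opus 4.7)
The plan is to leverage Katada's known embedding $W_i \hookrightarrow H^A_i(\IA_n,\Q)$ and show that it is surjective in a stable range, by exploiting the Hopf algebra structure on $H_*(U,\Q)$ together with representation-theoretic constraints coming from what is already known about the stable cohomology of $\Aut(F_n)$.

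First I would set up the Hopf-algebraic framework. Since $U=H_1(\IA_n,\Q)$ is abelian and concentrated in odd homological degree, $H_*(U,\Q)=\bigwedge^* U_1$ is a primitively generated exterior Hopf algebra. The abelianization $\tau:\IA_n\to U$ is a group homomorphism, so the induced map $\tau_*: H_*(\IA_n,\Q)\to \bigwedge^* U_1$ is a Hopf map (using the Pontryagin coproduct on the source), and consequently the image $H^A_*(\IA_n,\Q)$ is a sub-Hopf algebra. By the Milnor--Moore theorem, $H^A_*$ is the exterior algebra on its primitive subspace $P(H^A_*)$. The Katada embedding realizes $W_*$ as a primitively generated sub-Hopf algebra of $\bigwedge^* U_1$, whose primitives are built from the natural inclusions $U_i\hookrightarrow \bigwedge^{i+1}U_1$ induced by the iterated bracket/Johnson structure. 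Under this identification Conjecture \ref{conjalb} reduces to the equality $P(H^A_*)=P(W_*)$.

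The inclusion $P(W_*)\subset P(H^A_*)$ is exactly what Katada's lower bound provides; the reverse inclusion is the new content. To attack it I would work representation-theoretically: under Church--Farb's stability (Conjecture \ref{conjPi}) each $H^A_i$ decomposes stably as a finite direct sum of irreducible $\GLnQ$-types $V_{\ul\lambda}(n)$, and a Schur--Weyl--plethysm computation in $\bigwedge^*(H^*\otimes \bigwedge^2 H)$ identifies precisely which $V_{\ul\lambda}$ occur in $W_i$ and with what multiplicity. The task is to rule out any further isotype in $P(H^A_*)$. The two main inputs would be (i) the Hochschild--Serre spectral sequence for the extension $1\to \IA_n\to \Aut(F_n)\to \GLnZ\to 1$, which combined with Galatius's stable vanishing of $H^*(\Aut(F_n),\Q)$ and Borel's theorem (Theorem \ref{theoremhomologyofGL}) imposes strong cancellation on $\GLnZ$-isotypes of $H_*(\IA_n,\Q)$ (as already exploited in Section \ref{secinvIA} to identify the invariant part); and (ii) the Andreadakis filtration $\calA_*$ of $\IA_n$, whose rational associated graded is conjecturally the trace-free part of the Johnson Lie algebra, so that an associated Chevalley--Eilenberg-type spectral sequence would compute the Albanese quotient in terms of $W_*$.

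The main obstacle is that, beyond degree one, the upper bound on $H_*(\IA_n,\Q)$ is genuinely poorly understood; controlling it stably is essentially the content of Church--Farb's conjecture itself. Even granting $(\SF_i)$ for all $i$, one still needs Andreadakis's conjecture that the Johnson filtration rationally coincides with the lower central series of $\IA_n$ in order to identify the associated graded Lie algebra with its conjectural trace-free model, and this identification is open. A more realistic partial goal, which I would pursue first, is to bootstrap the conjecture from the verified cases $i\le 3$ to a few higher degrees: the multiplicative structure $H^A_i\cdot H^A_j\subset H^A_{i+j}$ inside $\bigwedge^*U_1$ forces any spurious primitive of $P(H^A_i)$ to manifest as a predictable Pontryagin-product in lower degrees, which the known values in $i\le 3$ obstruct. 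Pushing this bootstrap to all $i$ is where I expect the real difficulty to lie, and it cannot, I believe, be done without simultaneously resolving (rational versions of) the Andreadakis and Church--Farb conjectures in the stable range.
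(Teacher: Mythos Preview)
The statement you are trying to prove is \emph{Conjecture}~\ref{conjalb}: it is an open problem cited from \cite{KatadaIA}, and the present paper does not prove it. It is used only as a hypothesis (e.g.\ in Theorem~\ref{conjIAprop}), and the paper records that it is known only for $i\le 3$. There is therefore no ``paper's own proof'' to compare your proposal against; any complete argument here would be a new theorem.

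Your outline is honest about this---you already say at the end that the argument cannot be closed without (rational) Andreadakis and Church--Farb---but there is also a genuine technical gap earlier. The Milnor--Moore step is not justified as written. For a discrete non-abelian group such as $\IA_n$, the rational homology $H_*(\IA_n,\Q)$ carries the diagonal \emph{coproduct} but no Pontryagin \emph{product}; hence $\tau_*$ is only a coalgebra map, and its image $H^A_*(\IA_n,\Q)\subset\bigwedge^*U_1$ is a priori only a sub\emph{coalgebra}, not a sub-Hopf algebra. Milnor--Moore needs the full Hopf structure, so you cannot conclude that $H^A_*$ is the exterior algebra on its primitives. Dually, $H_A^*(\IA_n,\Q)$ is a quotient \emph{algebra} of $\bigwedge^*U_1^*$, but the kernel is not obviously a Hopf ideal. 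A separate point: $W_*$ is defined in \cite{KatadaIA} as the traceless part of $S^*(U_*)$, which is not a subalgebra (products of traceless tensors need not be traceless), so calling it a ``primitively generated sub-Hopf algebra'' is not accurate either; Katada's embedding $W_i\hookrightarrow H^A_i$ is a $\GL(n,\Q)$-module map, not a Hopf-algebra map.

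If you want to pursue this direction, the cleanest replacement for the Hopf argument is to work in cohomology with the algebra structure only, and to control the kernel of $\tau^*:\bigwedge^*U_1^*\to H^*(\IA_n,\Q)$ degree by degree via the quadratic relations coming from Hain/Pettet in degree~$2$; this is essentially the approach in \cite{KatadaIA}, and it is exactly where the problem becomes hard beyond degree~$3$.
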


For any algebraic $\GL(n,\Q)$-representation $V$,
the inclusion map $\IA_n\hookrightarrow \Aut(F_n)$ induces a morphism of graded vector spaces
\begin{gather}\label{AutIA}
     i^*:  H^*(\Aut(F_n),V)\to H^*(\IA_n,V)^{\GL(n,\Z)}.
\end{gather}
The second-named author \cite{KatadaIA} also made the following conjecture about the relation between the Albanese cohomology of $\IA_n$ and the twisted cohomology of $\Aut(F_n)$.
For any algebraic $\GL(n,\Q)$-representation $V$, let $H_A^i(\IA_n,V)= H_A^i(\IA_n,\Q)\otimes V$.

\begin{conjecture}[\cite{KatadaIA}]\label{conjkatada}
For any algebraic $\GL(n,\Q)$-representation $V$, the map $i^*$ is stably injective and its image stably equals to $H_A^*(\IA_n,V)^{\GL(n,\Z)}$. That is, we stably have an isomorphism of graded vector spaces
\begin{gather*}
   i^*: H^*(\Aut(F_n),V)\xrightarrow{\cong} H_A^*(\IA_n,V)^{\GL(n,\Z)}.
\end{gather*}
\end{conjecture}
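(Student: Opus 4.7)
The plan is to analyze the Hochschild--Serre spectral sequence
$$E_2^{p,q} = H^p(\GLnZ, H^q(\IA_n, V)) \Rightarrow H^{p+q}(\Aut(F_n), V)$$
for the extension $1 \to \IA_n \to \Aut(F_n) \to \GLnZ \to 1$ with coefficients in $V$, and to show that stably it collapses onto the $q$-axis with $E_\infty^{0,q} = H_A^q(\IA_n, V)^{\GLnZ}$. Since $i^*$ coincides with the edge map $H^*(\Aut(F_n), V) \twoheadrightarrow E_\infty^{0,*} \hookrightarrow E_2^{0,*} = H^*(\IA_n, V)^{\GLnZ}$, this immediately yields both the stable injectivity of $i^*$ and the identification of its image with $H_A^*(\IA_n, V)^{\GLnZ}$.

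First, assuming Conjecture \ref{conjPi} (or merely stable algebraicity of $\{H^q(\IA_n, \Q)\}_n$, cf.\ Remark \ref{algverIAnGLinv}), each $H^q(\IA_n, V) = H^q(\IA_n, \Q) \otimes V$ is a stable family of $\GLnZ$-representations, so Proposition \ref{stablefamilyBorel} gives stably $E_2^{p,q} \cong H^p(\GLnZ, \Q) \otimes H^q(\IA_n, V)^{\GLnZ}$. Next, invoking Conjectures \ref{introconjIAn} and \ref{introconjectureHIAinv}, the factor $H^*(\IA_n, V)^{\GLnZ}$ splits stably as $H_A^*(\IA_n, V)^{\GLnZ} \otimes \Q[y_1, y_2, \ldots]$, where the $y_k$ are the antitransgression classes from Section \ref{antiAut}. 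Combined with Borel's theorem, the $E_2$-page becomes, stably,
$$E_2^{*,*} \cong \bigwedge(x_1, x_2, \ldots) \otimes \Q[y_1, y_2, \ldots] \otimes H_A^*(\IA_n, V)^{\GLnZ}.$$
By construction, $d_{4k+1}(y_k) = x_k$, and by multiplicativity together with the Leibniz rule all differentials are determined on the first two tensor factors, while the Albanese factor consists of permanent cycles. The Koszul-type subcomplex $\bigwedge(x_i) \otimes \Q[y_i]$ has cohomology concentrated in bidegree $(0,0)$ --- essentially the content of Theorem \ref{stableZeeman} --- so stably $E_\infty^{p,q} = 0$ for $p > 0$ and $E_\infty^{0,q} \cong H_A^q(\IA_n, V)^{\GLnZ}$, completing the argument.

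The main obstacle is twofold. Structurally, the argument depends on Conjectures \ref{conjalb}, \ref{introconjIAn}, and \ref{introconjectureHIAinv}: the clean factorization of $H^*(\IA_n, V)^{\GLnZ}$ into Albanese and $\GLnZ$-invariant pieces is what drives the collapse, since otherwise the $y_k$'s and Albanese classes could interact nontrivially under the differentials. On the spectral-sequence side, one must still justify that Albanese classes are genuine permanent cycles and that products of $x_i$'s, $y_j$'s and Albanese classes factorize tensorially on every page. An alternative route, avoiding Conjecture \ref{introconjIAn}, would be to apply Lindell's theorem (Theorem \ref{introconjKV}) to describe $H^*(\Aut(F_n), H^{p,q})$ stably via the wheeled PROP $\calC_{\calP_0^{\circlearrowright}}(p,q)$, independently identify $((W_*)^* \otimes H^{p,q})^{\GLnZ}$ with the same wheeled PROP by a Schur--Weyl computation using Conjecture \ref{conjalb}, and then verify compatibility of these two identifications with the map $i^*$; this last step, matching the two combinatorial models through $i^*$, is likely to be the delicate part.
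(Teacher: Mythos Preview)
The statement is a \emph{conjecture}, and the paper does not prove it. What the paper does instead is record a single sentence immediately following the conjecture: ``By combining Theorem~\ref{conjKV} and the result of the second-named author \cite[Proposition~12.7]{KatadaIA}, it follows that Conjectures~\ref{conjalb} and~\ref{conjkatada} are equivalent.'' That is the paper's entire treatment: reduction to Conjecture~\ref{conjalb} via Lindell's theorem and an external combinatorial identification. Your ``alternative route'' in the last paragraph is precisely this.

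Your main spectral-sequence argument, by contrast, has two problems. First, there is a logical dependency issue: you assume Conjecture~\ref{conjIAn}, but in the paper's architecture that conjecture is \emph{derived from} Conjecture~\ref{conjkatada} (Theorem~\ref{conjIAprop} deduces \ref{conjIAn} from \ref{conjalb}${}+{}$\ref{conjPi}, and \ref{conjalb} is equivalent to \ref{conjkatada}). So you are not reducing to weaker hypotheses; you are assuming something at least as strong as what you want to prove. Second, the step ``the Albanese factor consists of permanent cycles'' is the heart of the matter and is not justified. The module structure $E_*(V)$ over $E_*(\Q)$ gives a Leibniz rule, but it does not force $d_r$ to vanish on the Albanese summand of $E_2^{0,*}(V)$; the tensor decomposition you write down is only an isomorphism of bigraded vector spaces, not of differential modules. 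Showing that Albanese classes survive to $E_\infty$ is tantamount to showing they lift to $H^*(\Aut(F_n),V)$, which is exactly the injectivity half of the conjecture. You acknowledge this gap yourself, but it is not a technicality---it is the whole content.

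In short: your alternative route is what the paper does; your main route is circular in its hypotheses and has a genuine gap at the key step.
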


Kawazumi and Vespa \cite{Kawazumi-Vespa} proposed a conjectural structure of the cohomology of $\Aut(F_n)$ with coefficients in $H^{p,q}$, the degree $1$ part of which we have mentioned in Remark \ref{remarkKV}, and which was previously known to hold for $p=0$ or $q=0$ by Djament--Vespa \cite{Djament-Vespa}, Djament \cite{Djament}, Vespa \cite{Vespa} and Randal-Williams \cite{Randal-Williams}.
Recently, Lindell \cite{Lindell22} proved their conjecture.

\begin{theorem}[Lindell \cite{Lindell22}]\label{conjKV}
Let $p,q\ge 0$.
If $i\neq p-q$, then we have
\begin{gather*}
    H^i(\Aut(F_n),H^{p,q})=0 
\end{gather*}
for sufficiently large $n$.
Otherwise, we have
\begin{gather*}
    H^{i}(\Aut(F_n),H^{q+i,q})=\calC_{\calP_0^{\circlearrowright}}(q+i,q)
\end{gather*}
for sufficiently large $n$.
(Here $\calC_{\calP_0^{\circlearrowright}}$ is the wheeled PROP associated to the wheeled completion of the operad $\calP_0$, see Remark \ref{remarkKV}.)
\end{theorem}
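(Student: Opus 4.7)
The plan is to reformulate the stable twisted cohomology computation as a combinatorial problem about a graph (or functor) complex, building on the known one-sided computations of Djament--Vespa, Vespa and Randal-Williams together with Galatius's vanishing theorem. The key structural observation is that the coefficient system $n\mapsto H^{p,q}(n)$ is polynomial of degree $p$ in the covariant and degree $q$ in the contravariant variable on the category $\mathbf{F}$ of finitely generated free groups, so the stable cohomology $H^{\ast}(\Aut(F_n),H^{p,q})$ should be intrinsically computed by an $\Ext$ group in a suitable functor category on $\mathbf{F}\times\mathbf{F}^{\op}$.

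First I would use the combination of Galatius's theorem and the Djament--Vespa framework to replace $H^{\ast}(\Aut(F_n),H^{p,q})$ in the stable range by this $\Ext$. Next, using a Koszul-type resolution of the bifunctor $H^{\otimes p}\otimes (H^{\ast})^{\otimes q}$ whose boundary maps are built out of the contraction maps $H\otimes H^{\ast}\to \Q$, I would produce a spectral sequence whose $E_2$-page is expressible in terms of the one-sided data computed by Vespa (for $(H^{\ast})^{\otimes q}$) and by Djament--Vespa/Randal-Williams (for $H^{\otimes p}$). The generators of this spectral sequence correspond to diagrams in which the $H$- and $H^{\ast}$-factors are paired by contractions, with possible cyclic contractions giving closed ``wheels''; the combinatorics of these diagrams, together with the symmetric group actions on the $p$ inputs and $q$ outputs, should match precisely the definition of the wheeled PROP $\calC_{\calP_0^{\circlearrowright}}$ attached to the wheeled completion of the operadic suspension of $\calC om$.

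To conclude, two things must be proved: (a) concentration of the resulting complex in cohomological degree $i=p-q$, which is the degree predicted by the ``Euler characteristic'' of a generic diagram with $p$ inputs, $q$ outputs and arbitrarily many wheels, and (b) identification of the surviving cohomology with $\calC_{\calP_0^{\circlearrowright}}(p,q)$ as a $(\gpS_p,\gpS_q)$-bimodule, with the multiplicative and composition structure intact. The main obstacle is (a): one expects a direct weight argument (using the $\GL(n,\Q)$-weights, which in the stable range separate the Specht components appearing in $H^{p,q}$) to rule out surviving classes off the diagonal $s+t=p-q$, but the actual collapse statement also requires excluding spurious higher differentials and extensions. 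Part (b) should then be essentially a bookkeeping exercise once the operad $\calP_0$ is correctly recognised in Vespa's one-sided computation and the effect of the contraction maps is tracked as the wheeled composition of $\calP_0^{\circlearrowright}$.
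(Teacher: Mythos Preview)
The paper does not contain a proof of this theorem. Theorem~\ref{conjKV} is stated and attributed to Lindell \cite{Lindell22}; the authors cite it as an external result and use it as input for their own arguments (notably in Theorem~\ref{conjIAprop} and Lemma~\ref{cohomIAstr}), but they do not reproduce or sketch Lindell's proof. There is therefore nothing in the paper to compare your proposal against.

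As for the proposal itself: your outline is broadly in the right spirit---Lindell's argument does pass through functor cohomology on $\mathbf{F}$ and relies on the prior one-sided computations of Djament--Vespa, Djament, Vespa and Randal-Williams, together with Galatius's vanishing---but what you have written is a heuristic rather than a proof. In particular, the crucial step~(a), the degree concentration, is not something that falls out of a ``direct weight argument''; this is the substantive part of Lindell's work and requires a careful analysis of the relevant functor-category $\Ext$ groups. Your description of~(b) as ``essentially bookkeeping'' also understates what is needed to match the wheeled PROP structure. If you intend to supply an independent proof, you would need to make both of these steps precise; if not, you should simply cite \cite{Lindell22} as the paper does.
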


By combining Theorem \ref{conjKV} and the result of the second-named author \cite[Proposition 12.7]{KatadaIA}, it follows that Conjectures \ref{conjalb} and \ref{conjkatada} are equivalent.

\subsection{Conjectural stable structure of $H^*(\IA_n,\Q)$}

Here we propose a conjectural structure of the whole stable rational cohomology of $\IA_n$.

We have the restrictions of the cup product maps
$$\omega_{n,i}:\bigoplus_{j+k=i} H_A^j(\IA_n,\Q)\otimes H^k(\IA_n,\Q)^{\GL(n,\Z)}\to H^i(\IA_n,\Q),$$
which form a graded algebra map
$$\omega_{n}:H_A^*(\IA_n,\Q)\otimes H^*(\IA_n,\Q)^{\GL(n,\Z)}\to H^*(\IA_n,\Q).$$
The map $\omega_{n,i}$ is an isomorphism in degree $i=0,1$, since we have
$$
  H^i(\IA_n,\Q)=  H_A^i(\IA_n,\Q) \quad(i=0,1).
$$

\begin{conjecture}
\label{conjIAn}
The graded algebra map $\omega_n$ is a stable isomorphism.
\end{conjecture}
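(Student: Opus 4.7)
Following Theorem \ref{conjIApropintro}, I would derive the statement from Conjecture \ref{conjalb} (Albanese) and Conjecture \ref{conjPi} (representation stability). Under $(\SF_i)$ for all $i$, Theorem \ref{thmHIAinv} gives the stable isomorphism $H^*(\IA_n,\Q)^{\GLnZ}\cong \Q[y_1,y_2,\dots]$, with $y_k$ the antitransgressions of the Borel classes $x_k$ (Section \ref{antiAut}), which handles the invariant factor of $\omega_n$. The Albanese factor is Conjecture \ref{conjalb}. Combining Conjecture \ref{conjalb} with Lindell's Theorem \ref{conjKV} yields the equivalent Conjecture \ref{conjkatada}: stably $i^*\colon H^*(\Aut(F_n),V)\simeq H_A^*(\IA_n,V)^{\GLnZ}$ for every algebraic $\GLnZ$-representation $V$.

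The core of the argument is to run the Hochschild--Serre spectral sequence for $1\to\IA_n\to\Aut(F_n)\to\GLnZ\to 1$ with coefficients in a variable algebraic representation $V_{\ul\la}$. By $(\SF_i)$ and Proposition \ref{stablefamilyBorel}, stably
$$
E_2^{p,q}\cong H^p(\GLnZ,\Q)\otimes(H^q(\IA_n,\Q)\otimes V_{\ul\la})^{\GLnZ},
$$
with abutment $(H_A^{p+q}(\IA_n,\Q)\otimes V_{\ul\la})^{\GLnZ}$ by Conjecture \ref{conjkatada}. The differentials $d_{4k+1}$ carry the class corresponding to $y_k$ in the $q$-direction onto $x_k$ in the $p$-direction by construction of the antitransgression, so a Zeeman-type comparison modelled on Theorem \ref{stableZeeman}, now with twisted coefficients, should force the equality of $\GLnZ$-isotypic multiplicities
$$
(H^*(\IA_n,\Q)\otimes V_{\ul\la})^{\GLnZ}\cong (H_A^*(\IA_n,\Q)\otimes V_{\ul\la})^{\GLnZ}\otimes \Q[y_1,y_2,\dots].
$$
Letting $V_{\ul\la}$ range over all bipartitions identifies the $V_{\ul\la}$-isotypic multiplicities on both sides of $\omega_n$.

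It remains to upgrade this multiplicity count to an isomorphism realized by $\omega_n$ itself. The multiplicative structure of the spectral sequence together with the construction of the $y_k$ as antitransgressions implies that each cup product $\alpha\cdot y_{k_1}\cdots y_{k_r}$ with $\alpha\in H_A^*(\IA_n,\Q)$ nonzero is itself nonzero in the expected isotypic component; hence $\omega_n$ is injective, and combined with the matching multiplicities it is bijective. The main obstacle is precisely this compatibility between the antitransgression construction and the cup product: one must verify that a twisted version of Theorem \ref{stableZeeman} preserves the multiplicative structure, and that the stable ranges obtained from Li--Sun can be chosen uniformly over $V_{\ul\la}$ so that the isotypic analysis assembles into a genuine stable statement.
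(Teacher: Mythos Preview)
Your overall strategy matches the paper's conditional proof (Theorem \ref{conjIAprop}): assume Conjectures \ref{conjalb} and \ref{conjPi}, use Lindell's Theorem \ref{conjKV} to pass to Conjecture \ref{conjkatada}, and reduce to checking for each bipartition $\ulla$ that the cup product
\[
H_A^*(\IA_n,V_{\ulla})^{\GLnZ}\otimes H^*(\IA_n,\Q)^{\GLnZ}\longrightarrow H^*(\IA_n,V_{\ulla})^{\GLnZ}
\]
is a stable isomorphism. Your worry about upgrading a multiplicity count to an isomorphism realized by $\omega_n$ itself dissolves under this reduction: with $(\SF_i)$, a map of algebraic $\GLnZ$-representations is an isomorphism iff it is so on each isotypic component, and these cup products \emph{are} the isotypic pieces of $\omega_n$.

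Where you diverge is in the core spectral-sequence step. You propose a ``twisted Zeeman'' comparison modelled on Theorem \ref{stableZeeman}, but its hypotheses fail for the twisted spectral sequence $E(\ulla)$: the base $E(\ulla)_2^{*,0}=H^*(\GLnZ,V_{\ulla})$ stably vanishes for $\ulla\ne(0,0)$, so there is no graded-symmetric algebra of generators to transgress. The paper instead exploits the \emph{concentration} half of Lindell's theorem --- $H^*(\Aut(F_n),V_{\ulla})$ lives in the single degree $q_0=\deg\ulla$ --- in a way you do not. Letting $\hat E$ be the spectral sequence concentrated in bidegree $(0,q_0)$ with $\hat E_2^{0,q_0}=H^{q_0}(\IA_n,V_{\ulla})^{\GLnZ}$, the paper builds the cup-product morphism $\phi:\hat E\otimes E\to E(\ulla)$ (with $E$ the untwisted spectral sequence) and proves, by induction on cohomological degree together with a custom first-quadrant comparison lemma (Lemma \ref{isomorphism-of-SS}, not Zeeman's theorem), that $\phi_2^{0,i}$ is an isomorphism. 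This yields Lemma \ref{cohomIAstr} directly, and the multiplicative-compatibility and uniform-stable-range issues you flag simply do not arise.
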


Conjectures \ref{conjIAn}, \ref{conjalb} and \ref{conjectureHIAinv} imply the following conjecture, which would give a complete algebraic structure for the stable rational cohomology of $\IA_n$.

\begin{conjecture}
\label{conjIAnW}
We stably have an isomorphism of graded algebras with $\GLnZ$-actions
\begin{gather*}
    H^*(\IA_n,\Q) \cong (W_*)^* \otimes 
    \Q[z_1,z_2,\dots],\quad \deg z_i=4i,
\end{gather*}
where $(W_*)^*$ denotes the graded dual of $W_*$.
\end{conjecture}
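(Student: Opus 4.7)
The plan is to deduce Conjecture \ref{conjIAnW} as a formal consequence of Conjectures \ref{conjIAn}, \ref{conjalb}, and \ref{conjectureHIAinv}, which collectively supply the two tensor factors and the multiplicative glue. Concretely, Conjecture \ref{conjIAn} provides a stable graded algebra isomorphism
$$\omega_n: H_A^*(\IA_n,\Q) \otimes H^*(\IA_n,\Q)^{\GL(n,\Z)} \simeqto H^*(\IA_n,\Q),$$
so the task reduces to identifying each tensor factor with its claimed description and verifying compatibility with the algebra structure and the $\GL(n,\Z)$-action.

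For the Albanese factor, Conjecture \ref{conjalb} gives a stable $\GL(n,\Q)$-module isomorphism $H^A_i(\IA_n,\Q) \cong W_i$, and dualising in each degree yields $H_A^*(\IA_n,\Q) \cong (W_*)^*$ as a graded $\GL(n,\Q)$-representation. To promote this to an isomorphism of graded algebras, I would exploit the fact that $H_A^*(\IA_n,\Q)$ is by definition the image of the graded commutative algebra map $\tau^*:H^*(U,\Q)\to H^*(\IA_n,\Q)$, and verify that the $\GL(n,\Q)$-equivariant embedding of $W_*$ into $S^*(U_*)$ used in \cite{KatadaIA} intertwines the (co)algebra structures, so that the dual identification is one of graded commutative algebras. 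For the invariant factor, Conjecture \ref{conjectureHIAinv} directly identifies $H^*(\IA_n,\Q)^{\GL(n,\Z)} \cong \Q[z_1,z_2,\dots]$ as a graded algebra with trivial $\GL(n,\Z)$-action, so the diagonal action on the tensor product matches the natural action on $H^*(\IA_n,\Q)$ by naturality of the cup product, which also ensures that $\omega_n$ is automatically $\GL(n,\Z)$-equivariant. Composing the three isomorphisms then yields the claimed stable isomorphism of graded algebras with $\GL(n,\Z)$-actions.

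The real difficulty lies not in this assembly but in the input conjectures themselves. Conjecture \ref{conjalb} is currently established only through degree three; by Theorem \ref{conjKV} (Lindell) it is known to be equivalent to Conjecture \ref{conjkatada}, so one natural route is via the stable twisted cohomology of $\Aut(F_n)$. Conjecture \ref{conjectureHIAinv} is reduced by Theorem \ref{thmHIAinv} to the representation-stability hypothesis Conjecture \ref{conjPi} of Church and Farb, and Theorem \ref{conjIApropintro} shows that Conjectures \ref{conjalb} and \ref{conjPi} together already imply Conjecture \ref{conjIAn}. Thus the whole package reduces to proving Conjectures \ref{conjalb} and \ref{conjPi}, and the rational representation stability of $H^*(\IA_n,\Q)$ (Conjecture \ref{conjPi}) appears to be the deepest obstacle. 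A complementary and more tractable task, which would supply strong corroborating evidence, is to identify the polynomial generators $z_i$ of $H^*(\IA_n,\Q)^{\GL(n,\Z)}$ with the antitransgressed Borel classes $y_k$ constructed in Section \ref{antiAut} (and, conjecturally, with the Morita--Sakasai--Suzuki secondary classes), thereby verifying their nontriviality and algebraic independence.
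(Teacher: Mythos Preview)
Your proposal is correct and follows the same approach as the paper: the paper's entire justification for Conjecture \ref{conjIAnW} is the single sentence preceding it, namely that ``Conjectures \ref{conjIAn}, \ref{conjalb} and \ref{conjectureHIAinv} imply the following conjecture,'' and you have correctly unpacked this implication. Your additional observation that the algebra (as opposed to merely $\GL(n,\Q)$-module) structure on the Albanese factor requires a word of justification is a fair point that the paper leaves implicit, and your summary of how the input conjectures reduce to Conjectures \ref{conjalb} and \ref{conjPi} via Theorems \ref{thmHIAinv} and \ref{conjIAprop} accurately reflects the paper's logical architecture.
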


Conjecture \ref{conjIAn} is related to some other conjectures as follows.

\begin{theorem}\label{conjIAprop}
Suppose that Conjectures \ref{conjPi} and \ref{conjalb} hold.
Then Conjecture \ref{conjIAn} holds.
\end{theorem}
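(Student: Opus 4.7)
The plan is to prove Conjecture \ref{conjIAn} isotypic component by isotypic component, using the Hochschild--Serre spectral sequence together with a module-theoretic Zeeman-type comparison. Since Conjecture \ref{conjPi} guarantees that $H^i(\IA_n,\Q)$ stably decomposes into algebraic $\GLnZ$-irreducibles $V_{\ul\lambda}$ and the graded algebra map $\omega_n$ is $\GLnZ$-equivariant, it suffices (after invoking Theorem \ref{thmHIAinv}, which applies under Conjecture \ref{conjPi}) to show that for each bipartition $\ul\lambda$ the cup product
\begin{equation*}
\omega_{n,\ul\lambda}\colon N_{\ul\lambda}^*\otimes\Q[y_1,y_2,\ldots]\longrightarrow M_{\ul\lambda}^*
\end{equation*}
is a stable isomorphism, where $M_{\ul\lambda}^*:=H^*(\IA_n,V_{\ul\lambda}^*)^{\GLnZ}$ and $N_{\ul\lambda}^*:=H_A^*(\IA_n,V_{\ul\lambda}^*)^{\GLnZ}$. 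Conjecture \ref{conjalb}, combined with Lindell's Theorem \ref{conjKV} and \cite[Proposition 12.7]{KatadaIA}, implies Conjecture \ref{conjkatada}: stably $i^*\colon H^*(\Aut(F_n),V_{\ul\lambda}^*)\congto N_{\ul\lambda}^*$; moreover $N_{\ul\lambda}^*$ is concentrated in cohomological degree $q_0:=-\deg\ul\lambda$, since $H_A^q(\IA_n,\Q)\cong (W_q)^*$ has $\GL$-degree $-q$ under Conjecture \ref{conjalb}.

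Next I would analyse the Hochschild--Serre spectral sequence
\begin{equation*}
E_2^{p,q}(\ul\lambda)=H^p(\GLnZ,H^q(\IA_n,V_{\ul\lambda}^*))\Rightarrow H^{p+q}(\Aut(F_n),V_{\ul\lambda}^*),
\end{equation*}
which is a multiplicative module spectral sequence over the trivial-coefficient spectral sequence $E(\Q)$ used in the proof of Theorem \ref{thmHIAinv}. Since $(\SF_q)$ holds for every $q$ and $V_{\ul\lambda}^*$ is fixed, $\{H^q(\IA_n,V_{\ul\lambda}^*)\}_n$ is again a stable family of $\GLnZ$-representations, so Proposition \ref{stablefamilyBorel} yields stably $E_2^{p,q}(\ul\lambda)\cong H^p(\GLnZ,\Q)\otimes M_{\ul\lambda}^q$. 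By Theorem \ref{conjKV} the abutment $H^i(\Aut(F_n),V_{\ul\lambda}^*)$ vanishes stably unless $i=q_0$, and the injectivity of $i^*$ forces the kernel of the edge map $H^{q_0}(\Aut(F_n),V_{\ul\lambda}^*)\to E_\infty^{0,q_0}(\ul\lambda)$ to vanish; together these conditions stably force
\begin{equation*}
E_\infty^{p,q}(\ul\lambda)=\begin{cases} N_{\ul\lambda}^{q_0}, & (p,q)=(0,q_0),\\ 0, & \text{otherwise.}\end{cases}
\end{equation*}
Since $N_{\ul\lambda}^{q_0}=E_\infty^{0,q_0}(\ul\lambda)\subset E_2^{0,q_0}(\ul\lambda)=M_{\ul\lambda}^{q_0}$ consists of permanent cycles, the $E(\Q)$-module structure extends the inclusion to a morphism of multiplicative module spectral sequences
\begin{equation*}
\phi\colon E(\Q)\otimes N_{\ul\lambda}^{q_0}\longrightarrow E(\ul\lambda),
\end{equation*}
where $N_{\ul\lambda}^{q_0}$ is placed in bidegree $(0,q_0)$ in the source.

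To conclude I would show that $\phi$ is a stable isomorphism of $E_2$-pages. Both sides converge to $N_{\ul\lambda}^{q_0}$ concentrated in total degree $q_0$, and $\phi$ is tautologically an isomorphism at $E_\infty$. The source has $E_2\cong(\bigwedge(x_*)\otimes\Q[y_*])\otimes N_{\ul\lambda}^{q_0}$; its differentials --- governed by the Koszul-type transgressions $d_{4k+1}(y_k)=\pm x_k$ of $E(\Q)$ --- make it a resolution of $N_{\ul\lambda}^{q_0}$, as established in the proof of Theorem \ref{thmHIAinv}. A bigraded, module-theoretic extension of Theorem \ref{stableZeeman}, obtained by applying the inductive tensor-product construction of its proof to the factors ${}^{k}E\otimes N_{\ul\lambda}^{q_0}$, then shows stably that $\phi$ is an isomorphism on $E_2$-pages. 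Extracting the $p=0$ column yields a stable isomorphism $\Q[y_*]\otimes N_{\ul\lambda}^{q_0}\congto M_{\ul\lambda}^*$, which by construction coincides with $\omega_{n,\ul\lambda}$; summing over $\ul\lambda$ gives Conjecture \ref{conjIAn}. The main obstacle is this final comparison: one must adapt the inductive Zeeman argument in the proof of Theorem \ref{stableZeeman} to $E(\Q)$-module spectral sequences, verifying at each stage that the constructed morphism ${}^{k}f\otimes\id_{N_{\ul\lambda}^{q_0}}$ extends to the next page compatibly with the higher module differentials and with the vanishing of $E_\infty$ outside bidegree $(0,q_0)$.
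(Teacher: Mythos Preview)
Your proposal is correct and follows essentially the same route as the paper: both reduce $\omega_n$ to its $\ul\lambda$-isotypic pieces, invoke Conjecture~\ref{conjkatada} (via Conjecture~\ref{conjalb}, Theorem~\ref{conjKV}, and \cite[Proposition 12.7]{KatadaIA}), and then compare the twisted Hochschild--Serre spectral sequence $E(\ul\lambda)$ with the tensor product $\hat E\otimes E(\Q)$, where $\hat E$ is concentrated in bidegree $(0,q_0)$ with value $N_{\ul\lambda}^{q_0}$. The only real difference is in how the final comparison is executed: you propose a module-theoretic adaptation of the Zeeman argument from Theorem~\ref{stableZeeman}, whereas the paper instead proves the comparison by an induction on cohomological degree together with an elementary, hand-made spectral-sequence comparison lemma (Lemma~\ref{isomorphism-of-SS}), which sidesteps the need to formulate and verify a module version of Zeeman's theorem---this is precisely the ``main obstacle'' you flag, and the paper's direct lemma is one clean way to discharge it.
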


\begin{proof}
By the assumptions $(\SF_i)$, the graded algebra map
$$\omega_n: H_A^*(\IA_n,\Q)\otimes H^*(\IA_n,\Q)^{\GL(n,\Z)}\to H^*(\IA_n,\Q)$$
is a stable isomorphism if and only if
for any bipartition $\ul\lambda$, the cup product map
$$H_A^*(\IA_n,V_{\ulla})^{\GL(n,\Z)} \otimes H^*(\IA_n,\Q)^{\GL(n,\Z)}\xrightarrow{\cup} H^*(\IA_n,V_{\ulla})^{\GL(n,\Z)}$$
is a stable isomorphism.
Therefore, since Conjectures \ref{conjalb} and \ref{conjkatada} are equivalent, Conjecture \ref{conjIAn} holds by Conjecture \ref{conjkatada} and Lemma \ref{cohomIAstr} below.
\end{proof}

We have a morphism of graded vector spaces
$$
\xi: H^*(\Aut(F_n),V_{\ulla})\otimes H^*(\IA_n,\Q)^{\GL(n,\Z)} \to H^*(\IA_n,V_{\ulla})^{\GL(n,\Z)}
$$
defined by $\xi(u\ot v)=i^*(u)\cup v$ for $u\in H^*(\Aut(F_n),V_{\ulla})$ and $v\in H^*(\IA_n,\Q)^{\GL(n,\Z)}$, where $i^*$ is defined in \eqref{AutIA}.

\begin{lemma}\label{cohomIAstr}
Suppose that Conjecture \ref{conjPi} holds.
Then for each bipartition $\ulla$, the map $\xi$ is a stable isomorphism.
\end{lemma}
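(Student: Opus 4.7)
The plan is to run the multiplicative Hochschild--Serre spectral sequence $E_r\Rightarrow H^{*}(\Aut(F_n),V_{\ulla})$ for $1\to\IA_n\to\Aut(F_n)\to\GLnZ\to 1$, using $(\SF_i)$ to identify the $E_2$-page, the Koszul-type structure from Proposition \ref{IAnGLinv} to analyze the differentials, and Lindell's theorem (Theorem \ref{conjKV}) to pin down the abutment.

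Under $(\SF_i)$ for all $i$, Proposition \ref{stablefamilyBorel} gives stably
\begin{gather*}
E_2^{p,q}\cong H^{p}(\GLnZ,\Q)\otimes M_{V_{\ulla}}^{q}\cong \bigwedge(x_1,x_2,\ldots)\otimes M_{V_{\ulla}},
\end{gather*}
where $M_{V_{\ulla}}=H^{*}(\IA_n,V_{\ulla})^{\GLnZ}$ and $|x_k|=4k+1$. The multiplicative structure makes this sequence a module over the trivial-coefficient one, in which $d_{4k+1}(y_k)=x_k$ up to a non-zero scalar by Proposition \ref{IAnGLinv}. The Leibniz rule then forces $d_{4k+1}(y_k\cdot m)=x_k\cdot m\pm y_k\cdot d_{4k+1}(m)$ for $m\in M_{V_{\ulla}}$, and assembling these differentials for varying $k$ identifies the $E_\infty$-page with $\operatorname{Tor}^{A}_{*}(\Q, M_{V_{\ulla}})$ in an appropriate total grading, where $A=H^{*}(\IA_n,\Q)^{\GLnZ}\cong\Q[y_1,y_2,\ldots]$.

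By Theorem \ref{conjKV}, the abutment $H^{*}(\Aut(F_n),V_{\ulla})$ is concentrated in the single cohomological degree $\deg\ulla=|\lambda|-|\lambda'|$, where it equals $F:=H^{\deg\ulla}(\Aut(F_n),V_{\ulla})$. Hence $\operatorname{Tor}^{A}_{*}(\Q,M_{V_{\ulla}})$ is concentrated in a single total degree, and since the Koszul generators satisfy $|x_k|\ge 5$, a Nakayama-type argument for the connected graded ring $A$ shows that this concentration forces $\operatorname{Tor}^{A}_{i}(\Q,M_{V_{\ulla}})=0$ for $i\ge 1$. Consequently $M_{V_{\ulla}}$ is a free $A$-module, and its free generators can be identified via the edge map $i^{*}$ with $F$; thus $\xi$ becomes the corresponding isomorphism $F\otimes A\congto M_{V_{\ulla}}$.

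The main obstacle will be making the identification of the SS with the Koszul complex rigorous, in particular coordinating the differentials $d_{4k+1}$ for varying $k$ into a single Koszul-type total differential and verifying that its cohomology matches the abutment. A secondary delicacy is the Tor-vanishing step: it is immediate for small $\deg\ulla$ (where the internal degrees of higher Tor groups exceed $\deg\ulla$) but requires more care for large $\deg\ulla$, possibly by induction on cohomological degree using $(\SF_i)$, or by a Zeeman-style comparison, in the spirit of Theorem \ref{stableZeeman}, with the model spectral sequence $E(\mathrm{triv})\otimes F$ built from the trivial-coefficient sequence and $F$.
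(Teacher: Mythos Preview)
Your setup mirrors the paper's: both run the Hochschild--Serre spectral sequence for $1\to\IA_n\to\Aut(F_n)\to\GLnZ\to1$, use $(\SF_i)$ together with Proposition~\ref{stablefamilyBorel} to write $E(\ulla)_2^{p,q}\cong H^p(\GLnZ,\Q)\otimes M^q$ with $M=H^*(\IA_n,V_{\ulla})^{\GLnZ}$, exploit the module structure over the trivial-coefficient sequence, and invoke Lindell's theorem to concentrate the abutment in the single degree $q_0=\deg\ulla$.

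The gap is exactly the one you flag, and it is genuine. The Leibniz rule gives $d_{4k+1}(y_k m)=x_k m\pm y_k\,d_{4k+1}(m)$, but it does \emph{not} determine $d_r(m)$ for a general $m\in M$: the differentials emanating from the $q$-axis of $E(\ulla)$ are intrinsic to the extension and are not read off from the $A$-module structure on $M$. So the identification of $E_\infty$ with $\operatorname{Tor}^A(\Q,M)$ is unproved, and without it the Nakayama step has no input. Trying to close this by writing every $m$ as $\sum a_i f_i$ with $a_i\in A$ and $f_i\in F$ (so that the differentials would follow from Leibniz plus $d_r(f_i)=0$ for degree reasons) is circular, since that presupposes the freeness you are trying to establish.

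The paper carries out precisely your second suggested fix. It argues by induction on the cohomological degree $i$. At the inductive step it constructs an honest morphism of spectral sequences $\phi:\hat E\otimes E\to E(\ulla)$ via cup product, where $E$ is the trivial-coefficient sequence and $\hat E$ is concentrated in bidegree $(0,q_0)$ with value $F=H^{q_0}(\Aut(F_n),V_{\ulla})$. The inductive hypothesis (together with Proposition~\ref{stablefamilyBorel}) makes $\phi_2^{p,q}$ an isomorphism for all $q\le i-1$; both source and target have vanishing $E_\infty$ along $p+q=i,\,i+1$ (by Galatius and Lindell); and a tailored Zeeman-type comparison lemma (Lemma~\ref{isomorphism-of-SS}) then forces $\phi_2^{0,i}$ to be an isomorphism. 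This route never requires identifying any individual differential on $M$, which is what makes it go through where the direct Koszul/Tor reading stalls.
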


\begin{proof}
Let $\ul\lambda$ be a bipartition.
In what follows, we assume that $n$ is sufficiently large with respect to the cohomological degrees of cohomology groups that we consider.

We use induction on the cohomological degree $i$.
The case $i=0$ is trivial. Let $i\ge1$.
Suppose that we have 
\begin{gather}\label{IAdecomp}
    \xi: \bigoplus_{j+k=q} H^j(\Aut(F_n),V_{\ulla})\otimes H^k(\IA_n,\Q)^{\GL(n,\Z)}\xrightarrow{\cong} H^q(\IA_n,V_{\ulla})^{\GL(n,\Z)}
\end{gather}
for $0\le q\le i-1$.
We consider the Hochschild--Serre spectral sequence
$$
E(\ulla)_2^{p,q}
=H^{p}(\GL(n,\Z), H^{q}(\IA_n,V_{\ulla}))\Rightarrow H^{p+q}(\Aut(F_n),V_{\ulla}).
$$

If we have $H^j(\Aut(F_n),V_{\ulla})=0$ for any $j\le i-1$, then we have $H^q(\IA_n,V_{\ulla})^{\GL(n,\Z)}=0$ for any $q\le i-1$ by \eqref{IAdecomp}.
By the assumptions $(\SF_j)$ for $j\le i-1$ and Proposition \ref{stablefamilyBorel}, we have $E(\ulla)_2^{p,q}=0$ for any $q\le i-1$.
Therefore, by the convergence of the spectral sequence, 
one can check that $\xi=i^*: H^i(\Aut(F_n),V_{\ulla}) \xrightarrow{\cong} H^i(\IA_n,V_{\ulla})^{\GL(n,\Z)}$.

Otherwise, there is $0\le q_0\le i-1$ such that we have $H^{q_0}(\Aut(F_n),V_{\ulla})\neq 0$.
Then by the first half of Theorem \ref{conjKV}, we have $H^{q}(\Aut(F_n),V_{\ulla})= 0$ for $q\neq q_0$.
Therefore, by \eqref{IAdecomp}, $\xi$ gives the following isomorphisms 
\begin{gather}\label{IAullainv}
H^q(\IA_n,V_{\ulla})^{\GL(n,\Z)}\cong 
    \begin{cases}
    0 & (q<q_0)\\
    H^{q_0}(\Aut(F_n),V_{\ulla}) & (q=q_0)\\
    H^{q_0}(\IA_n,V_{\ulla})^{\GL(n,\Z)}\otimes H^{q-q_0}(\IA_n,\Q)^{\GL(n,\Z)} & (q_0<q\le i-1).
    \end{cases}
\end{gather}

Recall the Hochschild--Serre spectral sequence 
$E_2^{p,q}=H^{p}(\GL(n,\Z), H^{q}(\IA_n,\Q))$ that we used in the proof of Proposition \ref{IAnGLinv}.
Let $\hat{E}=\{\hat{E}_{r}\}_{r\ge2}$ denote the spectral sequence concentrated in bidegree $(0,q_0)$ with $\hat{E}_2^{0,q_0}=H^{q_0}(\IA_n,V_{\ulla})^{\GL(n,\Z)}$.
Then we have a canonical morphism of spectral sequences
$\psi:\hat{E}\to E(\ulla)$.
Consider the tensor product $\hat{E}\otimes E$ of two spectral sequences $\hat{E}$ and $E$.
We have a morphism of spectral sequences defined as the composition
$$\phi: \hat{E}\otimes E\xrightarrow{\psi\otimes\id}  E(\ulla)\otimes E\xrightarrow{\cup} E(\ulla).$$

We have the following commutative diagram
\begin{gather*}
    \xymatrix{
    H^{q_0}(\IA,V_{\ulla})^{\GL}\otimes H^{q-q_0}(\IA,\Q)^{\GL}\otimes H^p(\GL,\Q)  \ar[r]^-{\cup\otimes\id}\ar[d]^-{\id\otimes \cup} &
     H^{q}(\IA,V_{\ulla})^{\GL}\otimes H^p(\GL,\Q) 
     \ar[d]^-{\cup}\\
     H^{q_0}(\IA,V_{\ulla})^{\GL}\otimes  H^p(\GL, H^{q-q_0}(\IA,\Q)) \ar[r]^-{\cup=\phi_2^{p,q}}&
     H^p(\GL, H^{q}(\IA,V_{\ulla})),
    }
\end{gather*}
where we write $\GL=\GL(n,\Z)$ and $\IA=\IA_n$.
Here the vertical maps are isomorphisms for $q\le i-1$ by Proposition \ref{stablefamilyBorel}, and the upper horizontal map is an isomorphism for $q\le i-1$ by \eqref{IAullainv}.
Therefore, $\phi_2^{p,q}$ is also an isomorphism for $q\le i-1$.

By Lemma \ref{isomorphism-of-SS} below, the map $\phi_2^{0,i}: (\hat{E}\otimes E)_2^{0,i}\to  E(\ulla)_2^{0,i}$
is an isomorphism since we have
\begin{itemize}
\item $\phi_2^{p,q}$ is an isomorphism for $q\le i-1$,
\item for $j=i, i+1$, $(\hat{E} \otimes E)_\infty^{j} \cong \hat{E}_\infty^{0,q_0}\otimes E_\infty^{j-q_0}\cong
\hat{E}_2^{0,q_0} \otimes H^{j-q_0}(\Aut(F_n),\Q)=0$ by \cite{Galatius}, and
\item for $j=i,i+1$, $E(\ulla)_\infty^{j}\cong H^j(\Aut(F_n),V_{\ulla})=0$.
\end{itemize}
Therefore, we have
\begin{gather*}
\begin{split}
    \xi:&\bigoplus_{j+k=i} H^j(\Aut(F_n),V_{\ulla})\otimes H^k(\IA_n,\Q)^{\GL(n,\Z)}\\
    &\xrightarrow[\cong]{i^*\ot \id} 
    H^{q_0}(\IA_n,V_{\ulla})^{\GL(n,\Z)}\otimes H^{i-q_0}(\IA_n,\Q)^{\GL(n,\Z)}\\
    &\xrightarrow[\cong]{\cup=\phi_2^{0,i}} H^i(\IA_n,V_{\ulla})^{\GL(n,\Z)},
\end{split}
\end{gather*}
which completes the proof.
\end{proof}

\begin{lemma}
\label{isomorphism-of-SS}
Let $\phi:{}'E\to E$ be a morphism between first-quadrant spectral sequence.
Let $i\ge1$ be an integer.
Suppose that
\begin{itemize}
\item[(1)] $\phi_2^{p,q}:{}'E_2^{p,q}\to E_2^{p,q}$ is an isomorphism for $0\le q\le i-1$ and $0\le p\le 2i$.
\item[(2)]
${}'E_\infty^{p,q}=E_\infty^{p,q}=0$ for $p,q$ with $p+q=i,i+1$.
\end{itemize}
Then $\phi_2^{0,i}$ is an isomorphism.

\end{lemma}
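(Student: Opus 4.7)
\emph{Proof plan.} I will prove this by a descending induction on the page index $r$, showing that $\phi_r^{0,i}$ is an isomorphism for every $r \ge 2$; specializing to $r=2$ then yields the lemma. The base case is $r \ge i+2$: since $(0,i)$ lies on total degree $i$ and admits no incoming differentials in the first quadrant, both $E_r^{0,i}$ and ${}'E_r^{0,i}$ stabilize to ${}'E_\infty^{0,i}=E_\infty^{0,i}=0$ by condition (2), so $\phi_r^{0,i}$ is trivially iso there.

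For the inductive step from $r+1$ down to $r$ (with $2 \le r \le i+1$), the first-quadrant vanishing of incoming differentials at $(0,i)$ yields the short exact sequence
\begin{gather*}
0 \to E_{r+1}^{0,i} \to E_r^{0,i} \xrightarrow{d_r^{0,i}} \operatorname{im}(d_r^{0,i}) \to 0,
\end{gather*}
together with its analogue for ${}'E$. By the inductive hypothesis $\phi_{r+1}^{0,i}$ is iso, so by the five lemma it remains to check that $\phi$ restricts to an iso on $\operatorname{im}(d_r^{0,i}) \subseteq E_r^{r,i-r+1}$. For this, I use the further exact sequence
\begin{gather*}
0 \to \operatorname{im}(d_r^{0,i}) \to \ker(d_r^{r,i-r+1}) \to E_{r+1}^{r,i-r+1} \to 0
\end{gather*}
and reduce, via another five lemma, to proving the companion iso statements $\phi_r^{r,i-r+1}\cong$ and $\phi_{r+1}^{r,i-r+1}\cong$ at the cells $(r,i-r+1)$ on the total-degree-$(i+1)$ antidiagonal.

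These auxiliary statements are established by a parallel descending induction of the same shape: their base case (large $r$) is again condition (2), now on total degree $i+1$; their inductive step reduces, by iterated five-lemma applications across pages, to iso at cells on page $2$ inside the iso region of condition (1). In particular, $(r,i-r+1)$ itself lies in that region on page $2$ for $2\le r\le i+1$ since $q=i-r+1\le i-1$ and $p=r\le i+1\le 2i$.

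\emph{Main obstacle.} The key difficulty is the nested-induction bookkeeping: at every stage of the descending chase one must verify that the three cells feeding into the five lemma on the relevant page lie within the iso region $\{0\le q\le i-1,\ 0\le p\le 2i\}$ of condition (1), even after iterated propagation from page $r$ back to page $2$. The bound $p\le 2i$ in (1) is tailored precisely to accommodate the farthest-reaching neighbors that arise in this chase for $r\le i+1$, while the vanishing in condition (2) is what supplies the terminating base cases for both the outer induction at $(0,i)$ and the inner inductions at $(r,i-r+1)$.
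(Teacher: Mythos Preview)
Your overall strategy---descending induction on the page $r$ for $\phi_r^{0,i}$, with base case at $r\ge i+2$ coming from condition (2)---is the same as the paper's. The paper, however, organises the argument differently: it first establishes a \emph{forward} propagation lemma (if $\phi_r$ is iso at the three cells $(p-r,q+r-1),(p,q),(p+r,q-r+1)$, then $\phi_{r+1}^{p,q}$ is iso), from which it deduces that $\phi_r^{p,q}$ is iso for all $q\le i-r+1$, $p\le 2i-2q$. Only then does it run the backward induction, and it does so simultaneously on the whole set $\Lambda=\{(p,q):p+q=i+1,\ p\ge2\}\cup\{(0,i)\}$ rather than nesting inductions.

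Your write-up has two gaps worth flagging. First, the reduction ``to $\phi_r^{r,i-r+1}\cong$ and $\phi_{r+1}^{r,i-r+1}\cong$'' is incomplete: an isomorphism $\phi_r^{r,i-r+1}$ does not by itself restrict to an isomorphism on $\ker(d_r^{r,i-r+1})$; you also need $\phi_r^{2r,i-2r+2}$ (the target of $d_r^{r,i-r+1}$) to be iso, or at least injective, so that preimages of kernel elements are again in the kernel. This cell is handled by the forward-propagation region, but you never invoke it.

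Second, and more substantively, your ``parallel descending induction of the same shape'' at $(r,i-r+1)$ risks circularity. Descending from large pages to page $r$ at that cell, the step from page $r+1$ to page $r$ involves the incoming differential from $(0,i)$---precisely the image $\operatorname{im}(d_r^{0,i})$ you are trying to control in the outer induction. The clean way to obtain $\phi_r^{r,i-r+1}$ is not a descent from $E_\infty$ but an \emph{ascent} from page $2$: since $q=i-r+1\le i-1$, this cell (and its neighbours along differentials on earlier pages) lies entirely in the region where condition (1) gives you page-$2$ isos, and forward propagation pushes this up to page $r$. Your phrase ``reduces to cells on page $2$'' hints at this, but calling it a descending induction with base case condition (2) misdescribes what is actually needed. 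The paper avoids the circularity by separating the two directions: forward propagation from (1) first, then backward induction from (2) on all of $\Lambda$ at once.
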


\begin{proof}
Firstly, note that if $\phi_{r}^{p,q}$ and $\phi_{r}^{p+r,q-r+1}$ are isomorphisms, then we have
$$\phi_{r}^{p,q}(\ker {}'d_r^{p,q})=\ker d_r^{p,q},\quad \phi_{r}^{p+r,q-r+1}(\im {}'d_r^{p,q})=\im d_r^{p,q}.$$
Therefore, if all of the three maps $\phi_{r}^{p-r,q+r-1}, \phi_{r}^{p,q}, \phi_{r}^{p+r,q-r+1}$ are isomorphisms, then $\phi_{r+1}^{p,q}$ is also an isomorphism.
By the assumption $(1)$, it follows inductively that the maps $\phi_{r}^{p,q}$ are isomorphisms for any $q\le i-r+1$, $p\le 2i-2q$.

Let $\Lambda=\{(p,q)\mid p+q=i+1, p\ge 2\} \cup \{(0,i)\}$.
We will show, by a backward induction on $r$, that for $2\le r\le i+2$, the map $\phi_r^{p,q}$ is an isomorphism for each $(p,q)\in \Lambda$.
For each $(p,q)\in \Lambda$, the map $\phi_{i+2}^{p,q}$ is an isomorphism
since we have ${}'E_{\infty}^{p,q}(={}'E_{i+2}^{p,q})=0$ and $E_{\infty}^{p,q}(=E_{i+2}^{p,q})=0$ by the assumption $(2)$.
Suppose that $\phi_{r+1}^{p,q}$ is an isomorphism for each $(p,q)\in \Lambda$.
We consider the following commutative diagram
\begin{gather}\label{diag}
\xymatrix{
{}'E_{r}^{0,i}\ar[rr]^-{\phi_r^{0,i}}\ar[d]^{{}'d_r^{0,i}}&& E_{r}^{0,i}\ar[d]^{d_r^{0,i}}\\
{}'E_{r}^{r,i-r+1}\ar[rr]^{\phi_r^{r,i-r+1}}\ar[d]^{{}'d_r^{r,i-r+1}}&& E_{r}^{r,i-r+1}\ar[d]^{d_r^{r,i-r+1}}\\
{}'E_{r}^{2r,i-2r+2}\ar[rr]^-{\phi_r^{2r,i-2r+2}}&& E_{r}^{2r,i-2r+2}.
}
\end{gather}
As we mentioned in the first paragraph, $\phi_r^{r,i-r+1}$ and $\phi_r^{2r,i-2r+2}$ are isomorphisms.
Therefore, we have an isomorphism
$$\phi_r^{r,i-r+1}|_{\ker {}'d_{r}^{r,i-r+1}}: \ker {}'d_r^{r,i-r+1}\xto{\cong} \ker d_r^{r,i-r+1}.$$
By the induction hypothesis, $\phi_{r+1}^{r,i-r+1}$ is also an isomorphism, thus, we have 
$$
\ker {}'d_r^{r,i-r+1}/\im{}'d_{r}^{0,i} ={}'E_{r+1}^{r,i-r+1}\xto{\cong}  E_{r+1}^{r,i-r+1}= \ker d_r^{r,i-r+1}/\im d_{r}^{0,i}.
$$
Therefore, we have an isomorphism
$\phi_r^{r,i-r+1}|_{\im {}'d_{r}^{0,i}}: \im {}'d_r^{0,i}\xto{\cong} \im d_r^{0,i}$.
We also have 
$$
\ker {}'d_r^{0,i} ={}'E_{r+1}^{0,i}\xto{\cong}  E_{r+1}^{0,i}= \ker d_r^{0,i}
$$
since $\phi_{r+1}^{0,i}$ is also an isomorphism.
Hence, $\phi_{r}^{0,i}$ is also an isomorphism.
We can also check that $\phi_{r}^{p,q}$ are isomorphisms for $p+q=i+1, i-r+1<q\le i-1$ by considering the commutative diagram starting at ${}'E_r^{p,q}$ as in \eqref{diag}.

Therefore, by induction, $\phi_2^{0,i}$ is an isomorphism.
\end{proof}

\begin{remark}
In the statements of Theorem \ref{conjIAprop} and Lemma \ref{cohomIAstr}, we can replace the assumption, Conjecture \ref{conjPi}, with the assumption $(\mathrm{SA}_i)$, that we considered in Remark \ref{algverIAnGLinv}.
Consequently, if we assume Conjecture \ref{conjalb}, the following are equivalent:
\begin{itemize}
    \item Conjecture \ref{conjIAnW},
    \item Conjecture \ref{conjPi},
    \item for each $i\ge 0$, the hypothesis $(\mathrm{SA}_i)$ holds.
\end{itemize}
\end{remark}

\subsection{Types of irreducible components of $H^*(\IA_n,\Q)$}

Here we obtain some irreducible algebraic $\GLnQ$-representations which are not included in $H^*(\IA_n,\Q)$.
 
\begin{proposition}
Let $q\ge 0$ and suppose that $(\SF_i)$ holds for $i\le q$ so that we have
\begin{gather*}
    H^q(\IA_n,\Q)\cong \bigoplus_{\ul\lambda} V_{\ul\lambda}^{\oplus c(q,\ul\lambda)}
\end{gather*}
for integers $c(q,\ul\lambda)\ge0$ for all sufficiently large $n$.
Then we have
$c(q,\ul\lambda)=0$ for bipartitions $\ul\lambda$ such that either $\deg\ul\lambda\le -q-1$ or $0\le \deg\ul\lambda$ with $\ul\lambda\neq (0,0)$.
\end{proposition}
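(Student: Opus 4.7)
The plan is to argue by induction on $q$. The base case $q=0$ is immediate since $H^0(\IA_n,\Q)=V_{(0,0)}$, which meets the constraint. For the inductive step, assume the statement for all $q'<q$, and fix a bipartition $\ul\lambda\neq(0,0)$ with either $\deg\ul\lambda\ge 0$ or $\deg\ul\lambda\le -q-1$. Since $V_{\ul\lambda^*}$ is the $\GLnQ$-dual of $V_{\ul\lambda}$, and since $\GLnZ$-invariants agree with $\GLnQ$-invariants on algebraic representations (by Zariski density of $\GLnZ$ in $\GLnQ$), Schur's lemma gives stably
$$c(q,\ul\lambda)=\dim \bigl(H^q(\IA_n,\Q)\otimes V_{\ul\lambda^*}\bigr)^{\GLnZ}.$$
So the goal becomes to show that this $\GLnZ$-invariant space vanishes stably.

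The main tool is the Hochschild--Serre spectral sequence for $1\to\IA_n\to\Aut(F_n)\to\GLnZ\to 1$ with coefficients in $V_{\ul\lambda^*}$:
$$E_2^{i,j}=H^i\bigl(\GLnZ,H^j(\IA_n,\Q)\otimes V_{\ul\lambda^*}\bigr)\Rightarrow H^{i+j}(\Aut(F_n),V_{\ul\lambda^*}).$$
I will show $E_2^{0,q}=E_\infty^{0,q}=0$ stably by verifying (a) the abutment $H^q(\Aut(F_n),V_{\ul\lambda^*})$ vanishes stably, and (b) every outgoing differential $d_r\colon E_r^{0,q}\to E_r^{r,q-r+1}$ ($r\ge2$) has zero target. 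For (a), $V_{\ul\lambda^*}$ is a $\GLnQ$-direct summand of $H^{|\lambda'|,|\lambda|}$, so Theorem \ref{conjKV} (Lindell's resolution of the Kawazumi--Vespa conjecture) implies that $H^i(\Aut(F_n),V_{\ul\lambda^*})$ vanishes stably unless $i=|\lambda'|-|\lambda|=-\deg\ul\lambda$. The hypotheses on $\ul\lambda$ combined with $q\ge 1$ force $-\deg\ul\lambda\ne q$ in every case, so $H^q(\Aut(F_n),V_{\ul\lambda^*})=0$ stably, and hence $E_\infty^{0,q}=0$.

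For (b), take first $2\le r\le q$. The stability assumption $(\SF_{q-r+1})$ and Proposition \ref{stablefamilyBorel} give stably
$$E_2^{r,q-r+1}\cong H^r(\GLnZ,\Q)\otimes \bigl(H^{q-r+1}(\IA_n,\Q)\otimes V_{\ul\lambda^*}\bigr)^{\GLnZ},$$
and the invariant factor is nonzero only if $V_{\ul\lambda}$ is a constituent of $H^{q-r+1}(\IA_n,\Q)$. The inductive hypothesis, applied to the smaller degree $q-r+1$, forbids this: since $\ul\lambda\ne(0,0)$, one would need $-(q-r+1)\le\deg\ul\lambda\le -1$, which fails under our range hypotheses (if $\deg\ul\lambda\ge 0$, the upper bound fails; if $\deg\ul\lambda\le -q-1$, the lower bound fails since $-q-1<-(q-r+1)$ for $r\ge 2$). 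For $r=q+1$, the target is $E_2^{q+1,0}=H^{q+1}(\GLnZ,V_{\ul\lambda^*})$, which vanishes stably by Theorem \ref{theoremhomologyofGL}(2) as $\ul\lambda^*\ne(0,0)$. Assembling (a) and (b) yields $E_2^{0,q}=0$ stably, which is precisely $c(q,\ul\lambda)=0$. The principal obstacle is the careful degree-bookkeeping across the various cases; the substantive external input is Theorem \ref{conjKV}, which supplies the vanishing of the $\Aut(F_n)$-abutment.
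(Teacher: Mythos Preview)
Your proof is correct and follows essentially the same strategy as the paper: both use the Hochschild--Serre spectral sequence for $1\to\IA_n\to\Aut(F_n)\to\GLnZ\to1$ with coefficients in $V_{\ul\lambda^*}$, combine the Borel/Li--Sun vanishing (Proposition~\ref{stablefamilyBorel} and Theorem~\ref{theoremhomologyofGL}) with Lindell's theorem (Theorem~\ref{conjKV}) for the abutment, and induct. The only organizational difference is that the paper fixes $\ul\lambda$ and observes upfront that $H^i(\Aut(F_n),V_{\ul\lambda^*})=0$ for \emph{all} $0\le i\le q$ (since $-\deg\ul\lambda\notin\{0,\dots,q\}$ under either hypothesis), then inducts on $i$ to kill $E_2^{0,i}$ directly; you instead induct on $q$ and only invoke Lindell at the top degree, with the lower-degree input absorbed into your inductive hypothesis applied to the \emph{same} $\ul\lambda$---which works precisely because, as you verify, the degree constraints on $\ul\lambda$ persist at every smaller $q'=q-r+1$.
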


\begin{proof}
Let $\ul\lambda$ be a bipartition such that either $\deg\ul\lambda\le -q-1$ or $0\le \deg\ul\lambda$ with $\ul\lambda\neq (0,0)$.
By Theorem \ref{conjKV}, for any $0\le i\le q$, we have
\begin{equation}\label{Autvanish}
    H^i(\Aut(F_n),V_{\ul\lambda^*})=0
\end{equation}
for sufficiently large $n$.
We will prove by induction on $i$ that for any $0\le i\le q$, we have
$$H^i(\IA_n,V_{\ul\lambda^*})^{\GLnZ}=0$$
for sufficiently large $n$.
For $i=0$, we have $H^0(\IA_n,V_{\ul\lambda^*})^{\GLnZ}=V_{\ul\lambda^*}^{\GLnZ}=0$.
Suppose that for any $k\le i-1$, we have $H^k(\IA_n,V_{\ul\lambda^*})^{\GLnZ}=0$ for sufficiently large $n$.
Then by the assumption $(\SF_i)$ and by Proposition \ref{stablefamilyBorel}, for any $j\le i+1$, we have $E_2^{j,k}=H^j(\GL(n,\Z),H^k(\IA_n,V_{\ul\lambda^*}))=0$
for sufficiently large $n$.
Therefore, by \eqref{Autvanish}, we have $H^{i}(\IA_n,V_{\ul\lambda^*})^{\GLnZ}=0$, which implies that $V_{\ul\lambda}$ is not included in $H^q(\IA_n,\Q)$.
\end{proof}

\section{Stable rational cohomology of $\IO_n$}\label{secstrIO}
In this section, we study the stable rational cohomology of $\IO_n$ as in the case of $\IA_n$.

\subsection{$\GL(n,\Z)$-invariant part of $H^*(\IO_n,\Q)$}

We make the following conjecture about the $\GL(n,\Z)$-invariant part of $H^*(\IO_n,\Q)$.

\begin{conjecture}\label{conjectureHIOinv}
We stably have an isomorphism of graded algebras
$$H^*(\IO_n,\Q)^{\GL(n,\Z)}\cong \Q[z_1,z_2,\ldots],\quad \deg z_i=4i.$$
\end{conjecture}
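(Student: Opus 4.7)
The plan is to mirror the proof of Theorem \ref{thmHIAinv} in Section \ref{secinvIA}, replacing the extension $1\to\IA_n\to\Aut(F_n)\to\GL(n,\Z)\to 1$ by its outer analogue $1\to\IO_n\to\Out(F_n)\to\GL(n,\Z)\to 1$. Concretely, I would apply Theorem \ref{stableZeeman} to the Hochschild--Serre spectral sequence
\[
E_2^{p,q}=H^p(\GL(n,\Z),H^q(\IO_n,\Q))\Longrightarrow H^{p+q}(\Out(F_n),\Q),
\]
which is graded-commutative, multiplicative, and first-quadrant.

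Three inputs are required in order to verify the hypotheses of Theorem \ref{stableZeeman}. First, combining Galatius's theorem with the Hatcher--Vogtmann stability results, one has $H^i(\Out(F_n),\Q)=0$ stably for $0<i\le\tfrac{n-2}{2}$, which gives the abutment-side vanishing $E_\infty^{p,q}=0$ for $0<p+q$ in a stable range, yielding hypothesis $(2)$. Second, Theorem \ref{theoremhomologyofGL} gives $E_2^{*,0}\cong\bigwedge_\Q(x_1,x_2,\ldots)$ with $\deg x_i=4i+1$ stably; since the $x_i$ are of odd degree, this exterior algebra is precisely the graded-symmetric algebra $SV$ on $V=\bigoplus_{k\ge1}\Q x_k$, verifying hypothesis $(3)$. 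Third, under the analogue for $\IO_n$ of $(\SF_i)$ (or the weaker $(\mathrm{SA}_i)$ of Remark \ref{algverIAnGLinv}), Proposition \ref{stablefamilyBorel} yields
\[
E_2^{p,q}\cong H^p(\GL(n,\Z),\Q)\otimes H^q(\IO_n,\Q)^{\GL(n,\Z)}
\]
in a stable range, supplying hypothesis $(1)$.

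I would then define, for each $k\ge1$, the class
\[
z_k=\varphi_{4k}(x_k)\in E_2^{0,4k}=H^{4k}(\IO_n,\Q)^{\GL(n,\Z)},
\]
where $\varphi_{4k}\colon E_2^{4k+1,0}\to E_2^{0,4k}$ is the antitransgression map of Section \ref{secinvIA}, well defined because $H^{4k}(\Out(F_n),\Q)=H^{4k+1}(\Out(F_n),\Q)=0$ stably. Theorem \ref{stableZeeman} then gives a stable isomorphism of graded algebras
\[
H^*(\IO_n,\Q)^{\GL(n,\Z)}=E_2^{0,*}\cong S(V[-1])=\Q[z_1,z_2,\ldots],\quad \deg z_k=4k,
\]
since $V[-1]$ is concentrated in even degrees.

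The main obstacle is exactly the stability input: at present, no representation stability result for $\{H^i(\IO_n,\Q)\}_n$ analogous to Church--Farb's Conjecture \ref{conjPi} is known. Consequently, the most one can prove by this route is the conditional statement that stable algebraicity of the family $\{H^i(\IO_n,\Q)\}_n$ implies Conjecture \ref{conjectureHIOinv}, with generators given by the antitransgressed Borel classes $z_k$, in direct parallel with Theorem \ref{thmHIAinv} for $\IA_n$.
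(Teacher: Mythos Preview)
Your proposal is correct and takes essentially the same approach as the paper. The statement is a conjecture, and the paper does not prove it unconditionally; rather, Theorem \ref{HIOinv} establishes precisely the conditional result you describe, with a proof that reads ``the proof is similar to that of Proposition \ref{IAnGLinv}''---your writeup is a faithful unpacking of that sentence, down to the use of the Hochschild--Serre spectral sequence for the $\IO_n$ extension, Galatius's vanishing for $\Out(F_n)$, Theorem \ref{theoremhomologyofGL} for the base, the stability hypothesis $(\SF'_i)$ (or its algebraic weakening) to obtain the $E_2$ tensor decomposition, and Theorem \ref{stableZeeman} to conclude.
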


We will prove Conjecture \ref{conjectureHIOinv} by assuming the following conjecture, which is analogous to Conjecture \ref{conjPi}.

\begin{conjecture}\label{conjSFIO}
For each $i\ge 0$, the following hypothesis $(\SF'_i)$ holds.
\begin{itemize}
    \item[$(\SF'_i)$]$\{H^i(\IO_n,\Q)\}_n$ is a stable family of $\GL(n,\Z)$-representations.
\end{itemize}
\end{conjecture}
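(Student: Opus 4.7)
The approach I would take is inductive on the cohomological degree $i$, exploiting the natural extension
$$1 \to F_n \to \IA_n \to \IO_n \to 1$$
(using $\Inn(F_n)\cong F_n \subset \IA_n$ and $\IA_n/\Inn(F_n)=\IO_n$) together with Church--Farb's Conjecture \ref{conjPi} for $\IA_n$. Since $H^q(F_n,\Q)=0$ for $q\ge 2$ and $\IO_n$ acts trivially on $H^1(F_n,\Q)=H^*$ (as it lies in the kernel of $\Out(F_n)\to\GL(n,\Z)$), the Hochschild--Serre spectral sequence for this extension has only two nonzero rows and collapses to a long exact sequence of $\GL(n,\Z)$-modules
$$\cdots \to H^p(\IO_n,\Q) \to H^p(\IA_n,\Q) \to H^{p-1}(\IO_n,H^*) \xrightarrow{d_2} H^{p+1}(\IO_n,\Q) \to \cdots,$$
which ties $(\SF'_i)$ directly to $(\SF_i)$ and to stability of $\{H^{i-1}(\IO_n,H^*)\}_n$.

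The plan is to prove the stronger statement that for every bipartition $\ul\lambda$ the family $\{H^i(\IO_n,V_{\ul\lambda}(n))\}_n$ is a stable family of $\GL(n,\Z)$-representations, by double induction: outer induction on $i$ and inner induction on the size $|\ul\lambda|$. In the inductive step I would twist the displayed long exact sequence by $V_{\ul\lambda}$ and compare with the corresponding statement for $\IA_n$ (granted by Conjecture \ref{conjPi}). To extract the multiplicities of each irreducible $V_{\ul\mu}$ in $H^i(\IO_n,\Q)$, I would run in parallel the Hochschild--Serre spectral sequence
$$E_2^{p,q}=H^p(\GL(n,\Z),H^q(\IO_n,V_{\ul\mu}))\Rightarrow H^{p+q}(\Out(F_n),V_{\ul\mu}),$$
appealing to the Borel--Li--Sun vanishing (Theorem \ref{theoremhomologyofGL}) for the $E_2$-page and to the analog of Theorem \ref{conjKV} for $\Out(F_n)$ (which follows from Lindell's techniques) for the abutment. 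A bookkeeping argument modelled on the proof of Lemma \ref{cohomIAstr}, combined with the identification $\dim (H^i(\IO_n,V_{\ul\lambda^*}))^{\GL(n,\Z)}=[V_{\ul\lambda}:H^i(\IO_n,\Q)]$, should then yield that each multiplicity is finite and independent of $n$ for $n$ large, establishing $(\SF'_i)$.

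The main obstacle is twofold, and neither part is mild. First, the entire scheme is conditional on Church--Farb's Conjecture \ref{conjPi}, which is itself open and of comparable difficulty; the long exact sequence above is essentially symmetric in $\IA_n$ and $\IO_n$, so proving $(\SF'_i)$ unconditionally would effectively require a simultaneous unconditional proof of $(\SF_i)$. Second, even granting $(\SF_i)$, the differentials $d_2\colon H^{p-1}(\IO_n,H^*)\to H^{p+1}(\IO_n,\Q)$ could in principle fail to respect the conjectural stable decompositions; although naturality suggests they assemble into morphisms of stable families, verifying this requires identifying $d_2$ explicitly (e.g.\ as a cup product with the extension class of $1\to F_n \to \IA_n \to \IO_n \to 1$) and checking that this class lies in a stable subrepresentation. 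An unconditional attack would almost certainly require a functor-categorical framework for $\{\IO_n\}$ in the spirit of Djament--Vespa, producing representation stability from a finite-generation property of a suitable functor, rather than the Hochschild--Serre methods used in the rest of this paper.
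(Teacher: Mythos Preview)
The statement you are attempting to prove is labelled a \emph{Conjecture} in the paper, and the paper does not offer a proof of it; it merely records that $(\SF'_i)$ is known for $i=0,1$ and uses the conjecture as a hypothesis in later results (Theorems \ref{HIOinv} and \ref{theoremIO}). So there is no proof in the paper to compare your proposal against.

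Your proposal is not a proof either, as you yourself note: it is a conditional reduction of Conjecture \ref{conjSFIO} to Conjecture \ref{conjPi}, which is equally open. That said, the reduction is cleaner than you make it sound. Proposition \ref{IAandIO} in the paper already establishes that the Hochschild--Serre spectral sequence for $1\to F_n\to \IA_n\to \IO_n\to 1$ degenerates at $E_2$ (the differential $d_2$ vanishes), so your second obstacle about controlling $d_2$ does not arise. Consequently $p^*:H^i(\IO_n,\Q)\hookrightarrow H^i(\IA_n,\Q)$ is injective, and one has a $\GL(n,\Z)$-isomorphism $\gr H^i(\IA_n,\Q)\cong H^i(\IO_n,\Q)\oplus \bigl(H^{i-1}(\IO_n,\Q)\otimes H^*\bigr)$. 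Assuming $(\SF_j)$ for $j\le i$ and, inductively, $(\SF'_j)$ for $j\le i-1$, the injectivity of $p^*$ forces $H^i(\IO_n,\Q)$ to be algebraic (as a subrepresentation of an algebraic representation), and the multiplicity of each $V_{\ul\lambda}$ in $H^i(\IO_n,\Q)$ is the difference of two eventually constant quantities, hence eventually constant. This gives $(\SF'_i)$. The detour through twisted coefficients, Lindell-type input for $\Out(F_n)$, and the analogue of Lemma \ref{cohomIAstr} is unnecessary for this reduction.

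None of this, however, removes the essential dependence on Conjecture \ref{conjPi}. An unconditional proof of Conjecture \ref{conjSFIO} would require genuinely new input, and the paper does not claim to have one.
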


It is known that $(\SF'_i)$ holds for $i=0,1$ \cite{Kawazumi}.

\begin{remark}
Similarly to the case of $\IA_n$, we can replace the assumption in Theorems \ref{HIOinv} and \ref{theoremIO}, i.e. Conjecture \ref{conjSFIO}, by the weaker assumption that $\{H^i(\IO_n,\Q)\}_n$ is a stably algebraic family.
\end{remark}

\begin{theorem}\label{HIOinv}
Conjecture \ref{conjSFIO} implies Conjecture \ref{conjectureHIOinv}.
\end{theorem}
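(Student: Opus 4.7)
The plan is to mimic the proof of Theorem \ref{thmHIAinv} (via Proposition \ref{IAnGLinv}), replacing the Hochschild--Serre spectral sequence for $1\to\IA_n\to\Aut(F_n)\to\GL(n,\Z)\to1$ with the one for the analogous extension $1\to\IO_n\to\Out(F_n)\to\GL(n,\Z)\to1$ from \eqref{exactseqIO}. The one ingredient genuinely specific to $\IO_n$ that must be supplied in place of Galatius's vanishing $H^i(\Aut(F_n),\Q)=0$ in positive stable degrees is the analogous stable vanishing for $\Out(F_n)$. This is also due to Galatius \cite{Galatius} (it can alternatively be extracted from the $\Aut(F_n)$ result via the Hochschild--Serre spectral sequence for $1\to\Inn(F_n)\to\Aut(F_n)\to\Out(F_n)\to1$, using that $\Inn(F_n)\cong F_n$ has rational cohomology concentrated in degrees $0$ and $1$), so no new input is needed.

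Concretely, I would consider the graded-commutative multiplicative first-quadrant spectral sequence
\begin{gather*}
E_2^{p,q}=H^p(\GL(n,\Z),H^q(\IO_n,\Q))\Longrightarrow H^{p+q}(\Out(F_n),\Q).
\end{gather*}
Assuming $(\SF'_i)$ for $i\le Q$, Proposition \ref{stablefamilyBorel} gives, for $0\le p\le P$ and $0\le q\le Q$ and $n$ sufficiently large, a multiplicative isomorphism $E_2^{p,q}\cong H^p(\GL(n,\Z),\Q)\otimes H^q(\IO_n,\Q)^{\GL(n,\Z)}$. Theorem \ref{theoremhomologyofGL} identifies $E_2^{*,0}\cong \bigwedge_{\Q}(x_1,x_2,\ldots)$ with $\deg x_k=4k+1$ in the stable range, and the stable vanishing $H^{p+q}(\Out(F_n),\Q)=0$ for $0<p+q\le (n-c)/2$ (Galatius) gives $E_\infty^{p,q}=0$ in a range of degrees proportional to $n$.

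At this point all three hypotheses of Theorem \ref{stableZeeman} are satisfied, with the graded vector space $V=\bigoplus_{k\ge 1}\Q x_k$ placed in degree $4k+1$. The theorem then yields
\begin{gather*}
H^*(\IO_n,\Q)^{\GL(n,\Z)}=E_2^{0,*}\cong S(V[-1])=\Q[z_1,z_2,\ldots],\quad \deg z_i=4i,
\end{gather*}
in degrees $*\le \min(P-2,Q,n/2-3)$, where the generators are produced explicitly as $z_k=\varphi_{4k}(x_k)$ via the antitransgression maps defined exactly as in Section \ref{antiAut}. Letting $P,Q\to\infty$ under the assumption $(\SF'_i)$ for all $i$ gives the stable isomorphism of graded algebras in every fixed cohomological degree for sufficiently large $n$.

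There is no serious obstacle here: the proof is formally parallel to Proposition \ref{IAnGLinv}, and the only point that requires any verification beyond routine substitution is that Galatius's vanishing theorem indeed applies to $\Out(F_n)$ in the needed range (and that the map $\Out(F_n)\to \GL(n,\Z)$ in \eqref{exactseqIO} is compatible with the algebra structure on Borel's stable cohomology, which is immediate from multiplicativity of the spectral sequence). One minor bookkeeping task is to collate the stable ranges from Proposition \ref{stablefamilyBorel}, Theorem \ref{theoremhomologyofGL}, and Galatius's theorem into a single integer $n_0=n_0(\{H^i(\IO_n,\Q)\}_n,P,Q)$, analogously to the $\IA_n$ case, but this is routine.
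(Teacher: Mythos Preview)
Your proposal is correct and follows essentially the same approach as the paper: the paper's proof simply notes the exact sequence $1\to \IO_n\to \Out(F_n)\to \GL(n,\Z)\to 1$, invokes Galatius's vanishing $H^i(\Out(F_n),\Q)=0$ for $n$ large, and then says the argument is identical to that of Proposition~\ref{IAnGLinv}. You have written out exactly those details, including the application of Proposition~\ref{stablefamilyBorel} and Theorem~\ref{stableZeeman}, so there is nothing to add.
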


\begin{proof}
We have an exact sequence 
$$1\to \IO_n\to \Out(F_n)\to \GL(n,\Z)\to 1$$
and by Galatius \cite{Galatius}, we have $H^i(\Out(F_n),\Q)=0$ for $n$ sufficiently large with respect to $i$.
Therefore, the proof is similar to that of Proposition \ref{IAnGLinv}.
\end{proof}

\subsection{Conjectural stable structure of $H^*(\IO_n,\Q)$}

The whole structure of the stable rational cohomology of $\IO_n$ has not been determined in degree $\ge 2$.
We propose a conjectural stable structure of $H^*(\IO_n,\Q)$ as in the case of $\IA_n$.

We have the restriction of the cup product maps
$$
 \omega'_{n,i}: \bigoplus_{j+k=i}H_A^j(\IO_n,\Q)\otimes H^k(\IO_n,\Q)^{\GL(n,\Z)}\to H^i(\IO_n,\Q),
$$
which form a graded algebra map 
$$
 \omega'_{n}: H_A^*(\IO_n,\Q)\otimes H^*(\IO_n,\Q)^{\GL(n,\Z)}\to H^*(\IO_n,\Q).
$$
The map $\omega'_{n,i}$ is an isomorphism in degree $i=0,1$.

\begin{conjecture}\label{omega'isom}
The graded algebra map $\omega'_{n}$ is a stable isomorphism.
\end{conjecture}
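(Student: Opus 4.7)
The plan is to mirror the proof of Theorem \ref{conjIAprop} in the $\IO_n$ setting, so the natural assumptions to make would be Conjecture \ref{conjSFIO} (the $(\SF'_i)$ hypotheses) together with an $\IO_n$-analogue of Conjecture \ref{conjalb}, namely that stably $H^A_i(\IO_n,\Q)\cong W^O_i$ as $\GL(n,\Q)$-representations. One would also want, in parallel with the discussion following Theorem \ref{conjKV}, the equivalent formulation that for every algebraic $\GL(n,\Q)$-representation $V$ the restriction map $H^*(\Out(F_n),V)\to H_A^*(\IO_n,V)^{\GL(n,\Z)}$ is a stable isomorphism; this equivalence should follow from an $\Out(F_n)$-analogue of the Kawazumi--Vespa/Lindell theorem.

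Granting these, the first step is a reduction: by $(\SF'_i)$ and Proposition \ref{stablefamilyBorel} (in its stably-algebraic form) one has, for any bipartition $\ul\lambda$, a stable identification
\begin{equation*}
H^p(\GL(n,\Z),H^q(\IO_n,V_{\ul\lambda}))\cong H^p(\GL(n,\Z),\Q)\otimes H^q(\IO_n,V_{\ul\lambda})^{\GL(n,\Z)}.
\end{equation*}
Splitting $H^i(\IO_n,\Q)$ into its isotypic components, $\omega'_n$ is a stable isomorphism of graded algebras if and only if for every bipartition $\ul\lambda$ the cup product
\begin{equation*}
H_A^*(\IO_n,V_{\ul\lambda})^{\GL(n,\Z)}\otimes H^*(\IO_n,\Q)^{\GL(n,\Z)}\xrightarrow{\ \cup\ }H^*(\IO_n,V_{\ul\lambda})^{\GL(n,\Z)}
\end{equation*}
is a stable isomorphism.

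The heart of the argument is then an $\IO_n$-analogue of Lemma \ref{cohomIAstr}: show that the composite
\begin{equation*}
\xi':H^*(\Out(F_n),V_{\ul\lambda})\otimes H^*(\IO_n,\Q)^{\GL(n,\Z)}\longrightarrow H^*(\IO_n,V_{\ul\lambda})^{\GL(n,\Z)},\quad u\otimes v\mapsto i^*(u)\cup v,
\end{equation*}
is a stable isomorphism. Exactly as in the proof of Lemma \ref{cohomIAstr}, this would be done by induction on cohomological degree, comparing via Lemma \ref{isomorphism-of-SS} the Hochschild--Serre spectral sequence for $1\to \IO_n\to \Out(F_n)\to \GL(n,\Z)\to 1$ twisted by $V_{\ul\lambda}$ with the tensor product of a trivial spectral sequence concentrated in bidegree $(0,q_0)$ and the trivially-coefficiented spectral sequence used to prove Theorem \ref{HIOinv}. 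The convergence input comes from Galatius's vanishing $H^i(\Out(F_n),\Q)=0$ stably for $i>0$, and the two key inductive inputs are that $H^q(\IO_n,V_{\ul\lambda})^{\GL(n,\Z)}$ is stably concentrated at a single $q=q_0(\ul\lambda)$, and that $H^i(\Out(F_n),V_{\ul\lambda})$ is stably concentrated in a single cohomological degree.

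The principal obstacle is this last concentration statement for $\Out(F_n)$. In the $\IA_n$ argument it is supplied by Lindell's theorem (Theorem \ref{conjKV}), which gives both the vanishing of $H^i(\Aut(F_n),H^{p,q})$ outside $i=p-q$ and an identification of the remaining piece. No such result is currently available for $\Out(F_n)$ in full generality, so one would either need to prove it directly (plausibly by combining Theorem \ref{conjKV} with the Hochschild--Serre spectral sequence for $1\to F_n\to \Aut(F_n)\to \Out(F_n)\to 1$, using that $H^*(F_n,V_{\ul\lambda})$ is concentrated in degrees $0$ and $1$), or to build the analogous wheeled-PROP computation from scratch. Securing this $\Out$-analogue of Kawazumi--Vespa is the step where the real work lies; once it, together with Conjecture \ref{conjSFIO} and the $\IO_n$-Albanese conjecture, is in hand, the spectral sequence induction proceeds verbatim as in Lemma \ref{cohomIAstr}, yielding Conjecture \ref{omega'isom}.
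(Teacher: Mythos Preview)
Your outline is essentially the paper's approach: this is exactly Theorem \ref{theoremIO}, which states that Conjectures \ref{conjSFIO}, \ref{conjIOOut} and \ref{conjOut} together imply Conjecture \ref{omega'isom}, and whose (omitted) proof runs verbatim as the $\IO_n$-analogue of Theorem \ref{conjIAprop} and Lemma \ref{cohomIAstr}. The spectral-sequence induction, the use of Lemma \ref{isomorphism-of-SS}, and the input from Galatius's vanishing for $\Out(F_n)$ are all just as you describe.

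The only difference is in how the hypotheses are packaged. You try to derive the two $\Out(F_n)$-inputs from more primitive assumptions (Conjecture \ref{conjalbIO} plus a hypothetical $\Out$-analogue of Lindell's theorem), and you correctly identify the degree-concentration of $H^*(\Out(F_n),V_{\ul\lambda})$ as the real sticking point. The paper does not attempt this derivation: it simply isolates the two statements needed in the induction as standalone conjectures, namely Conjecture \ref{conjIOOut} (the analogue of Conjecture \ref{conjkatada}) and Conjecture \ref{conjOut} (the degree-concentration statement, i.e.\ the ``first half'' of a would-be $\Out$-Lindell theorem), and assumes them directly. This buys the paper a clean conditional theorem without needing an $\Out$-version of the Kawazumi--Vespa/Lindell machinery; your route would give a result with more primitive hypotheses, but at the cost of proving the $\Out$-analogue of Theorem \ref{conjKV}, which (as you note) is not presently available.
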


The second-named author \cite{KatadaIA} also defined the traceless part $W^O_*$ of the graded symmetric algebra $S^*(U^O_*)$ of the graded $\GL(n,\Q)$-representation $U^O_*=\bigoplus_{i\ge 1}U^O_i$, where $U^O_i=U_i$ for $i\ge 2$ and $U^O_1=U_1/H\cong V_{1^2,1}$, and proposed a conjectural structure of the stable Albanese homology $H^A_*(\IO_n,\Q)$.

\begin{conjecture}[\cite{KatadaIA}]\label{conjalbIO}
Let $i\ge 0$.
We have a $\GL(n,\Q)$-isomorphism
$$
  H^A_i(\IO_n,\Q)\cong W^O_i
$$
for sufficiently large $n$.
\end{conjecture}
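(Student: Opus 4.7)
The strategy is to mimic the approach of \cite{KatadaIA} for Conjecture \ref{conjalb}, replacing ingredients for $\IA_n$ with their outer analogues. Recall that for $n\ge 2$ we have a short exact sequence $1\to \Inn(F_n)\to\IA_n\to\IO_n\to 1$ with $\Inn(F_n)\cong F_n$, and the abelianization map $\IA_n\to U_1$ carries $\Inn(F_n)$ onto the subspace $H\subset U_1$; hence the quotient $U_1\twoheadrightarrow U^O_1=U_1/H$ is exactly the abelianization of $\IO_n$, and there is an induced graded surjection of graded-symmetric algebras $S^*(U_*)\twoheadrightarrow S^*(U^O_*)$ that descends to a surjection $q_*:W_*\twoheadrightarrow W^O_*$ of traceless parts.

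The first step is injectivity of $W^O_i\hookrightarrow H^A_i(\IO_n,\Q)$, which the second-named author \cite{KatadaIA} already obtained by producing explicit cycles out of Magnus-type generators and checking traceless relations; one merely notes that these cycles survive the passage from $\IA_n$ to $\IO_n$ because they lift from generators that are not inner automorphisms (except in degree $1$, where $H$ is killed precisely by construction of $U^O_1$). The second step, which is the crux, is surjectivity. I would proceed by assuming Conjecture \ref{conjalb} (so that $H^A_*(\IA_n,\Q)\cong W_*$) and analyzing the map $\alpha: H^A_*(\IA_n,\Q)\to H^A_*(\IO_n,\Q)$ induced by the surjection $\IA_n\twoheadrightarrow \IO_n$ via the commutative square of abelianizations. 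The map $\alpha$ factors through $W^O_*$ because the image of $\Inn(F_n)$ lies in the kernel $H\subset U_1$, and we need to show that the resulting factored map $\bar\alpha: W^O_*\to H^A_*(\IO_n,\Q)$ is surjective; combined with the injectivity from Step~1 this gives the isomorphism.

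For the surjectivity of $\bar\alpha$, I would use the Hochschild--Serre spectral sequence for $1\to F_n\to\IA_n\to\IO_n\to 1$ together with the naturality of the abelianization maps. In stable range the fiber homology $H_*(F_n,\Q)$ concentrates in low degree, and the image of $H_*(\IA_n,\Q)\to H_*(U_1,\Q)$ is $W_*$ by hypothesis; chasing the diagram, any class in $H^A_*(\IO_n,\Q)\subset H_*(U^O_1,\Q)$ admits a lift coming from an $\IA_n$-class whose image modulo $H$ recovers the given class, showing $\bar\alpha$ is onto. Alternatively, one can run the argument through an outer analogue of Theorem \ref{conjKV}: an unconditional computation of $H^*(\Out(F_n),H^{p,q})$ in terms of a suitable wheeled PROP would, via the argument of \cite[Proposition 12.7]{KatadaIA} adapted to $\Out(F_n)$, yield the dual $H^*_A(\IO_n,V_{\ul\la})^{\GL(n,\Z)}$ and hence $W^O_*$.

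The main obstacle is that both routes rest on deep inputs that are not yet available in full generality: the first requires Conjecture \ref{conjalb} in all degrees (currently known only for $i\le 3$), while the second requires the outer analogue of Lindell's theorem \cite{Lindell22}. A secondary technical point is verifying that the Hochschild--Serre differentials for $1\to F_n\to\IA_n\to\IO_n\to 1$ behave compatibly with the traceless decomposition, i.e.\ that no ``new'' Albanese classes are introduced on passing from $\IA_n$ to $\IO_n$ beyond those predicted by $q_*:W_*\twoheadrightarrow W^O_*$; this amounts to showing that the contraction relations cutting out $W^O_*$ are exactly the image of those cutting out $W_*$ under the quotient $U_1\twoheadrightarrow U^O_1$, which is a representation-theoretic computation that should be manageable using Schur--Weyl duality as in \cite{KatadaIA}.
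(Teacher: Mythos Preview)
The statement you are attempting to prove is a \emph{conjecture}, not a theorem: the paper (and \cite{KatadaIA}) state it as an open problem and give no proof. There is nothing in the paper to compare your proposal against, because no proof exists.

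Your own write-up essentially acknowledges this. In your ``main obstacle'' paragraph you correctly observe that both of your routes require inputs that are not available: the first assumes Conjecture~\ref{conjalb} in all degrees (known only for $i\le 3$), and the second requires an $\Out(F_n)$ analogue of Lindell's theorem that has not been established. So what you have written is not a proof but a conditional reduction, and you should present it as such. In particular, your ``Step~1'' claim that \cite{KatadaIA} already produces the injection $W^O_i\hookrightarrow H^A_i(\IO_n,\Q)$ is correct and unconditional, but everything in ``Step~2'' is conjectural.

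There is also a mathematical wrinkle in your first route that you gloss over. Even granting Conjecture~\ref{conjalb}, the map you call $\alpha:H^A_*(\IA_n,\Q)\to H^A_*(\IO_n,\Q)$ need not be surjective a priori: Albanese homology is the \emph{image} of $H_*(\IA_n,\Q)\to H_*(U_1,\Q)$, and passing to a quotient group can in principle enlarge that image (a class in $H_*(\IO_n,\Q)$ mapping nontrivially into $H_*(U^O_1,\Q)$ might lift to an $\IA_n$-class that dies in $H_*(U_1,\Q)$). Your Hochschild--Serre argument for $1\to F_n\to\IA_n\to\IO_n\to 1$ does not address this; you would need to control the edge homomorphism and its interaction with the abelianization maps more carefully. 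The paper's Proposition~\ref{IAandIO} shows the spectral sequence degenerates, which helps, but the compatibility with Albanese images still requires work. In short: your sketch identifies the right circle of ideas, but it is a strategy for attacking an open conjecture, not a proof.
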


By combining Conjecture \ref{conjalbIO} with Conjectures \ref{conjectureHIOinv} and \ref{omega'isom}, we obtain the following conjecture, which would give a complete algebraic structure of the stable rational cohomology of $\IO_n$.

\begin{conjecture}\label{conjIOnWO}
We stably have an isomorphism of graded algebras with $\GL(n,\Z)$-actions
$$H^*(\IO_n,\Q)\cong (W^O_*)^*\otimes 
    \Q[z_1,z_2,\dots],\quad \deg z_i=4i.
$$
\end{conjecture}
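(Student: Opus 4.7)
The plan is to mimic the strategy used for $\IA_n$ that culminates in Theorem \ref{conjIApropintro} (Theorem \ref{conjIAprop}), reducing Conjecture \ref{conjIOnWO} to the three ingredients Conjecture \ref{conjectureHIOinv}, Conjecture \ref{omega'isom} and Conjecture \ref{conjalbIO}. The first of these already follows from Theorem \ref{HIOinv} under Conjecture \ref{conjSFIO}. So the real work is to produce an analog of Theorem \ref{conjIAprop} for $\IO_n$, namely, to show that Conjecture \ref{conjSFIO} together with Conjecture \ref{conjalbIO} implies Conjecture \ref{omega'isom}. Given that, the target isomorphism follows by tensoring the identifications $H_A^*(\IO_n,\Q)\cong (W^O_*)^*$ and $H^*(\IO_n,\Q)^{\GL(n,\Z)}\cong \Q[z_1,z_2,\ldots]$ through $\omega'_n$.

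The first step is to establish the $\Out(F_n)$-analog of Lemma \ref{cohomIAstr}: under Conjecture \ref{conjSFIO}, for any bipartition $\ulla$ the map
\begin{gather*}
\xi': H^*(\Out(F_n),V_{\ulla})\otimes H^*(\IO_n,\Q)^{\GL(n,\Z)} \longrightarrow H^*(\IO_n,V_{\ulla})^{\GL(n,\Z)},\quad u\ot v\mapsto i^*(u)\cup v
\end{gather*}
is a stable isomorphism. The proof would proceed by induction on cohomological degree $i$, using the Hochschild--Serre spectral sequence for $1\to\IO_n\to\Out(F_n)\to\GL(n,\Z)\to 1$. Proposition \ref{stablefamilyBorel} controls the $E_2$-terms under Conjecture \ref{conjSFIO}, Galatius's vanishing $H^*(\Out(F_n),\Q)=0$ in a stable range handles the untwisted case, and Lemma \ref{isomorphism-of-SS} transfers the inductively-established isomorphism through the spectral sequence, exactly as in the proof of Lemma \ref{cohomIAstr}. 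A necessary input here is an $\Out(F_n)$-version of Lindell's Theorem \ref{conjKV} to ensure that $H^*(\Out(F_n),V_{\ulla^*})$ is stably concentrated in a single cohomological degree.

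The second step is the analog, for $\IO_n$, of the equivalence between Conjectures \ref{conjalb} and \ref{conjkatada}: combining the $\Out(F_n)$-analog of Theorem \ref{conjKV} with \cite[Proposition 12.7]{KatadaIA} (or its $\IO_n$-version) should show that Conjecture \ref{conjalbIO} implies that $i^*: H^*(\Out(F_n),V)\to H_A^*(\IO_n,V)^{\GL(n,\Z)}$ is a stable isomorphism for every algebraic $\GL(n,\Q)$-representation $V$. Composing this with the isomorphism $\xi'$ from Step 1 then identifies the image of the cup product with the full invariant part and, after summing over $\ulla$, yields that the composite
\begin{gather*}
H_A^*(\IO_n,\Q)\otimes H^*(\IO_n,\Q)^{\GL(n,\Z)}\xrightarrow{\omega'_n} H^*(\IO_n,\Q)
\end{gather*}
is a stable isomorphism of graded algebras with $\GL(n,\Z)$-actions, proving Conjecture \ref{omega'isom}. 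Finally, combining Conjecture \ref{conjalbIO}, the identification $H^*(\IO_n,\Q)^{\GL(n,\Z)}\cong\Q[z_1,z_2,\ldots]$ from Theorem \ref{HIOinv}, and Conjecture \ref{omega'isom} yields the desired graded algebra isomorphism.

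The main obstacle is producing the two structural inputs: an $\Out(F_n)$-version of Lindell's theorem giving the precise concentration and values of the stable $H^*(\Out(F_n),H^{p,q})$, and Conjecture \ref{conjalbIO} itself, which remains open beyond low degrees. The inductive spectral-sequence bookkeeping of Step 1 is expected to be routine once these ingredients are in hand, since the argument is structurally identical to the $\IA_n$-case modulo replacing $\Aut(F_n)$ with $\Out(F_n)$ and $W_*$ with $W^O_*$; a subtle point to watch is that $U^O_1=U_1/H$ is smaller than $U_1$, so one must verify that the class $V_{1,0}\subset H^1(\IA_n,\Q)$ that is quotiented out does not contribute any unwanted cup product relations in the $\IO_n$ setting, which is exactly accounted for by the contraction-free definition of $W^O_*$.
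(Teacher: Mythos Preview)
The statement you are trying to prove is a \emph{conjecture}, and the paper gives no proof of it. What the paper does is exactly what appears in the sentence immediately preceding the conjecture: it observes that Conjecture \ref{conjIOnWO} follows formally from Conjectures \ref{conjectureHIOinv}, \ref{omega'isom}, and \ref{conjalbIO}, each of which is itself open. There is no further argument.

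Your proposal is thus not really a proof but a reduction sketch, and it runs into the same obstructions the paper records explicitly. You correctly identify that the crux is an $\Out(F_n)$-analog of Theorem \ref{conjIAprop}, and indeed the paper states exactly such an analog as Theorem \ref{theoremIO}: Conjectures \ref{conjSFIO}, \ref{conjIOOut}, and \ref{conjOut} together imply Conjecture \ref{omega'isom}. But notice the hypotheses: the paper does \emph{not} claim that Conjectures \ref{conjSFIO} and \ref{conjalbIO} alone suffice. Your ``necessary input'' of an $\Out(F_n)$-version of Lindell's theorem is precisely the content of Conjectures \ref{conjIOOut} and \ref{conjOut}, which remain open; unlike the $\Aut(F_n)$ case, no analog of \cite{Lindell22} is available for $\Out(F_n)$, so the concentration-in-a-single-degree step of your induction has no foundation. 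Similarly, the equivalence between Conjectures \ref{conjalbIO} and \ref{conjIOOut} that you invoke in Step 2 is not established in the paper or in \cite{KatadaIA} for $\IO_n$. In short, your sketch faithfully parallels the $\IA_n$ argument but rests on inputs that are themselves conjectural, which is why the paper presents the target statement as a conjecture rather than a theorem.
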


\subsection{Twisted stable cohomology of $\Out(F_n)$}

The stable cohomology of $\Out(F_n)$ with coefficients in $H^{p,0}$ and $H^{0,q}$ has been determined as follows.

\begin{theorem}[Randal-Williams \cite{Randal-Williams}]\label{Outtwisted}
We have
\begin{gather*}
    \begin{split}
        H^i(\Out(F_n),(H_{\Z}^*)^{\otimes q})&=0 \quad \text{for}\quad 2i\le n-q-3,\\
        H^i(\Out(F_n),H^{\otimes q})&=0 \quad \text{for}\quad 2i\le n-q-3,\;i\neq q,
    \end{split}
\end{gather*}
and for $n\ge 4q+3$,
$H^q(\Out(F_n),H^{\otimes q})\otimes \sgn_{\gpS_q}$ is the permutation module on the set of partitions of $\{1,\dots,q\}$ having no parts of size $1$.
\end{theorem}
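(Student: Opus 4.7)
The plan is to combine Borel's stability and vanishing theorem (Theorem \ref{theoremhomologyofGL}) with two Hochschild--Serre spectral sequences. Throughout I would decompose the coefficients $H^{\otimes q}$ and $(H^*)^{\otimes q}$ into irreducible algebraic $\GL(n,\Z)$-representations $V_\lambda$ and $V_{0,\lambda}$ via \eqref{kercontraction} and \eqref{exactfilt}, so that both the vanishing statements and the structural identification reduce to computing $H^{*}(\Out(F_n),V_{\ulla})$ for specific bipartitions $\ulla$.

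First I would establish the analogous statements for $\Aut(F_n)$, either directly from Theorem \ref{conjKV} (which identifies $H^i(\Aut(F_n),H^{p,q})$ with $\calC_{\calP_0^{\circlearrowright}}(p,q)$ when $i=p-q$ and with zero otherwise), or by an independent induction using the spectral sequence $E_2^{p,q}=H^p(\GL(n,\Z),H^q(\IA_n,V_{\ulla}))\Rightarrow H^{p+q}(\Aut(F_n),V_{\ulla})$ together with Galatius's stable triviality of $H^{>0}(\Aut(F_n),\Q)$ and the Djament--Vespa vanishing of the stable (co)homology of $\Aut(F_n)$ with polynomial-functor coefficients. This extends the degree-$1$ calculation of Theorem \ref{H1HpHq} to arbitrary cohomological degree.

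Next I would pass from $\Aut(F_n)$ to $\Out(F_n)$ via the short exact sequence $1\to \Inn(F_n)\to \Aut(F_n)\to \Out(F_n)\to 1$, where $\Inn(F_n)\cong F_n$ has cohomological dimension one and acts trivially on both $H$ and $H^*$, hence on $H^{\otimes q}$ and $(H^*)^{\otimes q}$. The corresponding Hochschild--Serre spectral sequence has only two nonzero rows and collapses to a Wang-type long exact sequence
\begin{gather*}
\cdots\to H^{i-2}(\Out(F_n),V\otimes H^*)\to H^i(\Out(F_n),V)\to H^i(\Aut(F_n),V)\to H^{i-1}(\Out(F_n),V\otimes H^*)\to\cdots.
\end{gather*}
For $V=H^{p,q}$ we have $V\otimes H^*=H^{p,q+1}$, so an induction on the total tensor degree $p+q$ combined with the already-established $\Aut(F_n)$-ranges yields the desired vanishing in the range $2i\le n-q-3$ for both $V=H^{\otimes q}$ and $V=(H^*)^{\otimes q}$.

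The main obstacle will be the structural identification for $i=q$. The piece $H^q(\Out(F_n),H^{\otimes q})$ sits in the Wang sequence between the stably known $H^q(\Aut(F_n),H^{\otimes q})=\calC_{\calP_0^{\circlearrowright}}(q,q)$ and the $\Out$-cohomology with coefficients $H^{q,1}$, and one must track the transgression-type connecting map as a morphism of $\gpS_q$-representations. Matching its cokernel with the permutation module (twisted by $\sgn_{\gpS_q}$) on set partitions of $\{1,\dots,q\}$ having no singleton parts requires a genuinely combinatorial or geometric input going beyond the algebraic framework of the present paper, since the singletons correspond exactly to the ``$H$-factors'' that should be absorbed into conjugation by $\Inn(F_n)$. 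Randal-Williams achieves this combinatorial identification by geometric means rooted in the action of $\Out(F_n)$ on outer space, which explains the enlarged hypothesis $n\ge4q+3$ needed for this portion of the statement.
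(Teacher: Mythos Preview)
This theorem is not proved in the paper: it is quoted verbatim from Randal-Williams \cite{Randal-Williams} as background for the conjectures in Section~\ref{secstrIO}, and no argument is supplied. So there is no ``paper's own proof'' to compare your proposal against.

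That said, your outline does not recover Randal-Williams's result even in principle. The explicit ranges $2i\le n-q-3$ and $n\ge 4q+3$ come from the geometric homological stability machinery in \cite{Randal-Williams}; the algebraic route you sketch, via Theorem~\ref{conjKV} and the Wang sequence for $1\to F_n\to \Aut(F_n)\to \Out(F_n)\to 1$, would at best yield \emph{some} stable range, not these specific bounds. Moreover, invoking Theorem~\ref{conjKV} is circular historically (Lindell's theorem postdates and partly relies on Randal-Williams's computations for $p=0$ or $q=0$), and in any case Theorem~\ref{conjKV} as stated in the paper gives no explicit range. Finally, as you yourself concede, the identification of $H^q(\Out(F_n),H^{\otimes q})\otimes\sgn_{\gpS_q}$ with the permutation module on partitions without singletons is the heart of the matter and cannot be extracted from the Wang sequence alone; Randal-Williams obtains it by an explicit analysis of the map $B\Aut(F_n)\to B\Out(F_n)$ at the level of the splitting of $\Sigma^\infty_+ B\Aut(F_n)$, not via outer space.
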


Here we make the following conjecture about the relation between the twisted stable cohomology of $\Out(F_n)$ and the stable Albanese cohomology of $\Out(F_n)$, which is an analogue of Conjecture \ref{conjkatada}.

\begin{conjecture}\label{conjIOOut}
Let $V$ be an algebraic $\GL(n,\Q)$-representation.
We stably have an isomorphism of graded algebras
$$H^*(\Out(F_n),V)\cong H_A^*(\IO_n,V)^{\GL(n,\Z)}.$$
\end{conjecture}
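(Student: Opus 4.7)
The plan is to reduce the conjecture to two structural inputs, mirroring the argument that, for the $(\Aut(F_n),\IA_n)$ pair, shows Conjectures~\ref{conjalb} and~\ref{conjkatada} are equivalent by combining Theorem~\ref{conjKV} with \cite[Proposition~12.7]{KatadaIA}. By semisimplicity of algebraic $\GLnQ$-representations it suffices to treat $V=V_{\ulla}$ for each bipartition $\ulla$, and the natural candidate for the isomorphism is the composition of $i^*\colon H^*(\Out(F_n),V_{\ulla})\to H^*(\IO_n,V_{\ulla})^{\GLnZ}$ induced by $\IO_n\hookrightarrow\Out(F_n)$ with the identification of its image inside the Albanese part.

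First, I would establish the $\Out(F_n)$-analogue of Lindell's Theorem~\ref{conjKV}, computing $H^i(\Out(F_n),H^{p,q})$ stably for all $i,p,q\ge 0$. Theorem~\ref{Outtwisted} already handles the ``pure'' cases $p=0$ or $q=0$. For the mixed case one would either rework Lindell's polynomial-functor argument \cite{Lindell22} in the $\Out(F_n)$-setting, or transfer Lindell's $\Aut(F_n)$-computation through the Hochschild--Serre spectral sequence of the extension $1\to F_n\to\Aut(F_n)\to\Out(F_n)\to 1$, which is tractable because $H^*(F_n,H^{p,q})$ is concentrated in degrees $0$ and $1$. One expects the stable answer to be the $\Out(F_n)$-counterpart of the wheeled-PROP formula appearing in Theorem~\ref{conjKV}, with $U_1$ replaced everywhere by $U^O_1=U_1/H$.

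Second, I would prove the $\IO_n$-version of \cite[Proposition~12.7]{KatadaIA}, which should identify the $\GLnQ$-invariant part $((W^O_*)^*\otimes V_{\ulla})^{\GLnQ}$ with the stable value of $H^*(\Out(F_n),V_{\ulla})$ predicted in the previous step. Granting Conjecture~\ref{conjalbIO}, this yields $H_A^*(\IO_n,V_{\ulla})^{\GLnZ}\cong ((W^O_*)^*\otimes V_{\ulla})^{\GLnZ}$, so that both sides of Conjecture~\ref{conjIOOut} match abstractly as graded $\GLnZ$-modules and are in fact graded algebras through the natural $H^*(\Out(F_n),\Q)$-module structures.

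Finally, to upgrade this numerical match to the claimed isomorphism realised by $i^*$, I would analyse the Hochschild--Serre spectral sequence for $1\to\IO_n\to\Out(F_n)\to\GLnZ\to 1$ along the lines of Lemma~\ref{cohomIAstr}: under $(\SF'_i)$ (Conjecture~\ref{conjSFIO}) and Proposition~\ref{stablefamilyBorel} the $E_2$-page stably decomposes as $H^p(\GLnZ,\Q)\otimes H^q(\IO_n,V_{\ulla})^{\GLnZ}$, and an induction on cohomological degree, combined with Galatius's vanishing of $H^*(\Out(F_n),\Q)$ in the stable range, identifies $i^*$ with the edge map and shows its image is exactly the Albanese part. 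The main obstacle is the first step: the mixed-variance analogue of Lindell's theorem for $\Out(F_n)$, since neither Randal--Williams's geometric approach nor the functor-category approach of Djament--Vespa and Lindell appears to extend verbatim to the case $p,q>0$ for outer automorphism groups.
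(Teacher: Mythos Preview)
The statement you are attempting to prove is a \emph{conjecture} in the paper, not a theorem: the paper offers no proof of Conjecture~\ref{conjIOOut}, only states it as the $\Out(F_n)$-analogue of Conjecture~\ref{conjkatada}. There is therefore no ``paper's own proof'' to compare your proposal against.

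Your outline is a reasonable strategy for what a proof \emph{might} eventually look like, but it is not a proof, and you essentially acknowledge this yourself. The argument rests on several unproven inputs: the $\Out(F_n)$-analogue of Lindell's theorem (your ``first step''), Conjecture~\ref{conjalbIO}, and Conjecture~\ref{conjSFIO}. Each of these is open. In particular, the key obstacle you identify --- extending the stable computation of $H^*(\Out(F_n),H^{p,q})$ to the mixed case $p,q>0$ --- is genuinely open and is precisely why the paper states Conjecture~\ref{conjOut} separately rather than deducing it. Note also that even in the $\Aut(F_n)$ setting the paper does not prove Conjecture~\ref{conjkatada}; it only observes that Lindell's theorem makes it \emph{equivalent} to Conjecture~\ref{conjalb}, which itself remains open. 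Your proposal is thus best read as a plausible reduction of one conjecture to several others, not as a proof.
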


Conjectures \ref{conjIOOut} and \ref{conjalbIO} imply the following.

\begin{conjecture}\label{conjOut}
Let $i\ge 0$ and let $\ulla$ be a bipartition such that $i\neq \deg \ulla$.
Then we have
$$H^i(\Out(F_n),V_{\ul\lambda})=0$$
for sufficiently large $n$.
\end{conjecture}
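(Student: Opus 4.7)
The plan is to combine the two assumed conjectures through a degree-matching argument for $\GL(n,\Z)$-invariants of tensor products. I will first apply Conjecture \ref{conjIOOut} to replace $H^i(\Out(F_n),V_{\ul\lambda})$ by $H_A^i(\IO_n,V_{\ul\lambda})^{\GL(n,\Z)}$ stably. By the definition of the twisted Albanese cohomology (following the convention set up for $\IA_n$ and applied similarly to $\IO_n$), this equals $(H_A^i(\IO_n,\Q)\otimes V_{\ul\lambda})^{\GL(n,\Z)}$. Then dualizing Conjecture \ref{conjalbIO} gives a stable $\GL(n,\Q)$-isomorphism $H_A^i(\IO_n,\Q)\cong (W_i^O)^*$, so the whole invariant group is stably identified with $((W_i^O)^*\otimes V_{\ul\lambda})^{\GL(n,\Z)}$.

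The key step is to show that every irreducible component of $W_i^O$ is of the form $V_{\ul\mu}$ with $\deg\ul\mu=i$. For this I would check that each $U_j^O$ is concentrated in bipartition-degree $j$: the case $j=1$ is immediate since $U_1^O=V_{1^2,1}$ has $\deg=2-1=1$, while for $j\ge 2$ we have $U_j^O=U_j=H^*\otimes\bigwedge^{j+1}H$, which has bipartition-degree $-1+(j+1)=j$. Since the bipartition-degree is additive under tensor products and is preserved by symmetrization and by passing to the traceless part, the degree-$i$ component $W_i^O$ of $S^*(U_*^O)$ consists entirely of irreducibles $V_{\ul\mu}$ with $\deg\ul\mu=i$. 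Taking graded duals, $(W_i^O)^*$ splits into irreducibles $V_{\ul\mu^*}$ with $\deg\ul\mu^*=-i$.

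Finally, I invoke Schur's lemma: for algebraic $\GL(n,\Z)$-representations (which after restriction are still semisimple up to det-twists, and in large enough $n$ behave as algebraic $\GL(n,\Q)$-representations), the invariant part $(V_{\ul\mu'}\otimes V_{\ul\lambda})^{\GL(n,\Z)}$ is nonzero only when $\ul\lambda=(\ul\mu')^*$, in which case the degrees satisfy $\deg\ul\lambda=-\deg\ul\mu'$. Applied to the components $V_{\ul\mu^*}$ of $(W_i^O)^*$, this forces $\deg\ul\lambda=-(-i)=i$ for any nonzero contribution. Hence if $\deg\ul\lambda\neq i$, every summand vanishes, yielding $H^i(\Out(F_n),V_{\ul\lambda})=0$ stably.

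I expect the bookkeeping of degrees to be entirely routine; the only potential subtlety is justifying the degree-wise decomposition of $(W_i^O)^*$ into algebraic irreducibles (which uses the stable range of Borel--Li--Sun in Proposition \ref{stablefamilyBorel} to realize the invariants functorially) and the elementary fact that contraction maps preserve bipartition-degree, so that taking the traceless part does not introduce components of other degrees. These are the mild obstacles, but no new input beyond the cited conjectures and the elementary representation theory of Section \ref{alg-glnz-rep} is needed.
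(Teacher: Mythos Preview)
Your argument is correct and is precisely the intended one. The paper does not actually give a proof here: it simply asserts ``Conjectures \ref{conjIOOut} and \ref{conjalbIO} imply the following'' before stating Conjecture \ref{conjOut}, leaving the verification to the reader. Your route --- reduce via Conjecture \ref{conjIOOut} to $((W_i^O)^*\otimes V_{\ul\lambda})^{\GL(n,\Z)}$ using Conjecture \ref{conjalbIO}, then check that every irreducible constituent of $W_i^O$ has bipartition-degree exactly $i$ --- is exactly what the authors have in mind.

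Two minor corrections. First, your parenthetical that the irreducible decomposition of $(W_i^O)^*$ ``uses the stable range of Borel--Li--Sun in Proposition \ref{stablefamilyBorel}'' is misplaced: Proposition \ref{stablefamilyBorel} concerns cohomology of $\GL(n,\Z)$, whereas decomposing $W_i^O$ is pure representation theory of $\GL(n,\Q)$ and needs no such input. Second, the step from $\GL(n,\Z)$-invariants to $\GL(n,\Q)$-invariants deserves a sharper justification than ``behave as algebraic $\GL(n,\Q)$-representations''. The clean statement is: since $\SL(n,\Z)$ is Zariski dense in $\SL(n)$, an irreducible algebraic $\GL(n,\Q)$-representation has nonzero $\GL(n,\Z)$-invariants only if it is a power of $\det$; but every irreducible in $(W_i^O)^*\otimes V_{\ul\lambda}$ has bipartition-degree $\deg\ul\lambda - i$, while $\det^{k}$ has degree $kn$, so for $n$ large only the trivial representation ($k=0$) can contribute, forcing $\deg\ul\lambda = i$. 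This replaces your Schur-lemma sentence and closes the argument without ambiguity.
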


We obtain the following proposition which is analogous to Theorem \ref{conjIAprop}.

\begin{theorem}
\label{theoremIO}
Conjectures \ref{conjSFIO}, \ref{conjIOOut} and \ref{conjOut} imply Conjecture \ref{omega'isom}.
\end{theorem}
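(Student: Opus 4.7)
My plan is to mirror the proof of Theorem \ref{conjIAprop} and its supporting Lemma \ref{cohomIAstr}, with $\IA_n$ and $\Aut(F_n)$ replaced throughout by $\IO_n$ and $\Out(F_n)$, respectively. The three assumed conjectures play parallel roles: Conjecture \ref{conjSFIO} replaces Conjecture \ref{conjPi}, Conjecture \ref{conjIOOut} replaces Conjecture \ref{conjkatada}, and Conjecture \ref{conjOut} replaces Theorem \ref{conjKV} of Lindell (which in the $\IA_n$ case was used to guarantee the vanishing of the twisted cohomology of $\Aut(F_n)$ outside a single cohomological degree determined by the bipartition).

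First I would reduce the claim that $\omega'_n$ is a stable isomorphism to a statement on each isotypic component. Assumption $(\SF'_i)$ together with Proposition \ref{stablefamilyBorel} stably decomposes both sides of $\omega'_n$ along isotypic components, so it is equivalent to show that for every bipartition $\ul\lambda$ the cup product
\begin{gather*}
    H_A^*(\IO_n,V_{\ul\lambda})^{\GL(n,\Z)} \otimes H^*(\IO_n,\Q)^{\GL(n,\Z)} \xrightarrow{\cup} H^*(\IO_n,V_{\ul\lambda})^{\GL(n,\Z)}
\end{gather*}
is a stable isomorphism. Applying Conjecture \ref{conjIOOut} rewrites the left-hand side stably as $H^*(\Out(F_n),V_{\ul\lambda}) \otimes H^*(\IO_n,\Q)^{\GL(n,\Z)}$, so the problem reduces to proving the $\IO_n$-analogue of Lemma \ref{cohomIAstr}: the map
\begin{gather*}
    \xi': H^*(\Out(F_n),V_{\ul\lambda}) \otimes H^*(\IO_n,\Q)^{\GL(n,\Z)} \to H^*(\IO_n,V_{\ul\lambda})^{\GL(n,\Z)}, \quad u \otimes v \mapsto i^*(u) \cup v,
\end{gather*}
where $i: \IO_n \hookrightarrow \Out(F_n)$, is a stable isomorphism.

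I would establish this analogue by induction on the cohomological degree $i$, using the Hochschild--Serre spectral sequence for $1 \to \IO_n \to \Out(F_n) \to \GL(n,\Z) \to 1$ in place of the one for $\Aut(F_n)$. The case split is identical to the one in Lemma \ref{cohomIAstr}: either $H^j(\Out(F_n),V_{\ul\lambda}) = 0$ for all $j \le i-1$, in which case the induction hypothesis together with $(\SF'_j)$ and Proposition \ref{stablefamilyBorel} collapses the low rows of the spectral sequence and gives $\xi' = i^*$ in degree $i$; or else there is some $q_0 \le i-1$ with $H^{q_0}(\Out(F_n),V_{\ul\lambda}) \neq 0$, which by Conjecture \ref{conjOut} forces $q_0 = \deg \ul\lambda$ uniquely. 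In the latter case I would construct the tensor product spectral sequence $\hat E \otimes E$ with $\hat E$ concentrated in bidegree $(0,q_0)$, together with the comparison morphism $\phi : \hat E \otimes E \to E(\ul\lambda)$ defined by cup product, and invoke Lemma \ref{isomorphism-of-SS} to conclude that $\phi_2^{0,i}$ is an isomorphism.

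The main obstacle I foresee is verifying the abutment-vanishing hypothesis of Lemma \ref{isomorphism-of-SS} in bidegrees with $p+q \in \{i,i+1\}$. The vanishing of $(\hat E \otimes E)_\infty$ in these bidegrees factors through the vanishing of $H^{j-q_0}(\Out(F_n),\Q)$ for positive $j-q_0$ in the stable range, which is Galatius's theorem \cite{Galatius} applied to $\Out(F_n)$. The vanishing of $E(\ul\lambda)_\infty^j = H^j(\Out(F_n),V_{\ul\lambda})$ for $j = i, i+1$ is precisely what Conjecture \ref{conjOut} supplies, since $\deg\ul\lambda = q_0 \le i-1 < i$. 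Once the induction closes, combining it with Conjecture \ref{conjIOOut} yields the stable isomorphism $\omega'_n$ asserted in Conjecture \ref{omega'isom}.
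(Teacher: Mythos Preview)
Your proposal is correct and follows essentially the same approach the paper intends: the paper gives no explicit proof of Theorem~\ref{theoremIO} beyond noting it is analogous to Theorem~\ref{conjIAprop}, and your outline is precisely the $\IO_n$-version of that argument together with its supporting Lemma~\ref{cohomIAstr}, with Conjectures~\ref{conjSFIO}, \ref{conjIOOut}, and \ref{conjOut} substituted for Conjecture~\ref{conjPi}, Conjecture~\ref{conjkatada}, and the first half of Theorem~\ref{conjKV} respectively, and with Galatius's vanishing for $\Out(F_n)$ in place of that for $\Aut(F_n)$.
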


\subsection{Relation between $H^*(\IO_n,\Q)$ and $H^*(\IA_n,\Q)$}

Here we study the relation between $H^*(\IO_n,\Q)$ and $H^*(\IA_n,\Q)$.

Let $n\ge2$. The projection $p:\IA_n\twoheadrightarrow \IO_n$ induces a morphism of graded algebras with $\GL(n,\Z)$-actions
$p^*:H^*(\IO_n,\Q)\to H^*(\IA_n,\Q)$.
The morphism of graded $\GLnZ$-representations $H^*(\IA_n,\Q)\to H^*(F_n,\Q)$ induced by the inclusion $F_n\hookrightarrow \IA_n$ has a splitting 
$$
t: H^*(F_n,\Q)\to H^*(\IA_n,\Q)
$$
since $H^*(F_n,\Q)=\Q[0]\oplus H^*[1]$ and $H^1(\IA_n,\Q)\cong (U/H)^*\oplus H^*=V_{1,1^2}\oplus V_{0,1}$.
Define a morphism of graded $\GL(n,\Z)$-representations
$$
\psi: H^*(\IO_n,\Q)\otimes H^*(F_n,\Q)\to H^*(\IA_n,\Q)
$$
by $\psi(u\otimes v)=p^*(u) \cup t(v)$.

\begin{proposition}\label{IAandIO}
Let $n\ge2$.
We have an isomorphism of graded $\GL(n,\Z)$-representations
$$
\gr H^*(\IA_n,\Q)\cong H^*(\IO_n,\Q)\otimes H^*(F_n,\Q)
$$
where the left hand side denotes the associated graded of $H^*(\IA_n,\Q)$ with respect to a filtration (of length 2) of the $\GL(n,\Z)$-representation $H^*(\IA_n,\Q)$.
\end{proposition}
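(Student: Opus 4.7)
The plan is to apply the Hochschild--Serre spectral sequence to the short exact sequence
\begin{gather*}
1 \to F_n \to \IA_n \to \IO_n \to 1,
\end{gather*}
where $F_n$ embeds in $\IA_n$ as the subgroup of inner automorphisms (using that $F_n$ has trivial center for $n\ge2$, and that inner automorphisms act trivially on the abelianization). The goal is to show that this spectral sequence collapses at $E_2$, so that $H^*(\IA_n,\Q)$ carries a length-$2$ filtration with the desired associated graded.

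First, since $F_n$ is a free group, it has cohomological dimension $1$, so the spectral sequence
\begin{gather*}
E_2^{p,q} = H^p(\IO_n, H^q(F_n,\Q)) \Rightarrow H^{p+q}(\IA_n,\Q)
\end{gather*}
is concentrated in the two rows $q=0,1$. Moreover, $\IO_n$ is by definition the kernel of $\Out(F_n)\to\GLnZ$, so it acts trivially on $H=H_1(F_n,\Q)$ and therefore on $H^1(F_n,\Q)=H^*$. Consequently, as $\GLnZ$-representations (the $\GLnZ$-action coming from the outer conjugation action of $\Aut(F_n)$ on the whole extension, factored through $\GLnZ$), we have a canonical identification
\begin{gather*}
E_2^{p,q} \cong H^p(\IO_n,\Q)\otimes H^q(F_n,\Q).
\end{gather*}

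Next I would show that the only potentially nonzero differential, $d_2: E_2^{p,1}\to E_2^{p+2,0}$, is identically zero. By the multiplicative structure of Hochschild--Serre together with the Leibniz rule and the vanishing of all $d_r$ on the bottom row for degree reasons, it suffices to verify $d_2=0$ on $E_2^{0,1}=H^*$. This reduces to the five-term exact sequence
\begin{gather*}
0\to H^1(\IO_n,\Q)\to H^1(\IA_n,\Q)\to H^*\xrightarrow{d_2} H^2(\IO_n,\Q),
\end{gather*}
combined with the known decompositions $H^1(\IA_n,\Q)=V_{1,1^2}\oplus V_{0,1}$ of \cite{CP,Farb,Kawazumi} and $H^1(\IO_n,\Q)=V_{1,1^2}$ of \cite{Kawazumi}: these force the middle arrow to be surjective (identifying $V_{0,1}$ with $H^*$ on both sides), so $d_2$ vanishes on $E_2^{0,1}$.

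The spectral sequence therefore collapses at $E_2$, yielding a $\GLnZ$-equivariant filtration of length $2$ on $H^*(\IA_n,\Q)$ with
\begin{gather*}
\gr H^*(\IA_n,\Q) \cong E_\infty^{*,*}=E_2^{*,*}\cong H^*(\IO_n,\Q)\otimes H^*(F_n,\Q),
\end{gather*}
as graded $\GLnZ$-representations. The main point requiring care is the identification of the $\GLnZ$-action: one has to verify that the Hochschild--Serre filtration is preserved by the outer conjugation action of $\Aut(F_n)$ on the extension, so that the induced $\GLnZ$-action on the associated graded agrees with the tensor product action on $H^*(\IO_n,\Q)\otimes H^*(F_n,\Q)$. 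This follows from the naturality of the spectral sequence with respect to automorphisms of the extension, but is the only non-formal step in the argument.
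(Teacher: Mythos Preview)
Your proposal is correct and follows essentially the same approach as the paper: both use the Hochschild--Serre spectral sequence for $1\to F_n\to \IA_n\to \IO_n\to 1$, observe it is concentrated in rows $q\le1$ with $E_2^{p,q}\cong H^p(\IO_n,\Q)\otimes H^q(F_n,\Q)$, verify $d_2^{0,1}=0$ from the known structure of $H^1(\IA_n,\Q)$ and $H^1(\IO_n,\Q)$, and then invoke multiplicativity to conclude collapse. The paper phrases the vanishing of $d_2^{0,1}$ slightly differently (``otherwise $H^1(\IA_n,\Q)\cong H^1(\IO_n,\Q)$''), implicitly using that $H^*=V_{0,1}$ is irreducible so $d_2^{0,1}$ is zero or injective; your five-term-sequence argument is an equivalent formulation, and your explicit attention to the $\GLnZ$-equivariance of the filtration is a point the paper leaves implicit.
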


\begin{proof}
Consider the Hochschild--Serre spectral sequence 
\begin{gather*}
    E_2^{p,q}=H^p(\IO_n,H^q(F_n,\Q))\Rightarrow H^{p+q}(\IA_n,\Q)
\end{gather*}
for the short exact sequence $1\to F_n\to \IA_n\to \IO_n\to 1$.
Since $\IO_n$ acts trivially on $H^*(F_n,\Q)$ we have
\begin{gather*}
    E_2^{p,q}=H^p(\IO_n,\Q)\otimes H^q(F_n,\Q).
\end{gather*}
For $q>1$, we have $E_2^{p,q}=0$ since $H^q(F_n,\Q)=0$.
We have $d_2^{0,1}=0:E_2^{0,1}=H^1(F_n,\Q)\to E_2^{2,0}=H^2(\IO_n,\Q)$ since otherwise it would follow that $H^1(\IA_n,\Q)\cong E_2^{1,0}=H^1(\IO_n,\Q)$.
By multiplicativity of the spectral sequence, we see that $d_2^{*,*}=0$.
Hence $E_\infty=E_2$.
Therefore the result follows.
\end{proof}

Conjectures \ref{conjPi} and \ref{conjSFIO} and Proposition \ref{IAandIO} imply the following conjecture.

\begin{conjecture}\label{conjstrIOIA}
The map $\psi$ is a stable isomorphism of graded algebraic $\GL(n,\Z)$-representations.
\end{conjecture}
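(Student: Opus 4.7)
The plan is to start from Proposition \ref{IAandIO}, which already provides the abstract isomorphism $\gr H^*(\IA_n,\Q)\cong H^*(\IO_n,\Q)\otimes H^*(F_n,\Q)$ of graded $\GL(n,\Z)$-representations via degeneration of the Hochschild--Serre spectral sequence for $1\to F_n\to\IA_n\to\IO_n\to 1$, and then to upgrade this to an honest isomorphism realized by the explicit map $\psi$. In cohomological degree $i$, the Hochschild--Serre filtration has length two:
\begin{gather*}
0\subset F^i H^i(\IA_n,\Q)\subset F^{i-1}H^i(\IA_n,\Q)=H^i(\IA_n,\Q),
\end{gather*}
with $F^iH^i(\IA_n,\Q)\cong E_\infty^{i,0}=H^i(\IO_n,\Q)$ and $F^{i-1}/F^i\cong E_\infty^{i-1,1}=H^{i-1}(\IO_n,\Q)\otimes H^1(F_n,\Q)$, using that $H^q(F_n,\Q)=0$ for $q\ge 2$ and that $\IO_n$ acts trivially on $H^1(F_n,\Q)$.

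I would next verify that $\psi$ respects this filtration and induces the identification of Proposition \ref{IAandIO} on associated gradeds. The map $\psi(u\otimes 1)=p^*(u)$ is exactly the edge homomorphism of Hochschild--Serre, landing in $F^iH^i(\IA_n,\Q)$ and inducing the identity on $E_\infty^{i,0}=H^i(\IO_n,\Q)$. By construction of $t$ as a section of the restriction map $H^1(\IA_n,\Q)\to H^1(F_n,\Q)$, the image of $t$ projects isomorphically onto the other edge piece $E_\infty^{0,1}=H^1(F_n,\Q)$. By the multiplicativity of the spectral sequence and the compatibility of the cup product with the filtration, for $u\in H^{i-1}(\IO_n,\Q)$ and $v\in H^1(F_n,\Q)$ the class $p^*(u)\cup t(v)$ lies in $F^{i-1}H^i(\IA_n,\Q)$ and maps to $u\otimes v$ in $F^{i-1}/F^i = E_\infty^{i-1,1}$ under the identification of Proposition \ref{IAandIO}.

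The role of Conjectures \ref{conjPi} and \ref{conjSFIO} is to guarantee that, in the stable range, both $H^i(\IA_n,\Q)$ and $H^i(\IO_n,\Q)$ are algebraic $\GL(n,\Z)$-representations, so that both source and target of $\psi$ are graded algebraic $\GL(n,\Z)$-representations. Since the category of algebraic $\GL(n,\Q)$-representations is semisimple, any $\GL(n,\Z)$-equivariant extension in this category splits, so the length-two filtration on $H^i(\IA_n,\Q)$ splits $\GL(n,\Z)$-equivariantly. Combined with the previous step, $\psi$ is a $\GL(n,\Z)$-equivariant map between algebraic $\GL(n,\Z)$-representations that induces an isomorphism on associated gradeds; since the filtration is finite, this forces $\psi$ itself to be a $\GL(n,\Z)$-equivariant isomorphism in the stable range.

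The hard part will be the second step: pinning down the precise identifications so that $\psi$ induces \emph{exactly} the canonical isomorphism of Proposition \ref{IAandIO} on associated gradeds, rather than the canonical one composed with a non-trivial $\GL(n,\Z)$-equivariant automorphism of some $V_{\ul\lambda}$-isotypic component. Concretely, this means verifying naturality of the edge homomorphisms and the cup-product pairing in Hochschild--Serre, and checking that the chosen splitting $t$ is compatible with the standard identification $E_\infty^{0,1}=H^1(F_n,\Q)$. Once this naturality is nailed down, the remaining conclusion is a formal consequence of the bounded-filtration principle together with the semisimplicity of algebraic $\GL(n,\Q)$-representations.
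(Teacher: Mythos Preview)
The statement is a \emph{conjecture} in the paper, not a theorem with a proof. The paper's entire justification is the single sentence ``Conjectures \ref{conjPi} and \ref{conjSFIO} and Proposition \ref{IAandIO} imply the following conjecture,'' with no further argument. Your proposal supplies the details behind this sentence, and the core of your argument---that $\psi$ is filtration-preserving for the Hochschild--Serre filtration and induces an isomorphism on associated gradeds, hence is an isomorphism---is correct and matches what the paper presumably has in mind.

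Two points where you are making the argument harder than necessary. First, your ``hard part'' is not hard: you do not need $\gr\psi$ to agree with the \emph{canonical} identification of Proposition \ref{IAandIO}; you only need $\gr\psi$ to be \emph{some} isomorphism, and this follows immediately once you know $p^*$ realizes the inflation edge map and $t$ is a section of the restriction edge map, together with multiplicativity of the Hochschild--Serre filtration. Whether or not the induced map on an isotypic component differs from the canonical one by an automorphism is irrelevant to whether $\psi$ is an isomorphism. Second, the semisimplicity of algebraic $\GL(n,\Q)$-representations is a red herring for the isomorphism claim: a filtration-preserving map that is an isomorphism on associated gradeds of a finite filtration is automatically an isomorphism, with no splitting required. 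In fact this shows that $\psi$ is an isomorphism of $\GL(n,\Z)$-representations unconditionally for $n\ge 2$, not merely stably; Conjectures \ref{conjPi} and \ref{conjSFIO} are needed only to ensure that source and target are \emph{algebraic} $\GL(n,\Z)$-representations, which is part of the conjecture's statement.
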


Note that for $n\ge 2$, we have an isomorphism of graded algebraic $\GL(n,\Z)$-representations
$$
 \psi_A:  H_A^*(\IO_n,\Q)\otimes H_A^*(F_n,\Q)\xrightarrow{\cong}H_A^*(\IA_n,\Q)
$$ \cite[Proposition 9.8]{KatadaIA}.
The projection $p:\IA_n\twoheadrightarrow \IO_n$ induces a morphism of graded algebras $p^*: H^*(\IO_n,\Q)^{\GLnZ}\xrightarrow{\cong} H^*(\IA_n,\Q)^{\GLnZ}$.
Then we have the following commutative diagram
\begin{gather*}
    \xymatrix{
    H_A^*(\IO_n,\Q)\otimes H_A^*(F_n,\Q)\otimes \Q[y_1,y_2,\ldots]\ar[r]^-{\psi_A}_-{\cong}\ar[d]_-{\id\otimes\id\otimes \varphi}  &
    H_A^*(\IA_n,\Q)\otimes \Q[y_1,y_2,\ldots]\ar[d]^-{\id\otimes \phi}\\
    H_A^*(\IO_n,\Q)\otimes H_A^*(F_n,\Q)\otimes H^*(\IO_n,\Q)^{\GLnZ}\ar[r]^-{\psi_A\otimes p^*}\ar[d]_-{\omega'_n\otimes \id} &
    H_A^*(\IA_n,\Q)\otimes H^*(\IA_n,\Q)^{\GLnZ}\ar[d]^-{\omega_n}\\
    H^*(\IO_n,\Q)\otimes H^*(F_n,\Q)\ar[r]^-{\psi} &  H^*(\IA_n,\Q),
    }
\end{gather*}
where $\phi$ (resp. $\varphi$) is the canonical map that sends $y_i$ to $y_i\in H^{4i}(\IA_n,\Q)^{\GLnZ}$ (resp. $y_i\in H^{4i}(\IO_n,\Q)^{\GLnZ}$).
Note that we have conjectured that the vertical maps are all stable isomorphisms (see Proposition \ref{IAnGLinv}, Theorem \ref{HIOinv} and Conjectures \ref{conjIAn} and \ref{omega'isom}).
By Conjecture \ref{conjstrIOIA} and the fact that $\psi_A$ is an isomorphism, it is natural to make the following conjecture as well.

\begin{conjecture}\label{p*}
    The morphisms
    $p^*: H^i(\IO_n,\Q)^{\GLnZ}\to H^i(\IA_n,\Q)^{\GLnZ}$
    are isomorphisms for $n$ sufficiently large with respect to $i$.
\end{conjecture}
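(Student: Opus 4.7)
The plan is to reduce Conjecture \ref{p*} to the naturality of the antitransgression construction of Section \ref{antiAut} applied to the morphism of short exact sequences
\begin{gather*}
    \xymatrix{
        1 \ar[r] & \IA_n \ar[r] \ar[d]_{p} & \Aut(F_n) \ar[r] \ar[d]^{\pi} & \GLnZ \ar[r] \ar[d]^{\id} & 1 \\
        1 \ar[r] & \IO_n \ar[r] & \Out(F_n) \ar[r] & \GLnZ \ar[r] & 1.
    }
\end{gather*}
Throughout I assume the hypotheses of Proposition \ref{IAnGLinv} and Theorem \ref{HIOinv}, which provide stable algebra isomorphisms $\phi\colon \Q[y_1,y_2,\dots]\congto H^*(\IA_n,\Q)^{\GLnZ}$ and $\varphi\colon \Q[y_1,y_2,\dots]\congto H^*(\IO_n,\Q)^{\GLnZ}$ sending each $y_i$ to $y_i^{\IA}:=\varphi_{4i}^{\IA}(x_i)$ and $y_i^{\IO}:=\varphi_{4i}^{\IO}(x_i)$ respectively, where $\varphi_k^{\IA}$ and $\varphi_k^{\IO}$ are the antitransgression maps in the two Hochschild--Serre spectral sequences.

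First I would upgrade the above diagram to a morphism $f\colon E(\IO)\to E(\IA)$ between the two cohomological Hochschild--Serre spectral sequences. By functoriality, $f$ is the identity on the edge $E_2^{*,0}=H^*(\GLnZ,\Q)$ and coincides with $p^*$ on the edge $E_2^{0,*}=H^*(\text{-},\Q)^{\GLnZ}$, while on the abutment it is the restriction $\pi^*\colon H^*(\Out(F_n),\Q)\to H^*(\Aut(F_n),\Q)$. By Galatius \cite{Galatius} both abutments vanish stably, so the antitransgressions $\varphi_k^{\IA}$ and $\varphi_k^{\IO}$ are defined in the stable range on both sides.

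The heart of the argument is to verify the compatibility
\begin{gather*}
    p^*(y_i^{\IO})=y_i^{\IA}\quad\text{for all }i\ge1
\end{gather*}
in the stable range. This is a diagram chase: since $\varphi_k=\iota_k\circ(d_{k+1}^{0,k})^{-1}\circ\pi_{k+1}$ is built entirely from edge maps and differentials, and $f$ commutes with all of these by naturality, one has $f\circ\varphi_k^{\IO}=\varphi_k^{\IA}\circ f$ on $E_2^{k+1,0}$. Evaluating on $x_i\in H^{4i+1}(\GLnZ,\Q)$, where $f$ acts as the identity, yields $p^*(y_i^{\IO})=y_i^{\IA}$. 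Consequently $p^*\circ\varphi=\phi$ as algebra maps, because they agree on the generators $y_i$, and therefore $p^*=\phi\circ\varphi^{-1}$ is a composition of stable isomorphisms.

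The naturality step is the main obstacle: though conceptually elementary, it requires carefully tracking $f$ through the several layers of quotients, kernels, and edge maps used to construct $\varphi_k$. A complementary and more self-contained viewpoint is provided by the Hochschild--Serre spectral sequence of $1\to F_n\to\IA_n\to\IO_n\to 1$: its $E_2$-degeneration (Proposition \ref{IAandIO}) gives a short exact sequence of $\GLnZ$-modules
\begin{gather*}
    0\to H^i(\IO_n,\Q)\xrightarrow{p^*} H^i(\IA_n,\Q)\to H^{i-1}(\IO_n,\Q)\otimes H^*\to 0,
\end{gather*}
so that surjectivity of $p^*$ on $\GLnZ$-invariants reduces to the statement that $V_{1,0}$ does not occur in $H^{i-1}(\IO_n,\Q)$ in the stable range. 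This becomes transparent under the conjectural structure $H^*(\IO_n,\Q)\cong(W^O_*)^*\otimes\Q[z_1,z_2,\dots]$ of Conjecture \ref{conjIOnWO} by a simple bipartition-degree count, since every nontrivial irreducible summand of $(W^O_*)^*$ has non-positive bipartition degree while $V_{1,0}$ has degree $+1$; this serves as an independent sanity check on the main argument.
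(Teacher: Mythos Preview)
The statement you are addressing is a \emph{conjecture} in the paper, not a theorem; the paper gives no proof. The paper only motivates it by the commutative diagram immediately preceding Conjecture~\ref{p*}: if all the vertical maps there are stable isomorphisms (which would follow from Conjectures~\ref{conjPi}, \ref{conjSFIO}, \ref{conjIAn} and \ref{omega'isom}) and if $\psi$ is a stable isomorphism (Conjecture~\ref{conjstrIOIA}), then so is the middle horizontal map $\psi_A\otimes p^*$, and since $\psi_A$ is known to be an isomorphism one would deduce that $p^*$ is as well. That is the full extent of the paper's argument.

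Your proposal does more, and with fewer hypotheses. Assuming only the stability conjectures $(\SF_i)$ and $(\SF'_i)$ (Conjectures~\ref{conjPi} and \ref{conjSFIO}), you invoke Proposition~\ref{IAnGLinv} and Theorem~\ref{HIOinv} to identify both invariant rings with $\Q[y_1,y_2,\dots]$, and then use naturality of the antitransgression construction under the morphism of extensions to show $p^*(y_i^{\IO})=y_i^{\IA}$. This naturality step is correct: the morphism of Hochschild--Serre spectral sequences induced by $(\pi,p,\id)$ is the identity on the base edge $E_2^{*,0}=H^*(\GLnZ,\Q)$ and equals $p^*$ on the fibre edge, it commutes with all differentials and edge maps, and both $d_{k+1}^{0,k}$ are isomorphisms stably by Galatius, so the inverse is natural as well. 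Since $p^*$ is an algebra map sending generators to generators, it is a stable isomorphism. This is a genuine strengthening of what the paper records: you deduce Conjecture~\ref{p*} from Conjectures~\ref{conjPi} and~\ref{conjSFIO} alone, bypassing Conjectures~\ref{conjIAn}, \ref{omega'isom} and~\ref{conjstrIOIA}.

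Your complementary argument via the spectral sequence for $1\to F_n\to\IA_n\to\IO_n\to1$ is also sound as a sanity check. One small caveat: the passage from the short exact sequence to its sequence of $\GLnZ$-invariants uses exactness of $(-)^{\GLnZ}$, which you are tacitly getting from the stability hypotheses (so that all three terms are algebraic and hence semisimple in the stable range); you might make that dependence explicit.
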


\begin{question}
    For coefficients in $\Z$, are the morphisms
    $p^*: H^i(\IO_n,\Z)^{\GLnZ}\to H^i(\IA_n,\Z)^{\GLnZ}$
    always isomorphisms?
\end{question}

\section{Conjectures about the stable cohomology of the Torelli groups}
\label{secConjectures}

In this section, we discuss some conjectures about the  stable rational cohomology of the Torelli groups and the relation to the stable rational cohomology of the IA-automorphism groups of free groups.

\subsection{Stable cohomology of the mapping class groups and the symplectic groups}\label{subsecmcgsp}

Let $\Sigma_{g}$ (resp. $\Sigma_{g,1}$) denote a compact oriented surface of genus $g$ (resp. with one boundary component).
Let $\calM_{g}$ (resp. $\calM_{g,1}$) denote the \emph{mapping class group} of $\Sigma_{g}$ (resp. $\Sigma_{g,1}$).
Let $\calI_{g}$ (resp. $\calI_{g,1}$) denote the \emph{Torelli group}, which is defined as the kernel of the canonical surjective homomorphism from $\calM_{g}$ (resp. $\calM_{g,1}$) to the symplectic group $\Sp(2g,\Z)$. Then we have exact sequences
\begin{gather}\label{Spexact}
   1\to \calI_{g}\to \calM_{g}\to \Sp(2g,\Z)\to 1,\quad  1\to \calI_{g,1}\to \calM_{g,1}\to \Sp(2g,\Z)\to 1.
\end{gather}
Then $\Sp(2g,\Z)$ acts on the cohomology of the Torelli groups.
See e.g. \cite{Morita-survey} for more precise definitions.

Madsen and Weiss \cite{Madsen-Weiss} proved the Mumford conjecture \cite{Mumford}, which determines the stable rational cohomology of the mapping class groups.

\begin{theorem}[Madsen--Weiss \cite{Madsen-Weiss}]\label{madsenweiss}
We stably have an algebra isomorphism
 $$H^{*}(\calM_{g}, \Q)\cong H^{*}(\calM_{g,1}, \Q) \cong \Q[e_1,e_2,\ldots],
 $$
where $e_i$ is the Mumford--Morita--Miller class of degree $2i$ \cite{Mumford,Morita1987,Miller}.
\end{theorem}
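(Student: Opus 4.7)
The plan is to reduce the theorem to identifying the stable homotopy type of $B\calM_\infty$. By Harer's homological stability theorem, both the capping-off map $H^*(\calM_g,\Q)\hookrightarrow H^*(\calM_{g,1},\Q)$ and the stabilisation maps between genera are isomorphisms in a range growing linearly with $g$, so the two cohomology rings in the statement stably coincide with a common limit $H^*(B\calM_\infty,\Q)$. It therefore suffices to prove a single identification of this limit with $\Q[e_1,e_2,\dots]$.

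The easy half is the injection $\Q[e_1,e_2,\dots]\hookrightarrow H^*(B\calM_\infty,\Q)$. Using the definition $e_i=\pi_*(c_1(T_{\mathrm{vert}})^{i+1})$ from the universal surface bundle $\pi$, one verifies stable algebraic independence of the $e_i$ by evaluating monomials on explicit iterated $\C\bbP^1$-bundles (Morita's argument). The genuinely difficult half is surjectivity---showing that no further stable classes exist---which is the content of the Madsen--Weiss theorem proper.

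The main obstacle, and the conceptual heart of the proof, is to pin down the stable homotopy type of $B\calM_\infty$. The strategy is to combine Tillmann's infinite loop space structure on $\Z\times B\calM_\infty^+$ with a Pontryagin--Thom collapse applied to surface bundles, producing a canonical infinite loop map
\[
\alpha:\Z\times B\calM_\infty^+\longrightarrow \Omega^\infty\mathbf{MT}\SO(2)
\]
into the infinite loop space of the Madsen--Tillmann spectrum. One then proves that $\alpha$ is a weak equivalence via a cobordism-category argument in the style of Galatius--Madsen--Tillmann--Weiss: a parameterised surgery and sheaf-theoretic scanning argument yields $B\calC_2\simeq \Omega^{\infty-1}\mathbf{MT}\SO(2)$ for the cobordism category $\calC_2$ of oriented surfaces, and Harer stability identifies the positive component of $\Omega B\calC_2$ with the group completion of $\bigsqcup_g B\calM_{g,1}$.

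With $\alpha$ established as an equivalence, the final step is a rational computation of $H^*(\Omega^\infty\mathbf{MT}\SO(2),\Q)$. Using that $\pi_*(\mathbf{MT}\SO(2))\otimes\Q$ is concentrated in even positive degrees (with generators dual to $c_1^{i+1}$) together with the Milnor--Moore theorem for the rational cohomology of a connected infinite loop space as a free graded-commutative algebra on its rational homotopy, one obtains exactly the polynomial algebra $\Q[e_1,e_2,\dots]$ with $\deg e_i=2i$, matching classes under $\alpha$ with the MMM generators and completing the proof.
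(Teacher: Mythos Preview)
The paper does not give its own proof of this statement. Theorem~\ref{madsenweiss} is quoted as a known result of Madsen and Weiss \cite{Madsen-Weiss} and is used purely as input (together with Theorem~\ref{SpBorel}) in the spectral-sequence arguments of Section~\ref{secConjectures}. There is therefore nothing in the paper to compare your proposal against.

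Your sketch is a reasonable outline of how the theorem is actually proved in the literature, though you have described the later Galatius--Madsen--Tillmann--Weiss cobordism-category approach rather than the original Madsen--Weiss argument (which proceeded via a more direct stratified-surgery analysis of spaces of smooth maps, not via $B\calC_2$). One small inaccuracy: $\pi_*(\mathbf{MT}\SO(2))\otimes\Q$ is not concentrated in positive degrees---there are classes in degrees $0$ and $-2$---but only the positive-degree part is relevant once you restrict to the $0$-component $\Omega^\infty_0\mathbf{MT}\SO(2)$, so the conclusion is unaffected. In any case, for the purposes of this paper no proof is expected: the theorem is a black box.
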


Borel's stability and vanishing theorem can also be applied to $\Sp(2g,\Z)$.
In the case of $\Sp(2g,\Z)$, the stable range given by Li and Sun \cite{Li-Sun} is equal to Borel's stable range that Borel remarked in \cite{Borel2}, which was computed by Tshishiku \cite{Tshishiku}.

\begin{theorem}[Borel \cite{Borel1, Borel2}, Li--Sun \cite{Li-Sun}, Tshishiku \cite{Tshishiku}]\label{SpBorel}
(1) We have an algebra isomorphism
$$H^{*}(\Sp(2g,\Z), \Q) \cong \Q[u_1,u_2,\ldots], \quad \deg u_i=4i-2$$
in $*\le g-1$.
 
(2) Let $V$ be an algebraic $\Sp(2g,\Q)$-representation such that $V^{\Sp(2g,\Q)}=0$.
Then we have 
$$
H^{p}(\Sp(2g,\Z), V)=0 \quad\text{for } p\le g-1.
$$
\end{theorem}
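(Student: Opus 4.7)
The plan is to follow Borel's original strategy, combining his stability/vanishing theorem for the rational cohomology of arithmetic groups with the improved stable range of Li--Sun, and with Tshishiku's identification of the explicit range for the symplectic case.

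First I would set up the standard framework. The group $\Sp(2g,\R)$ is a semisimple Lie group of real rank $g$ with maximal compact subgroup $U(g)$. Borel's theorem identifies, in a stable range, the rational cohomology $H^{*}(\Sp(2g,\Z),V)$ (for any rational finite-dimensional algebraic $\Sp(2g)$-representation $V$) with the relative Lie algebra cohomology $H^{*}(\mathfrak{sp}_{2g}(\R),\mathfrak{u}(g);V_\R)$, where $V_\R=V\otimes_\Q\R$. This reduction is the source of both assertions (1) and (2).

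For part (1), take $V=\Q$. The relative Lie algebra cohomology $H^{*}(\mathfrak{sp}_{2g}(\R),\mathfrak{u}(g);\R)$ is computed from the algebra of $U(g)$-invariants in $\bigwedge^{*}(\mathfrak{sp}_{2g}(\R)/\mathfrak{u}(g))^{*}$, and by Cartan's computation (equivalently, by passage through the compact dual Hermitian symmetric space $\Sp(2g)/U(g)$) this invariant algebra is in the stable range a polynomial algebra on generators of degrees $2,6,10,\ldots,4i-2,\ldots$, which yields the classes $u_i$. For part (2), one uses the well-known fact that if $V$ is a nontrivial irreducible algebraic representation, then $H^{*}(\mathfrak{sp}_{2g}(\R),\mathfrak{u}(g);V_\R)=0$ in the range where Borel's theorem applies; decomposing a general $V$ with $V^{\Sp(2g,\Q)}=0$ into irreducibles then gives vanishing.

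The only remaining issue is the precise stable range. Borel's original bound is weaker, but the improvement of Li--Sun \cite{Li-Sun} gives an optimal linear range for all arithmetic groups, which for $\Sp(2g,\Z)$ was worked out explicitly by Tshishiku \cite{Tshishiku} to be $*\le g-1$. I would simply invoke these two results to conclude. The main obstacle, which I would defer to the cited work, is the verification of the precise numerical stable range $g-1$; the identification of the stable cohomology algebra itself and the vanishing statement are classical consequences of the Lie algebra cohomology computation, but pinning down the sharp range requires the cohomological-dimension / vanishing-above-the-virtual-cohomological-dimension type arguments of Li--Sun and Tshishiku.
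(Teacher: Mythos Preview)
The paper does not give its own proof of this theorem; it is stated as a result from the cited literature (Borel, Li--Sun, Tshishiku) and used as a black box. Your sketch is a reasonable outline of how the result is obtained in those references, so in that sense there is nothing to compare against.

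One small correction on the attribution of the stable range: you describe Li--Sun as providing the improvement and Tshishiku as specializing it to $\Sp(2g,\Z)$, but the paper's own remark just before the theorem says the opposite. Tshishiku explicitly computed \emph{Borel's} original stable range for $\Sp(2g,\Z)$ and found it to be $g-1$; Li--Sun's general improvement (which is uniform in the representation) happens to give the same bound in the symplectic case. This does not affect the mathematics of your sketch, only the narrative of who contributed what.
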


Here we recall some relations between the stable rational cohomology of $\Sp(2g,\Z)$ and that of $\calM_g$ described in the introduction of Morita's paper \cite{Morita1996}.
The surjective map $p:\calM_{g}\to \Sp(2g,\Z)$ induces an injective map
\begin{gather}
\label{p-star}
p^*:\varprojlim_g H^*(\Sp(2g,\Z),\Q)\hookrightarrow \varprojlim_g H^*(\calM_{g},\Q).
\end{gather}
The image of $\varprojlim_g H^*(\Sp(2g,\Z),\Q)$ under $p^*$ is $\Q[e_1,e_3,\ldots]$.
For $i\ge1$ and $g$ sufficiently large with respect to $i$, let $e'_{2i-1}\in H^{4i-2}(\Sp(2g,\Z),\Q)$ denote the element such that $p^*(e'_{2i-1})=e_{2i-1}$.

\subsection{$\Sp(2g,\Z)$-invariant part of the stable rational cohomology of the Torelli groups}

Kawazumi and Morita made the following conjecture about a stable structure of the $\Sp(2g,\Z)$-invariant part of the stable rational cohomology of $\calI_{g}$ \cite[Conjecture 13.8]{Kawazumi-Morita} (see also \cite[Conjecture 3.4]{Morita-survey}).

\begin{conjecture}[Kawazumi--Morita \cite{Kawazumi-Morita}]\label{conjectureinvKM}
We stably have 
\begin{gather*}
H^*(\calI_{g},\Q)^{\Sp(2g,\Z)}\cong \Q[e_2,e_4,\dots],
\end{gather*}
where $e_{2i}\in H^{4i}(\calI_{g},\Q)^{\Sp(2g,\Z)}$ is the image of $e_{2i}$ under the map induced by the inclusion $\calI_{g}\hookrightarrow \calM_{g}$.
\end{conjecture}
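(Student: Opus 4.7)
The plan is to mirror the strategy used for $\IA_n$ in Proposition \ref{IAnGLinv} and Theorem \ref{thmHIAinv}, adapted to account for the nontrivial Madsen--Weiss abutment $H^*(\calM_g,\Q)\cong\Q[e_1,e_2,\ldots]$. The required conditional input, parallel to Conjecture \ref{conjPi} in the $\IA_n$ setting, is a Torelli-group analog: for each $q\ge 0$, the family $\{H^q(\calI_g,\Q)\}_g$ is a stable family of $\Sp(2g,\Z)$-representations, or at least stably algebraic in the sense of Remark \ref{stablyalgebraicBorel}.

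Granting this hypothesis, I would first feed it into the graded-commutative multiplicative Hochschild--Serre spectral sequence
\[
    E_2^{p,q}=H^p(\Sp(2g,\Z),H^q(\calI_g,\Q))\Rightarrow H^{p+q}(\calM_g,\Q)
\]
for the extension \eqref{Spexact}. The $\Sp$-analog of Proposition \ref{stablefamilyBorel}, based on Theorem \ref{SpBorel}, would then yield a multiplicative isomorphism $E_2^{p,q}\cong H^p(\Sp(2g,\Z),\Q)\otimes H^q(\calI_g,\Q)^{\Sp(2g,\Z)}$ in a stable range. By Madsen--Weiss the abutment stabilizes to $\Q[e_1,e_2,\ldots]$; by Theorem \ref{SpBorel} together with Morita's computation, $H^*(\Sp(2g,\Z),\Q)\cong\Q[u_1,u_2,\ldots]$ with $p^*(u_i)$ a nonzero scalar multiple of $e_{2i-1}$, so the ``missing'' even-indexed MMM classes must arise from the vertical axis $E_2^{0,*}$.

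Next, I would compare $E$ with a model graded-commutative multiplicative first-quadrant spectral sequence $\bar E$ with $\bar E_2=\Q[u_1,u_2,\ldots]\otimes\Q[v_2,v_4,\ldots]$ (bidegrees $(4i-2,0)$ for $u_i$ and $(0,4i)$ for $v_{2i}$), trivial differentials, and abutment $\Q[e_1,e_2,\ldots]$ via $u_i\mapsto e_{2i-1}$ and $v_{2i}\mapsto e_{2i}$. A morphism $\bar E\to E$ of multiplicative spectral sequences is defined by sending $u_i$ to the Borel class in $H^{4i-2}(\Sp(2g,\Z),\Q)$ and $v_{2i}$ to the restriction $i^*(e_{2i})\in H^{4i}(\calI_g,\Q)^{\Sp(2g,\Z)}$; the latter is $\Sp(2g,\Z)$-invariant and a permanent cycle because $e_{2i}$ extends to a global class on $\calM_g$. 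This morphism would be an isomorphism on $\bar E_2^{*,0}$ by Theorem \ref{SpBorel} and on abutments by Madsen--Weiss, so a multiplicative Zeeman-style comparison, in the spirit of Theorem \ref{stableZeeman}, should force it to be an isomorphism on $\bar E_2^{0,*}\to E_2^{0,*}$, yielding the desired
\[
    H^*(\calI_g,\Q)^{\Sp(2g,\Z)}\cong\Q[v_2,v_4,\ldots]\cong\Q[e_2,e_4,\ldots]
\]
with $v_{2i}$ identified with $i^*(e_{2i})$.

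The principal obstacle is the stability hypothesis itself: representation stability of $H^q(\calI_g,\Q)$ as $\Sp(2g,\Z)$-modules in degrees $\ge 2$ is a central open problem, and the techniques available in the $\IA_n$ setting (Lindell's theorem, polynomial functor categories, etc.) do not transfer directly. A secondary technical point is that Theorem \ref{stableZeeman} assumes a vanishing abutment in the relevant degrees, whereas here the abutment is the nonzero polynomial algebra $\Q[e_1,e_2,\ldots]$; consequently the Zeeman comparison must be reformulated to bootstrap from the isomorphisms on $\bar E_2^{*,0}$ and on $\bar E_\infty$ to an isomorphism on $\bar E_2^{0,*}$. The polynomial independence of the $i^*(e_{2i})$ in $H^*(\calI_g,\Q)^{\Sp(2g,\Z)}$ is then obtained a posteriori from this comparison rather than being shown directly.
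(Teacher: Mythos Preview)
Your conditional setup matches the paper's: the statement is a conjecture, and the paper proves it (in the unnumbered theorem following Conjecture~\ref{conjSFiI}) under exactly the hypothesis you name, via the same Hochschild--Serre spectral sequence compared against a bigraded polynomial model. The substantive gap in your sketch is the sentence ``isomorphism \dots on abutments by Madsen--Weiss, so a multiplicative Zeeman-style comparison \dots should force it.'' Zeeman's hypothesis is an isomorphism on $E_\infty$ \emph{as a bigraded object}, i.e.\ compatibly with the Hochschild--Serre filtration; Madsen--Weiss only identifies the underlying graded abutment $\Q[e_1,e_2,\dots]$ and says nothing about where each $e_j$ sits in that filtration. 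Concretely, you do not know a priori that $i^*(e_{2i})\neq 0$ in $H^{4i}(\calI_g,\Q)^{\Sp(2g,\Z)}$, which is exactly what would place $e_{2i}$ in filtration~$0$ and make your map $\bar E_\infty\to E_\infty$ an isomorphism. Your final paragraph flags this, but the needed ``reformulation'' is not a formality---it is the core of the proof.

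The paper supplies precisely this missing step by an explicit induction on a hypothesis $(P_{m,n})$ asserting both that the model map is an isomorphism onto $E_2^{p,q}$ for $p\le m$, $q\le n$ \emph{and} that $E_2\cong E_\infty$ in that range. The passage $(P_{n+2,n})\Rightarrow(P_{0,n+1})$ works because (i) every differential out of $E_r^{0,n+1}$ lands in a bidegree where $E_2=E_\infty$ is already known, hence vanishes, giving $E_2^{0,n+1}=E_\infty^{0,n+1}$; and (ii) a dimension count against $\dim\Q[e_1,e_2,\dots]_{n+1}$, using that Madsen--Weiss gives a \emph{free} polynomial algebra, forces the new class $e'_{(n+1)/2}$ to be nonzero when $n+1\equiv 0\pmod 4$. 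Step (ii) is where $i^*(e_{2i})\neq 0$ is \emph{proved} rather than assumed. If you want to package this as a Zeeman comparison, you would have to build that inductive argument into the verification of the $E_\infty$ hypothesis; the paper's proof is, in effect, that verification written out.
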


It is natural to make the following conjecture, which is a variant of Conjecture \ref{conjectureinvKM}.

\begin{conjecture}\label{conjectureHIginv}
The $\Sp(2g,\Z)$-invariant part of the rational cohomology of $\calI_{g,1}$ stabilizes, that is, 
for each $i\ge0$, there is an integer $N\ge0$ such that for all $g\ge N$ the inclusion map $\calI_{g,1}\hookrightarrow\calI_{g+1,1}$ induces an isomorphism
\begin{gather*}
H^i(\calI_{g+1,1},\Q)^{\Sp(2g+2,\Z)}\congto
H^i(\calI_{g,1},\Q)^{\Sp(2g,\Z)}.
\end{gather*}
Moreover, we stably have an algebra isomorphism
\begin{gather*}
H^*(\calI_{g,1},\Q)^{\Sp(2g,\Z)}\cong \Q[e_2,e_4,\dots],
\end{gather*}
where $e_{2i}
\in H^{4i}(\calI_{g,1},\Q)^{\Sp(2g,\Z)}$ is the image of $e_{2i}$ under the map induced by the inclusion $\calI_{g,1}\hookrightarrow \calM_{g,1}$.
\end{conjecture}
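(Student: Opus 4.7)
The plan is to adapt the proof of Proposition \ref{IAnGLinv} (which established Theorem \ref{thmHIAinv}) to the mapping-class extension \eqref{Spexact}. The base and total-space ingredients are already stable: Theorem \ref{SpBorel} identifies $H^*(\Sp(2g,\Z),\Q)\cong \Q[u_1,u_2,\ldots]$ with $\deg u_i=4i-2$ and kills algebraic coefficients without invariants, while Madsen--Weiss (Theorem \ref{madsenweiss}) gives $H^*(\calM_{g,1},\Q)\cong \Q[e_1,e_2,\ldots]$. The essential new feature, absent from the $\IA_n$ case, is that the total cohomology does not vanish stably, so Theorem \ref{stableZeeman} cannot be invoked verbatim; I would instead compare the Hochschild--Serre spectral sequence to a model multiplicative spectral sequence whose $E_\infty$ page matches $\Q[e_1,e_2,\ldots]$.

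First assume the natural Torelli analogue of $(\SF_i)$: that for each $i\ge0$, $\{H^i(\calI_{g,1},\Q)\}_g$ is a stably algebraic family of $\Sp(2g,\Z)$-representations. Together with Theorem \ref{SpBorel} and Proposition \ref{stablefamilyBorel}, this yields the stable cup-product decomposition
\begin{gather*}
E_2^{p,q}=H^p(\Sp(2g,\Z),H^q(\calI_{g,1},\Q))\cong H^p(\Sp(2g,\Z),\Q)\otimes H^q(\calI_{g,1},\Q)^{\Sp(2g,\Z)}
\end{gather*}
for the Hochschild--Serre spectral sequence of \eqref{Spexact}. I then set $z_i\in H^{4i}(\calI_{g,1},\Q)^{\Sp(2g,\Z)}$ equal to the pullback of $e_{2i}$ under $\calI_{g,1}\hookrightarrow\calM_{g,1}$; the edge map $H^{4i}(\calM_{g,1},\Q)\twoheadrightarrow E_\infty^{0,4i}\hookrightarrow E_2^{0,4i}$ shows each $z_i$ is a permanent cycle, and a Morita--Kawazumi-type argument yields $z_i\neq 0$ stably, since $e_{2i}$ does not lie in the subring $p^*H^*(\Sp(2g,\Z),\Q)=\Q[e_1,e_3,\ldots]$. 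Similarly each Borel class $u_i\in E_2^{4i-2,0}$ is a permanent cycle because $p^*(u_i)$ is a nonzero multiple of $e_{2i-1}$.

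Next I would build a graded-commutative multiplicative first-quadrant model spectral sequence ${}'E$ over $\Q$ whose $E_2$ page is $\Q[u_1,u_2,\ldots]\otimes\Q[\zeta_1,\zeta_2,\ldots]$, with $u_i$ in bidegree $(4i-2,0)$, $\zeta_i$ in bidegree $(0,4i)$, and all differentials zero, and map ${}'E\to E$ by sending each $u_i$ to the genuine Borel class and each $\zeta_i$ to $z_i$; this extends multiplicatively because every model generator targets a permanent cycle, and is an isomorphism on $E_2^{*,0}$ in the Borel range. A Poincar\'e series count shows $\Q[u_i]\otimes\Q[\zeta_i]\cong \Q[e_i]$ as graded vector spaces under $u_i\leftrightarrow e_{2i-1}$, $\zeta_i\leftrightarrow e_{2i}$, and on $E_\infty$ the induced map sends $u_i\mapsto c_i e_{2i-1}$, $\zeta_i\mapsto d_i e_{2i}$ with $c_i,d_i\neq 0$, so is a surjection of polynomial algebras of matching growth rate, hence an isomorphism. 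Zeeman's comparison theorem then forces the morphism to be an isomorphism on every page in the stable range, and restricting to column $p=0$ gives the stable isomorphism $H^*(\calI_{g,1},\Q)^{\Sp(2g,\Z)}\cong \Q[z_1,z_2,\ldots]$; stabilization is automatic since the $z_i$ are globally defined by pullback from $\calM_{g,1}$.

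The main obstacle is the stability hypothesis itself, exactly as in the $\IA_n$ setting: proving that $\{H^i(\calI_{g,1},\Q)\}_g$ is uniformly representation stable in high genus is a major open problem on par with Conjecture \ref{conjPi} for $\IA_n$. Secondary technical points are the careful verification of nonvanishing $z_i\neq 0$ for all $i\ge 1$ (which is not completely immediate from the edge-map filtration since $F^1 H^{4i}(\calM_{g,1},\Q)$ can a priori be strictly larger than the ideal generated by odd-index $e_{2j-1}$, so one would invoke Kawazumi--Morita-type constructions directly) and a precise tracking of the stable range throughout the Zeeman comparison, since both $\deg e_{2i}=4i$ and the Borel range depend unboundedly on $i$.
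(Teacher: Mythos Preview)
Your overall strategy---assume the representation-stability hypothesis (the paper's Conjecture \ref{conjSFiI}), run Hochschild--Serre for \eqref{Spexact}, use the stable product decomposition $E_2^{p,q}\cong E_2^{p,0}\otimes E_2^{0,q}$, and compare against Madsen--Weiss---matches the paper's. But the attempt to apply Zeeman's comparison theorem in one shot has a genuine gap at the step $z_i\neq0$, which you flag as a ``secondary technical point'' but which is in fact the crux. Your paragraph-two justification, that $e_{2i}\notin p^*H^*(\Sp(2g,\Z),\Q)$ forces $z_i\neq0$, is incorrect, as you yourself note later: $e_{2i}\notin\operatorname{im}p^*$ only says $e_{2i}\notin F^{4i}H^{4i}(\calM_{g,1},\Q)$, whereas $z_i=0$ is the much weaker condition $e_{2i}\in F^{1}H^{4i}$. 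No ``Kawazumi--Morita-type construction'' is known to rule this out for general $i$; the nonvanishing of $e_{2i}|_{\calI_{g,1}}$ is precisely part of the content of Conjecture \ref{conjectureinvKM}, so appealing to it is circular. Without $z_i\neq0$ you cannot verify that your model map ${}'E_\infty\to E_\infty$ is an isomorphism, and Zeeman's theorem does not follow from a matching of Poincar\'e series alone.

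The paper avoids this circularity by replacing the one-shot comparison with an induction on the fiber degree $n$. The inductive hypothesis $(P_{m,n})$ records both that the map $\Q[\tilde e_1,\tilde e_3,\dots]\otimes\Q[\tilde e_2,\tilde e_4,\dots]\to E_2^{*,*}$ is an isomorphism in the box $p\le m$, $q\le n$, and that $E_2^{p,q}\cong E_\infty^{p,q}$ there. From $(P_{n+2,n})$ every differential out of $E_r^{0,n+1}$ lands in a row already collapsed, so $E_2^{0,n+1}=E_\infty^{0,n+1}$; then a dimension count against Madsen--Weiss simultaneously computes $\dim E_2^{0,n+1}$ and \emph{proves} $e'_{2i}\neq0$ when $n+1=4i$, since otherwise $\bigoplus_{p+q=n+1}E_\infty^{p,q}$ would be strictly smaller than $H^{n+1}(\calM_{g,1},\Q)$. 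Thus the nonvanishing of your $z_i$ is an output of the argument rather than an input. Your Zeeman approach could be repaired by embedding it in exactly this induction, but at that point it becomes the paper's proof.
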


For the second cohomology, we have the following Torelli group version of Corollary \ref{DayPutmanrational}.
This result in some stable range has probably been known to some experts regarding Conjecture \ref{conjectureinvKM}, but we include it here because we could not find such a result in the literature.

\begin{proposition}
    For $g\ge 4$, we have 
    \begin{gather*}
        H^2(\calI_{g,1},\Q)^{\Sp(2g,\Z)}=H^2(\calI_{g},\Q)^{\Sp(2g,\Z)}=0.
    \end{gather*}
\end{proposition}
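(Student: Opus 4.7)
The plan is to run the Hochschild--Serre spectral sequence for each of the exact sequences \eqref{Spexact}, in exactly the spirit of Corollary \ref{DayPutmanrational}. Writing $\calI$ for either $\calI_{g}$ or $\calI_{g,1}$ and $\calM$ for the corresponding mapping class group, consider
\begin{gather*}
E_2^{p,q}=H^p(\Sp(2g,\Z),H^q(\calI,\Q))\Rightarrow H^{p+q}(\calM,\Q);
\end{gather*}
the aim is to identify $E_\infty^{0,2}$ with $H^2(\calI,\Q)^{\Sp(2g,\Z)}$ and to show that this quotient is forced to vanish.

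First I would assemble the inputs in total degree at most $3$. By Johnson's computation of the abelianisations of the Torelli groups, $H^1(\calI_{g,1},\Q)\cong \bigwedge^3 H_\Q$ and $H^1(\calI_{g},\Q)\cong \bigwedge^3 H_\Q /H_\Q$, where $H_\Q=H_1(\Sigma_g,\Q)$; each is an algebraic $\Sp(2g,\Q)$-representation whose $\Sp(2g,\Q)$-invariant part vanishes. Theorem \ref{SpBorel}(2) then gives $E_2^{p,1}=0$ for $p\le g-1$, so $E_2^{0,1}=E_2^{1,1}=E_2^{2,1}=0$ once $g\ge 3$. Theorem \ref{SpBorel}(1) gives, for $g\ge 4$, an isomorphism $E_2^{2,0}=H^2(\Sp(2g,\Z),\Q)\cong \Q u_1$ and the vanishing $E_2^{3,0}=H^3(\Sp(2g,\Z),\Q)=0$, since the stable rational cohomology of $\Sp(2g,\Z)$ is concentrated in degrees of the form $4i-2$.

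Next I would chase the relevant differentials. Since $E_2^{0,1}=0$, nothing can kill $E_2^{2,0}$, so $E_\infty^{2,0}=E_2^{2,0}\cong \Q$. At the $(0,2)$ spot, $d_2\colon E_2^{0,2}\to E_2^{2,1}$ vanishes because the target is zero, $d_3\colon E_3^{0,2}\to E_3^{3,0}$ vanishes because $E_2^{3,0}=0$, and higher differentials vanish for bidegree reasons. Hence $E_\infty^{0,2}=E_2^{0,2}=H^2(\calI,\Q)^{\Sp(2g,\Z)}$.

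The final input is the relationship with the first Mumford--Morita--Miller class. By Harer stability together with Theorem \ref{madsenweiss}, $H^2(\calM,\Q)\cong \Q e_1$ for $g\ge 4$, and by the discussion following \eqref{p-star} the pullback $p^*\colon H^2(\Sp(2g,\Z),\Q)\to H^2(\calM,\Q)$ sends $u_1$ to a nonzero scalar multiple of $e_1$ and is therefore an isomorphism. Since $p^*$ factors as $E_2^{2,0}\twoheadrightarrow E_\infty^{2,0}\hookrightarrow H^2(\calM,\Q)$, the spectral-sequence filtration satisfies $F^2 H^2(\calM,\Q)=E_\infty^{2,0}=H^2(\calM,\Q)$. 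Consequently the top quotient $E_\infty^{0,2}=H^2(\calM,\Q)/F^1 H^2(\calM,\Q)$ is zero, which forces $H^2(\calI,\Q)^{\Sp(2g,\Z)}=0$. The only conceptual point, more than a genuine obstacle, is that, in contrast with the $\Aut(F_n)$ analogue where $H^2(\Aut(F_n),\Q)=0$, here the nontrivial class $e_1\in H^2(\calM,\Q)$ has to be absorbed into $E_\infty^{2,0}$; the fact that $e_1$ lifts through $\Sp(2g,\Z)$ is precisely what makes this happen.
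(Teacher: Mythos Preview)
Your proof is correct and follows essentially the same approach as the paper's: both run the Hochschild--Serre spectral sequence for \eqref{Spexact}, use Theorem \ref{SpBorel} together with Johnson's description of $H^1(\calI,\Q)$ to kill the relevant $E_2$-terms on the $q\le 1$ rows, and then compare with $H^2(\calM,\Q)\cong\Q$ (Harer/Madsen--Weiss) to force $E_2^{0,2}=0$. Your write-up is more explicit than the paper's---you spell out Johnson's input and the edge-map factorisation of $p^*$---whereas the paper condenses this to a dimension count; note that once you have $E_\infty^{2,0}=E_2^{2,0}\cong\Q$, $E_\infty^{1,1}=0$, and $E_\infty^{0,2}=E_2^{0,2}$, the equality $\dim H^2(\calM,\Q)=1$ already yields $E_2^{0,2}=0$ without needing to invoke the discussion after \eqref{p-star} (which in any case is stated only for $\calM_g$, not $\calM_{g,1}$).
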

\begin{proof}
    Associated to the exact sequences \eqref{Spexact}, we have the Hochschild--Serre spectral sequences 
    \begin{gather*}
E_2^{p,q}=H^p(\Sp(2g,\Z),H^q(\calI_{g},\Q))\Rightarrow H^{p+q}(\calM_{g},\Q),\\
E_2^{p,q}=H^p(\Sp(2g,\Z),H^q(\calI_{g,1},\Q))\Rightarrow H^{p+q}(\calM_{g,1},\Q),    
    \end{gather*}
    respectively.
    Since we have $E_2^{2,0}=\Q$, $E_2^{1,1}=E_2^{3,0}=E_2^{2,1}=0$ by Theorem \ref{SpBorel}, we have $E_2^{0,2}=0$ by Theorem \ref{madsenweiss}.
    The stable range of the cohomology of $\calM_{g}$ and $\calM_{g,1}$ are given by Harer \cite{Harer} (see also \cite[Theorem 5.8]{Farb-Margalit}) and the stable range of the cohomology of $\Sp(2g,\Z)$ is given by Tshishiku \cite{Tshishiku} (see Theorem \ref{SpBorel}).
\end{proof}

Similarly to Definition \ref{stablefamily} for $\GLnZ$-representations, we make the following definition.
\begin{definition}
\label{stablefamilySp}
By a \emph{stable family of $\Sp(2g,\Z)$-representations}, we mean a family $W_*=\{W_g\}_{g\ge0}$ of $\Sp(2g,\Z)$-representations $W_g$ such that there are finitely many partitions $\lambda_i$ (possibly with repetitions) such that, for sufficiently large $g$,  we have an $\Sp(2g,\Z)$-module isomorphism
\begin{gather*}
W_g \cong \bigoplus_{i} V^{\Sp}_{\la_i}(g),
\end{gather*}
where for a partition $\lambda$ (with $l(\lambda)\le g$), $V^{\Sp}_{\la}(g)$ denotes the irreducible $\Sp(2g,\Z)$-representation corresponding to $\lambda$. \end{definition}

The following conjecture would follow from \cite[Conjectures 6.1, 6.6 and 6.7]{Church-Farb}, and is analogous to Conjecture \ref{conjPi}.

\begin{conjecture}[Church--Farb \cite{Church-Farb}]
\label{conjSFiI}
For each $i\ge0$, the following hypothesis  $(\SF_i^\calI)$ holds. \begin{itemize}
    \item[($\SF_i^\calI$)] $\{H^i(\calI_{g,1},\Q)\}_g$ is a stable family of $\Sp(2g,\Z)$-representations.
\end{itemize}
\end{conjecture}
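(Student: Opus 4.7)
The plan is to mirror the strategy that the paper carries out for $\IA_n$ in Section \ref{secstrIA}, with the group extension
$1\to \calI_{g,1}\to \calM_{g,1}\to \Sp(2g,\Z)\to 1$
playing the role of the $\IA_n$ extension, Borel's theorem for $\Sp(2g,\Z)$ (Theorem \ref{SpBorel}) replacing Theorem \ref{theoremhomologyofGL}, and a twisted-coefficient stable cohomology theorem for $\calM_{g,1}$ playing the role of Lindell's Theorem \ref{conjKV}. More precisely, I would proceed by induction on the cohomological degree $i$, assuming $(\SF_j^\calI)$ for $j<i$ and extracting $(\SF_i^\calI)$ from the Hochschild--Serre spectral sequence
\begin{gather*}
E_2^{p,q}(\lambda)=H^p(\Sp(2g,\Z),H^q(\calI_{g,1},V_\lambda^{\Sp}))\Rightarrow H^{p+q}(\calM_{g,1},V_\lambda^{\Sp})
\end{gather*}
indexed over partitions $\lambda$, reading off the multiplicity of $V_\lambda^{\Sp}$ in $H^i(\calI_{g,1},\Q)$ as the dimension of $H^i(\calI_{g,1},V_\lambda^{\Sp})^{\Sp(2g,\Z)}$ in the stable range.

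The key steps would be the following. First, establish an $\Sp(2g,\Z)$-analogue of Proposition \ref{stablefamilyBorel}: if $W_*$ is a stable family of $\Sp(2g,\Z)$-representations in the sense of Definition \ref{stablefamilySp}, then the cup product
\begin{gather*}
H^p(\Sp(2g,\Z),\Q)\otimes W_g^{\Sp(2g,\Z)}\longrightarrow H^p(\Sp(2g,\Z),W_g)
\end{gather*}
is a stable isomorphism; this is immediate from Theorem \ref{SpBorel}. Second, invoke the stable twisted cohomology of the mapping class group with coefficients in algebraic symplectic representations, which is known and computable via the work of Kawazumi--Morita, Looijenga, and Randal-Williams--Wahl; in particular, for each $\lambda$ the right-hand side $H^{p+q}(\calM_{g,1},V_\lambda^{\Sp})$ stabilizes and splits as $H^{*}(\calM_{g,1},\Q)\otimes (\text{explicit multiplicities})$ by the analogue of Lemma \ref{cohomIAstr}. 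Third, run the bootstrap: by the inductive hypothesis and the first step, the rows $q<i$ of $E_2^{p,q}(\lambda)$ stabilize, and combined with the known abutment one extracts the stability of the $V_\lambda^{\Sp}$-isotypic component of $H^i(\calI_{g,1},\Q)$ via a diagram-chase modelled on Lemma \ref{isomorphism-of-SS}.

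Separately, one must control which $\lambda$ can occur, since Definition \ref{stablefamilySp} requires only finitely many partitions. For this I would use the FI$_{\Sp}$-module (or equivalently $\mathrm{SI}(\Z)$-module) framework: show that $H^i(\calI_{\bullet,1},\Q)$ is a finitely generated object in an appropriate category of symplectic representations, so that only partitions $\lambda$ of bounded size contribute, matching the format of Definition \ref{stablefamilySp}. Candidate inputs are the known finite generation of $H_1(\calI_{g,1},\Q)$ (Johnson's theorem), together with the cup product structure and the higher-degree finite generation results of Boldsen--Hauge Dollerup, Kassabov--Putman, and Miller--Patzt.

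The main obstacle, and the reason this remains a conjecture, is the lack of a clean $\Sp$-equivariant computation of $H^{*}(\calM_{g,1},V_\lambda^{\Sp})$ that is strong enough to replace Lindell's theorem. Even granting finite generation in the FI$_{\Sp}$-sense, one needs that the stable twisted cohomology of $\calM_{g,1}$ decomposes in a way compatible with the spectral sequence filtration, so that the transgression-type differentials and extension problems do not inflate multiplicities unstably; in the $\IA_n$ setting this compatibility was supplied by Lindell's identification of the stable cohomology with the wheeled PROP $\calC_{\calP_0^{\circlearrowright}}$, and a genuine surface-group analogue of that structural statement, rather than a mere dimension count, is what the argument ultimately requires.
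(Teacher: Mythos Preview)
The statement you are attempting to prove is a \emph{conjecture}, not a theorem: the paper explicitly labels it as Conjecture~\ref{conjSFiI} and remarks immediately afterward that ``the cases $i\ge2$ are not known.'' There is therefore no proof in the paper to compare your proposal against.

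More importantly, your proposal misreads the logical direction of the paper. You say you will ``mirror the strategy that the paper carries out for $\IA_n$ in Section~\ref{secstrIA},'' but in that section the paper does \emph{not} prove Conjecture~\ref{conjPi} (the $\IA_n$ analogue of the present statement). Rather, the paper \emph{assumes} Conjecture~\ref{conjPi} as a hypothesis and deduces other statements from it (Theorems~\ref{thmHIAinv} and~\ref{conjIAprop}). The spectral-sequence machinery of Lemmas~\ref{cohomIAstr} and~\ref{isomorphism-of-SS} runs in the opposite direction from what you need: it takes $(\SF_i)$ as \emph{input} in order to identify $E_2^{p,q}$ via Proposition~\ref{stablefamilyBorel}, and outputs information about cup products and invariants. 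Your ``third step'' bootstrap cannot get started without already knowing that $H^i(\calI_{g,1},\Q)$ is at least an algebraic $\Sp(2g,\Z)$-representation, which is precisely (a weak form of) what you are trying to prove. Even if you computed every isotypic multiplicity $\dim\bigl(H^i(\calI_{g,1},\Q)\otimes V_\lambda^{\Sp}\bigr)^{\Sp(2g,\Z)}$ via the twisted cohomology of $\calM_{g,1}$, this would not establish that $H^i(\calI_{g,1},\Q)$ is a \emph{finite} direct sum of such $V_\lambda^{\Sp}$'s, nor that it contains no non-algebraic summands; that finiteness and algebraicity is the actual content of the conjecture, and the finite-generation inputs you cite do not currently supply it. You acknowledge essentially this in your final paragraph, which is the honest summary: what you have written is a heuristic for why the conjecture is plausible, not a proof.
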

We have ($\SF_i^\calI$) for $i=0,1$. The cases $i\ge2$ are not known.

We will prove the following theorem which is analogous to Theorem \ref{thmHIAinv}. Morita \cite{Morita-survey} gave a related argument.

\begin{theorem}
Conjecture \ref{conjSFiI} implies Conjecture \ref{conjectureHIginv}.
Similarly, the variant of Conjecture \ref{conjSFiI} for $\{H^i(\calI_{g},\Q)\}_g$ implies Conjecture \ref{conjectureinvKM}.
\end{theorem}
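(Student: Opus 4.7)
The strategy parallels the proof of Theorem \ref{thmHIAinv}, via the Hochschild--Serre spectral sequence
\begin{gather*}
E_2^{p,q} = H^p(\Sp(2g,\Z), H^q(\calI_{g,1},\Q)) \Rightarrow H^{p+q}(\calM_{g,1},\Q).
\end{gather*}
The essential difference from the $\IA_n$ case is that the stable abutment $H^*(\calM_{g,1},\Q) \cong \Q[e_1,e_2,\ldots]$ is nontrivial by Madsen--Weiss (Theorem \ref{madsenweiss}), so Theorem \ref{stableZeeman} cannot be applied directly; instead I would compare with a model spectral sequence whose abutment matches.

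First, I would use $(\SF_i^\calI)$ together with Theorem \ref{SpBorel} (via a symplectic analogue of Proposition \ref{stablefamilyBorel} for stable families of $\Sp(2g,\Z)$-representations) to obtain the stable identification
\begin{gather*}
E_2^{p,q} \cong H^p(\Sp(2g,\Z),\Q) \otimes H^q(\calI_{g,1},\Q)^{\Sp(2g,\Z)}.
\end{gather*}
In particular the bottom row stably equals $\Q[u_1,u_2,\ldots]$ with $\deg u_i = 4i-2$, and under $p^*$ we have $u_i \mapsto e_{2i-1}$ in $H^*(\calM_{g,1},\Q)$.

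Next I would introduce the model multiplicative first-quadrant spectral sequence $M$ with $M_r^{p,q} = \Q[u_1,u_2,\ldots]_p \otimes \Q[v_1,v_2,\ldots]_q$ for all $r \ge 2$, where $\deg v_i = 4i$, with trivial differentials, so that $M_2 = M_\infty$ converges to $\Q[u_i]\otimes \Q[v_j]$. I would define a map of multiplicative spectral sequences $\Phi : M \to E$ by sending $u_i$ to the corresponding generator in $E_2^{4i-2,0}$ and $v_i$ to the restriction of the Mumford--Morita--Miller class $e_{2i} \in H^{4i}(\calM_{g,1},\Q)$ to the invariant part $H^{4i}(\calI_{g,1},\Q)^{\Sp(2g,\Z)} = E_2^{0,4i}$, extended multiplicatively. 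On abutments, $\Phi_\infty$ becomes the graded algebra map $\Q[u_i]\otimes \Q[v_j] \to \Q[e_1,e_2,\ldots]$ sending $u_i \mapsto e_{2i-1}$ and $v_i \mapsto e_{2i}$, which is an isomorphism stably by Madsen--Weiss.

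Finally, I would invoke the dual form of Zeeman's comparison theorem: since $\Phi_2^{p,0}$ is an isomorphism by construction and $\Phi_\infty$ is an isomorphism stably, it follows that $\Phi_2^{0,q}$ is an isomorphism stably, yielding the desired identification
\begin{gather*}
H^*(\calI_{g,1},\Q)^{\Sp(2g,\Z)} \cong \Q[e_2,e_4,\ldots],
\end{gather*}
together with stabilization in each fixed cohomological degree. The argument for $\calI_g$ using the variant of $(\SF_i^\calI)$ is identical, replacing $\calM_{g,1}$ and $\calI_{g,1}$ by $\calM_g$ and $\calI_g$. The main technical obstacle is the careful coordination of stable ranges --- the range in which $\{H^i(\calI_{g,1},\Q)\}_g$ is a stable $\Sp(2g,\Z)$-family, the Borel/Li--Sun stable range for $\Sp(2g,\Z)$, and the Madsen--Weiss range --- so that Zeeman's theorem applies in the required bidegree. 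A secondary point is that the dual variant of Zeeman's comparison (isomorphism on bottom row plus abutment implies isomorphism on left column) is not literally the statement of Theorem \ref{stableZeeman}, but can be established by a backward induction on the page number analogous to the proof given there.
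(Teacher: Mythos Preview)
Your overall strategy is sound and close to the paper's: set up the Hochschild--Serre spectral sequence, use $(\SF_i^\calI)$ with Theorem \ref{SpBorel} to get the tensor decomposition $E_2^{p,q}\cong E_2^{p,0}\otimes E_2^{0,q}$, and compare with a polynomial model. The paper likewise compares with a bigraded polynomial algebra $\Q[\wti e_1,\wti e_2,\dots]$, but runs the comparison as an explicit double induction via hypotheses $(P_{m,n})$ rather than a single invocation of Zeeman.

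There is, however, a genuine gap in your argument. You assert that ``on abutments, $\Phi_\infty$ becomes the map $u_i\mapsto e_{2i-1}$, $v_i\mapsto e_{2i}$, which is an isomorphism by Madsen--Weiss.'' But $\Phi$ is defined only as a morphism of spectral sequences, not as one induced by a filtered map of abutments, so the hypothesis you need for Zeeman is that $\Phi_\infty^{p,q}$ is an isomorphism \emph{bidegree by bidegree}. Madsen--Weiss only gives equality of total dimensions $\sum_{p+q=n}\dim E_\infty^{p,q}=\dim H^n(\calM_{g,1},\Q)$. In particular, you have no a priori guarantee that $\Phi_2(v_i)=e_{2i}|_{\calI_{g,1}}$ is even nonzero in $E_2^{0,4i}$; the nontriviality of the restrictions $e_{2i}|_{\calI_{g,1}}$ is exactly part of the conclusion you are trying to establish. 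If some $e_{2i}$ happened to lie in $F^1H^{4i}(\calM_{g,1},\Q)$, then $\Phi_\infty(v_i)=0$ and your comparison collapses.

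The paper closes this gap inside its induction: assuming $(P_{m,n})$ (which includes collapse $E_2\cong E_\infty$ in rows $\le n$), it argues that the bigraded algebra $E_\infty^{*,*}$ can have no relations among the images of the $e_i$'s because the abutment $\Q[e_1,e_2,\dots]$ is a free polynomial algebra and the multiplication on $E_\infty$ is compatible with that of $H^*(\calM_{g,1},\Q)$. This ``no relations'' step forces $\dim E_\infty^{p,q}\ge$ the expected value, and since $E_\infty^{p,q}$ is a subquotient of $E_2^{p,q}$, equality follows; then a dimension count in total degree $n+1$ shows $e'_{(n+1)/2}\neq0$ when $n+1\equiv0\pmod4$. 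Your ``backward induction on the page number'' for the dual Zeeman variant would need to incorporate precisely this argument to succeed.
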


\begin{proof}
We consider only the case of $\calI_{g,1}$; the case of $\calI_g$ can be proved in the same way.

We have the Hochschild--Serre spectral sequence $$E_2^{p,q}(g)=H^p(\Sp(2g,\Z),H^q(\calI_{g,1},\Q))\Rightarrow H^{p+q}(\calM_{g,1},\Q)$$ 
associated to the exact sequence \eqref{Spexact}.

The map induced by the inclusion $\calI_{g,1}\hookrightarrow \calM_{g,1}$ can be decomposed into the following maps
$$\phi: H^{4j}(\calM_{g,1},\Q)\twoheadrightarrow  E_{\infty}^{0,4j}(g)\hookrightarrow E_2^{0,4j}(g)=H^{4j}(\calI_{g,1},\Q)^{\Sp(2g,\Z)}.$$
Let $e'_{2i}=\phi(e_{2i})\in H^{4j}(\calI_{g,1},\Q)^{\Sp(2g,\Z)}$.

Let $\Q[\wti{e}_1, \wti{e}_2,\ldots]$ be a bigraded polynomial algebra, where $\deg \wti{e}_{2i-1}=(4i-2,0)$ and $\deg \wti{e}_{2i}=(0,4i)$ for each $i\ge 1$.
Consider the following hypothesis
\begin{itemize}
    \item[$(P_{m,n})$] There is $g_{m,n}\ge 0$ such that for all $g\ge g_{m,n}$, the algebra map
    $$f_{m,n}:\Q[\wti e_1,\wti e_3,\dots, \wti e_{2[\frac{m+2}{4}]-1}, \wti e_2,\wti e_4,\ldots , \wti e_{2[\frac{n}{4}]}]\to E_2^{*,*}(g)$$
    which maps each $\wti e_{i}$ to $e'_{i}$,
    is an isomorphism in bidegree $(p,q)$ with $0\le p\le m,\; 0\le q\le n$, and 
     we have isomorphisms
    $$E_2^{p,q}(g)\cong E_{\infty}^{p,q}(g)$$
    for any $0\le p\le m,\; 0\le q\le n$.
\end{itemize}
Note that we have $E_2^{p,q}(g)\cong E_{\infty}^{p,q}(g)$ 
if and only if the differentials to and from $E_r^{p,q}$ are zero for all $r\ge2$.

We have $(P_{m,0})$ for any $m\ge0$ since we have $p^*(e'_{2i-1})=e_{2i-1}$ for any $i\ge 1$ and we have 
$\varprojlim_g p^*(H^*(\Sp(2g,\Z),\Q))= \Q[e_1,e_3,\ldots]$, where $p^*$ is the injective map in \eqref{p-star} with $\calM_g$ replaced by $\calM_{g,1}$.

We prove $(P_{0,n})$ by induction on $n$.
For $n=0$, we have already seen that $(P_{0,0})$ holds.
Let $n\ge0$ and suppose that $(P_{0,n})$ holds.
We will first show that $(P_{m,n})$ holds for any $m\ge0$.
Let $0\le p\le m$, $0\le q\le n$ and $g$ sufficiently large with respect to $m$ and $n$.
By the assumption ($\SF_q^\calI$) and Theorem \ref{SpBorel},
we have 
$$
 E_2^{p,q}(g)\cong E_2^{p,0}(g)\otimes E_2^{0,q}(g),
$$
and thus 
$f_{m,n}$ in bidegree $(p,q)$ can be identified with
$f_{m,0}\otimes f_{0,n}: \Q[\wti e_1,\wti e_3,\dots]^{p,0}\otimes\Q[\wti e_2,\wti e_4,\dots]^{0,q}\to E_2^{p,0}(g)\otimes E_2^{0,q}(g)$,
which is an isomorphism by the hypotheses $(P_{m,0})$ and $(P_{0,n})$.
Moreover, by $(P_{m,0})$ and $(P_{0,n})$, we have
$$
 E_2^{p,q}(g)\cong E_2^{p,0}(g)\otimes E_2^{0,q}(g)\cong E_{\infty}^{p,0}(g)\otimes E_{\infty}^{0,q}(g).
$$
By the definition of the Hochschild--Serre spectral sequence, $E_{\infty}^{p,q}(g)$ is a subquotient of $E_2^{p,q}(g)$.
Since $\varprojlim_g H^*(\calM_{g,1},\Q)$ has no relation for the generators $e_i$ by Theorem \ref{madsenweiss}, and since the multiplication map of $E_{\infty}^{*,*}(g)$ is compatible with the multiplication map of $H^*(\calM_{g,1},\Q)$, it follows that if $g$ is large enough, then
the bigraded algebra $E_\infty^{*,*}(g)$ has no relation in bidegree $(p,q)$ with $0\le p\le m,\; 0\le q\le n$,
and thus we have
$E_{\infty}^{p,q}(g)\cong E_2^{p,q}(g)$.
Therefore, $(P_{m,n})$ holds.

We will next show that $(P_{0,n+1})$ holds.
Since the Hochschild--Serre spectral sequence is first-quadrant, the differentials to $E_r^{0,n+1}(g)$ are zero.
The differentials from $E_r^{0,n+1}(g)$ are also zero since $(P_{n+2,n})$ holds.
Therefore, we have $E_2^{0,n+1}(g)\cong E_{\infty}^{0,n+1}(g)$.
By the hypothesis $(P_{n+2,n})$, we have $E_{\infty}^{p,q}(g)\cong \Q[\wti{e}_1,\wti{e}_2,\ldots]^{p,q}$ for $p+q=n+1$, $1\le p\le n+1$. 
By the argument in the above paragraph,
$E_{\infty}^{0,n+1}(g)$ includes $\Q[e'_2,\ldots, e'_{2[\frac{n}{4}]}]^{0,n+1}$.
Therefore, by using Theorem \ref{madsenweiss}, we have 
$$\dim E_2^{0,n+1}(g)- \dim \Q[e'_2,\ldots, e'_{2[\frac{n}{4}]}]^{0,n+1}
=
\begin{cases}
1 & (n+1\equiv0\pmod4)\\
0 & (\text{otherwise}).
\end{cases}
$$
Since $E_2^{0,n+1}(g)\cong E_{\infty}^{0,n+1}(g)$, the map $\phi: H^{n+1}(\calM_{g,1},\Q)\to H^{n+1}(\calI_{g,1},\Q)^{\Sp(2g,\Z)}$ is surjective.
Therefore, by dimension counting, we have $e'_{\frac{n+1}{2}}\neq 0$ if $n+1\equiv0\pmod4$ and thus the algebra map $f_{0,n+1}$ is an isomorphism in bidegree $(0,q)$ for $q\le n+1$.
This implies $(P_{0,n+1})$.

We now have $(P_{0,n})$ for all $n\ge0$. 
Thus we stably have an algebra isomorphism
\begin{gather*}
H^*(\calI_{g,1},\Q)^{\Sp(2g,\Z)}\cong \Q[e'_2,e'_4,\dots].
\end{gather*}
\end{proof}

\begin{remark}
    Similarly to the case of $\IA_n$, we can replace the assumption, Conjecture \ref{conjSFiI}, with a weaker assumption that $\{H^p(\calI_{g,1},\Q)\}_g$ is a stably algebraic family of $\Sp(2g,\Z)$-representations.
\end{remark}

The Dehn--Nielsen--Baer embedding maps the mapping class group $\calM_{g,1}$ into $\Aut(F_{2g})$ 
and restricts to 
an embedding $i_{g,1}:\calI_{g,1}\to\IA_{2g}$.
Then Conjectures \ref{conjectureHIAinv} and \ref{conjectureHIginv} imply the following conjecture.

\begin{conjecture}\label{conjIandIAinv}
The graded algebra homomorphism
$$i_{g,1}^*: H^*(\IA_{2g},\Q)^{\GL(2g,\Z)} \to H^*(\calI_{g,1},\Q)^{\Sp(2g,\Z)}$$ 
induced by $i_{g,1}$
is a stable isomorphism.
\end{conjecture}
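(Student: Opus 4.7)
The plan is to prove, conditionally on Conjectures \ref{conjectureHIAinv} and \ref{conjectureHIginv}, that $i_{g,1}^*$ is a stable isomorphism. Under these conjectures, both $H^*(\IA_{2g},\Q)^{\GL(2g,\Z)}\cong\Q[z_1,z_2,\dots]$ and $H^*(\calI_{g,1},\Q)^{\Sp(2g,\Z)}\cong\Q[e_2,e_4,\dots]$ are free polynomial algebras over $\Q$ with one generator in each degree of the arithmetic progression $4,8,12,\dots$, so they have the same graded Hilbert series. Because $i_{g,1}^*$ is a morphism of connected graded polynomial algebras of finite type with matching Hilbert series, it will be a stable isomorphism provided it is surjective onto the indecomposables in each positive degree; equivalently, it suffices to show that $i_{g,1}^*(z_k)$ is nonzero modulo the decomposable elements of $H^*(\calI_{g,1},\Q)^{\Sp(2g,\Z)}$ for every $k\ge 1$.

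The natural tool for this comparison is the morphism of Hochschild--Serre spectral sequences $\phi_r:E_r^{p,q}(\IA)\to E_r^{p,q}(\calI)$ induced by the commutative diagram of group extensions in which the Dehn--Nielsen--Baer map $\calM_{g,1}\hookrightarrow\Aut(F_{2g})$ covers the standard inclusion $\Sp(2g,\Z)\hookrightarrow\GL(2g,\Z)$ and restricts to $i_{g,1}$ on the Torelli subgroups. On the source, the argument of Proposition \ref{IAnGLinv} identifies $z_k$ as the antitransgression of the Borel class $x_k\in H^{4k+1}(\GL(2g,\Z),\Q)$; this class is forced into $E_2^{0,4k}(\IA)$ in order to kill $x_k$ in the vanishing abutment $H^{4k+1}(\Aut(F_{2g}),\Q)=0$ (Galatius). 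On the target, an analogous analysis of the Torelli spectral sequence using Madsen--Weiss and Tshishiku's stable range for $H^*(\Sp(2g,\Z),\Q)$ yields $e_{2k}$ as a lift along $E_\infty^{0,4k}(\calI)\hookrightarrow E_2^{0,4k}(\calI)$ of the Mumford--Morita--Miller class. The map $\phi_2^{*,0}$ is the restriction $H^*(\GL(2g,\Z),\Q)\to H^*(\Sp(2g,\Z),\Q)$, while $\phi_2^{0,*}$ is precisely $i_{g,1}^*$ on invariants.

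The main obstacle is that the two constructions are markedly asymmetric: the Borel class $x_k$ restricts to zero in $H^{4k+1}(\Sp(2g,\Z),\Q)=0$, since the odd-degree rational cohomology of $\Sp(2g,\Z)$ vanishes stably, so the defining transgression equation $d_{4k+1}(z_k)=x_k$ becomes the trivial equation $d_{4k+1}(i_{g,1}^*(z_k))=0$ on the Torelli side. A diagram chase then yields only that $i_{g,1}^*(z_k)$ survives to $E_\infty^{0,4k}(\calI)$, which under Conjecture \ref{conjectureHIginv} is the one-dimensional subspace generated by $e_{2k}$; hence a priori $i_{g,1}^*(z_k)\in\Q\cdot e_{2k}$ modulo decomposables, but the scalar is not forced to be nonzero by this argument alone. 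Establishing its nonvanishing is the essential difficulty. I would approach it via the expected identification of $z_k$ with the Morita--Sakasai--Suzuki secondary class in $H^{4k}(\IA_{2g},\R)^{\GL(2g,\Z)}$ and of $e_{2k}|_{\calI_{g,1}}$ with a scalar multiple of Igusa's higher Franz--Reidemeister torsion, followed by a direct comparison of these secondary characteristic classes along the Dehn--Nielsen--Baer embedding via the Borel regulator. This final compatibility step lies outside the purely algebraic framework of Hochschild--Serre spectral sequences and is where I expect the essential geometric or transcendental input to be required.
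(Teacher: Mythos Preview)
The statement you are addressing is a \emph{conjecture} in the paper, not a theorem, and the paper offers no proof. The only supporting text is the single sentence preceding it, ``Conjectures \ref{conjectureHIAinv} and \ref{conjectureHIginv} imply the following conjecture,'' together with the Remark immediately after it. The paper does not justify that implication, and in fact your analysis shows why it cannot be justified from those two conjectures alone: matching Hilbert series of the two polynomial algebras does not force $i_{g,1}^*$ to hit the indecomposables nontrivially. Your observation that the Borel class $x_k$ restricts to zero in $H^{4k+1}(\Sp(2g,\Z),\Q)$, so that the antitransgression characterisation of $z_k$ gives no information on the target side, is exactly the point at which the naive argument breaks down.

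Your proposed resolution --- identifying $z_k$ with the Morita--Sakasai--Suzuki secondary class $T\beta_{2k+1}^0$ and then invoking their result that $i_{g,1}^*(T\beta_{2k+1}^0)$ is a nonzero multiple of $e_{2k}$ --- is precisely what the paper's Remark says as well. Note, however, that the paper is careful to write that $y_k$ is only \emph{expected} to be a nonzero scalar multiple of $T\beta_{2k+1}^0$, and concludes that Conjecture \ref{conjIandIAinv} ``\emph{may} follow'' from Proposition \ref{IAnGLinv} and Conjecture \ref{conjectureHIginv}. In other words, the identification of the antitransgression class $y_k$ with the Morita--Sakasai--Suzuki class is itself an open point, so even with the MSS result in hand the argument remains conditional on that expected identification. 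Your proposal is thus in complete agreement with the paper's own assessment: you have correctly located the essential difficulty, and neither you nor the paper claims to resolve it.
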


\begin{remark}
In Section \ref{antiAut}, we defined $y_k\in H^{4k}(\IA_{2g},\Q)$, which is expected to be a nonzero scalar multiple of $T\beta_{2k+1}^0\in H^{4k}(\IA_{2g},\R)$.
Morita, Sakasai and Suzuki \cite{Morita-Sakasai-Suzuki} proved that $i_{g,1}^*(T\beta_{2k+1}^0)$ is a nonzero scalar multiple of $e_{2i}\in H^{4i}(\calI_{g,1},\Q)^{\Sp(2g,\Z)}$.
Therefore, Conjecture \ref{conjIandIAinv} may follow from Proposition \ref{IAnGLinv} and Conjecture \ref{conjectureHIginv}.
\end{remark}

\subsection{The stable rational cohomology of the Torelli groups}

Here we make a conjecture about the relation between the stable rational cohomology and the stable Albanese cohomology of the Torelli groups $\calI_g$ and $\calI_{g,1}$.

The stable Albanese cohomology of the Torelli groups of degree $\le3$ has been determined by Johnson \cite{Johnson}, Hain \cite{Hain}, Sakasai \cite{Sakasai} and Kupers--Randal-Williams \cite{KRW}.
Lindell \cite{Lindell} detected subquotient $\Sp(2g,\Z)$-representations of the higher degree stable Albanese cohomology of the Torelli groups.

By \cite{Hainsurvey,KRW21} it follows that the stable Albanese cohomology of the Torelli groups is a quotient of the cohomology of the associated graded Lie algebra of the Torelli groups with respect to the lower central series.
The latter cohomology has been studied by Hain \cite{Hain}, Garoufalidis--Getzler \cite{GG} and 
 Kupers--Randal-Williams \cite{KRW21}.

Recently, 
the second-named author \cite{KatadaIA} proposed conjectural structures of the stable Albanese cohomology of the Torelli groups in a way similar to the Albanese cohomology of $\IA_n$, which is true in degree $\le 3$ by \cite{Hain, Sakasai, KRW}.
In particular, she made the following conjecture about the $\Sp(2g,\Z)$-invariant part.

\begin{conjecture}[\cite{KatadaIA}]
We stably have isomorphisms of graded algebras
\begin{gather*}
H_A^*(\calI_{g},\Q)^{\Sp(2g,\Z)}\cong H_A^*(\calI_{g,1},\Q)^{\Sp(2g,\Z)}\cong \Q[z_1,z_2,\dots],
\end{gather*}
where $\deg z_i=4i$.
\end{conjecture}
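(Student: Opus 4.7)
The plan is to compute $H_A^*(\calI_{g,1},\Q)^{\Sp(2g,\Z)}$ as the $\Sp$-invariant part of the image of $\tau^*:H^*(U_{g,1},\Q)\to H^*(\calI_{g,1},\Q)$, where $U_{g,1}=H_1(\calI_{g,1},\Q)$. By Johnson's theorem, stably $U_{g,1}\cong \bigwedge^3 H$ with $H=H_1(\Sigma_g,\Q)$, and using self-duality of $H$ under the symplectic form one identifies $H^*(U_{g,1},\Q)\cong \bigwedge^*(\bigwedge^3 H)$ as graded $\Sp$-modules. Because this action is algebraic and $\Sp(2g,\Q)$ is reductive, the $\Sp(2g,\Z)$-invariants functor is stably exact on algebraic representations, so $H_A^*(\calI_{g,1},\Q)^{\Sp}$ is the image of $\bigl(\bigwedge^*(\bigwedge^3 H)\bigr)^{\Sp}$ under $\tau^*$.

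My plan would be as follows. First, compute $\bigl(\bigwedge^*(\bigwedge^3 H)\bigr)^{\Sp}$ stably via Kontsevich's theorem, which interprets this space in terms of a graph complex built from trivalent graphs, using Weyl's fundamental theorem for $\Sp$-invariants. Second, identify $\ker \tau^*$ using the Malcev description of $\calI_{g,1}$: by Hain, Garoufalidis--Getzler and Kupers--Randal-Williams, stably $H_A^*(\calI_{g,1},\Q)$ is the cohomology of a quadratic Lie algebra whose relations come from the Johnson bracket, so $\ker\tau^*$ is the graded ideal generated by an $\Sp$-equivariant subspace $R\subset \bigwedge^2(\bigwedge^3 H)$. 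Third, produce the generators: take the Morita--Sakasai--Suzuki classes $z_i\in H^{4i}(\calI_{g,1},\Q)^{\Sp}$, verify they lie in the Albanese part by realizing them through the Johnson image, and establish their algebraic independence by pairing with specific cycles in Kontsevich's graph homology. Fourth, prove the $\Sp$-invariants of the quotient $\bigwedge^*(\bigwedge^3 H)/(R)$ are exactly freely generated by the $z_i$ via a dimension-counting argument comparing the graph cohomology modulo Johnson relations against $\Q[z_1,z_2,\ldots]$. Fifth, deduce the comparison $H_A^*(\calI_g,\Q)^{\Sp}\cong H_A^*(\calI_{g,1},\Q)^{\Sp}$: the natural surjection $\calI_{g,1}\twoheadrightarrow \calI_g$ induces on abelianizations $U_{g,1}\twoheadrightarrow U_g=\bigwedge^3 H/H$, and the extra $H$-summand would contribute $\Sp$-invariants of the form $\omega^k\in \bigwedge^{2k}H$, which are killed by Johnson-type relations in $H_A^*(\calI_{g,1})$; hence the pullback $H_A^*(\calI_g)^{\Sp}\to H_A^*(\calI_{g,1})^{\Sp}$ is already a stable isomorphism.

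The main obstacle is step four: showing that the $\Sp$-invariants of the quotient form a free polynomial algebra on exactly one generator in each degree $4i$, with no extra generators or relations. Even in low degrees (known through degree three by Hain, Sakasai, and Kupers--Randal-Williams) the graph-cohomological computation is delicate, and no general vanishing theorem singling out degrees divisible by four is currently available for the relevant quotient. A promising auxiliary route would combine Conjecture \ref{conjIandIAinv} with Proposition \ref{IAnGLinv} applied to $n=2g$ in order to transport the polynomial structure $\Q[y_1,y_2,\ldots]$ of $H^*(\IA_{2g},\Q)^{\GL}$ through the Dehn--Nielsen embedding $\calI_{g,1}\hookrightarrow \IA_{2g}$; however this route requires reconciling the fact that $H_A^*(\IA_n,\Q)^{\GL}=0$, so the analog on the $\IA_n$-side lives in the non-Albanese part and one must track carefully how Albanese classes on the Torelli side arise as restrictions of non-Albanese $\IA_n$-classes.
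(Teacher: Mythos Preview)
The statement you are trying to prove is a \emph{conjecture} in the paper, not a theorem: the paper records it as an open problem from \cite{KatadaIA} and gives no proof. So there is no ``paper's own proof'' to compare against.

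Your proposal is a reasonable sketch of how one might attack the conjecture, and you correctly identify the essential obstruction yourself: step four requires showing that the $\Sp$-invariants of the quotient of $\bigwedge^*(\bigwedge^3 H)$ by the Johnson-relation ideal form a free polynomial algebra on one generator per degree $4i$. This is exactly the content of the conjecture and is not known. Your reduction to Kontsevich-style graph cohomology is natural, but the relevant computation of graph cohomology modulo the Johnson relations has not been carried out in general, and no structural argument currently forces the answer to be $\Q[z_1,z_2,\ldots]$.

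Two smaller issues. First, your identification of $H_A^*(\calI_{g,1},\Q)$ with the cohomology of a quadratic Lie algebra conflates the Albanese cohomology (the image of $\tau^*$) with the Lie algebra cohomology of $\gr^{\mathrm{LCS}}\calI_{g,1}$; these are related but not obviously equal, and the equality is itself part of what is conjectured in \cite{KatadaIA}. Second, the auxiliary route through Conjecture~\ref{conjIandIAinv} and Proposition~\ref{IAnGLinv} cannot work as stated: as you observe, $H_A^*(\IA_n,\Q)^{\GL(n,\Z)}=0$, so the Dehn--Nielsen restriction sends the conjectural generators $y_k$ to \emph{non-Albanese} classes on the $\IA$-side, and there is no mechanism in your outline to explain why their images in $H^*(\calI_{g,1},\Q)^{\Sp}$ should land in the Albanese part. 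In short, the proposal is a plan of attack on an open problem, not a proof.
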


By the definition of the Albanese cohomology, we have inclusion maps
$$\iota_{g}^{i}: H_A^i(\calI_{g},\Q)\hookrightarrow H^i(\calI_{g},\Q),\quad \iota_{g,1}^{i}: H_A^i(\calI_{g,1},\Q)\hookrightarrow H^i(\calI_{g,1},\Q),$$
which form graded algebra maps
$$\iota_{g}: H_A^*(\calI_{g},\Q)\hookrightarrow H^*(\calI_{g},\Q),\quad
\iota_{g,1}: H_A^*(\calI_{g,1},\Q)\hookrightarrow H^*(\calI_{g,1},\Q).$$

The maps $\iota_{g}^{i}$ and $\iota_{g,1}^{i}$ are isomorphisms in degree $i=0,1$.
It seems natural to make the following conjecture, which says that the rational cohomology and the Albanese cohomology of the Torelli groups stably coincide.

\begin{conjecture}\label{conjIgstr}
The graded algebra maps $\iota_{g}$ and $\iota_{g,1}$ are stable isomorphisms.
\end{conjecture}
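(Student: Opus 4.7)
The strategy mirrors the approach taken in Theorem \ref{conjIAprop} and Lemma \ref{cohomIAstr} for $\IA_n$. The main tool is the Hochschild--Serre spectral sequence
\begin{equation*}
E_2^{p,q}(V)=H^p(\Sp(2g,\Z),H^q(\calI_{g,1},\Q)\otimes V)\Rightarrow H^{p+q}(\calM_{g,1},V)
\end{equation*}
attached to \eqref{Spexact}, with $V$ running over the irreducible algebraic $\Sp(2g,\Q)$-representations $V^{\Sp}_\la$. The plan is to transfer information from the stable cohomology of $\calM_{g,1}$ with algebraic coefficients to a complete description of the isotypic multiplicities of $H^*(\calI_{g,1},\Q)$ as an $\Sp(2g,\Z)$-representation, and then match this description with the conjectural Albanese cohomology recalled in the preceding subsection.

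Concretely, the plan proceeds in four steps. \emph{Step 1.} Assume the hypothesis $(\SF_i^\calI)$ of Conjecture \ref{conjSFiI} for all $i$. Then Theorem \ref{SpBorel} and the symplectic analogue of Proposition \ref{stablefamilyBorel} yield a stable isomorphism
\begin{equation*}
H^p(\Sp(2g,\Z),\Q)\otimes (H^q(\calI_{g,1},\Q)\otimes V^{\Sp}_\la)^{\Sp(2g,\Z)}\congto E_2^{p,q}(V^{\Sp}_\la).
\end{equation*}
\emph{Step 2.} For $V=\Q$, combine Madsen--Weiss (Theorem \ref{madsenweiss}) with the injection $p^*:\Q[u_1,u_2,\dots]\hookrightarrow\Q[e_1,e_2,\dots]$ discussed in Section \ref{subsecmcgsp}, via a Zeeman-type comparison argument analogous to Theorem \ref{stableZeeman}, to recover the stable identification $H^*(\calI_{g,1},\Q)^{\Sp(2g,\Z)}\cong \Q[e_2,e_4,\dots]$ of Conjecture \ref{conjectureHIginv}. \emph{Step 3.} For $V^{\Sp}_\la$ with $\la\ne 0$, invoke a symplectic analogue of Lindell's theorem (Theorem \ref{conjKV}) giving the stable cohomology $H^i(\calM_{g,1},V^{\Sp}_\la)$ as an explicit combinatorial object. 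Then, by induction on cohomological degree exactly as in Lemma \ref{cohomIAstr}, deduce that the map
\begin{equation*}
\xi_{\Sp}:H^*(\calM_{g,1},V^{\Sp}_\la)\otimes H^*(\calI_{g,1},\Q)^{\Sp(2g,\Z)}\to H^*(\calI_{g,1},V^{\Sp}_\la)^{\Sp(2g,\Z)}
\end{equation*}
is a stable isomorphism. \emph{Step 4.} Compare the resulting isotypic decomposition of $H^*(\calI_{g,1},\Q)$ with the symplectic analogue of Conjecture \ref{conjalb} for the Albanese cohomology. Since $\iota_{g,1}$ is injective by construction, a multiplicity-wise dimension count on each $V^{\Sp}_\la$-isotypic component then forces $\iota_{g,1}$ to be a stable isomorphism of graded algebras. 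The closed-surface case $\iota_g$ is handled in exactly the same way, with $\calM_{g,1}$ and $\calI_{g,1}$ replaced by $\calM_g$ and $\calI_g$.

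The hard part is step three: one needs a complete stable computation of the twisted cohomology $H^*(\calM_{g,1},V^{\Sp}_\la)$, which is currently known only in special cases such as $V=H^{\otimes q}$ via work of Looijenga, Kawazumi--Morita, and Randal-Williams. A symplectic Kawazumi--Vespa/Lindell theorem, identifying these groups with the components of a suitable wheeled PROP built from the operadic structure governing the stable cohomology of mapping class groups, appears to be the essential missing input; its proof is likely to require a symplectic adaptation of the functor-homology machinery used by Lindell in the free-group case. A secondary obstacle is the combinatorial matching required in step four: one needs a symplectic refinement of the Koike-type decomposition \eqref{kercontraction}, adapted to the Johnson-homomorphism model of the Albanese cohomology, which identifies the multiplicity spaces produced by the spectral sequence with those predicted by the Albanese conjecture. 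Granting these inputs together with $(\SF_i^\calI)$ and the symplectic Albanese conjecture, the strategy above reduces Conjecture \ref{conjIgstr} to purely stable-range representation-theoretic assertions.
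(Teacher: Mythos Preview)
The statement in question is Conjecture~\ref{conjIgstr}: it is stated in the paper as an open conjecture, with no proof given. There is therefore no proof in the paper against which to compare your proposal.

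As for your strategy itself, you are transparent that it is a conditional plan rather than a proof, depending on unproven inputs (a symplectic analogue of Lindell's theorem, the symplectic Albanese conjecture, and $(\SF_i^{\calI})$). That is fair, but there is a genuine technical gap in Step~3 that deserves emphasis. The induction in Lemma~\ref{cohomIAstr} does \emph{not} transfer ``exactly'' to the mapping class group setting. The proof of Lemma~\ref{cohomIAstr} hinges on the first half of Theorem~\ref{conjKV}: for each bipartition $\ulla$, the stable cohomology $H^i(\Aut(F_n),V_{\ulla})$ is nonzero for at most one value of $i$. This concentration-in-one-degree property is precisely what drives the case analysis and the use of Lemma~\ref{isomorphism-of-SS}. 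In the Torelli setting it fails outright: already for the trivial coefficient, $H^*(\calM_{g,1},\Q)\cong\Q[e_1,e_2,\dots]$ is nonzero in infinitely many degrees, and for general $V^{\Sp}_\la$ the twisted stable cohomology is a free module over this polynomial ring. So the spectral sequence $\hat E\otimes E$ used in Lemma~\ref{cohomIAstr}, with $\hat E$ concentrated in a single bidegree, has no direct analogue here. Any symplectic version of Step~3 would need a genuinely different comparison argument, not a verbatim translation.

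A second, smaller point: even granting Steps~1--3, the dimension count in Step~4 would show $H^*(\calI_{g,1},\Q)$ and $H_A^*(\calI_{g,1},\Q)$ have the same isotypic multiplicities, but to conclude that the specific map $\iota_{g,1}$ is an isomorphism you must verify that the Albanese classes realize the nonzero images under $i^*:H^*(\calM_{g,1},V)\to H^*(\calI_{g,1},V)^{\Sp}$; this is the symplectic analogue of Conjecture~\ref{conjkatada} and is an additional assumption, not just a dimension comparison.
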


\end{document}